\documentclass[reqno,english]{amsart}
\usepackage{amsfonts,amsmath,latexsym,verbatim,amscd,mathrsfs,color,array}

\usepackage{amsmath,amssymb,amsthm,amsfonts,graphicx,color}
\usepackage{amssymb}
\usepackage{pdfsync}
\usepackage{epstopdf}
\usepackage[colorlinks=true]{hyperref}
\usepackage{subfigure}

\makeatletter
\def\@currentlabel{2.1}\label{e:dispaa}
\def\@currentlabel{2.21}\label{e:dispau}
\def\@currentlabel{2.22}\label{e:dispav}
\def\@currentlabel{2.23}\label{e:dispaw}
\def\@currentlabel{2.24}\label{e:dispax}
\def\theequation{\thesection.\@arabic\c@equation}
\makeatother
\let\oldbibliography\thebibliography
\renewcommand{\thebibliography}[1]{%
\oldbibliography{#1}%
\setlength{\itemsep}{0pt}%
}

\renewcommand{\theequation}{\thesection.\arabic{equation}}
\newtheorem{lemma}{Lemma}[section]

\newtheorem{proposition}{Proposition}[section]
\newtheorem{corollary}{Corollary}[section]
\newtheorem{remark}{Remark}[section]
\newcommand{\bremark}{\begin{remark} \em}
\newcommand{\eremark}{\end{remark} }

\newtheorem*{thmA}{Theorem A}
\newtheorem*{thmB}{Theorem B}
\newtheorem*{thmC}{Theorem C}
\newtheorem*{thmD}{Theorem D}
\newtheorem{open problem}{Open Problem}[section]
\newtheorem{open question}{Open Question}[section]
\newtheorem{open questions on hexagonal lattice}{Open Questions on hexagonal lattice}[section]
\newtheorem{theorem}{Theorem}[section]

\newtheorem*{problemA}{Problem A}
\newtheorem*{problemB}{Problem B}
\newtheorem*{problemC}{Problem C}

\newcommand{\R}{{\mathbb R}}

\newcommand{\BE}{\begin{equation}}
\newcommand{\BEN}{\begin{equation*}}
\newcommand{\EE}{\end{equation}}
\newcommand{\EEN}{\end{equation*}}
\newcommand{\BL}{\begin{lemma}}
\newcommand{\EL}{\end{lemma}}
\newcommand{\BT}{\begin{theorem}}
\newcommand{\ET}{\end{theorem}}
\newcommand{\BP}{\begin{proposition}}
\newcommand{\EP}{\end{proposition}}
\newcommand{\BC}{\begin{corollary}}
\newcommand{\EC}{\end{corollary}}
\renewcommand{\Re}{\operatorname{Re}}
\renewcommand{\Im}{\operatorname{Im}}
\usepackage{amsmath}
\DeclareMathOperator*{\argmax}{arg\,max}
\DeclareMathOperator*{\argmin}{arg\,min}

\begin{document}

%\title[Green Function]{Two-component reduced functional}
%%%%%%%%%%%%%%%%%%%%%%%%%%%%%%%%%%%%%%%%%%%%%%%%%%%%%%%%%%%%%%%%%%%%%%

\title[Ratio of theta functions]{\bf On ratios of theta functions}

\author{Senping Luo}

\author{Juncheng Wei}

\address[S.~Luo]{School of Mathematics and statistics, Jiangxi Normal University, Nanchang, 330022, China.}
\address[J.~Wei]{Department of Mathematics, Chinese University of Hong
Kong, Shatin, NT, Hong Kong.}

\email[S.~Luo]{luosp1989@163.com}

\email[J.~Wei]{wei@math.cuhk.edu.hk}

\begin{abstract} Motivated by the average partition function of c free bosons $($Afhkami-Jeddi et al. \cite{Afhk2021}$)$ and the average of the genus 1 partition function over the Narain moduli space $($Maloney-Witten \cite{Witten2020}$)$, we investigate ratios of theta functions.
In this paper, we completely classify the minimizers (or maximizers) for
ratios of theta and Epstein zeta functions. We find that the hexagonal lattice plays a pivotal role there.
These results have direct applications in conformal and Liouville field theory via partition functions. Additionally, they yield the minima of differences of theta and Epstein zeta functions, which have implications for the mathematics of crystallization and interacting particle theory (\cite{Bet2016,Bet2019AMP}).

\end{abstract}

\maketitle

\setcounter{equation}{0}

\section{Introduction and main results}
\setcounter{equation}{0}

Let $ z\in \mathbb{H}:=\{z\in\mathbb{C}: \Im(z)>0\}$. The Epstein zeta and theta functions associated with the lattice $ \Lambda$ are defined as
 \begin{equation}\aligned
 \nonumber
\zeta(s,\Lambda):=\sum_{\mathbb{P}\in \Lambda\setminus\{0\}}\frac{1}{|\mathbb{P}|^{2s}},\;\;
\theta(\alpha,\Lambda):=\sum_{\mathbb{P}\in\Lambda} e^{-\pi\alpha |\mathbb{P}|^2}.
\endaligned\end{equation}
By the parametrization $\Lambda =\sqrt{\frac{1}{\Im(z)}}\Big({\mathbb Z}\oplus z{\mathbb Z}\Big)$, one has
 \begin{equation}\aligned
 \label{thetas}
&\zeta(s,z):=\zeta(s,\Lambda)=\sum_{(m,n)\in\mathbb{Z}^2\backslash\{0\}}\frac{\Im(z)^s}{|mz+n|^{2s}},\;\\
&\theta(\alpha,z):=\theta(\alpha,\Lambda)=\sum_{(m,n)\in\mathbb{Z}^2}e^{-\pi \alpha\cdot\frac{|mz+n|^{2}}{\Im(z)}}.
\endaligned\end{equation}

In 1950s, in a series of work in number theory, Rankin \cite{Ran1953}, Cassels \cite{Cas1959}, Ennola \cite{Enn1964a}, and Diamond \cite{Dia1964} established that
\begin{thmA}[Rankin, Cassels, Ennola, Diamonda 1950-1960s] For $s>1$, up to the action by the modular group,
\begin{equation}\aligned\nonumber
\argmin_{z\in\mathbb{H}}\zeta(s,z)=e^{i\frac{\pi}{3}}.
\endaligned\end{equation}
\end{thmA}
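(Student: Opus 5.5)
The plan is to pass from the Epstein zeta function to the theta function and then prove a minimality statement for theta that holds uniformly in the parameter $\alpha$. By the Mellin transform one has, for $s>1$,
\begin{equation}\nonumber
\zeta(s,z)=\frac{\pi^s}{\Gamma(s)}\int_0^\infty \big(\theta(\alpha,z)-1\big)\alpha^{s-1}\,d\alpha .
\end{equation}
Hence it suffices to prove that for every fixed $\alpha>0$ the function $z\mapsto\theta(\alpha,z)$ attains its minimum over $\mathbb{H}$ at $z=e^{i\pi/3}$, up to the modular action. Strict minimality for some $\alpha$ then gives strict minimality of $\zeta(s,\cdot)$. Both functions are invariant under $SL_2(\mathbb{Z})$ and under $z\mapsto -\bar z$. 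I will therefore restrict to the fundamental half-domain
\begin{equation}\nonumber
\{z=x+iy:\ 0\le x\le \tfrac12,\ |z|\ge 1\}.
\end{equation}
The Jacobi transformation gives $\theta(\alpha,z)=\alpha^{-1}\theta(1/\alpha,z)$, so it is enough to treat $\alpha\ge 1$.

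\textbf{Step 1 (reduction to the arc).} Expand in $x$ by Poisson summation in $n$:
\begin{equation}\nonumber
\theta(\alpha,z)=\sqrt{\tfrac{y}{\alpha}}\sum_{m}\sum_{k} e^{-\pi\alpha m^2 y}\,e^{-\pi k^2 y/\alpha}\,e^{2\pi i k m x}.
\end{equation}
From this I will show $\partial_x\theta(\alpha,z)<0$ for $0<x<\tfrac12$ with $y\ge \sqrt3/2$. The main term is the $|m|=|k|=1$ term $-c\,y^{1/2}e^{-\pi y(\alpha+1/\alpha)}\sin(2\pi x)$. The remaining terms are controlled by $\sin(2\pi kmx)\le |km|\sin(2\pi x)\cdot(\text{const})$ and by the extra exponential decay they carry. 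This monotonicity pushes the minimizer to the line $x=\tfrac12$.

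\textbf{Step 2 (the line $x=\tfrac12$).} I will show $\partial_y\theta(\alpha,\tfrac12+iy)>0$ for $y\ge \sqrt3/2$. The derivative is analyzed from the same Fourier expansion. The leading $m=0$ contribution is $\sqrt{y/\alpha}\,\vartheta_3(e^{-\pi y/\alpha})$, and the $m\neq0$ terms are exponentially small in $\alpha y$. With Step 1, this places the minimum at the corner $\tfrac12+i\tfrac{\sqrt3}{2}=e^{i\pi/3}$.

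\textbf{Main obstacle.} The difficulty is uniformity in $\alpha$, especially for $\alpha$ near $1$, where neither the $m$-sum nor the $k$-sum dominates and the tail bounds are tight. I would first try the route of Montgomery's classical argument. Write $\partial_y$ using the heat-equation structure together with the logarithmic derivative of the one-dimensional theta function. Then use the fact that $t\mapsto \sqrt t\,\vartheta_3(e^{-\pi t})$ is monotone for $t\ge1$, which reduces everything to explicit one-variable inequalities that can be checked by estimating finitely many terms. A numerical verification on a compact $\alpha$-range, with rigorous tail bounds, is the fallback. Uniqueness up to the modular group then follows from strictness of both monotonicities.
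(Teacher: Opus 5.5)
Your Mellin-transform reduction is exactly how the paper obtains Theorem A. The paper does not reprove it; it integrates Montgomery's Theorem B against $\alpha^{s-1}$, so citing Theorem B would already finish your argument. The gap is in your self-contained re-proof of Theorem B, above all in Step 2.

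The point $e^{i\pi/3}$ lies on both mirror lines, $x=\frac12$ and $|z|=1$, of the reflections $z\mapsto 1-\bar z$ and $z\mapsto 1/\bar z$. It is therefore a critical point of $\theta(\alpha,\cdot)$ for every $\alpha$. Hence $\partial_y\theta(\alpha,\frac12+iy)=0$ at $y=\frac{\sqrt3}{2}$, and your strict claim fails there.

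More seriously, the proposed mechanism is wrong. On $x=\frac12$:
the $m=0$ part is $\vartheta_3(\alpha/y)$, whose $y$-derivative is $\approx\frac{2\pi\alpha}{y^2}e^{-\pi\alpha/y}$;
the $m=\pm1$ part is $2e^{-\pi\alpha y}\sum_n e^{-\pi\alpha(n+\frac12)^2/y}\approx 4e^{-\pi\alpha(y+\frac{1}{4y})}$, whose derivative is $\approx-4\pi\alpha(1-\frac{1}{4y^2})e^{-\pi\alpha(y+\frac{1}{4y})}$.
At $y=\frac{\sqrt3}{2}$ the exponents coincide ($y+\frac{1}{4y}=\frac1y$) and both coefficients equal $\frac{8\pi\alpha}{3}$. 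The contributions of the six shortest vectors cancel exactly, for every $\alpha$. Being $O(e^{-\pi\alpha y})$ does not make the $m\neq0$ terms negligible, because the $m=0$ derivative is itself of size $e^{-\pi\alpha/y}\le e^{-\pi\alpha y}$ when $y\le 1$.

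Near $y=\frac{\sqrt3}{2}$ the sign is decided by a degenerate cancellation. A truncation-plus-tail-bound check cannot certify it unless the zero is factored out. The monotonicity of $\sqrt t\,\vartheta_3$ you invoke only says that $\vartheta_3(1/t)$ increases, so it does not address this. You need either Montgomery's actual treatment of the line $x=\frac12$, or a way to bypass the degenerate region as in the paper's Proposition \ref{Prop31}:
prove $\partial_y\theta>0$ only on $[0,\frac12]\times[a,\infty)$ for some $a>1$ (note that $i$ is also a critical point);
prove $\partial_x\theta<0$ on the wider strip $(0,\frac12)\times[b,\infty)$ with $b\le\frac{a}{1/4+a^2}$;
apply $z\mapsto\frac{1}{1-z}$. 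This sends a putative minimizer $\frac12+iy_1$ with $\frac{\sqrt3}{2}\le y_1\le a$ into that strip with real part $<\frac12$, unless $y_1=\frac{\sqrt3}{2}$.

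Step 1 has a related gap, and your diagnosis of the obstacle is inverted. After reducing to $\alpha\ge1$ you have $\alpha y\ge\frac{\sqrt3}{2}$, but $X=y/\alpha$ can be arbitrarily small ($y$ near $\frac{\sqrt3}{2}$, $\alpha$ large). Then the $|m|=1$ terms with $|k|\ge2$ carry no extra decay. Bounding them termwise through $|\sin(2\pi kx)|\le k\sin(2\pi x)$ costs $\sum_{k\ge2}k^2e^{-\pi k^2X}$, which grows like $\frac{1}{4\pi}X^{-3/2}$, against a main term $e^{-\pi X}\approx 1$. Already at $\alpha=10$, $y=\frac{\sqrt3}{2}$ the error bound is about three times the main term.

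The missing ingredient is to keep each $k$-sum together as $\vartheta(y/\alpha;mx)$. Then use a two-sided bound $-\partial_Y\vartheta(X;Y)\asymp\sin(2\pi Y)$, valid uniformly in $X>0$, which follows from the Jacobi triple product (cf.\ Lemmas \ref{LemmaT1}--\ref{LemmaT2}). Its small-$X$ lower bound $\pi e^{-\pi/(4X)}X^{-3/2}$ is invisible termwise. Only after this does the decay $e^{-\pi\alpha(m^2-1)y}$ in $m$ close the estimate.

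By contrast, $\alpha$ near $1$ is the easy case: for $y\ge\frac{\sqrt3}{2}$ both sums are then dominated by their first terms. The hard regime $\alpha\to\infty$ is non-compact, so a numerical check on a compact $\alpha$-range cannot cover it.
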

The analysis of high-dimensional Epstein zeta functions is significantly more challenging; the first rigorous result in this direction was established by Sarnak-Str\"{o}mbergsson \cite{Sarnak2006} in 2006 .
Motivated by {\bf Theorem A},
Montgomery \cite{Mon1988} further proved that
\begin{thmB}[Montgomery 1988] For $\alpha>0$, up to the action by the modular group,
\begin{equation}\aligned\nonumber
\argmin_{z\in\mathbb{H}}\theta(\alpha,z)=e^{i\frac{\pi}{3}}.
\endaligned\end{equation}
\end{thmB}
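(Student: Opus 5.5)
We prove Theorem B (Montgomery's theorem) by restricting to the standard fundamental domain
$\mathcal{D}=\{z=x+iy:\ |x|\le \tfrac12,\ |z|\ge 1\}$ and then using monotonicity, first in $x$ and then along the arc $|z|=1$. Two symmetries come first. The function $\theta(\alpha,z)$ is invariant under $z\mapsto z+1$, $z\mapsto -1/z$ and $z\mapsto -\bar z$. Poisson summation gives the duality $\theta(\alpha,z)=\frac{1}{\alpha}\theta(\frac{1}{\alpha},z)$, so it suffices to treat $\alpha\ge 1$. The problem therefore reduces to showing that on $\mathcal{D}\cap\{x\ge 0\}$ the minimum is attained only at $z=e^{i\pi/3}$ (for $\alpha=1$ the statement also holds by continuity, but the argument below covers it directly).

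\emph{Step 1 (monotonicity in $x$).} Apply Poisson summation in $n$ to write
\[
\theta(\alpha,z)=\sqrt{\tfrac{y}{\alpha}}\sum_{m\in\mathbb{Z}}\sum_{k\in\mathbb{Z}} e^{-\pi \alpha m^2 y}\, e^{-\pi k^2 y/\alpha}\, e^{2\pi i k m x}.
\]
This gives a Fourier series in $x$,
\[
\theta=\sqrt{\tfrac{y}{\alpha}}\Big(\vartheta_3(\alpha y)+2\sum_{k\ge1} e^{-\pi k^2 y/\alpha}\,\vartheta_3(\alpha y, kx)\Big),
\]
where $\vartheta_3(t,kx)=\sum_m e^{-\pi t m^2}\cos(2\pi kmx)$. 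Compute $\partial_x\theta$; the $k=1$ term is $-2\pi\sum_m m\,e^{-\pi\alpha m^2 y}\sin(2\pi m x)$, multiplied by $e^{-\pi y/\alpha}$. The goal is $\partial_x\theta<0$ for $0<x<\tfrac12$ and $y\ge\sqrt3/2$. To get this, use the Jacobi triple product: $\vartheta(t,x)=\prod_n(1-q^{2n})(1+2q^{2n-1}\cos 2\pi x+q^{4n-2})$ with $q=e^{-\pi t}$. This shows each $\vartheta_3(t,\cdot)$ is decreasing on $(0,\tfrac12)$, since the logarithmic derivative is a sum of negative terms. The $k=1$ term is then negative, and the $k\ge2$ terms carry the extra factors $e^{-\pi (k^2-1) y/\alpha}$. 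Bounding $|\partial_x\vartheta_3(\alpha y,kx)|\le k\,C(\alpha y)\sin(2\pi x)$ (using the product form and $|\sin 2\pi kx|\le k|\sin 2\pi x|$) should let the $k=1$ term dominate once $y\ge \sqrt3/2$ and $\alpha\ge1$. Consequently the minimum on $\mathcal D$ lies on the boundary $x=\tfrac12$, or equivalently by symmetry on the arc, and with the $x$-monotonicity it lies on the segment from $e^{i\pi/3}$ upward along $x=\tfrac12$.

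\emph{Step 2 (the line $x=\tfrac12$).} On $x=\tfrac12$ the series gives
\[
\theta(\alpha,\tfrac12+iy)=\sqrt{\tfrac{y}{\alpha}}\sum_k e^{-\pi k^2y/\alpha}\,\vartheta_3(\alpha y,\tfrac k2).
\]
This equals a combination of the one-variable Jacobi thetas $\vartheta_2,\vartheta_3,\vartheta_4$ evaluated at $\alpha y$ and $y/\alpha$. We need it increasing in $y$ for $y\ge \sqrt3/2$. Differentiate in $y$, use the heat-equation identity $\partial_t\vartheta=\frac{1}{4\pi}\partial_x^2\vartheta$, and control the sign by truncating the series after the first two or three terms with explicit exponential tail bounds. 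Near $y=\sqrt3/2$ the point is a critical point of $\theta$ by the order-three symmetry, so the derivative vanishes there. Near this point we instead argue through the second variation: use the Hessian at the hexagonal point, computed from the modular invariance, which forces the Hessian to be a multiple of the hyperbolic metric.

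\textbf{Main obstacle.} The hard part is Step 1 uniformly in $\alpha\ge1$, in the regime where $\alpha$ is large and $y/\alpha$ is small. There the terms $e^{-\pi k^2y/\alpha}$ are not small and the $k$-sum does not truncate. In that regime we would switch representations: apply Poisson summation in the $m$ variable instead, or use the duality to exchange the roles of $\alpha y$ and $y/\alpha$. The aim is to rewrite the expression in terms of the product $\prod_n(1-2q^{n}\cos 2\pi x+q^{2n})$ with $q=e^{-\pi\alpha y}$ small, after which the sign follows termwise.
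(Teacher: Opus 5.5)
Your architecture (reduce to $\alpha\ge1$ via $\theta(\alpha,z)=\frac1\alpha\theta(\frac1\alpha,z)$, prove $\partial_x\theta<0$ on the half fundamental domain, then prove monotonicity along $\Re z=\frac12$) is essentially Montgomery's. The paper does not reprove Theorem B. It quotes Montgomery's two lemmas, Lemma~\ref{Lemma22} ($\partial_x\theta\le0$) and Lemma~\ref{Lemma212} ($\partial_y\theta\ge0$ on $\mathcal{D}_{\mathcal{G}}$ for $\alpha\ge1$), which together give the theorem at once. These two lemmas are the entire analytic content, and your proposal establishes neither.

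\textbf{Step 1.} Grouping by the dual frequency $k$ makes the outer weights $e^{-\pi k^2X}$, where $X:=y/\alpha$. This works when $X$ is bounded below, but the weights are close to $1$ when $X$ is small. To leading order in $e^{-\pi\alpha y}$, $\partial_x\theta$ is then a multiple of $\sum_{k\ge1}k\,e^{-\pi k^2X}\sin(2\pi kx)=-\frac1{4\pi}\partial_x\vartheta(X;x)$. This is a sum of oscillating $O(1)$ terms whose total is exponentially small in $1/X$ for $x$ near $\frac12$. So no termwise domination of $k\ge2$ by $k=1$ is possible.

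Your remedy does not repair this. In this regime one of the two nomes $e^{-\pi\alpha y}$, $e^{-\pi y/\alpha}$ is close to $1$ whichever way you group. Moreover, the triple product factorizes only the one-variable theta functions, not $\theta(\alpha,z)$ itself, so there is no product from which the sign can be read off termwise.

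The missing idea is to group by $m$, as in Lemma~\ref{Lemma21}:
\[
\theta=\sqrt{X}\sum_m e^{-\pi\alpha m^2y}\vartheta(X;mx).
\]
Now the weights are uniformly small ($e^{-\pi\alpha y}\le e^{-\pi\sqrt3/2}$), and the near-$1$ nome sits inside $\vartheta(X;\cdot)$. There the Jacobi transformation gives, for $X<\frac{\pi}{\pi+2}$ and $0<Y<\frac12$,
\[
\pi X^{-3/2}e^{-\frac{\pi}{4X}}\sin(2\pi Y)\le-\partial_Y\vartheta(X;Y)\le X^{-3/2}\sin(2\pi Y)
\]
(Lemma~\ref{LemmaT2}; Lemma~\ref{LemmaT1} handles $X>\frac15$). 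The two bounds differ by the factor $e^{\pi\alpha/(4y)}/\pi$. Hence the $m=\pm1$ term dominates as soon as
\[
\sum_{m\ge2}m^2e^{-\pi\alpha(m^2-1)y}e^{\pi\alpha/(4y)}<\pi,
\]
which holds because $3y-\frac1{4y}\ge\frac4{\sqrt3}$ for $y\ge\frac{\sqrt3}2$ and $\alpha\ge1$. This quantitative comparison, in which the loss $e^{\pi\alpha/(4y)}$ is beaten by $e^{-3\pi\alpha y}$, is the heart of Montgomery's first lemma and is absent from your plan.

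\textbf{Step 2.} Here no argument is actually given. Truncating your $k$-series has the same non-uniformity in $\alpha$ as in Step 1. The Hessian remark decides nothing. Invariance under the order-three stabilizer of $e^{i\pi/3}$ forces the Hessian to be $c(\alpha)$ times the metric, but says nothing about the sign of $c(\alpha)$. You would need $\Delta\theta(\alpha,e^{i\pi/3})>0$ for every $\alpha\ge1$, a quantity that becomes exponentially small as $\alpha\to\infty$. Even then, a second-order expansion gives monotonicity only on an $\alpha$-dependent neighbourhood, which would still have to be matched quantitatively with the far-field estimate.

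The degenerate corner can be bypassed by a Rankin-type argument as in Proposition~\ref{Prop31}. It suffices to have $\partial_y\theta(\alpha,\frac12+iy)>0$ for $y\ge a$ with some $a>\frac{\sqrt3}2$, together with $\partial_x\theta<0$ for $0<x<\frac12$, $y\ge b$, where $b\le\frac{a}{1/4+a^2}<\frac{\sqrt3}2$. Under these hypotheses, a minimizer in $\overline{\mathcal{D}_{\mathcal{G}}}$ has the form $z_1=\frac12+iy_1$ with $y_1\le a$. Its image $\frac1{1-z_1}$ is again a minimizer, with imaginary part $\ge b$ and real part $\frac{1/2}{1/4+y_1^2}\in(0,\frac12]$. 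The $x$-monotonicity forces this real part to equal $\frac12$, i.e.\ $y_1=\frac{\sqrt3}2$. Both hypotheses are non-degenerate, and the corrected Step 1 estimate has enough room to hold for $y$ somewhat below $\frac{\sqrt3}2$.
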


A third class of fundamental modular invariant functions, distinct from the theta and Epstein zeta functions, is given by
\begin{equation}\aligned\label{eta}
{\sqrt{\Im(\tau)}|\eta(\tau)|^2}.
\endaligned\end{equation}
Here, the Dedekind eta function $\eta(\tau)$ is defined as
\begin{equation}\aligned\nonumber
\eta(\tau)=q^{\frac{1}{24}}\prod_{n=1}^\infty (1-q^n),\;\; q=e^{2\pi i\tau}.
\endaligned\end{equation}
$\eta^{24}(\tau)$ is the discriminant function from the theory of elliptic functions (Zagier \cite{Zagier}).
In their study of extremals for the determinants of Laplacians, Osgood-Phillips-Sarnak \cite{Sarnak1988} found that
\begin{thmC}[Osgood-Phillips-Sarnak, page 206]\label{Sarnak}
Up to the action by the modular group,
 \begin{equation}\aligned\nonumber
\argmax_{\tau\in\mathbb{H}}\sqrt{\Im(\tau)}|\eta(\tau)|^2=e^{i\frac{\pi}{3}}.
\endaligned\end{equation}
\end{thmC}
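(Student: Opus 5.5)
The plan is to derive Theorem C from Theorem A through the Kronecker limit formula, which links the Epstein zeta function at $s=1$ to $\log\big(\sqrt{\Im(\tau)}|\eta(\tau)|^2\big)$. With the normalization in \eqref{thetas}, the first Kronecker limit formula says that near $s=1$
\begin{equation}\nonumber
\zeta(s,\tau)=\frac{\pi}{s-1}+2\pi\Big(\gamma-\log 2-\log\big(\sqrt{\Im(\tau)}\,|\eta(\tau)|^2\big)\Big)+O(s-1).
\end{equation}
The residue $\pi$ and the constant $c_0:=2\pi(\gamma-\log 2)$ do not depend on $\tau$. So for each fixed $\tau$,
\begin{equation}\nonumber
-2\pi\log\big(\sqrt{\Im(\tau)}|\eta(\tau)|^2\big)=\lim_{s\to1^+}\Big(\zeta(s,\tau)-\frac{\pi}{s-1}\Big)-c_0 .
\end{equation}

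By Theorem A, for every $s>1$ and every $\tau\in\mathbb{H}$ we have $\zeta(s,\tau)\ge \zeta(s,e^{i\pi/3})$. Subtracting the common term $\pi/(s-1)+c_0$ from both sides and letting $s\to1^+$ gives
\begin{equation}\nonumber
-2\pi\log\big(\sqrt{\Im(\tau)}|\eta(\tau)|^2\big)\ge -2\pi\log\big(\sqrt{\Im(e^{i\pi/3})}|\eta(e^{i\pi/3})|^2\big).
\end{equation}
This shows that $e^{i\pi/3}$ is a maximizer of $\sqrt{\Im(\tau)}|\eta(\tau)|^2$. The function is modular invariant, because $|\eta(\gamma\tau)|^2=|c\tau+d|\,|\eta(\tau)|^2$ and $\Im(\gamma\tau)=\Im(\tau)/|c\tau+d|^2$. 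Hence every point in its orbit is also a maximizer.

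The main obstacle is \emph{uniqueness} of the maximizer up to the modular group, since non-strict inequalities survive the limit but strict ones need not. I would handle this directly on the fundamental domain $\mathcal{D}=\{|\tau|\ge1,\ |\Re\tau|\le\frac12\}$, using the function $f(\tau)=\log(\sqrt{y}|\eta(\tau)|^2)$ with $\tau=x+iy$. Its expansion is
\begin{equation}\nonumber
f(\tau)=\tfrac12\log y-\tfrac{\pi y}{6}-2\sum_{n\ge1}\sigma_{-1}(n)e^{-2\pi n y}\cos(2\pi n x).
\end{equation}

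The argument then runs in three steps.

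\textbf{Step 1: the $x$-direction.} For fixed $y\ge\sqrt3/2$, the $n=1$ term dominates the tail, because $e^{-2\pi y}\le e^{-\pi\sqrt3}$ is very small. So $\partial_x f$ has the sign of $\sin(2\pi x)$ on $(0,\frac12)$. Thus $f$ increases in $|x|$, and the maximum over each horizontal slice of $\mathcal{D}$ lies on its boundary $\{|x|=\frac12\}\cup\{|\tau|=1\}$.

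\textbf{Step 2: the line $x=\frac12$.} Here $f(\tfrac12+iy)=\tfrac12\log y-\tfrac{\pi y}{6}+2e^{-2\pi y}-\dots$. Differentiating shows this is strictly decreasing for $y\ge\sqrt3/2$, since $\frac{1}{2y}<\frac{\pi}{6}$ once $y>3/\pi\approx0.955$, and the remaining range near $\sqrt3/2$ is checked using the exponential terms.

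\textbf{Step 3: the arc $|\tau|=1$.} The arc is mapped to itself by $\tau\mapsto-1/\bar\tau$, so one can reduce to monotonicity along the arc from $i$ to $e^{i\pi/3}$. Alternatively, uniqueness follows because $e^{i\pi/3}$ is a critical point with negative-definite Hessian, which Step 1 plus the explicit expansion make checkable.

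The delicate estimate is the one in Step 2 near $y=\sqrt3/2$, and the analogous estimate along the arc. These require controlling the tail $\sum_{n\ge2}$ by a geometric series with ratio $e^{-\pi\sqrt3}$.
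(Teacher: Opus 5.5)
Your derivation of the non-strict inequality is exactly the paper's argument. The paper obtains Theorem C from Theorem A via the Kronecker first limit formula in one line and does not address strictness, so you are right that uniqueness modulo $\mathrm{SL}_2(\mathbb{Z})$ needs more.

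The gap is in your Step 2. Put $g(y)=f(\tfrac12+iy)$. Then $g'(\tfrac{\sqrt3}{2})=0$ \emph{exactly}. The reason is that $\rho=e^{i\pi/3}$ is fixed by $\tau\mapsto 1/(1-\tau)$, whose differential at $\rho$ is rotation by $2\pi/3$, so every smooth modular-invariant function has zero gradient at $\rho$. Numerically, $\frac{1}{\sqrt3}-\frac{\pi}{6}\approx 0.0537$ is cancelled by $4\pi e^{-\pi\sqrt3}-12\pi e^{-2\pi\sqrt3}+\cdots$. Hence no ``main term beats the geometric tail'' estimate can give $g'<0$ near $\tfrac{\sqrt3}{2}$: the inequality you propose to check is an equality at the endpoint.

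What is needed is a second-order step. For $y\ge\tfrac{\sqrt3}{2}$ (the $n\ge2$ terms have negative sum),
$g''(y)=-\frac{1}{2y^2}+8\pi^2\sum_{n\ge1}(-1)^{n+1}n\,\sigma_1(n)e^{-2\pi ny}\le-\frac{1}{2y^2}+8\pi^2e^{-2\pi y}$.
On $[\tfrac{\sqrt3}{2},\tfrac{3}{\pi}]$ this is at most $-\tfrac{\pi^2}{18}+8\pi^2e^{-\pi\sqrt3}\approx-0.548+0.342<0$. Integrating from $g'(\tfrac{\sqrt3}{2})=0$ gives $g'<0$ on $(\tfrac{\sqrt3}{2},\tfrac{3}{\pi}]$, and beyond $\tfrac3\pi$ your sign argument works.

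Step 3 is unnecessary, and as written it is shaky:
the map $\tau\mapsto-1/\bar\tau$ does not preserve $\mathbb{H}$;
a negative-definite Hessian at $\rho$ only gives local uniqueness;
and the gradient also vanishes at $i$, so monotonicity along the arc has the same endpoint degeneracy at both ends.

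Your Step 1 already shows $f(x+iy)<f(\tfrac12+iy)$ for all $0\le x<\tfrac12$ and $y\ge\tfrac{\sqrt3}{2}$. So every point of your $\mathcal{D}$ off $x=\pm\tfrac12$, the arc included, is strictly beaten. The arc end of each horizontal slice is its minimum, not a candidate maximum. With Step 2 repaired, Steps 1--2 prove Theorem C outright (existence and uniqueness), which makes the Kronecker step redundant.

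If you prefer to stay on the paper's route, strictness is cheaper via the Mellin representation:
$\zeta(s,\tau)-\zeta(s,\rho)=\frac{\pi^s}{\Gamma(s)}\int_0^\infty\big(\theta(\alpha,\tau)-\theta(\alpha,\rho)\big)\alpha^{s-1}\,d\alpha$.
The integrand is exponentially small at both ends (use $\theta(\alpha,\cdot)=\alpha^{-1}\theta(\alpha^{-1},\cdot)$ near $0$). So the limit as $s\to1^+$ is $\pi\int_0^\infty(\theta(\alpha,\tau)-\theta(\alpha,\rho))\,d\alpha$, which is strictly positive for $\tau$ not equivalent to $\rho$ by Theorem B. This gives the strict inequality with no numerics.
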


Before proceeding, we discuss the interrelations among Theorems A-C.
Through the Mellin transform
$$
\zeta(s,z)=\frac{\pi^s}{\Gamma(s)}\int_0^\infty \big(\theta(\alpha,z)-1\big)\alpha^{s-1}d\alpha,
$$
Theorem B implies Theorem A. Furthermore, via the Kronecker first limit formula (see, e.g., \cite{Cohen2007}), we obtain
${\sqrt{\Im(\tau)}|\eta(\tau)|^2}$ from the Epstein zeta function as follows:
 \begin{equation}\aligned
 \zeta(s,\tau)={\pi  \over s-1}+2\pi (\gamma -\log(2)-\log({\sqrt {\Im(\tau)}}|\eta (\tau )|^{2}))+O(s-1), \;\;s\rightarrow 1^+,
\endaligned\end{equation}
where $\gamma$ is the Euler-Mascheroni constant. Consequently, Theorem A yields Theorem C. On the other hand, the Dedekind eta function can be expressed in terms of a difference of theta functions (Nakayama \cite{Naka2004}, page 233). Specifically,
\begin{equation}\aligned\label{Prop1}
\sqrt{\Im(\tau)}|\eta(\tau)|^2=-\frac{\sqrt6}{4}\Big(\theta(\frac{3}{2},\tau)-2\theta(6,\tau)\Big).
\endaligned\end{equation}
Sandier-Serfaty \cite{Serfaty2012} and Chen-Oshita \cite{Che2007} reduce their respective functionals to the Dedekind eta function in distinct problems, thereby providing independent and completely different proofs of Theorem C. Another proof is given in Nonnenmacher and Voros \cite{Non1998}. Alternatively, a direct proof of Theorem C can be derived using \eqref{Prop1} and Theorem 1 in \cite{LW2023}.

The celebrated Theorems A-C have profound applications across various fields. These theorems lay the foundation for the optimality of triangular (hexagonal) vortices in Ginzburg-Landau theory (Abrikosov \cite{Abr}, Sandier-Serfaty \cite{Serfaty2012,Serfaty2018}).
Theorems A-C have direct applications to crystallization among lattices (B\'etermin \cite{Bet2016}), Ohta-Kawasaki models in di-block copolymers (Chen-Oshita \cite{Che2007}, Goldman-Muratov-Serfaty \cite{GMS2013}), Bose-Einstein condensates (\cite{Ho2001}), and the crystallization of particle interactions (B\'etermin \cite{Bet2019,Betermin2021JPA,Betermin2021AHP,Betermin2021a}, Blanc-Lewin \cite{Bla2015}, Luo-Ren-Wei \cite{Luo2019}), among others.
Furthermore, theta functions  are deeply connected to string theory (Alvarez-Gaum\'e-Moore-Vafa \cite{Vafa1986}),
 Gauss core model  (Cohn and Courcy-Ireland \cite{Cohn2018},
  Prestipino-Saija-Giaquinta \cite{Pres2011}), sphere packings (Conway-Sloane \cite{Conway},
   Viazovska \cite{Via2017}, Cohn-Kumar-Miller-Radchenko-Viazovska \cite{Coh2017}),
 the reverse Minkowski inequality (Regev and Stephens-Davidowitz \cite{Regev2024}, Regev \cite{Regev2023}), and
  communications (Barreal-Damir-Freij-Hollanti \cite{Bar2020}). A recent result on the difference of Epstein zeta functions and its application can be found in Luo-Wei (\cite{LW2025}). For the further applications and background of theta functions, we refer to \cite{Betermin2020CA,Betermin2023JAM,Betermin2021bb,Kramer1991}. Various
interesting advances in on theta related functions are made in B\'etermin \cite{Betermin2020CA,Betermin2021SIAM}, and B\'etermin-Faulhuber \cite{Betermin2023JAM}, B\'etermin-Faulhuber-Steinerberger \cite{Betermin2021bb}.
This enduring relevance is captured by Mumford's \cite{Mumford1983} observation: {\em "The theory of theta functions is far from a finished polished topic."}

Consequently, this motivates the examination of the ratio forms of Theorems A-C. Such ratios--specifically involving theta, Epstein zeta, and Dedekind eta functions--are fundamental to conformal field theory and string theory. In the classic text on conformal field theory by Francesco, Mathieu, and S\'en\'echal \cite{Fran1997}, the free-boson partition function (without the zero-mode; see Section 10.2 in \cite{Fran1997}) is given by
 
\begin{equation}\aligned\label{etap}
\mathbf{Z}_{\hbox{bos}}(\tau)=\frac{1}{\sqrt{\Im(\tau)}|\eta(\tau)|^2}.
\endaligned\end{equation}

When the free bosonic theory is compactified in a circle with radius $R$, the corresponding partition function on the torus is given by
\begin{equation}\aligned\label{Zvafa}
\mathbf{Z}(R,\tau)=\frac{R}{\sqrt2}\mathbf{Z}_{\hbox{bos}}(\tau)\sum_{m,m'}\exp(-\frac{\pi R^2|m\tau-m'|^2}{2\Im(\tau)}).
\endaligned\end{equation}
See Section 10.4 in Francesco-Mathieu-S\'en\'echal \cite{Fran1997},
 Bershadsky-Klebanov \cite{Bers1991} and Alvarez-Gaum-Moore-Vafa \cite{Vafa1986} (page 28). By \eqref{thetas}, the widely used partition function \eqref{Zvafa} can be rewritten as

\begin{equation}\aligned\label{Th0}
\mathbf{Z}(R,\tau)=\frac{R}{\sqrt2}\frac{\theta(\frac{R^2}{2},\tau)}{\sqrt{\Im(\tau)}|\eta(\tau)|^2},
\endaligned\end{equation}
which represents a ratio form of theta and Dedekind eta functions (up to a power and scaling).
See \eqref{etap} and \eqref{Th0} also in the classical book of statistical field theory (Mussardo \cite{Muss2020}, chapter 12, pages 404-408)
for the free energy.

The average of partition functions of c massless free bosons in two dimensions over Narain moduli space and
a $U(1)^c\times U(1)^c$ Chern-Simons gauge in three dimensions coupled to topological gravity.
The following three quantities are the same (as summarized by Benjamin-Keller-Ooguri-Zadeh \cite{Ben2022}):

{\it
1. The average partition function of c free bosons $($Afhkami-Jeddi et al. \cite{Afhk2021}$)$ or the average of the genus 1 partition function over the Narain moduli space $($Maloney-Witten \cite{Witten2020}$)$.

\begin{equation}\aligned\label{M1}
\mathbf{Z}_{\mathcal{M}}(\tau)=\frac{\int_{\mathcal{M}}d\mu Z(\mu)}{\int_{\mathcal{M}}d\mu}=\frac{\zeta(\frac{c}{2},\tau)}{(\sqrt{\Im(\tau)}|\eta(\tau)|^2)^c}
,\;\mathcal{M}=O(c,c;\mathbb{Z})\setminus O(c,c)/ O(c)\times O(c).
\endaligned\end{equation}

2. The Poincar\'e sum of a $U(1)^c$ vacuum character.

\begin{equation}\aligned\label{M2}
\mathbf{Z}_{T^c}(\tau)=\sum_{\gamma\in\Gamma_{\infty}\setminus SL(2,\mathbb{Z})}\left| \chi^{vac}(\gamma\tau)\right|^2
=\frac{\zeta(\frac{c}{2},\tau)}{(\sqrt{\Im(\tau)}|\eta(\tau)|^2)^c},\;\; \chi^{vac}(\tau)=\frac{1}{{\eta(\tau)}^c}.
\endaligned\end{equation}

3. An exotic 3d gravity computation of a sum over geometries of a $U(1)^c\times U(1)^c$ abelian Chern-Simons theory:

\begin{equation}\aligned\label{M3}
\mathbf{Z}_{T^c}(\tau)=\sum_{\hbox{3-manifold geometries}} e^{-S_{CS}}
=\frac{\zeta(\frac{c}{2},\tau)}{(\sqrt{\Im(\tau)}|\eta(\tau)|^2)^c}.
\endaligned\end{equation}
}

In the purely mathematical side,
the explicit expression $\mathbf{Z}_{T^c}(\tau)$ in \eqref{M1}-\eqref{M3} is a reformulation of an argument originally by Siegel, and is
known as the Siegel-Weil formula (Benjamin-Keller-Ooguri-Zadeh \cite{Ben2022}).

In the physical and applied side, the explicit expressions $\mathbf{Z}_{T^c}(\tau)$ and $\mathbf{Z}(R,\tau)$ (given by \eqref{Zvafa} and \eqref{Th0}) appeared as partition functions in physical systems. Partition function plays fundamental role in statistical physical: the total energy, free energy, entropy, and pressure, can all be expressed in terms of the partition function or its derivatives. In particular, the Helmholtz free energy ($F$) and the partition function ($\mathbf{Z}$) have the following relation
$$
F=-k_{B}T\log(\mathbf{Z}).
$$
Here $k_B$ is Boltzmann's constant and $T$ is the temperature. Therefore, at a given temperature, locating the min (max) of the partition functions is equivalent to finding the max (min) of the Helmholtz free energy. Since the partition function determines many basic
physical quantities, we are led to the following problem:
\begin{problemA}[Torus geometry and max (min) of partition functions]  How does the geometry of the torus affect the value of partition functions in various physical models?
In particular, what kind of geometry of torus such that the partition functions achieve the extreme values?
\end{problemA}

By Theorems A-C, we have

\begin{thmD}\label{PropZ1}
Assume that $\alpha>0$, $s>1$ and $c>0$. Then, up to the action by the modular group,
$$\argmin_{\tau\in\mathbb{H}}\frac{\theta(\alpha,\tau)}{(\sqrt{\Im(\tau)}|\eta(\tau)|^2)^c}=e^{i\frac{\pi}{3}},\;\;
\argmin_{\tau\in\mathbb{H}}\frac{\zeta(s,\tau)}{(\sqrt{\Im(\tau)}|\eta(\tau)|^2)^c}=e^{i\frac{\pi}{3}}.$$
Namely, the hexagonal lattice minimizes the partition functions in \eqref{Zvafa}, and \eqref{M1}-\eqref{M3}.
\end{thmD}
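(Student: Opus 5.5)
The plan is to decouple the ratio into a product of two factors, each of which is extremized at the same point by an earlier theorem. For $c>0$, write
\[
\frac{\theta(\alpha,\tau)}{(\sqrt{\Im(\tau)}|\eta(\tau)|^2)^c}=\theta(\alpha,\tau)\cdot\Big(\frac{1}{\sqrt{\Im(\tau)}|\eta(\tau)|^2}\Big)^c,
\qquad
\frac{\zeta(s,\tau)}{(\sqrt{\Im(\tau)}|\eta(\tau)|^2)^c}=\zeta(s,\tau)\cdot\Big(\frac{1}{\sqrt{\Im(\tau)}|\eta(\tau)|^2}\Big)^c.
\]
All factors are strictly positive on $\mathbb{H}$:
\begin{itemize}
\item $\theta\ge 1$, because of the $\mathbb{P}=0$ term;
\item $\zeta>0$ for $s>1$, since it is a convergent sum of positive terms;
\item $\sqrt{\Im(\tau)}|\eta(\tau)|^2>0$, because $\eta$ has no zeros in $\mathbb{H}$.
\end{itemize}
All three functions are also invariant under the modular group. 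So both ratios are well defined, positive and modular invariant, and it suffices to work on the fundamental domain.

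By Theorem C, $\sqrt{\Im(\tau)}|\eta(\tau)|^2$ attains its maximum at $\tau=e^{i\pi/3}$ (modulo the modular group). Since $x\mapsto x^{-c}$ is strictly decreasing on $(0,\infty)$ for $c>0$, the factor $(\sqrt{\Im(\tau)}|\eta(\tau)|^2)^{-c}$ attains its minimum there. By Theorem B (respectively Theorem A), $\theta(\alpha,\cdot)$ (respectively $\zeta(s,\cdot)$) attains its minimum at $e^{i\pi/3}$ as well. For positive functions $f,g$ we have $f(\tau)g(\tau)\ge f(\tau_0)g(\tau_0)$ whenever $f(\tau)\ge f(\tau_0)$ and $g(\tau)\ge g(\tau_0)$. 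Hence the product is minimized at $e^{i\pi/3}$. Uniqueness up to the modular group also follows: if equality holds at some $\tau$, then equality must hold in both factor inequalities, and Theorem B (or A) forces $\tau$ to be equivalent to $e^{i\pi/3}$.

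The only point needing care is that Theorems A–C are understood as statements that the extremum is attained exactly on the orbit of $e^{i\pi/3}$ (strict, unique up to $SL_2(\mathbb{Z})$). If only non-strict statements were available, the argument would still give that $e^{i\pi/3}$ is a minimizer, but not the uniqueness. No real obstacle is expected here: the two factors share the same extremal point, and the monotone power transformation, valid precisely because $c>0$, aligns their directions of optimization.

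Finally, translate back to the physical partition functions. By \eqref{Th0}, $\mathbf{Z}(R,\tau)$ is a positive constant times the first ratio with $\alpha=R^2/2$ and $c=1$. By \eqref{M1}–\eqref{M3}, $\mathbf{Z}_{T^c}$ is exactly the second ratio with $s=c/2$, which requires $c>2$ so that $s>1$. Hence the hexagonal lattice minimizes these partition functions.
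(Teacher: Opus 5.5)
Your proposal is correct and follows the paper's route: the paper derives Theorem D directly from Theorems A--C ("By Theorems A-C, we have"). That is exactly your argument of writing each ratio as a product of positive, modular-invariant factors ($\theta$ or $\zeta$, times $(\sqrt{\Im(\tau)}|\eta(\tau)|^2)^{-c}$ with $c>0$), all of which are minimized on the orbit of $e^{i\pi/3}$.
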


Motivated by Theorem D and Problem A, it is natural to consider the ratio forms of theta functions and Epstein zeta functions. There are four cases in general by simple combinatorics. We formulate them in the following problem.
\begin{problemB} Assume that $\alpha,\beta>0$ and $s>1$. Classify
 \begin{equation}\aligned
(A): \min_{z\in\mathbb{H}}\frac{\theta(\beta,z)}{\theta(\alpha,z)},
\;\;\text{and}\;\; (B): \min_{z\in\mathbb{H}}\frac{\zeta(s,z)}{\theta(\alpha,z)},\\
(C): \max_{z\in\mathbb{H}}\frac{\theta(\beta,z)}{\theta(\alpha,z)},\;\;\text{and}\;\;
(D): \max_{z\in\mathbb{H}}\frac{\zeta(s,z)}{\theta(\alpha,z)}.
\endaligned\end{equation}
\end{problemB}
Note that the consideration of the ratio between theta functions and certain theta-related functions can be traced back to Lefschetz \cite{L}. Regarding lattice energy, B\'etermin (\cite{Betermin2021SIAM}, p. 1940) considers the maximum of the ratio of a theta function and a theta-related function, where he comments that {\it Optimizing such lattice energy is a huge challenge since the maximizer varies a lot with $x$ and $Y$.} In our situation, the absence of $Y$ (denoting position) makes the problem tractable.

It turns out that the ratio forms in {\bf Problem B} appear frequently in conformal field theory (Francesco-Mathieu-S\'en\'echal \cite{Fran1997}, e.g., Section 10.2). Problem B also arises from partition functions. Noting that ${\mathbf{Z}(R,\tau)}$ denotes the partition function of a free bosonic theory compactified on a circle with radius $R$, and considering the effect of the circle radius on the partition functions, we have
\begin{problemC} Assume that $R, R_1, R_2>0$ and $c>2$. Classify
\begin{equation}\aligned\nonumber
\min_{\tau\in\mathbb{H}}\frac{\mathbf{Z}(R_2,\tau)}{\mathbf{Z}(R_1,\tau)},\;\;\text{and}\;\;
\max_{\tau\in\mathbb{H}}\frac{\mathbf{Z}(R_2,\tau)}{\mathbf{Z}(R_1,\tau)},\;\;\text{and}\;\;
\min_{\tau\in\mathbb{H}}\frac{\mathbf{Z}_{T^c}(\tau)}{\mathbf{Z}^c(R,\tau)}.
\endaligned\end{equation}
\end{problemC}

By \eqref{M2}-\eqref{M3} and \eqref{Th0}, one has
\begin{equation}\aligned\nonumber
\frac{\mathbf{Z}(R_2,\tau)}{\mathbf{Z}(R_1,\tau)}=\frac{\theta(\frac{R_2^2}{2},\tau)}{\theta(\frac{R_1^2}{2},\tau)},\;\;
\frac{\mathbf{Z}_{T^c}(\tau)}{\mathbf{Z}^c(R,\tau)}=(\frac{\sqrt2}{R})^c\frac{\zeta(\frac{c}{2},\tau)}{\theta^c(\frac{R^2}{2},\tau)}.
\endaligned\end{equation}
Problem C coincides partially with Problem B, and is completely solved in Theorems \ref{Th1} and \ref{Th2}.

Through a simple asymptotic analysis, (noting that $\theta (\alpha, iy) \sim \sqrt{y}, \;\zeta (s, iy) \sim y^{s}$ as $y\to +\infty$), it is easy to see that case $(D)$ in Problem B has no maximizer.

To state our results concisely, we denote that
 \begin{equation}\aligned\nonumber
\uppercase\expandafter{\romannumeral1}:=\{(\alpha,\beta):\beta>\alpha, \beta\alpha>1\},\;\;
\uppercase\expandafter{\romannumeral2}:=\{(\alpha,\beta):\beta<\alpha, \beta\alpha>1\},\\
\uppercase\expandafter{\romannumeral3}:=\{(\alpha,\beta):\beta<\alpha, \beta\alpha<1\},\;\;
\uppercase\expandafter{\romannumeral4}:=\{(\alpha,\beta):\beta>\alpha, \beta\alpha<1\}.
\endaligned\end{equation}
A geometric illustration of these regions in the first quadrat can be found in Figure \ref{LJFFF1}.
Our first main concerning cases (A) and (C) in {\bf Problem B}, and we provide a complete characterization as follows:
\begin{figure}
\centering
\includegraphics[scale=0.585]{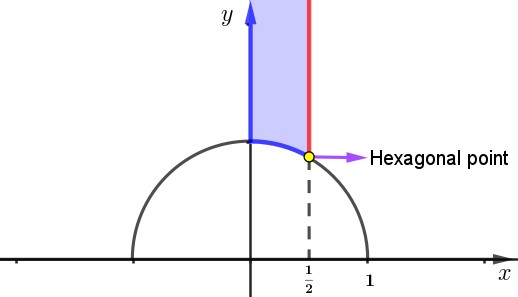}
 \caption{Fundamental domain and hexagonal point}
\label{LJFFF}
\end{figure}
\begin{theorem}[Ratio of theta functions]\label{Th1} Assume that $\alpha>0$ and $\beta>0$. Then, up to the action by the modular group, the following hold.
\begin{itemize}
  \item [(a)] Maximum of ratio of theta functions.
\begin{equation}\aligned\nonumber
\argmax_{z\in\mathbb{H}}\frac{\theta(\beta,z)}{\theta(\alpha,z)}=\begin{cases}
e^{i\frac{\pi}{3}},\;\;\;\;\;\;&\hbox{if}\;\;(\alpha,\beta)\in\uppercase\expandafter{\romannumeral1}\cup\uppercase\expandafter{\romannumeral3};\\
\text{does not exist},\;\;\;\;\;\;&\hbox{if}\;\;(\alpha,\beta)\in\uppercase\expandafter{\romannumeral2}\cup\uppercase\expandafter{\romannumeral4}.
\end{cases}
\endaligned\end{equation}

  \item [(b)]
Minimum of ratio of theta functions.
\begin{equation}\aligned\nonumber
\argmin_{z\in\mathbb{H}}\frac{\theta(\beta,z)}{\theta(\alpha,z)}=\begin{cases}
e^{i\frac{\pi}{3}},\;\;\;\;\;\;&\hbox{if}\;\;(\alpha,\beta)\in\uppercase\expandafter{\romannumeral2}\cup\uppercase\expandafter{\romannumeral4};\\
\text{does not exist},\;\;\;\;\;\;&\hbox{if}\;\;(\alpha,\beta)\in\uppercase\expandafter{\romannumeral1}\cup\uppercase\expandafter{\romannumeral3}.
\end{cases}
\endaligned\end{equation}

\end{itemize}

\end{theorem}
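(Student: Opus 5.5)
The plan is to reduce the eight cases to one by symmetries, settle the cusp behaviour by an explicit asymptotic expansion, and then locate the interior extremum by a monotonicity argument on the fundamental domain.

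\textbf{Symmetry reduction.} Set $f_{\alpha,\beta}(z):=\theta(\beta,z)/\theta(\alpha,z)$. By Poisson summation, $\theta(\alpha,z)=\frac{1}{\alpha}\theta(\frac{1}{\alpha},z)$. Hence
\[
f_{\alpha,\beta}=\frac{\alpha}{\beta}\,f_{1/\alpha,1/\beta}.
\]
This shows two things:
\begin{itemize}
\item On the curve $\alpha\beta=1$ the ratio is the constant $\alpha$, which explains why that curve is excluded.
\item The map $(\alpha,\beta)\mapsto(1/\alpha,1/\beta)$ sends region I to region III and region II to region IV, while preserving both argmax and argmin.
\end{itemize}
Moreover, $f_{\beta,\alpha}=1/f_{\alpha,\beta}$ interchanges maxima with minima and sends region I to region II. So every statement of Theorem \ref{Th1} follows from one claim: for $(\alpha,\beta)\in$ I, $f_{\alpha,\beta}$ attains its maximum over $\mathbb{H}$ exactly at $e^{i\pi/3}$ (modulo the modular group), and has no minimizer.

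\textbf{Behaviour at the cusp.} Apply Poisson summation in $n$:
\[
\theta(\alpha,z)=\sqrt{\tfrac{y}{\alpha}}\sum_{m,k\in\mathbb{Z}}e^{-\pi\alpha y m^2}\,e^{-\pi k^2 y/\alpha}\,e^{2\pi i k m x}.
\]
As $y\to\infty$ this gives
\[
f_{\alpha,\beta}(z)=\sqrt{\tfrac{\alpha}{\beta}}\,\frac{1+2e^{-\pi y/\beta}+2e^{-\pi\beta y}+\cdots}{1+2e^{-\pi y/\alpha}+2e^{-\pi\alpha y}+\cdots}.
\]
In region I we have $\beta>\max(\alpha,1/\alpha)$, so $\min(\beta,1/\beta)=1/\beta<\min(\alpha,1/\alpha)$. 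The numerator correction $2e^{-\pi y/\beta}$ therefore dominates every other correction, and $f_{\alpha,\beta}>\sqrt{\alpha/\beta}$ for $y$ large, with $f_{\alpha,\beta}\to\sqrt{\alpha/\beta}$ from above. I will then show that $\sqrt{\alpha/\beta}$ is the infimum over the whole fundamental domain, which needs a global lower bound and not only the asymptotic one. Given that, the infimum is approached only at the cusp and never attained, so there is no minimizer. The same expansion shows that $f_{\alpha,\beta}$ is continuous on the closure of the fundamental domain and tends to a value below its interior values at the cusp, so a maximizer exists.

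\textbf{Locating the maximizer.} By the invariance $z\mapsto -\bar z$ and modular invariance, it suffices to work on $\mathcal{D}_{1/2}=\{0\le x\le \frac12,\ |z|\ge1\}$. I will follow the two-step scheme used in \cite{LW2023}:
\begin{enumerate}
\item[(i)] Show $\partial_x f_{\alpha,\beta}\ge0$ on $\mathcal{D}_{1/2}$, with equality only at $x=0,\frac12$. This pushes the maximizer to the line $x=\frac12$.
\item[(ii)] Show $\partial_y f_{\alpha,\beta}<0$ on $\{x=\frac12,\ y\ge\frac{\sqrt3}{2}\}$. This forces the maximizer to be $e^{i\pi/3}$.
\end{enumerate}
For (i), I write
\[
\partial_x f=\frac{\theta_x(\beta)\theta(\alpha)-\theta(\beta)\theta_x(\alpha)}{\theta(\alpha)^2}.
\]
In the Fourier form above, each $\theta_x$ is a sum of terms $-4\pi k m\sin(2\pi kmx)(\cdots)$. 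The leading term is $k=m=1$, which is proportional to $-\sin(2\pi x)\,e^{-\pi y(\beta+1/\beta)}$ for $\beta$ and to the same expression with $\alpha$ for $\alpha$. Since $\beta+1/\beta>\alpha+1/\alpha$ in region I, the $\alpha$-term wins. The sign of the numerator is then governed by $+\sin(2\pi x)$ times a positive factor, because the denominator factor $\theta(\beta)$ carries the larger prefactor. The remaining terms are then bounded by geometric tails, using $y\ge\frac{\sqrt3}{2}$.

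\textbf{Main obstacle.} I expect the main difficulty to be making these tail estimates uniform over all of region I, which is unbounded. The two dangerous regimes are:
\begin{itemize}
\item near the boundaries $\beta\approx\alpha$ and $\alpha\beta\approx1$, where the leading differences degenerate and one must factor out $(\beta-\alpha)$ or $(\alpha\beta-1)$ before estimating;
\item $\beta\to\infty$ or $\alpha\to0$, where one switches to the dual representation $\theta(\alpha)=\frac1\alpha\theta(\frac1\alpha)$ to keep the exponents large.
\end{itemize}
To handle the boundaries, my first attempt would be to write
\[
\log f_{\alpha,\beta}=\int_\alpha^\beta\partial_t\log\theta(t,z)\,dt
\]
and prove the monotonicity statements for the integrand $\partial_t\log\theta(t,z)$ with $t$ ranging over $[\alpha,\beta]$. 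The difficulty is that the required sign of the integrand changes at $t=1$. So this representation must be combined with the duality symmetry, which reduces to $t$ on one side of $1$ before integrating.
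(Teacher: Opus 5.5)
Your symmetry reduction is correct, and your scheme for the maximizer is exactly the paper's: (i) $\partial_x f\ge0$ on the half fundamental domain, then (ii) $\partial_y f\le0$ on $x=\tfrac12$, $y\ge\tfrac{\sqrt3}{2}$. The gap is that none of the monotonicity facts the theorem rests on is actually established.
\begin{itemize}
\item You check (i) only for the $k=m=1$ Fourier mode. Incidentally, there the products $\theta_x(\beta)\theta(\alpha)$ and $\theta(\beta)\theta_x(\alpha)$ carry the same prefactor $y/\sqrt{\alpha\beta}$, so the sign comes from the exponents alone, not from a larger prefactor on $\theta(\beta)$.
\item You defer the rest to ``tail estimates'' and leave open their uniformity over the unbounded region I.
\item For the non-existence of a minimizer, the cusp expansion only gives $f>\sqrt{\alpha/\beta}$ for large $y$. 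The global bound needs monotonicity in $y$ along $x=0$ and along the arc $|z|=1$, which you never formulate.
\end{itemize}
That deferred material is the whole analytic content of the theorem.

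The missing idea, after the further reduction to $\beta>\alpha\ge1$, is to normalize by $\sqrt t$. The $x$-derivative of your integrand $\partial_t\log\theta(t,z)$ then splits by the product rule as
\[
\partial_t\partial_x\log\theta(t,z)=\frac{\partial_t\partial_x\big(\sqrt t\,\theta\big)}{\sqrt t\,\theta}-\frac{\partial_x\big(\sqrt t\,\theta\big)\,\partial_t\big(\sqrt t\,\theta\big)}{\big(\sqrt t\,\theta\big)^2}.
\]
For $t\ge1$ on the fundamental domain, each ingredient has a known sign:
\begin{itemize}
\item $\partial_x\partial_t(\sqrt t\,\theta)\ge0$ (Lemma \ref{Lemma23}, from the authors' earlier work);
\item $\partial_x\theta\le0$ (Montgomery, Lemma \ref{Lemma22});
\item $\partial_t(\sqrt t\,\theta)\ge0$, i.e.\ $\sqrt\beta\,\theta(\beta,z)\ge\sqrt\alpha\,\theta(\alpha,z)$ (Lemma \ref{Lemma25}). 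This is proved by a check on $x=0$ and on the arc, propagated in the $x$-direction.
\end{itemize}
The paper uses exactly this, in finite-difference form, as the splitting $\mathcal{B}_a+\mathcal{B}_b$ in Proposition \ref{Prop1A}. The $y$-statements on $x=\tfrac12$ and on $x=0$ (Propositions \ref{Prop1B} and \ref{Prop1C}) follow the same way. They use $\partial_y\partial_t(\sqrt t\,\theta)\le0$ on those lines (Lemma \ref{LemmaPPP}) and $\partial_y\theta\ge0$ (Lemma \ref{Lemma212}).

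Everything therefore reduces to one-parameter sign statements in $t\ge1$, so no estimate uniform over region I is needed. Finally, invariance under $z\mapsto1/\bar z$ turns $\partial_x f\ge0$ into monotonicity along the arc (Lemma \ref{LemmaA}). Together with Proposition \ref{Prop1C} and your cusp expansion, this shows the infimum is the limit along $x=0$ as $y\to\infty$ and is never attained.
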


Theorem \ref{Th1} reveals that the hexagonal lattice plays a key role in the ratio of theta functions. We state these results in pure mathematical forms to facilitate generalization and various physical applications.
By Theorem \ref{Th1}, we have the following:
\begin{corollary}[Ratio of theta functions with different powers]\label{Coro1} Assume that $\beta>\alpha\geq1$. Then, up to the action by the modular group, it holds that

\begin{equation}\aligned\nonumber
\argmax_{z\in\mathbb{H}}\frac{\theta(\beta,z)}{\theta^k(\alpha,z)}=\begin{cases}
e^{i\frac{\pi}{3}},\;\;\;\;\;\;&\hbox{if}\;\;k\in[1,\infty);\\
\text{does not exist},\;\;\;\;\;\;&\hbox{if}\;\;k\in(0,1).
\end{cases}
\endaligned\end{equation}

\end{corollary}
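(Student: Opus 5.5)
We write the ratio as a product of two factors,
\begin{equation}\nonumber
\frac{\theta(\beta,z)}{\theta^k(\alpha,z)}=\frac{\theta(\beta,z)}{\theta(\alpha,z)}\cdot\frac{1}{\theta^{k-1}(\alpha,z)},
\end{equation}
and treat the cases $k\ge1$ and $k\in(0,1)$ separately.

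\emph{Case $k\ge1$.} Since $\beta>\alpha\ge1$, we have $\beta\alpha>1$, so $(\alpha,\beta)\in\uppercase\expandafter{\romannumeral1}$. Theorem \ref{Th1}(a) then says that the first factor attains its maximum exactly at $e^{i\frac{\pi}{3}}$, up to the modular group. By Theorem B, $\theta(\alpha,z)$ is minimized exactly at $e^{i\frac{\pi}{3}}$. Because $k-1\ge0$, the second factor $\theta(\alpha,z)^{-(k-1)}$ is therefore maximized at $e^{i\frac{\pi}{3}}$; when $k=1$ it is identically $1$. Both factors are positive and both are maximized at the same point, so their product is maximized there. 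Uniqueness up to modular action follows from uniqueness in Theorem \ref{Th1}(a).

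\emph{Case $k\in(0,1)$.} Here we use the cusp asymptotics. By the Poisson summation (modular) formula, $\theta(\alpha,iy)\sim\sqrt{y/\alpha}$ as $y\to+\infty$, since only the $m=0$ row survives and it is summed over $n$. Hence
\begin{equation}\nonumber
\frac{\theta(\beta,iy)}{\theta^k(\alpha,iy)}\sim \beta^{-1/2}\alpha^{k/2}\,y^{\frac{1-k}{2}}\to+\infty,
\end{equation}
so the function is unbounded above on the fundamental domain and has no maximizer.

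The main point to check is the cusp asymptotic of $\theta(\alpha,iy)$. Writing
\begin{equation}\nonumber
\theta(\alpha,iy)=\sum_{m}\sum_{n}e^{-\pi\alpha(m^2y+n^2/y)},
\end{equation}
the inner sum over $n$ is $\sqrt{y/\alpha}\,(1+o(1))$ by Poisson summation, and the terms with $m\neq0$ are exponentially small as $y\to+\infty$. The case $k\ge1$ needs nothing beyond combining Theorem \ref{Th1}(a) with Theorem B. The hypothesis $\alpha\ge1$ is used only to place $(\alpha,\beta)$ in region $\uppercase\expandafter{\romannumeral1}$.
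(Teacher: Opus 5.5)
Your proposal is correct and follows the paper's proof essentially verbatim. For $k\ge1$ you use the same factorization $\frac{\theta(\beta,z)}{\theta(\alpha,z)}\cdot\theta^{-(k-1)}(\alpha,z)$ together with Theorem \ref{Th1} and Montgomery's Theorem B, and for $k\in(0,1)$ you use the same cusp asymptotic $\theta(\alpha,iy)\sim\sqrt{y/\alpha}$, which gives growth $y^{(1-k)/2}\to+\infty$.
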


\begin{figure}
\centering
\includegraphics[scale=0.685]{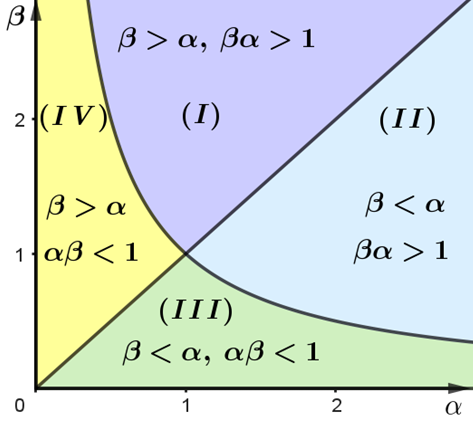}
 \caption{$(\alpha,\beta)$ plane for extreme of $\frac{\theta(\beta,z)}{\theta(\alpha,z)}$}
\label{LJFFF1}
\end{figure}

Theorem \ref{Th1} can be further generalized as follows:
\begin{theorem}[Ratio of sum of theta functions]\label{Coro2} Let $\min_{1\leq j\leq N}\beta_j\geq\max_{1\leq j\leq N}\alpha_j\geq1$ and any $a_j, b_j\geq0$, where $i,j=1\cdots N$ and $N\geq2$ is arbitrary. Then, up to the action by the modular group,
 \begin{equation}\aligned\nonumber
\argmax_{z\in\mathbb{H}}\frac{\sum_{j=1}^Nb_j\theta(\beta_j,z)}{\sum_{j=1}^Na_j\theta(\alpha_j,z)}=e^{i\frac{\pi}{3}}.\;\;
\endaligned\end{equation}
\end{theorem}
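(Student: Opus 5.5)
Throughout I write $F_j(z):=\theta(\beta_j,z)$ and $G_j(z):=\theta(\alpha_j,z)$, and I assume not all $a_j$ and not all $b_j$ vanish, so that the ratio is defined. The goal is to show that the ratio attains its maximum at $z_0:=e^{i\frac{\pi}{3}}$ (and only there, modulo the modular group). Since all weights are nonnegative, I would first reduce the statement to a pairwise inequality by cross-multiplication. We need
\[
\sum_{j}b_j\theta(\beta_j,z)\cdot\sum_k a_k\theta(\alpha_k,z_0)\le \sum_j b_j\theta(\beta_j,z_0)\cdot\sum_k a_k\theta(\alpha_k,z)
\]
for all $z$. This follows by summing over $(j,k)$, with weights $a_kb_j\ge0$, the pairwise inequalities
\[
\theta(\beta_j,z)\,\theta(\alpha_k,z_0)\le\theta(\beta_j,z_0)\,\theta(\alpha_k,z).
\]
The pairwise inequality says exactly that $\theta(\beta_j,\cdot)/\theta(\alpha_k,\cdot)$ is maximized at $z_0$. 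Strictness for $z\notin$ the orbit of $z_0$ comes from any pair with $a_kb_j>0$ for which the pairwise inequality is strict.

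So the theorem reduces to: for $\beta\ge\alpha\ge1$, $\argmax_z \theta(\beta,z)/\theta(\alpha,z)=z_0$.

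\textbf{Case $\beta>\alpha\ge1$.} Then $\beta\alpha>1$, so $(\alpha,\beta)\in$ region I, and Theorem~\ref{Th1}(a) gives the maximizer $e^{i\frac{\pi}{3}}$ directly.

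\textbf{Case $\beta=\alpha$.} The ratio is identically $1$, so the pairwise inequality holds with equality. Such pairs contribute nothing to strictness.

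The remaining concern is degenerate configurations in which every pair with $a_kb_j>0$ has $\beta_j=\alpha_k$. Since $\min\beta_j\ge\max\alpha_k$, this forces all active $\beta_j$ and $\alpha_k$ to equal a common value $\gamma$. The ratio is then the constant $\sum b_j/\sum a_k$. In this case $e^{i\frac{\pi}{3}}$ is still a maximizer, but not uniquely, and I would note this trivial exception, since the statement implicitly excludes it. In every other configuration at least one pair has $\beta_j>\alpha_k$, and strict maximality at $z_0$ follows from Theorem~\ref{Th1}.

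The main obstacle is therefore entirely inside Theorem~\ref{Th1}(a) for region I, which I assume as already established. If I had to prove that myself, I would use two ingredients:
\begin{itemize}
\item the Jacobi transform $\theta(\alpha,z)=\alpha^{-1}\theta(1/\alpha,z)$, which reduces region I to the comparison between scales, together with the monotonicity of $\partial_y\log\theta(\alpha,x+iy)$ in $\alpha$;
\item the Luo--Wei reduction of the fundamental domain to the arc $|z|=1$ and the line $x=\tfrac12$, which would show that $\log\theta(\beta,\cdot)-\log\theta(\alpha,\cdot)$ decreases away from $z_0$.
\end{itemize}
For the present theorem, however, only the cross-multiplication argument is new.
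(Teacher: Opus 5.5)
Your proof is correct, but it takes a different (and shorter) route than the paper.

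The paper does not invoke the conclusion of Theorem~\ref{Th1}; it works at the level of derivatives. It writes $\partial\frac{\sum_j b_j\theta(\beta_j,z)}{\sum_i a_i\theta(\alpha_i,z)}=\sum_{i,j}c_{ij}\,\partial\frac{\theta(\beta_j,z)}{\theta(\alpha_i,z)}$ with $c_{ij}=a_ib_j\theta^2(\alpha_i,z)/(\sum_k a_k\theta(\alpha_k,z))^2\geq0$. This passes the sign conditions of Propositions~\ref{Prop1A} and~\ref{Prop1B} to the combined ratio (Propositions~\ref{Prop2.3} and~\ref{Prop2.4}), after which Steps one and two of the proof of Theorem~\ref{Th12} are repeated.

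Your cross-multiplication is the global counterpart of the same bilinear expansion in $a_ib_j$: a ratio of nonnegative combinations is maximized at any point that maximizes all the pairwise ratios simultaneously. What your route buys:
\begin{itemize}
\item it needs only the statement of Theorem~\ref{Th1}(a) (equivalently Theorem~\ref{Th12}(1)), not its internal monotonicity propositions;
\item it disposes of the boundary pairs $\beta_j=\alpha_k$ trivially (Proposition~\ref{Prop1A} is stated only for $\beta>\alpha$);
\item it transfers strictness from any single pair with $a_kb_j>0$ and $\beta_j>\alpha_k$.
\end{itemize}
You are also right that the non-strict hypothesis $\min_j\beta_j\geq\max_j\alpha_j$ admits the degenerate constant case. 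There $e^{i\frac{\pi}{3}}$ is a maximizer but not the unique one, a point the paper passes over.

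The paper's route, in exchange, yields more structure. The combined ratio itself is nondecreasing in $x$ on $\mathcal{D}_{\mathcal{G}}$ and nonincreasing in $y$ along $\Re(z)=\frac{1}{2}$, $\Im(z)\geq\frac{\sqrt3}{2}$; a pure comparison of values does not give this. Your closing sketch of how Theorem~\ref{Th1} might be proved is not needed for the argument and can be dropped.
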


Our second main concerning case (B) in {\bf Problem B} with a power $k$. It is stated as follows:

\begin{theorem}[Ratio of Epstein zeta and theta functions]\label{Th2} Assume that $s>1$ and $\alpha\geq3s$. Then, up to the action by the modular group, it holds that:

\begin{equation}\aligned\nonumber
\argmin_{z\in\mathbb{H}}\frac{\zeta(s,z)}{\theta^k(\alpha,z)}=\begin{cases}
e^{i\frac{\pi}{3}},\;\;\;\;\;\;&\hbox{if}\;\;k\in(0, 2s];\\
\text{does not exist},\;\;\;\;\;\;&\hbox{if}\;\;k\in(2s,\infty).
\end{cases}
\endaligned\end{equation}

\end{theorem}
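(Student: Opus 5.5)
Throughout, write $z=x+iy$ and take $z$ in the fundamental domain $\mathcal{D}=\{|z|\ge1,\ 0\le x\le\frac12\}$. The functional $\zeta(s,z)/\theta^k(\alpha,z)$ is modular invariant, so it suffices to work there.

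\emph{Case $k>2s$.} This part is asymptotic. As $y\to\infty$ we have $\zeta(s,iy)\sim 2\zeta(2s)y^{s}$ and $\theta(\alpha,iy)\sim y^{1/2}/\sqrt{\alpha}$. Hence the ratio behaves like $C\,y^{s-k/2}\to0$. The ratio is strictly positive on $\mathbb{H}$ and its infimum is $0$, so no minimizer exists.

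\emph{Case $k\in(0,2s]$.} The plan is to write
\[
\frac{\zeta(s,z)}{\theta^k(\alpha,z)}=\frac{\zeta(s,z)}{\theta^{2s}(\alpha,z)}\cdot\theta^{2s-k}(\alpha,z).
\]
By Theorem B the factor $\theta^{2s-k}(\alpha,z)$ is minimized at $e^{i\pi/3}$ for every exponent $2s-k\ge0$. So it suffices to show that $F(z):=\zeta(s,z)/\theta^{2s}(\alpha,z)$ is minimized at $e^{i\pi/3}$ when $\alpha\ge3s$.

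To handle $F$, I would use the Mellin representation
\[
\zeta(s,z)=\frac{\pi^s}{\Gamma(s)}\int_0^\infty(\theta(t,z)-1)t^{s-1}\,dt.
\]
This reduces $F$ to weighted ratios $(\theta(t,z)-1)/\theta^{2s}(\alpha,z)$. Split the integral at $t=1$ and use $\theta(1/t,z)=t\,\theta(t,z)$ to fold it onto $t\ge1$. This gives
\[
\zeta(s,z)=\frac{\pi^s}{\Gamma(s)}\Bigl(\int_1^\infty(\theta(t,z)-1)(t^{s-1}+t^{-s})\,dt-\tfrac{1}{s}-\tfrac{1}{1-s}\Bigr).
\]
The function $\log F$ is then minimized at $e^{i\pi/3}$ provided its derivatives along the two boundary directions have the correct sign. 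Concretely, I would aim to show:
\begin{itemize}
\item[(i)] $\partial_x\log F\le0$ on $\mathcal{D}$, so the minimum is pushed to $x=\frac12$;
\item[(ii)] along the arc $x=\frac12$, $y\ge\frac{\sqrt3}{2}$, $\partial_y\log F\ge0$.
\end{itemize}

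For (i), the condition reads $\partial_x\zeta/\zeta\le 2s\,\partial_x\theta(\alpha)/\theta(\alpha)$. Both $\partial_x\zeta$ and $\partial_x\theta(\alpha)$ are negative on the interior of $\mathcal{D}$; this is the standard Montgomery-type fact. Expand $\theta(t,z)=\sqrt{y/t}\sum_n e^{-\pi n^2y/t}\vartheta$-type series, and keep the dominant first term $\propto e^{-2\pi y t}\cos(2\pi x)$ for large $t$, or its dual form. Then $\partial_x\log\theta(t,\cdot)$ behaves like $-4\pi^2\sqrt{t}\,\ldots e^{-\pi t y}$ type quantities. The task becomes comparing $\partial_x\log\theta(t)$, averaged against the Mellin weight normalized by $\zeta$, with $2s\,\partial_x\log\theta(\alpha)$.

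The hypothesis $\alpha\ge3s$ should enter exactly here. The Mellin weight $(t^{s-1}+t^{-s})(\theta(t)-1)$ concentrates at $t$ of size comparable to $s$. The $x$-derivative of $\log\theta(t)$ grows in magnitude with $t$ up to the scale where $\theta(t)-1$ is dominated by the shortest vectors. Having $\alpha\ge3s$, together with the factor $2s$, should give the required dominance, using explicit exponential tail bounds on $\mathcal{D}$ (where $y\ge\frac{\sqrt3}{2}$). Step (ii) is similar, using the $y$-monotonicity of theta functions along $x=\frac12$ established in \cite{LW2023}-type arguments.

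I expect step (i) to be the main obstacle. It requires quantitative inequalities between logarithmic derivatives of $\zeta$ and of $\theta(\alpha)$, uniform over all of $\mathcal{D}$ and all $s>1$. I would first treat large $y$ by keeping the leading exponential terms with error bounds, and then cover the compact remaining region $y\in[\frac{\sqrt3}{2},y_0]$ by cruder estimates. The threshold $3s$ is presumably chosen so that these crude estimates close.
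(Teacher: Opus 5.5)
Your argument for $k>2s$ is correct, and reducing $k\in(0,2s]$ to $k=2s$ via Theorem B is legitimate. The paper does the equivalent: its derivative bounds carry the factor $2s$, and it uses $(\theta(\alpha,e^{i\pi/3})/\theta(\alpha,z))^k\ge(\theta(\alpha,e^{i\pi/3})/\theta(\alpha,z))^{2s}$.

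The plan breaks at step (ii), which is false for $k=2s$ and every $s>1$. Up to exponentially small terms, $\zeta(s,\tfrac12+iy)=2\xi(2s)y^s+C_sy^{1-s}$ with $C_s=2\xi(2s-1)\Gamma(\tfrac12)\Gamma(s-\tfrac12)/\Gamma(s)>0$, and $\theta(\alpha,\tfrac12+iy)=\sqrt{y/\alpha}$. Hence $F(\tfrac12+iy)=2\xi(2s)\alpha^s+C_s\alpha^sy^{1-2s}+\cdots$ decreases strictly to the cusp value $2\xi(2s)\alpha^s$, so $\partial_y\log F<0$ for all large $y$. No global monotonicity argument can work.

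What is needed is monotonicity on a bounded window plus a direct comparison $F(z)>F(e^{i\pi/3})$ for $y\ge c$. This is the paper's refined minimum principle (Proposition~\ref{Prop32} with $(a,b,c)=(\tfrac43,\tfrac{48}{73},2)$). That principle never requires $\partial_yF\ge0$ near $e^{i\pi/3}$, where both partial derivatives vanish. There the $y$-monotonicity of $\theta$ you invoke actually works against $\theta^{-2s}$. It uses instead $x$-monotonicity on the rectangle $[0,\tfrac12]\times[\tfrac{48}{73},2]$, which reaches below the unit arc and so outside your $\mathcal D$, combined with the map $z\mapsto1/(1-z)$.

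More seriously, the cusp comparison, and in fact the statement itself, fails for $s$ close to $1$. The $z$-independent term $\frac{\pi^s}{\Gamma(s)}(\frac{1}{s-1}-\frac1s)$ in your folded Mellin formula blows up as $s\to1^+$; equivalently, by Kronecker's limit formula, $\zeta(s,z)=\frac{\pi}{s-1}+O(1)$ locally uniformly. Write $F_k:=\zeta(s,\cdot)/\theta^k(\alpha,\cdot)$ with, say, $\alpha=3s$. Then $F_k(z)/F_k(e^{i\pi/3})\to(\theta(3,e^{i\pi/3})/\theta(3,z))^k<1$ for every fixed $z$ not equivalent to $e^{i\pi/3}$. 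So by Theorem B the hexagonal point tends to maximize the ratio, not minimize it.

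Concretely, for $s=1.01$, $\alpha=3.03$:
\begin{itemize}
\item $\zeta(s,e^{i\pi/3})\approx316.7$ and $\zeta(s,10i)\approx339.6$;
\item $\theta(\alpha,e^{i\pi/3})\approx1.0001$ and $\theta(\alpha,10i)\approx1.817$.
\end{itemize}
Hence $F_k(10i)<F_k(e^{i\pi/3})$ for every $k\in[0.12,2.02]$. For $k=2s$ the cusp value $2\xi(2s)\alpha^s\approx10$ lies far below $F(e^{i\pi/3})\approx317$. Your step (i) also fails here, since $\partial_x\log\zeta\to0$ while $-2s\,\partial_x\log\theta(\alpha,z)>0$ in the interior. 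The statement as written is therefore false near $s=1$, and no argument can succeed without a stronger hypothesis, for instance keeping $s$ away from $1$.

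The paper's proof breaks at exactly these points:
\begin{itemize}
\item Lemma~\ref{Lemma4.20} fails because $\mathcal A_s(\tfrac43)\to1$ as $s\to1^+$. Even the paper's own table gives $1-\mathcal A_1(\tfrac43)\approx0.11<4\pi e^{-\pi}\approx0.54$.
\item Lemma~\ref{Lemma4.13} fails because $\xi(2s-1)\to\infty$ in the denominator of its left side, while its right side stays bounded below.
\item The proof of Lemma~\ref{L31} uses $\theta(3s,iy)\le\theta(3,2i)$ for $y\ge2$. This is reversed, since $\theta(\alpha,iy)\sim\sqrt{y/\alpha}$ grows.
\end{itemize}
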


For direct applications in crystallization in lattices (B\'etermin \cite{Bet2016,Betermin2021a}), we state an corollary of Theorem \ref{Th2} in the following.
\begin{corollary}[Differences of Epstein zeta and theta functions with different powers]\label{Coro1.3}

Assume that $ s\in(1,12]$ and $\alpha\geq3s$. Then, up to the action by the modular group,
\begin{equation}\aligned\nonumber
\argmin_{z\in\mathbb{H}}\Big(\zeta(s,z)-\theta^k(\alpha,z)\Big)=\begin{cases}
e^{i\frac{\pi}{3}},\;\;\;\;\;\;&\hbox{if}\;\;k\in(0, 2s];\\
\text{does not exist},\;\;\;\;\;\;&\hbox{if}\;\;k\in(2s,\infty).
\end{cases}
\endaligned\end{equation}

\end{corollary}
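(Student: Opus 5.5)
The plan is to reduce the statement to Theorem \ref{Th2} and Theorem B by writing the difference as a product. For $k>0$ I set
\[
\zeta(s,z)-\theta^k(\alpha,z)=\theta^k(\alpha,z)\Big(\frac{\zeta(s,z)}{\theta^k(\alpha,z)}-1\Big)=:\theta^k(\alpha,z)\,\big(R_k(z)-1\big).
\]
The argument is then split into the two ranges of $k$.

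\textbf{Case $k\in(0,2s]$.}

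1. By Theorem \ref{Th2}, under the hypotheses $s>1$ and $\alpha\geq 3s$, the ratio $R_k$ attains its minimum over $\mathbb{H}$ (up to modular action) only at $e^{i\frac{\pi}{3}}$.

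2. By Theorem B, $\theta(\alpha,\cdot)$, and hence $\theta^k(\alpha,\cdot)$, is also minimized only at $e^{i\frac{\pi}{3}}$, and it is positive.

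3. Suppose we know $R_k(e^{i\frac{\pi}{3}})>1$. Then $R_k(z)-1\geq R_k(e^{i\frac{\pi}{3}})-1>0$ for every $z$, so both factors are positive. Each factor is minimized exactly at the hexagonal point, so their product is too. This gives the first case of the statement.

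4. The key step, and the place where the restriction $s\le 12$ enters, is the numerical inequality
\[
\zeta(s,e^{i\frac{\pi}{3}})>\theta^k(\alpha,e^{i\frac{\pi}{3}})\quad\text{for } s\in(1,12],\ k\le 2s,\ \alpha\ge 3s .
\]

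5. Lower bound for the zeta side. With the normalization $\Lambda=\Im(z)^{-1/2}(\mathbb{Z}\oplus z\mathbb{Z})$, the hexagonal lattice has six shortest vectors of squared length $2/\sqrt3$. Keeping only these six terms gives
\[
\zeta(s,e^{i\frac{\pi}{3}})>6\Big(\tfrac{\sqrt3}{2}\Big)^{s}\ge 6\Big(\tfrac34\Big)^{6}\approx 1.068 .
\]
The middle inequality uses $s\le 12$ and the monotonicity of $(\sqrt3/2)^s$ in $s$.

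6. Upper bound for the theta side. I expand
\[
\theta(\alpha,e^{i\frac{\pi}{3}})=1+6e^{-2\pi\alpha/\sqrt3}+\cdots .
\]
The tail is bounded by a geometric series, which yields $\theta\le 1+7e^{-2\pi\alpha/\sqrt3}$. Then
\[
\theta^k\le \exp\!\big(14s\,e^{-2\pi\cdot 3s/\sqrt3}\big),
\]
using $k\le 2s$ and $\alpha\ge 3s$. For $s>1$ the exponent is below about $14\cdot e^{-10.8}\cdot s\,e^{-10.8(s-1)}$, which is tiny. So $\theta^k<1.001<1.068$.

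I expect this numerical check to be the main obstacle. The difficulty is making the tail estimates of the theta series fully rigorous, uniformly in $s$. It is routine but must be done carefully.

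\textbf{Case $k>2s$.}

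1. I use the asymptotics already noted in the paper: $\zeta(s,iy)\sim 2\zeta(2s)\,y^{s}$ and $\theta(\alpha,iy)\sim \alpha^{-1/2}\sqrt{y}$ as $y\to+\infty$.

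2. Consequently $\theta^k(\alpha,iy)\sim \alpha^{-k/2}y^{k/2}$. Since $k/2>s$, the difference satisfies $\zeta(s,iy)-\theta^k(\alpha,iy)\to-\infty$.

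3. The infimum over $\mathbb{H}$ is therefore $-\infty$ and is not attained, so the minimizer does not exist. This completes the dichotomy.
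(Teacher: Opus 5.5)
Your proposal is correct and takes the paper's route: the factorization $\zeta-\theta^k=\theta^k(\zeta/\theta^k-1)$ together with Theorem~\ref{Th2} and Theorem B reduces the case $k\in(0,2s]$ to the value inequality $\zeta(s,e^{i\frac{\pi}{3}})\geq\theta^k(\alpha,e^{i\frac{\pi}{3}})$ of Lemma~\ref{Lemma4.28}, and your growth argument along $iy$ correctly handles $k>2s$. The only difference is how that inequality is checked: you compare the six-shortest-vector bound $\zeta(s,e^{i\frac{\pi}{3}})>6(\frac{\sqrt3}{2})^s\geq 6(\frac34)^6\approx 1.068$ with $\theta^k\leq e^{14s\,e^{-2\sqrt3\pi s}}$, which is elementary and explains the threshold $s\leq 12$, whereas the paper appeals (details omitted) to the Chowla--Selberg expansion with $\theta(\alpha,e^{i\frac{\pi}{3}})\leq\vartheta_3(2\sqrt3 s)\vartheta_3(\frac{3\sqrt3}{2}s)$ in Lemma~\ref{Lemma4.22}.
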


\vskip0.1in

The paper is organized as follows: In Section 2, we provide the proof of Theorem \ref{Th1}.
In Section 3, we state a minimum principle for modular invariant functions and collect some summation formulas for Zeta functions.
Finally, we give the proof of Theorem \ref{Th2} and Corollary \ref{Coro1.3} in Section 4.

\section{Proof of Theorem \ref{Th1} and its corollaries}
\setcounter{equation}{0}
Define the subgroup of the modular group as follows:
\begin{equation}\aligned\label{GroupG1}
\mathcal{G}: \hbox{the group generated by} \;\;\tau\mapsto -\frac{1}{\tau},\;\; \tau\mapsto \tau+1,\;\;\tau\mapsto -\overline{\tau}.
\endaligned\end{equation}
The fundamental domain associated to the group $\mathcal{G}$ is given by
\begin{equation}\aligned\nonumber
\mathcal{D}_{\mathcal{G}}:=\{
z\in\mathbb{H}: |z|>1,\; 0<\Re(z)<\frac{1}{2}
\}.
\endaligned\end{equation}
It is well known that theta and Epstein zeta functions are $\mathcal{G}$-invariant.
For convenience, we define
\begin{equation}\aligned\label{Gfff}
&\Gamma_1:=\{
z\in\mathbb{H}: \Re(z)=0,\; \Im(z)\geq1
\},\\
&\Gamma_2:=\{
z\in\mathbb{H}: z=e^{i\theta},\; \theta\in[\pi/3,\pi/2]
\},\\
&\Gamma_3:=\{
z\in\mathbb{H}: \Re(z)=1/2,\; \Im(z)\geq{\sqrt3}/{2}
\}.
\endaligned\end{equation}

First we show that, by deformation, Theorem \ref{Th1} follows from the following
\begin{theorem}\label{Th12} Assume that $\beta>\alpha\geq1$. Then
\begin{itemize}
  \item  [(1)] $
\argmax_{z\in\mathbb{H}}\frac{\theta(\beta,z)}{\theta(\alpha,z)}=e^{i\frac{\pi}{3}}$.

  \item [(2)]  $\argmin_{z\in\mathbb{H}}\frac{\theta(\beta,z)}{\theta(\alpha,z)}$ does not exist.
 \end{itemize}
\end{theorem}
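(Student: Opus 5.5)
Part (2) is an asymptotic statement, and I would dispose of it first. Along $z=iy$ with $y\to\infty$, Poisson summation in $n$ gives $\theta(\alpha,iy)=\sqrt{y/\alpha}\,(1+o(1))$. Hence
\[
\frac{\theta(\beta,iy)}{\theta(\alpha,iy)}\to\sqrt{\alpha/\beta}<1 .
\]
At any fixed $z$, however, the ratio is strictly larger than this limit. To see this, use the transformation $\theta(\alpha,z)=\alpha^{-1}\theta(1/\alpha,z)$ together with the strict monotonicity of $\alpha\mapsto \sqrt{\alpha}\,\theta(\alpha,z)$ for $\alpha\ge1$. This monotonicity reads
\[
\frac{d}{d\alpha}\big(\sqrt\alpha\,\theta(\alpha,z)\big)>0 \quad\Longleftrightarrow\quad \tfrac12\theta(\alpha,z)>\pi\alpha\sum_{\mathbb P}|\mathbb P|^2e^{-\pi\alpha|\mathbb P|^2},
\]
which I expect to follow from the heat-equation/Jacobi representation of $\theta$ as a product of one-dimensional theta factors. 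Since $\sqrt{\beta}\,\theta(\beta,z)>\sqrt{\alpha}\,\theta(\alpha,z)$, the infimum $\sqrt{\alpha/\beta}$ is approached only at the cusp and is not attained, so no minimizer exists.

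For part (1), I would work on the fundamental domain $\overline{\mathcal D_{\mathcal G}}$ and use the $\mathcal G$-invariance. I would set $f(z)=\log\theta(\beta,z)-\log\theta(\alpha,z)$ and aim to show two things.

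First, the derivative $\partial_x f$ has a sign that pushes toward $\Gamma_3$: $\partial_x f\ge0$ for $0<x<\tfrac12$. I would write $\theta(\alpha,z)$ via Poisson summation in $m$ as
\[
\theta(\alpha,z)=\sqrt{\tfrac{y}{\alpha}}\sum_{n,k} e^{-\pi\alpha n^2 y-\pi k^2 y/\alpha+2\pi i nkx}.
\]
Equivalently, by symmetry, I would use the Fourier expansion in $x$ whose coefficients are $e^{-\pi (\alpha n^2+k^2/\alpha)y}$. The quantity $\partial_x\log\theta$ is then a sum of terms $-\sin(2\pi nkx)$ weighted by these coefficients. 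The claim to prove is
\[
\frac{\partial_x\theta(\beta,z)}{\theta(\beta,z)}\ \ge\ \frac{\partial_x\theta(\alpha,z)}{\theta(\alpha,z)} .
\]
Since both sides are negative, this says that $\theta(\beta)$ is ``less sensitive'' to $x$ than $\theta(\alpha)$. I would prove it by showing that the function $\alpha\mapsto -\partial_x\log\theta(\alpha,z)$ is decreasing for $\alpha\ge1$ and $y\ge\sqrt3/2$. The leading term is $-\partial_x\log\theta\approx 4\pi\sqrt{y/\alpha}\,e^{-\pi y/\alpha}\sin(2\pi x)$-type; it involves $e^{-\pi(\alpha+1/\alpha)y}$, which decreases in $\alpha$ for $\alpha>1$. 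The tail estimates are controlled using $y\ge\sqrt3/2$. Once this holds, on each horizontal segment the maximum sits on $\Gamma_3$ (the boundary $\Gamma_2$ needs separate handling because horizontal lines exit through the arc; I would instead use the $\mathcal G$-reflection to reduce to showing the maximum over $\overline{\mathcal D_{\mathcal G}}$ lies on $\Gamma_2\cup\Gamma_3$, then on $\Gamma_3$).

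Second, on $\Gamma_3$, $z=\tfrac12+iy$ with $y\ge\sqrt3/2$, I would show that $f$ is decreasing in $y$, so the maximum is at $y=\sqrt3/2$, i.e.\ at $e^{i\pi/3}$. Using the same expansions, this reduces to comparing $\partial_y\log\theta$ at $\beta$ versus $\alpha$. Here I would use Montgomery-type one-dimensional estimates (as in the proof of Theorem B) and monotonicity in $\alpha$.

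The main obstacle is the first step, the $x$-derivative comparison uniformly near the corner $y\approx\sqrt3/2$, where the tail terms are not negligible. There I would try to factor $\theta$ as a product of Jacobi theta values, as Montgomery does, and compare logarithmic derivatives termwise using the condition $\beta>\alpha\ge1$.
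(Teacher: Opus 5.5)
Your architecture for (1) matches the paper's: $\partial_x$-monotonicity of the ratio on $\overline{\mathcal{D}_{\mathcal{G}}}$ pushes the maximum to $\Gamma_3$, and the ratio then decreases along $\Gamma_3$. Your argument for (2) is a more economical reduction than the paper's route through Propositions~\ref{Prop1A}, \ref{PropA} and \ref{Prop1C}. If $\sqrt\beta\,\theta(\beta,z)\ge\sqrt\alpha\,\theta(\alpha,z)$ holds on $\mathbb{H}$, the ratio is bounded below by its cusp limit $\sqrt{\alpha/\beta}$, and strictness follows from real-analyticity of $s\mapsto\sqrt s\,\theta(s,z)$ together with $\sqrt s\,\theta(s,z)\to\infty$.

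The gap is that every analytic input is asserted, and the routes you sketch do not work as stated.

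\emph{First gap.} The inequality above (Lemma~\ref{Lemma25}) cannot be read off a product of one-dimensional theta factors. The identity $\theta(\alpha,z)=\vartheta_3(\alpha y)\vartheta_3(\alpha/y)$ holds only on $\Re z=0$. For $x\neq0$ you have the sum $\sqrt{y/\alpha}\sum_n e^{-\pi\alpha yn^2}\vartheta(y/\alpha;nx)$, whose $\alpha$-derivative is a sum of terms of both signs that cancel exactly at $\alpha=1$. The paper obtains the inequality as follows. It first proves $\partial_x\big(\sqrt\beta\,\theta(\beta,z)-\sqrt\alpha\,\theta(\alpha,z)\big)\ge0$ from the mixed-derivative sign $\partial_x\partial_s(\sqrt s\,\theta(s,z))\ge0$ (Lemma~\ref{Lemma23}, imported from earlier work), which reduces everything to the left boundary. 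Montgomery's $\partial_y\theta\ge0$ then handles $\Gamma_1$. The arc is mapped by $z\mapsto 1/(1-z)$ onto $\{\Re z=\frac12,\ y\in[\frac12,\frac{\sqrt3}{2}]\}$, where $\partial_y\partial_s(\sqrt s\,\theta)\ge0$ (Lemma~\ref{LemmaPPP}(2)) is used.

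\emph{Second gap.} Your $x$-step target, that $\alpha\mapsto-\partial_x\log\theta(\alpha,z)$ decreases on $[1,\infty)$, is the right statement. Its finite form $\partial_x\log\theta(\beta,z)\ge\partial_x\log\theta(\alpha,z)$ is what the splitting $\mathcal{B}_a+\mathcal{B}_b$ in the proof of Proposition~\ref{Prop1A} delivers, from $\partial_x\theta\le0$, $\partial_s(\sqrt s\,\theta)\ge0$ and $\partial_x\partial_s(\sqrt s\,\theta)\ge0$. The leading-term heuristic does not prove it, for two reasons.
\begin{itemize}
\item Since $\sqrt\alpha\,\theta(\alpha,z)=\alpha^{-1/2}\theta(1/\alpha,z)$, the map $\alpha\mapsto\partial_x\log\theta(\alpha,z)$ is invariant under $\alpha\mapsto1/\alpha$, so its $\alpha$-derivative vanishes identically at $\alpha=1$. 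Near $\alpha=1$ the decay of $e^{-\pi y(\alpha+1/\alpha)}$ is only second order in $\log\alpha$. It must be compared with equally second-order contributions from the normalizing sum (the terms $2e^{-\pi\alpha y}+2e^{-\pi y/\alpha}$) and from higher modes.
\item For $\alpha\gg y$ the factors $e^{-\pi k^2y/\alpha}$ are not small. The bound $y\ge\frac{\sqrt3}{2}$ then gives no tail control, and a dual expansion is needed.
\end{itemize}

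\emph{Third gap.} The $\Gamma_3$ step is essentially absent. Montgomery's estimate only says that $\theta(\beta,\cdot)$ and $\theta(\alpha,\cdot)$ both increase in $y$; what you need is a comparison of their rates. That is the sign $\partial_y\partial_s(\sqrt s\,\theta(s,\frac12+iy))\le0$ for $y\ge\frac{\sqrt3}{2}$, $s\ge1$ (Lemma~\ref{LemmaPPP}(1)), fed into the splitting $\mathcal{H}_a+\mathcal{H}_b$ together with Lemma~\ref{Lemma25}.

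These mixed-derivative sign estimates are the real content of the theorem, and your proposal does not supply them.

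A minor point: no separate treatment of $\Gamma_2$ is needed for the maximum. Horizontal segments of $\overline{\mathcal{D}_{\mathcal{G}}}$ at heights $y\in[\frac{\sqrt3}{2},1)$ start on the arc and end on $\Gamma_3$, so $\partial_x f\ge0$ already sends the maximum to $\Gamma_3$. The arc matters only for the minimum, which your argument for (2) bypasses.
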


In fact,  in the case (a) of Theorem \ref{Th1}: $\beta>\alpha, \beta\alpha>1$, we consider  two subcases, (a1): $\beta>\alpha\geq1$ and (a2): $\beta>\frac{1}{\alpha}\geq1$.
The subcases (a1) are exactly proved in Theorem \ref{Th12}. For subcases (a2), we use the deformation
$\frac{\theta(\beta,z)}{\theta(\alpha,z)}=\alpha\frac{\theta(\beta,z)}{\theta(\frac{1}{\alpha},z)}$, then it reduces to Theorem \ref{Th12}.
The case (b): $\beta<\alpha, \beta\alpha<1$ in Theorem \ref{Th1} contains two subcases, namely, (b1): $\frac{1}{\beta}>\frac{1}{\alpha}\geq1$ and (b2): $\frac{1}{\beta}>\alpha\geq1$. In subcases (b1) and (b2), one uses the deformations $\frac{\theta(\beta,z)}{\theta(\alpha,z)}=\frac{\alpha}{\beta}\frac{\theta(\frac{1}{\beta},z)}{\theta(\frac{1}{\alpha},z)}$ and
$\frac{\theta(\beta,z)}{\theta(\alpha,z)}=\frac{1}{\beta}\frac{\theta(\frac{1}{\beta},z)}{\theta(\alpha,z)}$ respectively, then they are reduced to Theorem \ref{Th12}. The case (c): $\beta<\alpha, \beta\alpha>1$ in Theorem \ref{Th1} contains two subcases, (c1): $\alpha>\beta\geq1$
and (c2): $\alpha>\frac{1}{\beta}\geq1$. In subcases (c1) and (c2), one uses the deformations $\frac{\theta(\beta,z)}{\theta(\alpha,z)}=\frac{1}{\frac{\theta(\alpha,z)}{\theta(\beta,z)}}$ and
$\frac{\theta(\beta,z)}{\theta(\alpha,z)}=\frac{1}{\beta}\frac{1}{\frac{\theta(\alpha,z)}{\theta(\frac{1}{\beta},z)}}$ respectively, then  they are reduced to Theorem \ref{Th12}. Similar analysis applied to case (d): $\beta>\alpha, \beta\alpha<1$ in Theorem \ref{Th1}, we omit the details here.

We now prove Theorem \ref{Th12}. We first state a preliminary extreme property, which can be viewed as a consequence of Proposition \ref{Prop1A}.

\begin{proposition}\label{PropA}
Assume that $\beta>\alpha\geq1$. Then

 \begin{equation}\aligned\nonumber
\argmax_{z\in\Gamma_2}\frac{\theta(\beta,z)}{\theta(\alpha,z)}=e^{i\frac{\pi}{3}},\;
\argmin_{z\in\Gamma_2}\frac{\theta(\beta,z)}{\theta(\alpha,z)}=i.
\endaligned\end{equation}
\end{proposition}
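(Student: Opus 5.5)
Parametrize $\Gamma_2$ by $z=e^{i\phi}$, $\phi\in[\pi/3,\pi/2]$, and set
\[
f(\phi):=\log\theta(\beta,e^{i\phi})-\log\theta(\alpha,e^{i\phi}).
\]
It suffices to show that $f$ is strictly decreasing on $(\pi/3,\pi/2)$. Then the maximum on $\Gamma_2$ is attained at $\phi=\pi/3$, i.e.\ at $e^{i\pi/3}$, and the minimum at $\phi=\pi/2$, i.e.\ at $i$. Equivalently, we want the logarithmic derivative
\[
g(\gamma,\phi):=\frac{\partial_\phi\theta(\gamma,e^{i\phi})}{\theta(\gamma,e^{i\phi})}
\]
to satisfy $g(\beta,\phi)<g(\alpha,\phi)$ whenever $\beta>\alpha\ge1$. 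Note that $g(\gamma,\pi/2)=g(\gamma,\pi/3)=0$, since $i$ and $e^{i\pi/3}$ are critical points on the arc by the symmetries $z\mapsto -1/\bar z$ and $z\mapsto 1-\bar z$. Montgomery's result (Theorem B), restricted to the arc, gives $g(\gamma,\phi)\ge0$ in the sense that $\theta(\gamma,\cdot)$ increases from $e^{i\pi/3}$ to $i$. What we need is a monotonicity of $g$ in $\gamma$. This is presumably the content of Proposition \ref{Prop1A}, which we are told implies the statement, and I would build the argument around it.

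First, compute explicitly. On $|z|=1$ we have $|mz+n|^2/\Im z=(m^2+n^2+2mn\cos\phi)/\sin\phi$. Hence
\[
\partial_\phi\theta(\gamma,e^{i\phi})=-\pi\gamma\sum_{m,n}\frac{d}{d\phi}\Big(\frac{m^2+n^2+2mn\cos\phi}{\sin\phi}\Big)\,e^{-\pi\gamma Q_{mn}(\phi)}.
\]
Alternatively, one can work with the Jacobi triple-product form. Writing $\theta(\gamma,z)=\sqrt{y/\gamma}\sum_n e^{-\pi n^2 y/\gamma}\vartheta$-type expansions, the ratio $g(\gamma,\phi)$ can be written as $-\pi\gamma$ times a weighted average, under the probability weights $e^{-\pi\gamma Q}/\theta$, of the quantity $dQ/d\phi$. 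The $\gamma$-derivative of this weighted average is
\[
\partial_\gamma g=-\pi\,\mathbb{E}_\gamma[Q']+\pi^2\gamma\,\mathrm{Cov}_\gamma(Q,Q').
\]
The key step is to show that this is negative for $\gamma\ge1$ and $\phi\in(\pi/3,\pi/2)$.

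The main obstacle is precisely this sign estimate for $\partial_\gamma g$, which is Proposition \ref{Prop1A}. My plan is to use the Poisson/Jacobi representation to isolate the dominant terms. For $\gamma\ge1$ the sum is dominated by the shortest vectors $(m,n)=(1,0),(0,1),(1,-1),(1,1)$, whose contributions to $Q'$ have explicit signs. Concretely, $Q_{1,\mp1}'$ has sign opposite to $2\cos\phi$ terms, and the symmetric pairs combine into a term proportional to $\sin$-differences that is positive on the open arc. I would then bound the tail, $|m|+|n|\ge3$, by exponentially small quantities of order $e^{-\pi\gamma\cdot 2/\sin\phi}$, using $\gamma\ge1$ and $\sin\phi\ge\sqrt3/2$.

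The delicate region is near the endpoints $\phi=\pi/3$ and $\phi=\pi/2$, where $g$ vanishes. There I would factor out the vanishing factor, for instance $\cos\phi\,(\sin\phi-\text{const})$ or equivalently $(\phi-\pi/3)(\pi/2-\phi)$, after applying the symmetry. The aim is to show that the quotient stays of one sign, which reduces everything to a one-variable inequality that is checked via the leading term plus an error bound. Once $g(\beta,\phi)<g(\alpha,\phi)$ on the open arc, integrating in $\phi$ gives strict monotonicity of $f$, and the claim follows.
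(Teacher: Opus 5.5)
\textbf{There is a genuine gap: the key sign estimate is only planned, and the planned method cannot work near $\gamma=1$.}

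Your reduction is sound. On $\Gamma_2$, monotonicity of $f$ is equivalent to $g(\beta,\phi)\le g(\alpha,\phi)$ for all $\beta>\alpha\ge1$, i.e.\ to $\partial_\gamma g\le 0$ on $[1,\infty)$. But that sign estimate is the entire content of the statement, and you only sketch an attack on it.

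The sketched attack fails near $\gamma=1$. By Poisson summation, $\theta(1/\gamma,z)=\gamma\,\theta(\gamma,z)$; this is the identity behind the paper's ``deformations''. Hence $g(1/\gamma,\phi)=g(\gamma,\phi)$, and so $\partial_\gamma g(1,\phi)=0$ for \emph{every} $\phi$.

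\begin{itemize}
\item Your target ``$\partial_\gamma g<0$ for $\gamma\ge1$'' is therefore false at $\gamma=1$.
\item Near $\gamma=1$, the quantity $-\pi\mathbb{E}_\gamma[Q']+\pi^2\gamma\,\mathrm{Cov}_\gamma(Q,Q')$ is $O(\gamma-1)$ uniformly in $\phi$. It also vanishes at both endpoints of the arc, so it is a doubly degenerate two-parameter quantity, not a one-variable inequality.
\item A ``shortest vectors plus exponentially small tail'' estimate cannot certify the sign of such a quantity. The truncated expression does not vanish on $\gamma=1$, so there the neglected terms are as large as the retained ones. In fact, keeping the six shortest vectors and the normalisation $1/\theta$, the truncation is positive at $\gamma=1$ for $\phi$ slightly above $\pi/3$, which is the wrong sign.
\end{itemize}

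Two minor points: $z\mapsto-1/\bar z$ does not preserve $\mathbb{H}$, and Theorem B by itself gives no monotonicity along the arc.

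The paper does not attack $\partial_\gamma g$ directly. Proposition \ref{Prop1A} is not a $\gamma$-monotonicity statement. It says $\partial_x F\ge0$ on $\overline{\mathcal{D}_{\mathcal{G}}}$ for $F=\theta(\beta,\cdot)/\theta(\alpha,\cdot)$. Since $F$ is invariant under $z\mapsto1/\bar z$, which fixes $|z|=1$ pointwise, the radial derivative of $F$ vanishes on the arc. This gives $\partial_\phi F=-F_x/\sin\phi\le0$ there, which is the statement.

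The degeneracy at $\gamma=1$ is absorbed by working with $\sqrt{s}\,\theta(s,z)$, which is invariant under $s\mapsto1/s$. The numerator of $\partial_x\big(\sqrt\beta\theta(\beta,z)/\sqrt\alpha\theta(\alpha,z)\big)$ splits as $\sqrt\beta\,\partial_x\theta(\beta,z)\,\big(\sqrt\alpha\theta(\alpha,z)-\sqrt\beta\theta(\beta,z)\big)+\sqrt\beta\theta(\beta,z)\,\partial_x\big(\sqrt\beta\theta(\beta,z)-\sqrt\alpha\theta(\alpha,z)\big)$. Each term is signed by:
\begin{itemize}
\item Montgomery's $\partial_x\theta\le0$ (Lemma \ref{Lemma22});
\item the comparison $\sqrt\beta\theta(\beta,z)\ge\sqrt\alpha\theta(\alpha,z)$ (Lemma \ref{Lemma25});
\item $\partial_x\partial_s(\sqrt{s}\,\theta(s,z))\ge0$ for $s\ge1$ (Lemma \ref{Lemma23}, imported from earlier work).
\end{itemize}

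To complete your route you would need an input of exactly this kind: a signed estimate for a mixed $\partial_s$-derivative of $\sqrt{s}\,\theta(s,z)$ that remains valid down to $s=1$. That is the hard analytic part, and it is not a tail bound.
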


Proposition \ref{PropA} is proved by the following lemma:
\begin{lemma}\label{LemmaA}
Assume that $\beta>\alpha\geq1$. Then

 \begin{equation}\aligned\nonumber
\partial_\theta\frac{\theta(\beta,e^{i\theta})}{\theta(\alpha,e^{i\theta})}\leq0,\;\;\hbox{for}\;\;\theta\in[\pi/3,\pi/2].
\endaligned\end{equation}
\end{lemma}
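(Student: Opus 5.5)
Since $\theta(\alpha,\cdot)>0$, the inequality of Lemma \ref{LemmaA} is equivalent to
\begin{equation}\nonumber
\frac{\partial_\theta\theta(\beta,e^{i\theta})}{\theta(\beta,e^{i\theta})}\leq\frac{\partial_\theta\theta(\alpha,e^{i\theta})}{\theta(\alpha,e^{i\theta})},\qquad \theta\in[\pi/3,\pi/2].
\end{equation}
So the plan is to prove that, for fixed $\theta$ in this range, the logarithmic derivative $\alpha\mapsto \partial_\theta\log\theta(\alpha,e^{i\theta})$ is nonincreasing on $[1,\infty)$. It suffices to show $\partial_\alpha\partial_\theta\log\theta(\alpha,e^{i\theta})\le0$ for $\alpha\ge1$ and then integrate in $\alpha$ from $\alpha$ to $\beta$.

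First compute the quantities explicitly. On the arc $z=e^{i\theta}$ one has $\Im z=\sin\theta$, and
\[
\frac{|mz+n|^2}{\Im z}=\frac{m^2+n^2}{\sin\theta}+2mn\cot\theta .
\]
Hence
\[
\partial_\theta\frac{|mz+n|^2}{\Im z}=-\frac{\cos\theta}{\sin^2\theta}\,(m^2+n^2)-\frac{2mn}{\sin^2\theta}
=-\frac{(m^2+n^2)\cos\theta+2mn}{\sin^2\theta}.
\]
Write $Q(m,n)=\frac{m^2+n^2}{\sin\theta}+2mn\cot\theta$ and $R(m,n)=\frac{(m^2+n^2)\cos\theta+2mn}{\sin^2\theta}$. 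Then $\partial_\theta\theta(\alpha,e^{i\theta})=\pi\alpha\sum R\,e^{-\pi\alpha Q}$. Define the Gibbs-type average $\langle f\rangle_\alpha=\sum f e^{-\pi\alpha Q}/\sum e^{-\pi\alpha Q}$. With this notation,
\[
\partial_\theta\log\theta=\pi\alpha\langle R\rangle_\alpha,\qquad
\partial_\alpha\big(\alpha\langle R\rangle_\alpha\big)=\langle R\rangle_\alpha-\pi\alpha\big(\langle RQ\rangle_\alpha-\langle R\rangle_\alpha\langle Q\rangle_\alpha\big).
\]
The target inequality is therefore
\[
\langle R\rangle_\alpha\le \pi\alpha\,\mathrm{Cov}_\alpha(R,Q),\qquad \alpha\ge 1,\ \theta\in[\pi/3,\pi/2].
\]

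Next, exploit the symmetry to reduce everything to one-dimensional theta sums. At $\theta=\pi/2$ both sides vanish by the symmetry $(m,n)\mapsto(m,-n)$, which is consistent with $i$ being a critical point. In general, I would rewrite the two-dimensional sums via Poisson summation in $n$ as the standard expansion
\[
\theta(\alpha,z)=\sqrt{\tfrac{y}{\alpha}}\sum_{m}e^{-\pi y m^2/\alpha}\vartheta_3\big(mx,\ \cdot\big),
\]
or equivalently use the product/Jacobi representations. In these expressions the $\theta$-derivative is dominated by the $m=\pm1$ terms, each carrying a factor $\sin(2\pi x)$ type with $x=\cos\theta\in[0,1/2]$, hence of fixed sign, with $y=\sin\theta\ge\sqrt3/2$. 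The derivative then becomes $-(\text{positive factor})\times(\text{main term}+\text{error})$, and the $\alpha$-monotonicity of the main term is explicit: it is essentially $\partial_\alpha\big[\alpha^{-1}e^{-\pi y/\alpha}\cdots\big]$, compared against the normalization $\theta(\alpha)$, whose logarithmic derivative in $\alpha$ is controlled.

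The main obstacle is a uniform tail estimate valid for all $\alpha\ge1$, including $\alpha\to\infty$ and $\alpha$ near $1$. For large $\alpha$ I would use the direct lattice sum, where the $(m,n)=(1,0),(0,1),(1,-1)$ shells dominate and the covariance is of order $\alpha e^{-\pi\alpha Q_{\min}}$, which beats $\langle R\rangle$ because $Q$ separates the shells. Near $\alpha=1$ I would use the Poisson-dual form, which makes the one-mode approximation sharp. In both regimes, the remaining terms are bounded by geometric series using $y\ge\sqrt3/2$ and $\alpha\ge1$. If a single split point, say $\alpha=2$, does not close the estimates, I would subdivide further and check the endpoint constants numerically, which is the delicate step.
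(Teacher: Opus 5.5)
Your reformulation is correct: the lemma is equivalent to $\alpha\mapsto\partial_\theta\log\theta(\alpha,e^{i\theta})$ being nonincreasing on $[1,\infty)$, i.e.\ to $\langle R\rangle_\alpha\le\pi\alpha\,\mathrm{Cov}_\alpha(R,Q)$. Everything after that, however, is a heuristic plan and not an argument. No tail bound is stated, the regime split is unspecified, and checking endpoint constants numerically cannot establish an inequality for all $\alpha\ge1$ and all $\theta\in[\pi/3,\pi/2]$.

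The heuristic also breaks down exactly where the difficulty lies. Both sides of your target vanish identically in $\alpha$ at $\theta=\pi/2$, and also at $\theta=\pi/3$, because $e^{i\pi/3}$ is a critical point of every $\mathcal{G}$-invariant function. So any main-term-plus-error argument must be made relative to a factor that vanishes at both endpoints.

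Near $\theta=\pi/3$ the shells $(1,0),(0,1),(1,-1)$ all have $Q=2/\sqrt3$. Hence $Q$ does \emph{not} separate them, and $\langle R\rangle_\alpha$ is a near-cancellation between $4\cos\theta$ and $4(1-\cos\theta)$. Whether the covariance beats $\langle R\rangle_\alpha$ there is therefore a first-order question in $\theta-\pi/3$, which you have not addressed.

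The Poisson form has the same problem. Along the arc, $\partial_\theta=-\sin\theta\,\partial_x+\cos\theta\,\partial_y$, and the $\partial_y$ part carries no $\sin(2\pi x)$ factor termwise. For example, the zero mode $\vartheta_3(\alpha/\sin\theta)$ contributes $-\frac{\alpha\cos\theta}{\sin^2\theta}\vartheta_3'(\alpha/\sin\theta)$, which is nonzero at $\theta=\pi/3$. (Your expansion also has a slip: the prefactor should be $e^{-\pi\alpha y m^2}$, not $e^{-\pi y m^2/\alpha}$.)

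The missing idea is the invariance $\theta(\alpha,z)=\theta(\alpha,1/\bar z)$, which sends $re^{i\theta}\mapsto r^{-1}e^{i\theta}$. It makes the radial derivative vanish on $|z|=1$: $\cos\theta\,\partial_xf+\sin\theta\,\partial_yf=0$. Hence $\partial_\theta f=-\frac{1}{\sin\theta}\partial_xf$ on the arc, and the lemma reduces to the transversal monotonicity $\partial_xf\ge0$ of Proposition \ref{Prop1A}. In that form the vanishing at $x=0$ and $x=\tfrac12$ is automatic.

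In your own language, with $g(s,z)=\sqrt s\,\theta(s,z)$, your target becomes $\partial_s\partial_x\log g\ge0$. Now $\partial_s\partial_x\log g=g_{xs}/g-g_xg_s/g^2$, and both terms are nonnegative:
\begin{itemize}
\item $g_{xs}\ge0$ by Lemma \ref{Lemma23};
\item $g_x\le0$ by Montgomery's Lemma \ref{Lemma22};
\item $g_s\ge0$, which is the content of Lemma \ref{Lemma25}.
\end{itemize}
The paper runs exactly this argument in finite-difference form as $\mathcal{B}_a+\mathcal{B}_b$, with no numerics. The $\sqrt s$ normalization is essential: $\theta(s,z)$ itself decreases in $s$, which would give the second term the wrong sign. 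This monotone structure is what your covariance formulation does not expose.
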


\begin{proof}
Fix $\beta>\alpha\geq1$. By Proposition \ref{Prop1A},
 \begin{equation}\aligned\label{2Q1}
\partial_x\frac{\theta(\beta,e^{i\theta})}{\theta(\alpha,e^{i\theta})}\geq0,\;\;\hbox{for}\;\;\theta\in[\pi/3,\pi/2].
\endaligned\end{equation}
Since $\frac{\theta(\beta,z)}{\theta(\alpha,z)}$ is $\mathcal{G}$-invariant, then
 $\frac{\theta(\beta,re^{i\theta})}{\theta(\alpha,re^{i\theta})}=\frac{\theta(\beta,1/re^{i\theta})}{\theta(\alpha,1/re^{i\theta})}$ for $r>0$.
After taking derivative with respect to $r$ and evaluating at $r=1$, we get
 \begin{equation}\aligned\label{2Q2}
\partial_x\frac{\theta(\beta,e^{i\theta})}{\theta(\alpha,e^{i\theta})}
\cos\theta+\partial_y\frac{\theta(\beta,e^{i\theta})}{\theta(\alpha,e^{i\theta})}
\sin\theta=0.
\endaligned\end{equation}
Thus, \eqref{2Q1} and \eqref{2Q2} yield that
 \begin{equation}\aligned\label{2Q3}
\partial_y\frac{\theta(\beta,e^{i\theta})}{\theta(\alpha,e^{i\theta})}\leq0,\;\;\hbox{for}\;\;\theta\in[\pi/3,\pi/2].
\endaligned\end{equation}
On the other hand, by taking derivative with respect to $\theta$, we have
 \begin{equation}\aligned\label{2Q4}
\partial_\theta\frac{\theta(\beta,e^{i\theta})}{\theta(\alpha,e^{i\theta})}
=\partial_x\frac{\theta(\beta,e^{i\theta})}{\theta(\alpha,e^{i\theta})}(-\sin\theta)
+\partial_y\frac{\theta(\beta,e^{i\theta})}{\theta(\alpha,e^{i\theta})}\cos\theta.
\endaligned\end{equation}
In view of \eqref{2Q4}, \eqref{2Q1} and \eqref{2Q3} yield the result.

\end{proof}

We outline the proof of Theorem \ref{Th12} into two main steps.

In Step one, we show that the maximizer can be reduced to the vertical line $\Gamma$.
We shall prove that
 \begin{equation}\aligned
\max_{z\in\mathbb{H}}\frac{\theta(\beta,z)}{\theta(\alpha,z)}
=\max_{z\in\overline{\mathcal{D}_{\mathcal{G}}}}\frac{\theta(\beta,z)}{\theta(\alpha,z)}
=\max_{z\in\Gamma_3}\frac{\theta(\beta,z)}{\theta(\alpha,z)}\;\;\hbox{for}\;\;\beta>\alpha\geq1.
\endaligned\end{equation}
This is a consequence of Proposition \ref{Prop1A}. Propositions \ref{Prop1A} and \ref {PropA} imply that
assuming the existence of the minimizers, we have
\begin{equation}\aligned
\min_{z\in\mathbb{H}}\frac{\theta(\beta,z)}{\theta(\alpha,z)}
=\min_{z\in\overline{\mathcal{D}_{\mathcal{G}}}}\frac{\theta(\beta,z)}{\theta(\alpha,z)}
=\min_{z\in\Gamma_1\cup\Gamma_2}\frac{\theta(\beta,z)}{\theta(\alpha,z)}
=\min_{z\in\Gamma_1}\frac{\theta(\beta,z)}{\theta(\alpha,z)}
\;\;\hbox{for}\;\;\beta>\alpha\geq1.
\endaligned\end{equation}

In Step two, we show that the maximizer is located on $z=\frac{1}{2}+i\frac{\sqrt3}{2}=e^{i\frac{\pi}{3}}$.
We shall prove that
 \begin{equation}\aligned
\max_{z\in\Gamma_3}\frac{\theta(\beta,z)}{\theta(\alpha,z)}\;\;\hbox{is achieved at}\;\;\frac{1}{2}+i\frac{\sqrt3}{2}\;\;\hbox{for}\;\;\beta>\alpha\geq1.
\endaligned\end{equation}

This follows from Proposition \ref{Prop1B}. By Proposition \ref{Prop1C} we have
\begin{equation}\aligned
\min_{z\in\Gamma_1}\frac{\theta(\beta,z)}{\theta(\alpha,z)}\;\;\hbox{does not exist for}\;\;\beta>\alpha\geq1.
\endaligned\end{equation}

Combining Steps one and two, we complete the proof of Theorem \ref{Th12}.

In the remaining part, we prove these propositions  used in proof of Theorem \ref{Th12}.

\subsection{Transversal monotonicity}

In this subsection, we aim to prove a transversal monotonicity on ratio of theta functions.
It is stated as follows
\begin{proposition}\label{Prop1A} Assume that $\beta>\alpha\geq1$. Then
\begin{equation}\aligned\nonumber
\frac{\partial}{\partial x}\frac{\theta(\beta,z)}{\theta(\alpha,z)}\geq0\;\;\hbox{for}\;\;z\in\overline{\mathcal{D}_{\mathcal{G}}}.
\endaligned\end{equation}

\end{proposition}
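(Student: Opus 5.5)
The plan is to write the ratio's $x$-derivative through a Fourier expansion of $\theta$ in $x=\Re(z)$ and then show that the sign is driven by the first Fourier mode. Apply Poisson summation in $n$ to $\theta(\alpha,z)=\sum_{m,n}e^{-\pi\alpha|mz+n|^2/y}$. This gives
\begin{equation}\nonumber
\theta(\alpha,z)=\sqrt{\tfrac{y}{\alpha}}\sum_{m,k\in\mathbb{Z}}e^{-\pi\alpha m^2 y}e^{-\pi k^2 y/\alpha}\cos(2\pi mkx),
\end{equation}
so that
\begin{equation}\nonumber
\partial_x\theta(\alpha,z)=-\sqrt{\tfrac{y}{\alpha}}\sum_{m,k}2\pi mk\,e^{-\pi y(\alpha m^2+k^2/\alpha)}\sin(2\pi mkx).
\end{equation}
The sign of $\partial_x(\theta(\beta,z)/\theta(\alpha,z))$ is the sign of
\begin{equation}\nonumber
\mathcal{N}:=\theta(\alpha,z)\,\partial_x\theta(\beta,z)-\theta(\beta,z)\,\partial_x\theta(\alpha,z).
\end{equation}
Since $\partial_x\theta(\alpha,z)\le 0$ on $\overline{\mathcal{D}_{\mathcal{G}}}$ (the $x$-monotonicity behind Montgomery's Theorem B), it will be convenient to write
\begin{equation}\nonumber
\mathcal{N}=\theta(\alpha,z)\theta(\beta,z)\Big(\frac{-\partial_x\theta(\alpha,z)}{\theta(\alpha,z)}-\frac{-\partial_x\theta(\beta,z)}{\theta(\beta,z)}\Big).
\end{equation}
The task is then to show that the logarithmic $x$-derivative $L(\alpha):=-\partial_x\log\theta(\alpha,z)$ is nonincreasing in $\alpha$ for $\alpha\ge1$ and $z\in\overline{\mathcal{D}_{\mathcal{G}}}$.

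First isolate the dominant terms. In $\overline{\mathcal{D}_{\mathcal{G}}}$ we have $y\ge\sqrt3/2$ and $x\in[0,1/2]$, and $\alpha\ge1$. Hence $\partial_x\theta(\alpha,z)$ is governed by the $(m,k)=(\pm1,\pm1)$ terms,
\begin{equation}\nonumber
-\partial_x\theta(\alpha,z)\approx 8\pi\sqrt{\tfrac{y}{\alpha}}\,e^{-\pi y(\alpha+1/\alpha)}\sin(2\pi x)\,(1+\text{error}),
\end{equation}
where the relative error is controlled by $e^{-\pi y/\alpha}$ and $e^{-3\pi\alpha y}$ times explicit $\sin(2\pi jx)/\sin(2\pi x)$ factors. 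Those factors are bounded by $j$ via $|\sin(jt)|\le j|\sin t|$. Likewise $\theta(\alpha,z)=\sqrt{y/\alpha}\,\bigl(\vartheta_3(y/\alpha)+O(e^{-\pi\alpha y})\bigr)$, where $\vartheta_3(t):=\sum_k e^{-\pi k^2t}$.

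At leading order,
\begin{equation}\nonumber
L(\alpha)\approx \frac{8\pi e^{-\pi y(\alpha+1/\alpha)}\sin(2\pi x)}{\vartheta_3(y/\alpha)}.
\end{equation}
I would then verify monotonicity in $\alpha$ by computing $\partial_\alpha\log$ of this expression:
\begin{equation}\nonumber
-\pi y\Big(1-\frac{1}{\alpha^2}\Big)+\frac{y}{\alpha^2}\frac{\vartheta_3'(y/\alpha)}{\vartheta_3(y/\alpha)}.
\end{equation}
The first term is $\le0$ for $\alpha\ge1$, and the second is negative because $\vartheta_3$ is decreasing. So the leading model is strictly decreasing in $\alpha$. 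Note that $\alpha\ge1$ is exactly what makes the first term nonpositive. Because $\sin(2\pi x)\ge0$ cancels as a common factor, the inequality is uniform in $x$.

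The main obstacle is controlling the tails rigorously and uniformly. This is hardest near $\alpha=1$, where the margin in the first term vanishes and the second term alone must dominate the errors. It is also delicate near $y=\sqrt3/2$, where the $(1,2)$ and $(2,1)$ modes are least suppressed.

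For the tails, my first approach is to write $L(\alpha)$ exactly as a ratio of series and differentiate in $\alpha$ directly. Rather than compare only $\alpha$ and $\beta$, I would show $\partial_\alpha L(\alpha)\le0$. This reduces to a Cauchy–Schwarz/covariance-type inequality: $\partial_\alpha$ acts on each term by the weight $-\pi y(m^2-k^2/\alpha^2)$, and the $\sin$-weighted sum concentrates on $|m|=|k|=1$, while $\theta$ concentrates on $m=0$. Explicit geometric bounds for the remaining terms (using $y\ge\sqrt3/2$) then close the estimate. If needed, the region would be split into $y$ large, where crude bounds suffice, and a compact range $y\in[\sqrt3/2,Y_0]$, handled by sharper truncated estimates.

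Finally, integrating $\partial_\alpha L\le0$ from $\alpha$ to $\beta$ gives $\mathcal{N}\ge0$, which is the claimed inequality.
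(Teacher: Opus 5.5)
Your reformulation is sound. Since $\partial_x\frac{\theta(\beta,z)}{\theta(\alpha,z)}=\frac{\theta(\beta,z)}{\theta(\alpha,z)}\bigl(L(\alpha)-L(\beta)\bigr)$, the proposition is equivalent to $L$ being nonincreasing on $[1,\infty)$. The gap is in the leading-order model that is supposed to supply the margin.

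Write $\theta(\alpha,z)=\sqrt{y/\alpha}\,D(\alpha)$ with $D(\alpha)=\sum_{m,k}e^{-\pi y(\alpha m^2+k^2/\alpha)}\cos(2\pi mkx)$. Swapping $m\leftrightarrow k$ gives $D(1/\alpha)=D(\alpha)$, which is the duality $\theta(\alpha,z)=\alpha^{-1}\theta(1/\alpha,z)$. Hence $L(\alpha)=L(1/\alpha)$, and $\partial_\alpha L=0$ \emph{exactly} at $\alpha=1$.

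Your model keeps the row $\vartheta_3(y/\alpha)$ of $D$ but discards the column $2\sum_{m\ge1}e^{-\pi\alpha y m^2}$ as an $O(e^{-\pi\alpha y})$ error. It therefore predicts $\partial_\alpha\log L|_{\alpha=1}=y\vartheta_3'(y)/\vartheta_3(y)\approx-2\pi y e^{-\pi y}<0$, which is false. The discarded column contributes about $+2\pi y e^{-\pi\alpha y}$ to $\partial_\alpha\log L$. When $\alpha\approx1$ this is the same size as your second term $\approx-\frac{2\pi y}{\alpha^2}e^{-\pi y/\alpha}$, and at $\alpha=1$ it cancels it.

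So your step ``near $\alpha=1$ the second term alone must dominate the errors'' cannot be carried out, because that margin does not exist. The true margin is the first term $-\pi y(1-1/\alpha^2)$. You must show that every other contribution to $\partial_\alpha\log L$, from $D$ and from the higher numerator modes, is bounded by a small multiple of $1-1/\alpha^2$. This requires pairing the $(m,k)$ and $(k,m)$ terms so that each pair visibly vanishes at $\alpha=1$. To leading order the $D$-contribution at $\alpha=1+\epsilon$ is $-4\pi y\epsilon e^{-\pi y}(\pi y-1)$, which has the right sign since $y\ge\sqrt3/2>1/\pi$. So the route is salvageable, but only with this symmetry built in from the start.

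A second, independent gap is uniformity in $\alpha$. The relative error of your $(\pm1,\pm1)$ truncation of the numerator is of size $\sum_{k\ge2}k^2e^{-\pi(k^2-1)y/\alpha}$, which is small only when $y/\alpha$ is bounded below. The proposition requires all $\alpha\ge1$, with $y$ as small as $\sqrt3/2$. When $\alpha\gg y$ the $(\pm1,\pm1)$ modes do not dominate at all. The whole sum $\sum_k k\,e^{-\pi k^2 y/\alpha}\sin(2\pi kx)$ must then be resummed by Poisson summation, as in the separate regimes of Lemmas \ref{LemmaT1} and \ref{LemmaT2}. Your case split in $y$ alone never reaches this regime. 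Beyond these two points, all tail estimates are deferred, so the proposal is still a heuristic.

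For comparison, the paper avoids any mode-by-mode analysis. It uses the symmetric normalization $\sqrt s\,\theta(s,z)=\sqrt y\,D(s)$ and splits the numerator of $\partial_x\frac{\sqrt\beta\theta(\beta,z)}{\sqrt\alpha\theta(\alpha,z)}$ as $\mathcal{B}_a+\mathcal{B}_b$. The term $\mathcal{B}_a\ge0$ follows from Montgomery's $\partial_x\theta\le0$ (Lemma \ref{Lemma22}) together with $\sqrt\beta\theta(\beta,z)\ge\sqrt\alpha\theta(\alpha,z)$ (Lemma \ref{Lemma25}). The term $\mathcal{B}_b\ge0$ follows from the imported estimate $\partial_x\partial_s(\sqrt s\,\theta(s,z))\ge0$ for $s\ge1$ (Lemma \ref{Lemma23}).
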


The proof of Proposition \ref{Prop1A} will be given at the end of this subsection. Before that we shall prove some preliminary lemmas first.

In terms of one dimensional theta function, one has an alternative expression of theta functions.
\begin{lemma}\label{Lemma21} Assume that $z\in \mathbb{H}$ and $\alpha>0$. Then
\begin{equation}\aligned\label{CM3}
\sum_{n\in\mathbb{Z}} e^{-\pi\alpha yn^2}\vartheta(\frac{y}{\alpha};nx)=
\sqrt{\frac{\alpha}{y}}\cdot\theta(\alpha,z).
\endaligned\end{equation}
Here the classical one-dimensional theta function  is given by
\begin{equation}\aligned\label{TXY}
\vartheta(X;Y):=\sum_{n\in\mathbb{Z}} e^{-\pi n^2 X} e^{2n\pi i Y},\;\hbox{where}\; X>0,\; Y\in\R.
 \endaligned\end{equation}
\end{lemma}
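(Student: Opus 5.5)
The identity should follow from expanding the definition of $\theta(\alpha,z)$ with $z=x+iy$ and applying Poisson summation in one of the two lattice indices. Write
\[
\frac{|mz+n|^2}{y}=\frac{(mx+n)^2+m^2y^2}{y}=my^{\,}m+\frac{(n+mx)^2}{y}\cdot 1,
\]
that is, $|mz+n|^2/y = y m^2 + (n+mx)^2/y$. This gives
\[
\theta(\alpha,z)=\sum_{m\in\mathbb{Z}} e^{-\pi\alpha y m^2}\sum_{n\in\mathbb{Z}} e^{-\pi\frac{\alpha}{y}(n+mx)^2}.
\]
Both series converge absolutely, so the order of summation can be chosen freely.

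The next step is to apply the one-dimensional Poisson summation formula to the inner sum in $n$. For $a>0$ and $t\in\mathbb{R}$ it reads
\[
\sum_{n\in\mathbb{Z}} e^{-\pi a (n+t)^2}=\frac{1}{\sqrt a}\sum_{k\in\mathbb{Z}} e^{-\pi k^2/a}\,e^{2\pi i k t}.
\]
This follows from $\widehat{e^{-\pi a u^2}}(\xi)=a^{-1/2}e^{-\pi\xi^2/a}$ together with the translation rule for the Fourier transform. We take $a=\alpha/y$ and $t=mx$. The inner sum then becomes
\[
\sqrt{\tfrac{y}{\alpha}}\sum_{k} e^{-\pi k^2 y/\alpha}e^{2\pi i k m x}=\sqrt{\tfrac{y}{\alpha}}\,\vartheta\!\left(\tfrac{y}{\alpha};mx\right),
\]
which is exactly the definition \eqref{TXY} with $X=y/\alpha$ and $Y=mx$.

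Substituting this back, renaming $m$ as $n$, and multiplying through by $\sqrt{\alpha/y}$ gives \eqref{CM3}. There is no real obstacle here. The only points needing care are the absolute convergence that justifies Poisson summation (a Gaussian is a Schwartz function) and interchanging sums, and getting the normalization $\sqrt{y/\alpha}$ right. It is also worth checking that the variable playing the role of $X$ is $y/\alpha$ and not $\alpha/y$, as the statement requires.
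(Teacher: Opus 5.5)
Your proof is correct and matches the argument the paper relies on. The paper states this lemma without a written proof, and the only ingredient needed is one-variable Poisson summation in the index $n$, with $X=y/\alpha$ and $Y=mx$ as you have it; this is the same identity $\vartheta(X;Y)=X^{-1/2}\sum_{n}e^{-\pi(n-Y)^2/X}$ that the paper later invokes in the proof of Lemma \ref{Lemma4.19}. Your first display ($my^{\,}m+\dots$) is garbled, but the next line correctly restates it as $|mz+n|^2/y=ym^2+(n+mx)^2/y$.
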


Recall that
\begin{lemma}[Montgomery's first lemma \cite{Mon1988}]\label{Lemma22} Assume that $\alpha\geq1$. Then

\begin{equation}\aligned\nonumber
\frac{\partial}{\partial x}\theta(\alpha,z)\leq0\;\;\hbox{for}\;\;z\in\mathcal{D}_{\mathcal{G}}.
\endaligned\end{equation}
Or equivalently,
\begin{equation}\aligned\nonumber
\frac{\partial}{\partial x}
\sum_{n\in\mathbb{Z}} e^{-\pi\alpha yn^2}\vartheta(\frac{y}{\alpha};nx)\leq0\;\;\hbox{for}\;\;z\in\mathcal{D}_{\mathcal{G}}.
\endaligned\end{equation}

\end{lemma}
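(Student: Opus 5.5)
The plan follows Montgomery's original argument, working from the representation in Lemma \ref{Lemma21}. Since $\sqrt{y/\alpha}$ does not depend on $x$, Lemma \ref{Lemma21} gives
\begin{equation*}
\frac{\partial}{\partial x}\theta(\alpha,z)=\sqrt{\frac{y}{\alpha}}\sum_{n\in\mathbb{Z}} n\,e^{-\pi\alpha y n^2}\,\vartheta_Y\Big(\frac{y}{\alpha};nx\Big)
=2\sqrt{\frac{y}{\alpha}}\sum_{n\geq1} n\,e^{-\pi\alpha y n^2}\,\vartheta_Y\Big(\frac{y}{\alpha};nx\Big),
\end{equation*}
where the second equality uses that $\vartheta_Y(X;\cdot)$ is odd. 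Write $X=y/\alpha$. On $\mathcal{D}_{\mathcal{G}}$ we have $x\in(0,\frac12)$, so $\sin(2\pi x)>0$, and $y\geq\frac{\sqrt3}{2}$. The goal is to show that the $n=1$ term is strictly negative and dominates the sum of all terms with $n\geq2$, with each side carrying an explicit factor $\sin(2\pi x)$.

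\emph{Step 1 (the leading term).} I would use the Jacobi triple product with $q=e^{-\pi X}$:
\begin{equation*}
\vartheta(X;Y)=\prod_{m\geq1}(1-q^{2m})\big(1+2q^{2m-1}\cos(2\pi Y)+q^{4m-2}\big).
\end{equation*}
Logarithmic differentiation gives
\begin{equation*}
\vartheta_Y(X;Y)=-4\pi\sin(2\pi Y)\,\vartheta(X;Y)\sum_{m\geq1}\frac{q^{2m-1}}{1+2q^{2m-1}\cos(2\pi Y)+q^{4m-2}}.
\end{equation*}
Hence $\vartheta_Y(X;x)\leq -c(X)\sin(2\pi x)$ with an explicit $c(X)>0$. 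For example, keeping only $m=1$ and bounding the denominator by $(1+q)^2$ gives $c(X)=4\pi\,\vartheta(X;x)\,q/(1+q)^2$. To make this uniform in $x$, I would bound $\vartheta(X;x)$ below by the product formula with $\cos(2\pi x)\ge -1$.

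\emph{Step 2 (the tail $n\geq2$).} Use the Fourier form $\vartheta_Y(X;Y)=-4\pi\sum_{k\geq1}k\,e^{-\pi k^2X}\sin(2\pi kY)$ together with $|\sin(2\pi kn x)|\leq kn\,|\sin(2\pi x)|$. This gives
\begin{equation*}
|\vartheta_Y(X;nx)|\leq 4\pi n\sin(2\pi x)\sum_{k\ge1}k^2e^{-\pi k^2X},
\end{equation*}
and therefore
\begin{equation*}
\sum_{n\ge2}n\,e^{-\pi\alpha yn^2}\,|\vartheta_Y(X;nx)|\le 4\pi\sin(2\pi x)\,S(X)\sum_{n\ge2}n^2e^{-\pi\alpha y n^2},\qquad S(X):=\sum_{k\ge1}k^2e^{-\pi k^2X}.
\end{equation*}
After factoring out $e^{-\pi\alpha y}\sin(2\pi x)$, it remains to prove
\begin{equation*}
S(X)\sum_{n\ge2}n^2e^{-\pi\alpha y(n^2-1)}<\frac{c(X)}{4\pi}.
\end{equation*}

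\emph{Main obstacle.} The difficulty is uniformity in the regime where $X=y/\alpha$ is small, i.e.\ $\alpha$ large relative to $y$. There $S(X)\sim X^{-3/2}$ and $c(X)$ also degenerates, so the one-variable comparison is not automatic. My remedy is to exploit the exponential gain $e^{-3\pi\alpha y}\le e^{-3\sqrt3\pi\alpha/2}$ in the tail factor, which holds because $\alpha\ge1$ and $y\ge\frac{\sqrt3}{2}$. Against this, the losses are only polynomial in $\alpha/y$. To control them I would apply Jacobi's transformation $\vartheta(X;Y)=X^{-1/2}\sum_k e^{-\pi(k+Y)^2/X}$ to bound $\vartheta(X;x)$ and $S(X)$ when $X<1$.

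I therefore expect to split into two cases. When $X\geq1$, the series in $q$ converge fast and the inequality becomes a direct numerical check. When $X<1$, the transformed bounds give an inequality of the form $\mathrm{poly}(\alpha/y)\,e^{-3\pi\alpha y}\ll(\text{lower bound for }c(X))$. The most delicate part is the lower bound for $\vartheta(X;x)$ when $x$ is near $\frac12$ and $X$ is small, since $\vartheta(X;\frac12)$ is then exponentially small. In that case I would compare the tail against $\vartheta(X;x)$ directly rather than through the product formula. Finally, equality $\partial_x\theta=0$ can occur only on the boundary $x\in\{0,\frac12\}$, which is consistent with the non-strict inequality claimed on $\mathcal{D}_{\mathcal{G}}$.
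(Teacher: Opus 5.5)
Your plan is correct and follows the standard Montgomery argument that the paper cites (the paper gives no proof of its own): write $\partial_x\theta=2\sqrt{y/\alpha}\sum_{n\ge1}ne^{-\pi\alpha yn^2}\vartheta_Y(\frac{y}{\alpha};nx)$, bound the $n=1$ term below by a multiple of $\sin(2\pi x)$, and control the tail using $|\sin(2\pi knx)|\le kn\sin(2\pi x)$. One correction to your obstacle discussion: the loss from $\vartheta(X;x)\ge\vartheta(X;\frac12)\ge 2X^{-1/2}e^{-\pi/(4X)}$ is exponential, not polynomial, but it is still dominated: since $\alpha y=y^2/X\ge\frac{3}{4X}$, the tail carries a factor $e^{-9\pi/(4X)}$, so this crude uniform bound already settles the small-$X$ case and no separate comparison near $x=\frac12$ is needed.
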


In our previous work \cite{Luo2023a}, we have established that
\begin{lemma}[Corollary of Theorem 3.4 in \cite{Luo2023a}]\label{Lemma23}
Assume that $s\geq1$. Then
\begin{equation}\aligned\nonumber
\frac{\partial}{\partial x}\frac{\partial}{\partial s}(\sqrt{s}\theta(s,z))\geq0\;\;\hbox{for}\;\;z\in\mathcal{D}_{\mathcal{G}}.
\endaligned\end{equation}

\end{lemma}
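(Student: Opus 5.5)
The plan is to prove the inequality directly from the Fourier expansion of Lemma \ref{Lemma21}, rather than only citing \cite{Luo2023a}. Applying Poisson summation to the one-dimensional theta function $\vartheta(\frac{y}{s};nx)$ gives
\begin{equation}\nonumber
\sqrt{s}\,\theta(s,z)=\sqrt{y}\sum_{n,k\in\mathbb{Z}} e^{-\pi y\left(sn^2+\frac{k^2}{s}\right)}\cos(2\pi knx).
\end{equation}
Here $x$ and $s$ enter through separate factors, so the mixed derivative is explicit:
\begin{equation}\nonumber
\frac{\partial}{\partial x}\frac{\partial}{\partial s}\big(\sqrt{s}\,\theta(s,z)\big)
=2\pi^2 y^{3/2}\sum_{n,k\in\mathbb{Z}} kn\Big(n^2-\frac{k^2}{s^2}\Big)\sin(2\pi knx)\, e^{-\pi y\left(sn^2+\frac{k^2}{s}\right)}.
\end{equation}
The summand is even under $(n,k)\mapsto(-n,-k)$ and under each sign flip separately. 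So it suffices to sum over $n,k\geq1$ and show that the resulting series is nonnegative for $0<x<\frac12$ and $y>\frac{\sqrt3}{2}$ (which holds on $\mathcal{D}_{\mathcal{G}}$), for every $s\geq1$.

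\textbf{Step 1 (pairing).} I would group the term $(n,k)$ with its transpose $(k,n)$, for $n>k\geq1$; the diagonal terms $n=k$ have coefficient $n^2(1-s^{-2})\geq0$. Write $E_1=e^{-\pi y(sn^2+k^2/s)}$ and $E_2=e^{-\pi y(sk^2+n^2/s)}$. The difference of the exponents is $(n^2-k^2)(s-\frac1s)\geq0$, so $E_1\leq E_2$. Each pair contributes
\begin{equation}\nonumber
kn\,\sin(2\pi knx)\Big[\Big(n^2-\frac{k^2}{s^2}\Big)E_1+\Big(k^2-\frac{n^2}{s^2}\Big)E_2\Big].
\end{equation}
At $s=1$ everything cancels exactly, so the whole expression vanishes there. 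This suggests factoring out $(s-1)$ or $(1-s^{-2})$, e.g.\ by writing the bracket as an integral in $s$ from $1$. The result is a kernel whose sign is transparent for the leading pairs.

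\textbf{Step 2 (dominant term versus tail).} I would factor out $\sin(2\pi x)>0$ and use $|\sin(2\pi Nx)|\leq N\sin(2\pi x)$ for $x\in(0,\frac12)$. The leading term $(n,k)=(1,1)$ gives $(1-s^{-2})\sin(2\pi x)e^{-\pi y(s+1/s)}$. All remaining pairs are then bounded by this term times an explicit series of the form $\sum kn\,N\,|\cdot|\,e^{-\pi y(\cdots)+\pi y(s+1/s)}$. Each such ratio carries at least a factor $e^{-3\pi y\min(s,1/s)}$ or better. With $y\geq\frac{\sqrt3}{2}$ this gives a small numerical constant, so the tail is dominated.

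\textbf{Main obstacle.} The delicate range is large $s$. There $k^2/s$ is cheap, so terms with $n=1$ and $k$ up to about $\sqrt{s}$ are not exponentially small relative to $(1,1)$. Their coefficients $n^2-k^2/s^2$ stay positive only while $k<s$, and positivity of $\sin(2\pi kx)$ is not guaranteed.

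For this regime I would first try returning to Lemma \ref{Lemma21} in the other variable. I would treat the $n=0,\pm1$ rows of $\sum_n e^{-\pi s yn^2}\vartheta(\frac ys;nx)$ exactly, using the positivity and monotonicity of the Jacobi product for $\vartheta$ in the spirit of Montgomery's lemma (Lemma \ref{Lemma22}). The rows $|n|\geq2$ are then negligible, being of size $e^{-4\pi sy}$. I would split at a threshold such as $s=2$: the pairing bound of Steps 1--2 handles $1\leq s\leq2$, and the product-formula argument handles $s\geq2$. This matches how Theorem 3.4 of \cite{Luo2023a} is presumably organized.
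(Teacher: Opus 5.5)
The paper gives no argument for this lemma; it imports it as a consequence of Theorem 3.4 in \cite{Luo2023a}. Your set-up is correct:
\begin{itemize}
\item the double series for $\sqrt{s}\,\theta(s,z)$ and its mixed derivative;
\item the reduction to $n,k\geq 1$;
\item the exact vanishing at $s=1$, which is just $\phi(s)=\phi(1/s)$ for $\phi(s)=\sqrt{s}\,\theta(s,z)$, so that $\partial_s\phi|_{s=1}\equiv 0$.
\end{itemize}
The range $1\leq s\leq 2$ is also believable, but only if you compare the \emph{paired} brackets $B_{n,k}(s)$ with the $(1,1)$ term after extracting the factor $s-1$. An individual term such as $(n,k)=(2,1)$ is not $O(s-1)$, so its ratio to $(1-s^{-2})\sin(2\pi x)e^{-\pi y(s+1/s)}$ is unbounded as $s\to 1^+$. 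Step 2 therefore has to be written for the pairs, e.g.\ via $|B_{n,k}(s)|\leq (s-1)\max_{[1,s]}|B_{n,k}'|$ near $s=1$.

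The genuine gap is the regime $s\geq 2$. This is where the lemma has its content, and you leave it as an intention. After discarding the $x$-independent row $n=0$, the rows $n=\pm1$ require
\[
-\pi y\,\vartheta_Y\big(\tfrac{y}{s};x\big)-\frac{y}{s^{2}}\,\vartheta_{XY}\big(\tfrac{y}{s};x\big)>0,\qquad 0<x<\tfrac12,
\]
uniformly in $X=y/s\in(0,\infty)$.

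Montgomery-type positivity and monotonicity of the Jacobi product (Lemma \ref{Lemma22}) controls only the sign of $\vartheta_Y$. The $s$-derivative brings in $\vartheta_{XY}$, which is positive on part of $(0,\frac12)$:
\begin{itemize}
\item on all of it when $X$ is large, where $\vartheta_{XY}\approx 4\pi^2e^{-\pi X}\sin 2\pi x$;
\item near $x=0$ when $X$ is small, where $\vartheta_{XY}\approx 2\pi x X^{-5/2}\big(\frac32-\frac{\pi x^2}{X}\big)e^{-\pi x^2/X}$.
\end{itemize}
So the second term genuinely competes with the first, with relative size about $s^{-2}$ in the first case and $\frac{3}{2\pi s y}$ in the second.

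Proving the inequality therefore needs a quantitative two-regime argument, plus control of the crossover between the regimes.
\begin{itemize}
\item For large $X$, work on the Fourier side.
\item For small $X$, work on the Poisson side. There each row equals, up to a positive factor, $\sum_m (x-m)(\pi s A_m-\frac32)e^{-\pi s A_m}$ with $A_m=y+(m-x)^2/y$. The $m=0$ and $m=1$ terms cancel exactly at $x=\frac12$, so you must show the sum keeps its sign as $x\to\frac12$.
\end{itemize}

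Likewise, the claim that the rows $|n|\geq 2$ are negligible because they have size $e^{-4\pi s y}$ is not yet a comparison. Both sides vanish at $x=0$ and $x=\frac12$. Moreover, for small $X$ the $n=1$ row carries, relative to $e^{-\pi s y}$, an extra factor as small as $e^{-\pi s/(4y)}$. You need a relative bound uniform in $x$; it works because $3\pi y>\pi/(4y)$ for $y\geq\frac{\sqrt3}{2}$, but it must be proved.

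As it stands, your proposal reduces the lemma to exactly the estimate the paper takes from \cite{Luo2023a}, without supplying it.
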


Using Lemma \ref{Lemma23} and fundamental theorem of calculus, one has
\begin{equation}\aligned\nonumber
\sqrt{\beta}\theta(\beta,z)-\sqrt{\alpha}\theta(\alpha,z)=
\int_\alpha^\beta \frac{\partial}{\partial s}(\sqrt{s}\theta(s,z))ds.
\endaligned\end{equation}
Then
\begin{equation}\aligned\label{F123}
\frac{\partial}{\partial x}\Big(\sqrt{\beta}\theta(\beta,z)-\sqrt{\alpha}\theta(\alpha,z)\Big)=
\int_\alpha^\beta \frac{\partial}{\partial x}\frac{\partial}{\partial s}(\sqrt{s}\theta(s,z))ds.
\endaligned\end{equation}

Therefore, by Lemma \ref{Lemma23} and \eqref{F123}, it holds that

\begin{lemma}\label{Lemma24} Assume that $\beta>\alpha\geq1$. Then
\begin{equation}\aligned\label{nonumber}
\frac{\partial}{\partial x}\Big(
\sqrt{\beta}\theta(\beta,z)-\sqrt{\alpha}\theta(\alpha,z)\Big)\geq0
\;\;\hbox{for}\;\;z\in\mathcal{D}_{\mathcal{G}}.
\endaligned\end{equation}
Or equivalently,
\begin{equation}\aligned\label{CM2}
\frac{\partial}{\partial x}\Big(\sum_{n\in\mathbb{Z}} e^{-\pi\beta yn^2}\vartheta(\frac{y}{\beta};nx)-
\sum_{n\in\mathbb{Z}} e^{-\pi\alpha yn^2}\vartheta(\frac{y}{\alpha};nx)\Big)\geq0\;\;\hbox{for}\;\;z\in\mathcal{D}_{\mathcal{G}}.
\endaligned\end{equation}
\end{lemma}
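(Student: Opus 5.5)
The statement to prove is Lemma \ref{Lemma24}. It follows directly from Lemma \ref{Lemma23} by integrating in the parameter $s$ and differentiating under the integral sign. I would carry this out in three steps.

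\textbf{Step 1 (smoothness).} For fixed $z=x+iy\in\mathcal{D}_{\mathcal{G}}$, the function $s\mapsto\sqrt{s}\,\theta(s,z)$ is smooth on $[\alpha,\beta]\subset[1,\infty)$. By the fundamental theorem of calculus,
\[
\sqrt{\beta}\theta(\beta,z)-\sqrt{\alpha}\theta(\alpha,z)=\int_\alpha^\beta \frac{\partial}{\partial s}\big(\sqrt{s}\,\theta(s,z)\big)\,ds .
\]

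\textbf{Step 2 (differentiation under the integral).} To differentiate in $x$ under the integral sign, I need the mixed partial $\partial_x\partial_s(\sqrt{s}\,\theta(s,z))$ to be jointly continuous. It must also be dominated uniformly for $s\in[\alpha,\beta]$ and for $z$ in a compact neighbourhood of the given point.

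I would obtain this from the series $\theta(s,z)=\sum_{(m,n)}e^{-\pi s|mz+n|^2/y}$. Each term and its derivatives in $s$ and $x$ are bounded by a polynomial in $(m,n)$ times $e^{-c(m^2+n^2)}$, where $c>0$ is uniform on the compact set, since $|mz+n|^2/y$ is a positive definite quadratic form in $(m,n)$ that depends continuously on $z$. The series of derivatives therefore converges uniformly, which gives \eqref{F123}.

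\textbf{Step 3 (conclusion).} Lemma \ref{Lemma23} makes the integrand in \eqref{F123} nonnegative on $\mathcal{D}_{\mathcal{G}}$ for every $s\geq1$, hence for every $s\in[\alpha,\beta]$. Since $\beta>\alpha$, the integral is nonnegative, which proves the first inequality.

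For the equivalent form \eqref{CM2}, I would apply Lemma \ref{Lemma21} with parameter $\beta$ and with parameter $\alpha$. This gives
\[
\sqrt{\beta}\,\theta(\beta,z)=\sqrt{y}\sum_{n\in\mathbb{Z}} e^{-\pi\beta yn^2}\vartheta\Big(\frac{y}{\beta};nx\Big),
\]
and similarly for $\alpha$. The difference $\sqrt{\beta}\theta(\beta,z)-\sqrt{\alpha}\theta(\alpha,z)$ is therefore $\sqrt{y}$ times the bracket in \eqref{CM2}. Because $\sqrt{y}>0$ does not depend on $x$, the signs of the two $x$-derivatives agree.

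The only point needing care is the justification of the interchange in Step 2, and it is routine because of the Gaussian decay of the series.
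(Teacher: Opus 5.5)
Your proposal is correct and matches the paper's argument: you write $\sqrt{\beta}\theta(\beta,z)-\sqrt{\alpha}\theta(\alpha,z)=\int_\alpha^\beta\partial_s\big(\sqrt{s}\,\theta(s,z)\big)\,ds$, differentiate in $x$ under the integral, and apply Lemma \ref{Lemma23} on $[\alpha,\beta]\subset[1,\infty)$. The justification of the interchange via Gaussian decay, and the observation (from Lemma \ref{Lemma21}) that the two forms differ only by the $x$-independent factor $\sqrt{y}$, are details the paper leaves implicit, and you handle both correctly.
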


We shall also prove that

\begin{lemma}\label{Lemma25} Assume that $\beta>\alpha\geq1$. Then

\begin{equation}\aligned\nonumber
\sqrt\beta\theta(\beta,z)\geq\sqrt\alpha\theta(\alpha,z)\;\;\hbox{for}\;\;z\in\mathcal{D}_{\mathcal{G}}.
\endaligned\end{equation}
Or equivalently,
\begin{equation}\aligned\label{CM}
\sum_{n\in\mathbb{Z}} e^{-\pi\beta yn^2}\vartheta(\frac{y}{\beta};nx)\geq
\sum_{n\in\mathbb{Z}} e^{-\pi\alpha yn^2}\vartheta(\frac{y}{\alpha};nx)\;\;\hbox{for}\;\;z\in\mathcal{D}_{\mathcal{G}}.
\endaligned\end{equation}

\end{lemma}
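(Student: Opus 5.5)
Since Lemma \ref{Lemma23} controls only the $x$-derivative of $\partial_s(\sqrt{s}\theta(s,z))$, it does not by itself give the sign of $\sqrt\beta\theta(\beta,z)-\sqrt\alpha\theta(\alpha,z)$. The plan is to prove the stronger pointwise statement
\begin{equation}\nonumber
\frac{\partial}{\partial s}\big(\sqrt{s}\,\theta(s,z)\big)\geq0\quad\hbox{for } s\geq1,\; z\in\mathcal{D}_{\mathcal{G}},
\end{equation}
and then integrate from $\alpha$ to $\beta$, exactly as in \eqref{F123}. By \eqref{CM3} this is equivalent to the monotonicity of $s\mapsto F(s):=\sum_{n}e^{-\pi s y n^2}\vartheta(\frac{y}{s};nx)$ in $s$, which is \eqref{CM}.

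To establish the monotonicity, I would use Poisson summation in the $m$ variable. This gives
\begin{equation}\nonumber
\sqrt{s}\,\theta(s,z)=\sqrt{y}\sum_{n,k\in\mathbb{Z}}e^{-\pi s y n^2}e^{-\pi k^2 y/s}e^{2\pi i k n x}.
\end{equation}
Separating the terms with $nk=0$ gives
\begin{equation}\nonumber
\sqrt{y}\Big(\vartheta(sy;0)+\vartheta(\tfrac{y}{s};0)-1\Big)+\sqrt{y}\sum_{n,k\neq0}e^{-\pi y(sn^2+k^2/s)}\cos(2\pi knx).
\end{equation}
The variable $s$ now enters only through $sn^2+k^2/s$. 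Its derivative in $s$ is $n^2-k^2/s^2$.

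For $s\geq1$ I would treat the two parts separately.

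\begin{itemize}
\item \emph{The $nk=0$ part.} The derivative is $\pi y\sum_{k}k^2 e^{-\pi k^2y/s}/s^2-\pi y\sum_n n^2 e^{-\pi s y n^2}$. This is nonnegative for $s\geq1$, because $e^{-\pi k^2 y/s}\geq e^{-\pi k^2 sy}$ and $1/s^2$ is compensated by that gap. To make this rigorous I would compare term by term, using $y\geq\sqrt3/2$, and use the $k=\pm1$ terms to dominate.
\item \emph{The $nk\neq0$ part.} Its derivative is bounded in absolute value by $\pi y\sum_{n,k\neq0}|n^2-k^2/s^2|e^{-\pi y(sn^2+k^2/s)}$.
\end{itemize}

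The main obstacle is showing that this cross-term error is dominated by the leading positive term. The leading term is $2\pi y e^{-\pi y/s}/s^2-2\pi y e^{-\pi sy}$. Near $s=1$ both sides vanish, since the leading difference is $O(s-1)$. The cross terms also carry factors $n^2-k^2/s^2$ that vanish at $s=1$ when $|n|=|k|$. So I would compare the $s$-derivatives near $s=1$ and, for larger $s$, exploit the extra decay $e^{-\pi y(sn^2+k^2/s)}\leq e^{-\pi y s}e^{-\pi y/s}$. The cleanest route is probably to observe that at $s=1$ the function $\sqrt{s}\,\theta(s,z)$ is stationary by the duality $\theta(s,z)=\theta(1/s,z)/s$. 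This duality gives $\sqrt{s}\,\theta(s,z)=\sqrt{1/s}\,\theta(1/s,z)$, so $g(t):=\sqrt{e^t}\,\theta(e^t,z)$ is even in $t$.

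It then suffices to show that $g$ is convex in $t$, or at least that $g'(t)\geq0$ for $t\geq0$. Here I would use Lemma \ref{Lemma22} and Lemma \ref{Lemma23} to reduce to the boundary line $x=\frac12$. On that line the sign of $\cos(\pi kn)$ is explicit, and the estimate becomes a one-variable computation in $y\geq\sqrt3/2$.
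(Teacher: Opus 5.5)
There is a genuine gap, and it sits exactly in the step you call the cleanest route. Your reformulation is fine and equivalent to the paper's target: Lemma \ref{Lemma25} for all $\beta>\alpha\geq1$ is the same as $\partial_s(\sqrt{s}\,\theta(s,z))\geq0$ for $s\geq1$. The paper also argues by a boundary reduction, through Lemma \ref{Lemma24}, which is the $s$-integrated form of Lemma \ref{Lemma23}.

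The problem is that your reduction points the wrong way. Lemma \ref{Lemma23} says $\partial_x\partial_s(\sqrt{s}\,\theta(s,z))\geq0$. So on each horizontal section of $\mathcal{D}_{\mathcal{G}}$, the quantity $\partial_s(\sqrt{s}\,\theta(s,z))$ is \emph{smallest} at the left boundary and \emph{largest} on $\Re(z)=\frac12$. A sign check on $\{\frac12+iy:\ y\geq\frac{\sqrt3}{2}\}$ therefore tells you nothing about the interior. Lemma \ref{Lemma22} does not bear on the sign of $\partial_s$ at all; the paper uses it only later, for $\mathcal{B}_a(\alpha,\beta,z)\geq0$ in the proof of Proposition \ref{Prop1A}.

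Your first route does not close either. At $s=1$ the $nk\neq0$ part of the $s$-derivative vanishes only because the $(n,k)$ and $(k,n)$ terms cancel. Your absolute-value bound discards this cancellation and keeps contributions such as $3\pi y\,e^{-5\pi y}$ from $(n,k)=(1,2)$. Meanwhile the $s$-derivative of the $nk=0$ part is only $O(s-1)$. So no domination is possible near $s=1$ unless you pair the terms.

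What has to be checked is the left boundary $\Gamma_1\cup\Gamma_2$.
\begin{itemize}
\item On $\Gamma_1$ the sum factorizes: $\sqrt{s}\,\theta(s,iy)=\sqrt{y}\,\vartheta_3(sy)\vartheta_3(\frac{y}{s})$. Exchanging the roles of $s$ and $y$ turns monotonicity in $s\geq1$ into $\partial_y\theta(\alpha,iy)\geq0$ for $y\geq1$, $\alpha\geq1$. That is Montgomery's second lemma (Lemmas \ref{Lemma29}--\ref{Lemma212}).
\item The arc $\Gamma_2$ is not covered by anything in your plan. The paper sends it by $z\mapsto\frac{1}{1-z}$ onto $\{\frac12+iy':\ y'\in[\frac12,\frac{\sqrt3}{2}]\}$. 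This is the piece of the line \emph{below} the hexagonal point, outside $\mathcal{D}_{\mathcal{G}}$. There, item (2) of Lemma \ref{LemmaPPP} gives monotonicity in $y'$. The argument is anchored at $y'=\frac12$, which is the image of $z=i\in\Gamma_1$ (Lemmas \ref{Lemma213}--\ref{Lemma215}).
\end{itemize}
With the reduction redirected to these two pieces, your pointwise version goes through exactly as the paper's integrated one does.
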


By Lemma \ref{Lemma24}, to prove Lemma \ref{Lemma25}, it suffices to prove that
$\sqrt\beta\theta(\beta,z)\geq\sqrt\alpha\theta(\alpha,z)$ on the left boundary of half fundamental domain $\mathcal{D}_{\mathcal{G}}$.
These are done in Lemmas \ref{Lemma26} and \ref{Lemma27}.

\begin{lemma}\label{Lemma26} Assume that $\beta>\alpha\geq1$. Then
\begin{equation}\aligned\nonumber
\sqrt\beta\theta(\beta,z)\mid_{\Re(z)=0}\geq\sqrt\alpha\theta(\alpha,z)\mid_{\Re(z)=0}\;\;\hbox{for}\;\;\Im(z)\geq1.
\endaligned\end{equation}

\begin{lemma}\label{Lemma27} Assume that $\beta>\alpha\geq1$. Then
\begin{equation}\aligned\nonumber
\sqrt\beta\theta(\beta,z)\mid_{|z|=1, 0\leq\Re(z)\leq\frac{1}{2}}\geq\sqrt\alpha\theta(\alpha,z)\mid_{|z|=1, 0\leq\Re(z)\leq\frac{1}{2}}.
\endaligned\end{equation}
\end{lemma}

\end{lemma}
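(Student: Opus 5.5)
The plan is to reduce the imaginary-axis inequality to a one-variable monotonicity property of the one-dimensional theta function. On $\Re(z)=0$ the double sum factorizes. Write $z=iy$ with $y\geq1$ and set $\vartheta_1(t):=\sum_{n\in\mathbb{Z}}e^{-\pi n^2 t}=\vartheta(t;0)$. Then
\begin{equation*}
\theta(s,iy)=\vartheta_1(sy)\,\vartheta_1\big(\tfrac{s}{y}\big).
\end{equation*}
Jacobi's identity $\vartheta_1(t)=t^{-1/2}\vartheta_1(1/t)$ gives $\vartheta_1(s/y)=\sqrt{y/s}\,\vartheta_1(y/s)$, so
\begin{equation*}
\sqrt{s}\,\theta(s,iy)=\sqrt{y}\,\vartheta_1(sy)\,\vartheta_1\big(\tfrac{y}{s}\big)=:\sqrt{y}\,g_y(s).
\end{equation*}
Lemma \ref{Lemma26} follows once we show that $s\mapsto g_y(s)$ is nondecreasing on $[1,\infty)$ for every fixed $y\geq1$; we then integrate from $\alpha$ to $\beta$.

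Next we take a logarithmic derivative. Introduce
\begin{equation*}
h(t):=-\frac{t\,\vartheta_1'(t)}{\vartheta_1(t)}=\frac{\sum_n \pi n^2 t\, e^{-\pi n^2 t}}{\sum_n e^{-\pi n^2 t}}\;\geq0 .
\end{equation*}
A direct computation gives
\begin{equation*}
\frac{d}{ds}\log g_y(s)=\frac{1}{s}\Big(h\big(\tfrac{y}{s}\big)-h(sy)\Big).
\end{equation*}
So it suffices to prove $h(u)\geq h(v)$ whenever $u=y/s$ and $v=sy$. These satisfy $v\geq u$ and $uv=y^2\geq1$, so in particular $v\geq1$.

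Differentiating the Jacobi identity gives the functional equation $h(t)+h(1/t)=\tfrac12$, and hence $h(1)=\tfrac14$. The argument then splits into two cases according to the position of $u$.
\begin{itemize}
\item[(i)] If $u\geq1$, then $1\leq u\leq v$, and the inequality follows as soon as $h$ is nonincreasing on $[1,\infty)$.
\item[(ii)] If $u<1$, we write $h(u)=\tfrac12-h(1/u)$. The condition $uv\geq1$ means $v\geq 1/u>1$. Using monotonicity on $[1,\infty)$ again, $h(1/u)+h(v)\leq 2h(1)=\tfrac12$, which is exactly $h(u)\geq h(v)$.
\end{itemize}
Both cases therefore reduce to the single claim that $h$ is nonincreasing on $[1,\infty)$.

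The main obstacle is proving this monotonicity claim. Writing $X=\pi n^2$ as a random variable under the weights $e^{-\pi n^2 t}$, we have $h=t\,\mathbb{E}[X]$ and $h'(t)=\mathbb{E}[X]-t\,\mathrm{Var}(X)$. So we need $t\,\mathrm{Var}(X)\geq\mathbb{E}[X]$ for $t\geq1$.

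For $t\geq1$ with $q=e^{-\pi t}\leq e^{-\pi}$, the terms $n=\pm1$ dominate. One has $\mathbb{E}[X]\approx 2\pi q$ and $\mathrm{Var}(X)\approx 2\pi^2 q$, so the required inequality reads roughly $2\pi^2 t q\gtrsim 2\pi q$, i.e.\ $\pi t\geq1$ with a comfortable margin. We will make this rigorous by bounding the $n=0,\pm1$ contributions exactly and controlling the tail $|n|\geq2$ by geometric series in $q^3$; the margin $\pi t-1\geq\pi-1$ absorbs the error terms.

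If a cleaner route is wanted, we would alternatively show that $t\mapsto \log\vartheta_1(t)$ has $t\,(\log\vartheta_1)''+(\log\vartheta_1)'\geq0$ on $[1,\infty)$. This is the same inequality, and we would check it numerically on a compact interval such as $[1,2]$ and asymptotically beyond.

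The nested arc statement, Lemma \ref{Lemma27}, is handled separately; it is not needed for the imaginary-axis bound above.
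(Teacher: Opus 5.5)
\textbf{Imaginary-axis part: correct, by a different route.} Your argument for the inequality on $\Re(z)=0$ is correct. The paper also starts from $\sqrt{s}\,\theta(s,iy)=\sqrt{y}\,\vartheta_3(sy)\vartheta_3(y/s)$; your $\vartheta_1$ is the paper's $\vartheta_3$. It then observes that $\vartheta_3(sy)\vartheta_3(y/s)=\theta(y,is)$, so the roles of the two variables can be swapped. Monotonicity in $s\ge1$ is then exactly Montgomery's lemma that $\theta(\cdot,iy)$ increases for $y\ge1$ (Lemmas \ref{Lemma210}--\ref{Lemma212}), which the paper cites rather than proves.

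Your identity $\frac{d}{ds}\log g_y(s)=\frac1s\bigl(h(y/s)-h(sy)\bigr)$, together with $h(t)+h(1/t)=\frac12$, is in effect a self-contained proof of that case of Montgomery's lemma. The same computation gives $\frac{d}{dy}\log\theta(\alpha,iy)=\frac1y\bigl(h(\alpha/y)-h(\alpha y)\bigr)$.

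The step you flag as the main obstacle is easy:
\begin{itemize}
\item Keeping only the pairs $(0,\pm1)$ in $\mathrm{Var}(X)=\frac{1}{2Z^2}\sum_{m,n}w_mw_n(X_m-X_n)^2$ gives $\mathrm{Var}(X)\ge 2\pi^2q/Z^2$.
\item On the other side, $\mathbb{E}[X]=2\pi q(1+4q^3+9q^8+\cdots)/Z$.
\item So $t\,\mathrm{Var}(X)\ge\mathbb{E}[X]$ follows from $\pi t\ge Z(1+4q^3+\cdots)$, and the right side is below $1.1$ for $t\ge1$.
\end{itemize}
Moreover, differentiating $h(t)+h(1/t)=\frac12$ gives $h'(t)=t^{-2}h'(1/t)$, so $h$ is nonincreasing on all of $(0,\infty)$. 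Your case split and the use of $uv\ge1$ are therefore unnecessary, and $g_y$ is nondecreasing on $[1,\infty)$ for every $y>0$.

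\textbf{Arc part: missing.} The statement as posed also contains the nested arc inequality, Lemma \ref{Lemma27}, which you explicitly set aside. For that half the proposal has no argument, and it does not follow from the imaginary-axis bound alone. Your product argument does not reach it: on the arc (equivalently, after $z\mapsto\frac{1}{1-z}$, on $\Re z=\frac12$) the theta function is a sum of two products of one-dimensional theta functions, so there is no analogue of $g_y$. The $x$-monotonicity of Lemma \ref{Lemma24} does not help either, because the horizontal segment from $iy$ ($y<1$) to the arc lies outside $\overline{\mathcal{D}_{\mathcal{G}}}$.

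The paper closes this half in three steps:
\begin{itemize}
\item The $\mathrm{SL}_2(\mathbb{Z})$ map $z\mapsto\frac{1}{1-z}$ carries the arc onto $\{\frac12+iy:\ y\in[\frac12,\frac{\sqrt3}{2}]\}$, with $i\mapsto\frac12+\frac i2$ (Lemma \ref{Lemma213}).
\item At $y=\frac12$ the difference $\sqrt\beta\,\theta(\beta,\cdot)-\sqrt\alpha\,\theta(\alpha,\cdot)$ equals its value at $i$, which is $\ge0$ by the imaginary-axis result.
\item Along the segment the difference is nondecreasing in $y$, because $\partial_y\partial_s\bigl(\sqrt s\,\theta(s,\frac12+iy)\bigr)\ge0$ for $s\ge1$, $y\in[\frac12,\frac{\sqrt3}{2}]$ (item (2) of Lemma \ref{LemmaPPP}, from \cite{Luo2023a}).
\end{itemize}
That mixed-derivative estimate, or a substitute for it, is the ingredient you would still need to cover the full statement.
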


By Lemma \ref{Lemma21},
\begin{equation}\aligned\nonumber
\sum_{n\in\mathbb{Z}} e^{-\pi\alpha yn^2}\vartheta(\frac{y}{\alpha};nx)\mid_{x=0}
=\vartheta_3(\alpha y)\vartheta_3(\frac{y}{\alpha}).
\endaligned\end{equation}
Then we have
\begin{lemma}[Evaluation of theta function on $y$-axis]\label{Lemma28}

 \begin{equation}\aligned\nonumber
\theta(\alpha,iy)=\begin{cases}\sqrt{\frac{y}{\alpha}}\vartheta_3(\alpha y)\vartheta_3(\frac{y}{\alpha})\;&\hbox{for}\;\;\frac{y}{\alpha}\;\;\hbox{has a positive lower bound},\\
\vartheta_3(\alpha y)\vartheta_3(\frac{\alpha}{y})\;&\hbox{for}\;\;\frac{\alpha}{y}\;\;\hbox{has a positive lower bound}.
\end{cases}
\endaligned\end{equation}
Here $\vartheta_3$ is the Jacobi theta function of third type and defined as
 \begin{equation}\aligned\nonumber
\vartheta_3(x):=\sum_{n\in\mathbb{Z}}e^{-\pi n^2 x}.
\endaligned\end{equation}

\end{lemma}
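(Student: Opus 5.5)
The statement to prove is Lemma \ref{Lemma28}: on the imaginary axis the theta function factors into one-dimensional Jacobi theta functions. The plan has three steps.

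\textbf{Step 1 (factorization).} For $z=iy$ we have $|mz+n|^2/\Im(z)=m^2y+n^2/y$. Hence the double sum in \eqref{thetas} splits as
\begin{equation*}
\theta(\alpha,iy)=\sum_{m\in\mathbb{Z}}e^{-\pi\alpha y m^2}\sum_{n\in\mathbb{Z}}e^{-\pi\alpha n^2/y}=\vartheta_3(\alpha y)\,\vartheta_3\Big(\frac{\alpha}{y}\Big).
\end{equation*}
This is exactly the second expression. It is an identity valid for all $y>0$; the condition that $\alpha/y$ is bounded below only tells us which form is numerically useful, since then both arguments are bounded away from $0$ and both series converge rapidly.

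\textbf{Step 2 (Jacobi transformation).} We use the one-dimensional Jacobi identity $\vartheta_3(X)=X^{-1/2}\vartheta_3(1/X)$, which follows from Poisson summation applied to the Gaussian $e^{-\pi X t^2}$. Applying it with $X=\alpha/y$ gives $\vartheta_3(\alpha/y)=\sqrt{y/\alpha}\,\vartheta_3(y/\alpha)$. Substituting this into Step 1 yields
\begin{equation*}
\theta(\alpha,iy)=\sqrt{\frac{y}{\alpha}}\,\vartheta_3(\alpha y)\,\vartheta_3\Big(\frac{y}{\alpha}\Big),
\end{equation*}
which is the first expression. It is the appropriate form when $y/\alpha$ is bounded below.

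\textbf{Step 3 (consistency check).} As a cross-check, Lemma \ref{Lemma21} at $x=0$ gives $\vartheta(y/\alpha;0)=\vartheta_3(y/\alpha)$, so the left side of \eqref{CM3} equals $\vartheta_3(\alpha y)\vartheta_3(y/\alpha)$. Multiplying by $\sqrt{y/\alpha}$ recovers the first formula, in agreement with Step 2.

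There is no real obstacle. The only point needing care is to read the two regimes as a choice between two equivalent representations rather than as a restriction on when the identity holds.
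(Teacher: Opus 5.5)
Your proof is correct and is essentially the paper's argument. The paper reads the first form off Lemma~\ref{Lemma21} at $x=0$ (your Step 3); the second form is the direct factorization of the double sum on the imaginary axis, and the two are linked by the Jacobi identity $\sqrt{y/\alpha}\,\vartheta_3(y/\alpha)=\vartheta_3(\alpha/y)$ exactly as in your Step 2, so your reading of the two ``regimes'' as equivalent representations valid for all $y>0$ is also right.
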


By Lemma \ref{Lemma28}, Lemma \ref{Lemma26} is equivalent to

\begin{lemma}[=Lemma \ref{Lemma26}]\label{Lemma29} Assume that $\beta>\alpha\geq1$. Then
\begin{equation}\aligned\nonumber
\vartheta_3(\beta y)\vartheta_3(\frac{y}{\beta})
\geq
\vartheta_3(\alpha y)\vartheta_3(\frac{y}{\alpha})\;\;\hbox{for}\;\;y\geq1.
\endaligned\end{equation}

\end{lemma}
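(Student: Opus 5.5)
The plan is to prove Lemma \ref{Lemma29} by showing that, for fixed $y\geq1$, the function
\begin{equation}\nonumber
t\mapsto f(t):=\vartheta_3(ty)\vartheta_3(\tfrac{y}{t})
\end{equation}
is nondecreasing on $[1,\infty)$. Monotonicity is enough, since then $f(\beta)\geq f(\alpha)$ whenever $\beta>\alpha\geq1$. To analyse $f$, I introduce
\begin{equation}\nonumber
h(x):=-\frac{x\,\vartheta_3'(x)}{\vartheta_3(x)}=\frac{\sum_{n\in\mathbb{Z}}\pi n^2x\,e^{-\pi n^2x}}{\sum_{n\in\mathbb{Z}}e^{-\pi n^2x}}>0 .
\end{equation}
A direct computation gives
\begin{equation}\nonumber
t\,\frac{d}{dt}\log f(t)=h(\tfrac{y}{t})-h(ty).
\end{equation}
So, writing $a=y/t$ and $b=ty$, it suffices to show $h(a)\geq h(b)$ whenever $a\leq b$ and $ab=y^2\geq1$. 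In particular $b\geq1$.

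The next step uses the Jacobi transformation $\vartheta_3(x)=x^{-1/2}\vartheta_3(1/x)$. Taking logarithmic derivatives yields the reflection identity
\begin{equation}\nonumber
h(x)+h(1/x)=\tfrac12 ,
\end{equation}
and in particular $h(1)=\tfrac14$. The argument then rests on one claim.

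\emph{Claim:} $h$ is nonincreasing on $[1,\infty)$. Granting this, there are two cases.
\begin{itemize}
\item If $a\geq1$, then $h(a)\geq h(b)$ follows directly from the claim.
\item If $a<1$, then $1/a\leq b$ because $ab\geq1$, and also $1/a>1$. The claim then gives $h(1/a)\leq\tfrac14$ and $h(b)\leq\tfrac14$. By the reflection identity,
\begin{equation}\nonumber
h(a)=\tfrac12-h(1/a)\geq\tfrac14\geq h(b).
\end{equation}
\end{itemize}
In both cases $f'(t)\geq0$, which proves Lemma \ref{Lemma29}, and hence Lemma \ref{Lemma26} via Lemma \ref{Lemma28}.

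The main obstacle is the Claim. Let $E$ and $\mathrm{Var}$ denote the mean and variance of $n^2$ under the probability weights proportional to $e^{-\pi n^2x}$. Then $h(x)=\pi x\,E[n^2]$, and
\begin{equation}\nonumber
h'(x)=\pi E[n^2]-\pi^2x\,\mathrm{Var}(n^2).
\end{equation}
So the Claim amounts to $E[n^2]\leq\pi x\,\mathrm{Var}(n^2)$ for $x\geq1$. Set $q=e^{-\pi x}\leq e^{-\pi}<0.044$. I will expand both sides in powers of $q$. On the one hand, $E[n^2]=2q+O(q^4)$ up to the normalising factor $\vartheta_3(x)$. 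On the other hand, $\mathrm{Var}(n^2)=E[n^4]-E[n^2]^2$, where $E[n^4]=2q+32q^4+\dots$ and $E[n^2]^2\approx4q^2$.

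This reduces the inequality to roughly $2q\leq\pi x(2q-4q^2-\dots)$, i.e. $1\leq\pi x(1-2q-\dots)$. This has ample room, since $\pi x\geq\pi$ and $q<0.05$. To make it rigorous I will bound the tails $\sum_{n\geq2}n^{2k}q^{n^2}$ by geometric series in $q^3$, using $q\leq e^{-\pi}$. The difficult part is the bookkeeping of these explicit constants, not the structure of the argument.
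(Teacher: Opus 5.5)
Your proof is correct, but it follows a different route from the paper. The paper never proves this inequality by hand. It passes to $\partial_\alpha$ (Lemma~\ref{Lemma210}), then swaps the roles of $\alpha$ and $y$ to obtain $\partial_y\bigl(\vartheta_3(\alpha y)\vartheta_3(\alpha/y)\bigr)\ge0$ (Lemma~\ref{Lemma211}). It identifies this product with $\theta(\alpha,iy)$ (Lemma~\ref{Lemma28}) and finally quotes Montgomery's second lemma (Lemma~\ref{Lemma212}) restricted to the imaginary axis. You instead give a self-contained one-variable argument. The logarithmic derivative reduces everything to $h(y/t)\ge h(ty)$, the Jacobi transformation supplies $h(x)+h(1/x)=\tfrac12$, and the only analytic input is the monotonicity of $h(x)=-x\vartheta_3'(x)/\vartheta_3(x)$. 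The paper's route is shorter and reuses a lemma it needs anyway: Lemma~\ref{Lemma212} is applied again off the imaginary axis, on $\Re(z)=\tfrac12$, in the proof of Proposition~\ref{Prop1B}, where your one-variable reduction would not apply. Your route isolates exactly what is true on the axis and avoids Montgomery's two-dimensional estimates entirely.

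The step you flag as the main obstacle needs no $q$-expansion bookkeeping. Since $n^4\ge n^2$ for every $n\in\mathbb{Z}$, you have $\mathrm{Var}(n^2)\ge E[n^2]\bigl(1-E[n^2]\bigr)$. Moreover $E[n^2]$ is nonincreasing in $x$ (its derivative is $-\pi\,\mathrm{Var}(n^2)$), and at $x=1$ it equals $h(1)/\pi=1/(4\pi)$. Hence for $x\ge1$ you get $h'(x)\le\pi E[n^2]\bigl(1-\pi x+\tfrac{x}{4}\bigr)\le\pi E[n^2]\bigl(\tfrac54-\pi\bigr)<0$.

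A second small remark: differentiating the reflection identity gives $h'(x)=x^{-2}h'(1/x)$, so $h$ is nonincreasing on all of $(0,\infty)$. Then $y/t\le ty$ alone yields $h(y/t)\ge h(ty)$. Your case split is therefore unnecessary, and the inequality in fact holds for every $y>0$, not just $y\ge1$.
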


To prove Lemma \ref{Lemma29}, it suffices to prove that

\begin{lemma}\label{Lemma210} Assume that $\alpha\geq1$. Then
\begin{equation}\aligned\nonumber
\frac{\partial}{\partial \alpha}
\Big(\vartheta_3(\alpha y)\vartheta_3(\frac{y}{\alpha})\Big)\geq0\;\;\hbox{for}\;\;y\geq1.
\endaligned\end{equation}

\end{lemma}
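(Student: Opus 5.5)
We need to show that $F(\alpha):=\vartheta_3(\alpha y)\vartheta_3(y/\alpha)$ is nondecreasing in $\alpha\ge1$ for every fixed $y\ge1$. The plan is to take the logarithmic derivative and reduce everything to a one-variable monotonicity property of $\vartheta_3$. Write $\psi(t):=-t\,\vartheta_3'(t)/\vartheta_3(t)>0$. Then
\begin{equation}\nonumber
\alpha\frac{\partial}{\partial\alpha}\log F
=\alpha y\frac{\vartheta_3'(\alpha y)}{\vartheta_3(\alpha y)}-\frac{y}{\alpha}\frac{\vartheta_3'(y/\alpha)}{\vartheta_3(y/\alpha)}
=\psi\Big(\frac{y}{\alpha}\Big)-\psi(\alpha y).
\end{equation}
Since $y\ge1$ and $\alpha\ge1$, we have $y/\alpha\le\alpha y$. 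Hence it suffices to show that $\psi$ is nonincreasing on $(0,\infty)$, or at least that $\psi(a)\ge\psi(b)$ whenever $0<a\le b$ and $ab=y^2\ge1$.

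For the monotonicity of $\psi$, write $\psi(t)=\pi t\,\mathbb{E}_t[n^2]$, where $\mathbb{E}_t$ is the expectation with respect to the weights $e^{-\pi n^2t}$ on $\mathbb{Z}$. Then
\begin{equation}\nonumber
\psi'(t)=\pi\,\mathbb{E}_t[n^2]-\pi^2t\,\mathrm{Var}_t(n^2).
\end{equation}
This sign is not obvious for small $t$. For $t$ bounded below, say $t\ge1/2$, I will verify $\psi'\le0$ by direct series estimates, since the sums are dominated by $n=\pm1$. For small $t$, I will use the Jacobi transformation $\vartheta_3(t)=t^{-1/2}\vartheta_3(1/t)$, which gives
\begin{equation}\nonumber
\psi(t)=\frac12-\psi\Big(\frac1t\Big).
\end{equation}
Since $t\mapsto 1/t$ reverses order, the monotonicity of $\psi$ on $[1,\infty)$ transfers to $(0,1]$, and the two pieces agree at $t=1$, where $\psi(1)=1/4$. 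Thus it suffices to prove that $\psi$ is nonincreasing on $[1,\infty)$.

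The main obstacle is this estimate on $[1,\infty)$. Using $\sum_{n\ge1}n^2e^{-\pi n^2 t}$ and its $t$-derivative, the inequality $\psi'\le0$ is equivalent to
\begin{equation}\nonumber
\big(1-\pi t\,\mathbb{E}_t[n^2]\big)\,\mathbb{E}_t[n^2]\le\pi t\big(\mathbb{E}_t[n^4]-\mathbb{E}_t[n^2]\big).
\end{equation}
For $t\ge1$ the left side is at most $\mathbb{E}_t[n^2]\big(1-\pi t\,\mathbb{E}_t[n^2]\big)$. Here $\pi t\,\mathbb{E}_t[n^2]\approx 2\pi te^{-\pi t}$ is tiny, of order $0.09$ at $t=1$, so the left side is essentially $\mathbb{E}_t[n^2]$. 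The right side is $\pi t\big(\mathbb{E}_t[n^4]-\mathbb{E}_t[n^2]\big)$; its $n=\pm1$ contributions cancel, so it is only about $24\pi t\,e^{-4\pi t}$.

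This crude comparison fails, so full monotonicity on $[1,\infty)$ may be false. I therefore expect that only the weaker statement is needed: $\psi(a)\ge\psi(b)$ for $ab\ge1$, $a\le b$. To prove it, I will use the identity $\psi(a)=\tfrac12-\psi(1/a)$ and set $s:=1/a\le b$. The claim becomes
\begin{equation}\nonumber
\psi(s)+\psi(b)\le\frac12\quad\text{for all } s\le b \text{ with } b\ge1.
\end{equation}
When $s\ge1$, both terms are at most $\psi(1)=1/4$, provided $\psi\le1/4$ on $[1,\infty)$. I will prove that bound from the leading-term estimate $\psi(t)\approx2\pi te^{-\pi t}/(1+2e^{-\pi t})$, which is decreasing for $t\ge1/\pi$, controlling the tail rigorously. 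When $s<1$, I will apply the identity to $\psi(s)$ as well and reduce to $\psi(b)\le\psi(1/s)$ with $1/s>1$ and $b\ge1$. This case I would close by checking the range directly or by a finer sum-of-two-terms estimate. This last verification is the step I expect to be delicate.
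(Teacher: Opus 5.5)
Your opening reduction is correct: $\alpha\partial_\alpha\log F=\psi(y/\alpha)-\psi(\alpha y)$ and $\psi(t)+\psi(1/t)=\tfrac12$. The point where you give up on monotonicity of $\psi$, however, rests on an algebra slip. Rearranging $\psi'(t)\le 0$, i.e.\ $\mathbb{E}_t[n^2]\le \pi t\big(\mathbb{E}_t[n^4]-\mathbb{E}_t[n^2]^2\big)$, gives
\[
\big(1-\pi t+\pi t\,\mathbb{E}_t[n^2]\big)\,\mathbb{E}_t[n^2]\le \pi t\,\big(\mathbb{E}_t[n^4]-\mathbb{E}_t[n^2]\big).
\]
You dropped the $-\pi t$ from the coefficient on the left. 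Since $\pi t\,\mathbb{E}_t[n^2]=\psi(t)<\tfrac12$, the left side is negative for $t>3/(2\pi)$. The right side is nonnegative because $n^4\ge n^2$ on $\mathbb{Z}$. Hence $\psi'<0$ on $(3/(2\pi),\infty)$. The relation $\psi'(t)=t^{-2}\psi'(1/t)$ then gives $\psi'<0$ on $(0,1]$, so $\psi$ is strictly decreasing on all of $(0,\infty)$. Your first paragraph therefore already finishes the proof, and in fact without using $y\ge1$. The ``cancellation of the $n=\pm1$ terms'' was an artifact of the slip: in $\mathrm{Var}_t(n^2)$ those terms contribute about $2e^{-\pi t}$, and $2\pi t e^{-\pi t}>2e^{-\pi t}$ once $t>1/\pi$.

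As written, though, the proposal relies on the fallback, and that fallback does not work. Your reformulation ``$\psi(s)+\psi(b)\le\frac12$ for all $s\le b$, $b\ge 1$'' is false: let $s\to0^+$, where $\psi(s)\to\frac12$. The error is that you dropped the constraint $s\ge 1/b$, which comes from $a\le b$. With that constraint restored, your remaining case $s<1$ asks for $\psi(b)\le\psi(1/s)$ whenever $1<1/s\le b$. That is exactly monotonicity of $\psi$ on $[1,\infty)$, the statement you were trying to avoid, so the step you call delicate is circular.

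No weaker statement can suffice. For any $1\le a\le b$, take $\alpha=\sqrt{b/a}\ge1$ and $y=\sqrt{ab}\ge1$; then the lemma itself forces $\psi(a)\ge\psi(b)$.

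Once the algebra is fixed, your route is a valid, self-contained alternative to the paper's. The paper swaps the roles of $\alpha$ and $y$ and identifies $\vartheta_3(\alpha y)\vartheta_3(\alpha/y)=\theta(\alpha,iy)$ via Lemma~\ref{Lemma28}. It then simply cites Montgomery's lemma (Lemma~\ref{Lemma212}) that $\theta(\alpha,iy)$ increases for $y\ge1$. Your corrected computation is in effect a short direct proof of that monotonicity on the imaginary axis.
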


By symmetry, Lemma \ref{Lemma210} is equivalent to
\begin{lemma}\label{Lemma211} Assume that $\alpha\geq1$. Then
\begin{equation}\aligned\nonumber
\frac{\partial}{\partial y}
\Big(\vartheta_3(\alpha y)\vartheta_3(\frac{\alpha}{y})\Big)\geq0\;\;\hbox{for}\;\;y\geq1.
\endaligned\end{equation}

\end{lemma}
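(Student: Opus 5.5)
The plan is to pass to logarithms and reduce Lemma \ref{Lemma211} to the monotonicity of a single one-variable function built from $\vartheta_3$, using the Jacobi transformation $\vartheta_3(1/t)=\sqrt t\,\vartheta_3(t)$.

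\textbf{Reduction to a one-variable function.} Put $f(t):=\log\vartheta_3(t)$. Since $\vartheta_3$ is decreasing, $f'<0$. Introduce
\begin{equation}\nonumber
h(t):=-t f'(t)=\frac{\pi t\sum_{n\in\mathbb{Z}}n^2e^{-\pi n^2t}}{\sum_{n\in\mathbb{Z}}e^{-\pi n^2t}}>0 .
\end{equation}
A direct computation gives
\begin{equation}\nonumber
y\,\frac{\partial}{\partial y}\log\Big(\vartheta_3(\alpha y)\vartheta_3(\tfrac{\alpha}{y})\Big)=\alpha y f'(\alpha y)-\frac{\alpha}{y}f'(\tfrac{\alpha}{y})=h(\tfrac{\alpha}{y})-h(\alpha y).
\end{equation}
For $y\geq1$ we have $\alpha/y\leq\alpha y$. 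Hence Lemma \ref{Lemma211} follows once we show that $h$ is nonincreasing on $(0,\infty)$. This route does not even use $\alpha\ge1$.

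\textbf{Reduction to $[1,\infty)$ by the Jacobi identity.} Take logarithms in $\vartheta_3(1/t)=\sqrt t\,\vartheta_3(t)$ and differentiate. This gives $-t^{-2}f'(1/t)=\frac{1}{2t}+f'(t)$, that is,
\begin{equation}\nonumber
h(1/t)=\tfrac12-h(t).
\end{equation}
Under $t\mapsto 1/t$, monotonicity of $h$ on $[1,\infty)$ therefore transfers to $(0,1]$: if $h$ is nonincreasing on $[1,\infty)$, then $t\mapsto h(1/t)$ is nondecreasing there, so $h$ is nonincreasing on $(0,1]$. It therefore suffices to prove $h'(t)\le0$ for $t\ge1$.

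\textbf{The main estimate on $t\geq1$.} Regard $w_n=e^{-\pi n^2t}/\vartheta_3(t)$ as a probability distribution on $\mathbb{Z}$, and write $E$ for the corresponding expectation. Differentiating,
\begin{equation}\nonumber
h'(t)=\pi E[n^2]-\pi^2t\big(E[n^4]-E[n^2]^2\big).
\end{equation}
So we need $\pi t\big(E[n^4]-E[n^2]^2\big)\geq E[n^2]$; this is the step requiring actual estimates. Use $E[n^2]\le E[n^4]$, which holds because $n^2\le n^4$ on $\mathbb{Z}$. Then it suffices to show
\begin{equation}\nonumber
(\pi t-1)E[n^4]\geq \pi t\,E[n^2]^2 .
\end{equation}
By Cauchy--Schwarz applied to $n^2=n^2\mathbf 1_{n\ne0}$, we get $E[n^2]^2\le E[n^4]\,P(n\neq0)$. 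With $q=e^{-\pi t}\le e^{-\pi}<0.044$,
\begin{equation}\nonumber
P(n\ne0)\le 2\sum_{n\ge1}q^{n^2}\le \frac{2q}{1-q}<0.1 .
\end{equation}
The required inequality thus reduces to $\pi t-1\geq 0.1\,\pi t$, i.e. $0.9\pi t\ge1$, which holds for $t\ge1$ with room to spare.

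I expect this last estimate to be the only real obstacle. The key is choosing bounds that avoid tedious tail computations, and the Cauchy--Schwarz trick together with the smallness of $e^{-\pi}$ handles it. If a sharper margin were needed, one could instead bound the two lowest terms $n=\pm1,\pm2$ explicitly.
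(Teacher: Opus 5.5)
Your proof is correct, and it takes a genuinely different route from the paper. The paper does no computation here. It notes via Lemma \ref{Lemma28} that $\vartheta_3(\alpha y)\vartheta_3(\frac{\alpha}{y})=\theta(\alpha,iy)$. It then cites Montgomery's second lemma (Lemma \ref{Lemma212}), i.e.\ $\partial_y\theta(\alpha,z)\geq0$ on the half fundamental domain for $\alpha\geq1$, specialized to the boundary line $\Re(z)=0$.

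You instead reduce everything to a one-variable fact. Your identity $y\,\partial_y\log\big(\vartheta_3(\alpha y)\vartheta_3(\frac{\alpha}{y})\big)=h(\frac{\alpha}{y})-h(\alpha y)$ shows the lemma is a consequence of the convexity of $u\mapsto\log\vartheta_3(e^u)$; for all $\alpha>0$ it is equivalent to it, since $\frac{d^2}{du^2}\log\vartheta_3(e^u)=-e^uh'(e^u)$. The Jacobi identity gives $h(1/t)=\frac12-h(t)$, so only $t\geq1$ needs checking. There the variance bound, with $E[n^2]\leq E[n^4]$ and Cauchy--Schwarz with $P(n\neq0)<0.1$, closes with ample room.

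What your route buys:
\begin{itemize}
\item a self-contained, elementary proof;
\item strict inequality for $y>1$;
\item validity for every $\alpha>0$, so the hypothesis $\alpha\geq1$ is not needed for this lemma. This is consistent with the duality $\theta(\alpha,z)=\alpha^{-1}\theta(\frac{1}{\alpha},z)$.
\end{itemize}

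What the paper's route buys: a one-line citation of a stronger two-dimensional statement that it needs anyway off the imaginary axis. Lemma \ref{Lemma212} is used on $\Re(z)=\frac12$ in the proof of Proposition \ref{Prop1B}, and your argument only covers $\Re(z)=0$.

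One small wording fix: in the transfer step, $t\mapsto h(1/t)$ is nondecreasing on $(0,1]$, not on $[1,\infty)$.
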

By Lemma \ref{Lemma28}, Lemma \ref{Lemma211} is equivalent to following Montegomery's Lemma \cite{Mon1988}.
\begin{lemma}[Montegomery's second Lemma \cite{Mon1988}]\label{Lemma212} Assume that $\alpha\geq1$. Then
\begin{equation}\aligned\nonumber
\frac{\partial}{\partial y}
\theta(\alpha,z)\geq0\;\;\hbox{for}\;\;z\in\mathcal{D}_{\mathcal{G}}.
\endaligned\end{equation}

\end{lemma}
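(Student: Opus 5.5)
The plan is to work with the $y$-derivative of the lattice sum directly. I will use the two standard representations of $\theta(\alpha,z)$ (the direct sum and the Poisson-summed sum of Lemma \ref{Lemma21}), and handle the delicate boundary points separately through the modular symmetry. First I separate the $m=0$ row in the direct expansion
\begin{equation}\nonumber
\theta(\alpha,z)=\vartheta_3\Big(\frac{\alpha}{y}\Big)+\sum_{m\neq0}e^{-\pi\alpha m^2 y}\sum_{n\in\mathbb{Z}}e^{-\pi\alpha\frac{(mx+n)^2}{y}} .
\end{equation}
The first term is increasing in $y$, with derivative $\frac{2\pi\alpha}{y^2}\sum_{n} n^2e^{-\pi\alpha n^2/y}$; this is the dominant positive contribution. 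For the rows $m\neq0$, I would apply Poisson summation in $n$ (as in Lemma \ref{Lemma21}), which rewrites each row as $\sqrt{y/\alpha}\,e^{-\pi\alpha m^2y}\vartheta(\frac{y}{\alpha};mx)$.

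Next I differentiate in $y$ and bound the negative part. For $|m|\ge1$ and $z\in\mathcal{D}_{\mathcal{G}}$, so that $y\ge\sqrt3/2$ and $\alpha\ge1$, every such row carries the factor $e^{-\pi\alpha m^2y}\le e^{-\pi\sqrt3/2}$. I would bound $|\vartheta(X;Y)|\le\vartheta_3(X)$ and $|\partial_X\vartheta(X;Y)|\le|\vartheta_3'(X)|$ uniformly in $Y$, so the contribution of the rows $|m|\ge2$ is exponentially small, of order $e^{-2\pi\sqrt3\alpha}$. The goal is then the inequality
\begin{equation}\nonumber
\frac{2\pi\alpha}{y^2}\sum_{n}n^2e^{-\pi\alpha n^2/y}\;\ge\;\Big|\partial_y\Big(2\sqrt{\tfrac{y}{\alpha}}e^{-\pi\alpha y}\vartheta(\tfrac{y}{\alpha};x)\Big)\Big|+(\text{tail}),
\end{equation}
to be checked in two regimes. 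When $y/\alpha$ is large, the $n=\pm1$ term $e^{-\pi\alpha/y}$ on the left is of order one, while the right side is of order $y\,e^{-\pi\alpha y}$. When $y/\alpha$ is bounded, I would use Lemma \ref{Lemma28} to switch each one-dimensional theta factor to whichever of the forms $\vartheta_3(t)$ or $\vartheta_3(1/t)$ converges fast, and then compare the leading exponentials explicitly.

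The main obstacle is that the inequality is sharp: by the invariance $z\mapsto-1/\bar z$, the gradient of $\theta(\alpha,\cdot)$ is normal to the arc $|z|=1$, and at $z=i$ (and at $e^{i\pi/3}$) one has $\partial_y\theta=0$. So no crude series bound can work near the arc. To handle this I would use the identity from the proof of Lemma \ref{LemmaA}: invariance under $z\mapsto -1/\bar z$ gives $x\,\partial_x\theta+y\,\partial_y\theta=0$ on $|z|=1$. Hence on the arc
\begin{equation}\nonumber
\partial_y\theta(\alpha,z)=-\frac{x}{y}\,\partial_x\theta(\alpha,z)\ge0
\end{equation}
by Montgomery's first lemma (Lemma \ref{Lemma22}). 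Off the arc, I would write $y=|z|\sin\phi$ and show that $\partial_y\theta$ is increasing in $|z|$ on a strip $1\le|z|\le1+\delta$, so that the arc bound propagates into the strip; the estimates above then cover $|z|\ge1+\delta$, where the gap is quantitative. Choosing $\delta$ and verifying the numerical constants in the intermediate region $\alpha\approx1$, $y\approx1$ is where I expect the real work to lie. I would first try $\delta$ of order $1/10$, with explicit truncation of the sums at $|n|\le2$.
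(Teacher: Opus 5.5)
The paper does not prove this lemma. It quotes it from Montgomery \cite{Mon1988} and then specialises it to $x=0$ to obtain Lemma~\ref{Lemma211}. Your sketch therefore has to stand on its own, and as written it does not. The step that carries all the difficulty is only announced: that $\partial_y\theta$ increases in $r=|z|$ on a strip $1\le|z|\le1+\delta$, so that the arc bound propagates inward. This is the mixed second-order inequality $\cos\phi\,\theta_{xy}+\sin\phi\,\theta_{yy}\ge0$, needed uniformly in $\alpha\ge1$ on a two-dimensional region. It is harder than the first-order statement you are proving, and nothing in the proposal addresses it.

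There is also a gap in the decomposition itself: the two pieces do not cover the domain.

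For $m\neq0$, the row $\sqrt{y/\alpha}\,e^{-\pi\alpha m^2y}\vartheta(\frac{y}{\alpha};mx)$ has $y$-derivative
$\frac{e^{-\pi\alpha m^2y}}{2\sqrt{\alpha y}}(1-2\pi\alpha m^2y)\,\vartheta+\frac{1}{\alpha}\sqrt{\frac{y}{\alpha}}\,e^{-\pi\alpha m^2y}\,\vartheta_X$.
Here the first coefficient is negative, $0\le\vartheta(X;Y)\le\vartheta_3(X)$, and $\vartheta_X(X;Y)\ge\vartheta_X(X;0)$. So your $Y$-uniform bounds are all attained at $x=0$ with the worst sign, and the most they can give is
\[
\partial_y\theta(\alpha,x+iy)\ \ge\ \partial_y\theta(\alpha,iy).
\]
Since $\theta(\alpha,iy)=\theta(\alpha,i/y)$ and $\theta(\alpha,it)$ increases for $t\ge1$, the right side vanishes at $y=1$ and is $\le0$ for $y<1$. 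Two consequences follow:
\begin{itemize}
\item On $\mathcal{D}_{\mathcal{G}}\cap\{y\le1\}$ these estimates certify nothing, however far the point is from the arc.
\item On $\{y>1\}$ they reduce exactly to the imaginary-axis statement, with no margin as $y\downarrow1$. That one-variable statement must be proved independently, because the paper deduces it from the present lemma.
\end{itemize}

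With $\delta\approx\frac{1}{10}$, the region $\{|z|\ge1+\delta\}$ still contains points with $y<1$, for example $z=0.49+0.99i$ with $|z|\approx1.105$. So your cover has a hole. To close it, the strip must contain the whole curvilinear triangle with vertices $i$, $e^{i\pi/3}$, $\frac12+i$. That forces $\delta\ge\frac{\sqrt5}{2}-1\approx0.118$, and the propagation claim would then have to hold up to $\frac12+i$, which is nowhere near the arc.

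What is actually missing is an $x$-dependent estimate on that triangle. There $x^2\ge1-y^2$. You would need to show that the gain $\partial_y\theta(\alpha,x+iy)-\partial_y\theta(\alpha,iy)$ beats $|\partial_y\theta(\alpha,iy)|=O(1-y)$ for all $\alpha\ge1$. That gain is nonnegative termwise and comparable to $(1-\cos2\pi x)$ times the weight of the $m=\pm1$ rows.

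Two minor slips:
\begin{itemize}
\item The identity $x\theta_x+y\theta_y=0$ says the gradient is tangent to the arc, not normal to it.
\item The correct formula is $\partial_y\vartheta_3(\frac{\alpha}{y})=\frac{2\pi\alpha}{y^2}\sum_{n\ge1}n^2e^{-\pi\alpha n^2/y}$; your version summed over all $n\in\mathbb{Z}$ and so double-counts.
\end{itemize}
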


Therefore, Lemma \ref{Lemma26} is proved. It remains to prove Lemma \ref{Lemma27}.
By the group invariance($z\mapsto\frac{1}{1-z}$), one has

\begin{lemma}[From arc to $\frac{1}{2}-$vertical line] \label{Lemma213} Assume that $\alpha, \beta>0$, it holds that
\begin{equation}\aligned\nonumber
\sqrt\beta\theta(\beta,z)-\sqrt\alpha\theta(\alpha,z)\mid_{|z|=1, \Re(z)\in[0,\frac{1}{2}]}
=\sqrt\beta\theta(\beta,\frac{1}{2}+iy')-\sqrt\alpha\theta(\alpha,\frac{1}{2}+iy'),\;  y'\in[\frac{1}{2},\frac{\sqrt3}{2}],
\endaligned\end{equation}
explicitly, $y'=\frac{1}{2}\sqrt\frac{1+\Re(z)}{1-\Re(z)}$. In particular,
\begin{equation}\aligned\label{P100}
\sqrt\beta\theta(\beta,i)-\sqrt\alpha\theta(\alpha,i)
=\sqrt\beta\theta(\beta,\frac{1}{2}+i\frac{1}{2})-\sqrt\alpha\theta(\alpha,\frac{1}{2}+i\frac{1}{2}).
\endaligned\end{equation}

\end{lemma}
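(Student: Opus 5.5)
The plan is to show that the M\"obius map $\gamma(z)=\frac{1}{1-z}$ belongs to the modular group, that each $\theta(\alpha,\cdot)$ is invariant under it, and then to compute explicitly where $\gamma$ sends the arc $\{|z|=1,\ 0\le \Re(z)\le \frac12\}$. Since the identity is claimed pointwise for the linear combination $\sqrt\beta\theta(\beta,z)-\sqrt\alpha\theta(\alpha,z)$ with fixed coefficients, invariance of each theta function under $\gamma$ suffices. No restriction on $\alpha,\beta>0$ is needed.

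\emph{Step 1: invariance.} Write $\gamma(z)=\frac{az+b}{cz+d}$ with $(a,b,c,d)=(0,1,-1,1)$, so $ad-bc=1$. For $z'=\frac{az+b}{cz+d}$ we have $\Im(z')=\frac{\Im(z)}{|cz+d|^2}$ and
\[
mz'+n=\frac{(ma+nc)z+(mb+nd)}{cz+d}.
\]
Hence
\[
\frac{|mz'+n|^2}{\Im(z')}=\frac{|m'z+n'|^2}{\Im(z)},\qquad (m',n')=(ma+nc,\;mb+nd).
\]
The map $(m,n)\mapsto(m',n')$ is a bijection of $\mathbb{Z}^2$ because the matrix is unimodular. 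Reindexing the absolutely convergent sum in \eqref{thetas} therefore gives $\theta(\alpha,\gamma(z))=\theta(\alpha,z)$ for every $\alpha>0$. This is also the standard $\mathcal{G}$-invariance noted above, since $\gamma$ is a composition of $\tau\mapsto-1/\tau$ and $\tau\mapsto\tau+1$.

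\emph{Step 2: image of the arc.} Parametrize $z=e^{i\varphi}$ with $\varphi\in[\pi/3,\pi/2]$; this is exactly the range where $\Re(z)=\cos\varphi\in[0,\frac12]$. Using $1-e^{i\varphi}=-2i\sin(\varphi/2)e^{i\varphi/2}$, I compute
\[
\frac{1}{1-e^{i\varphi}}=\frac{i e^{-i\varphi/2}}{2\sin(\varphi/2)}=\frac12+\frac{i}{2}\cot\frac{\varphi}{2}.
\]
So $y'=\frac12\cot\frac{\varphi}{2}=\frac12\sqrt{\frac{1+\cos\varphi}{1-\cos\varphi}}=\frac12\sqrt{\frac{1+\Re(z)}{1-\Re(z)}}$, which is the stated formula.

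\emph{Step 3: range and endpoints.} As $\varphi$ runs over $[\pi/3,\pi/2]$, $\cot(\varphi/2)$ decreases monotonically from $\sqrt3$ to $1$. Hence $y'$ runs over $[\frac12,\frac{\sqrt3}{2}]$, and the arc maps bijectively onto the segment $\{\frac12+iy'\}$. The case $z=i$ ($\varphi=\pi/2$) gives $y'=\frac12$, which yields \eqref{P100}.

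Combining Steps 1 and 2 gives the claimed equality for the difference. There is no real obstacle here: the only points needing care are the sign and branch in the half-angle computation, and checking that $\gamma$ is genuinely unimodular, so that the lattice reindexing is a bijection.
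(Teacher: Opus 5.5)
Your proof is correct and takes the same route as the paper, which justifies the lemma solely by the invariance of $\theta(\alpha,\cdot)$ under $z\mapsto\frac{1}{1-z}$. Your unimodular reindexing of the lattice sum and the half-angle computation $\frac{1}{1-e^{i\varphi}}=\frac12+\frac{i}{2}\cot\frac{\varphi}{2}$ fill in the details the paper leaves implicit.
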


In fact, one has

\begin{lemma}\label{Lemma214}
 Assume that $\alpha, \beta>0$, it holds that
\begin{equation}\aligned\nonumber
\frac{\partial}{\partial y}\Big(\sqrt\beta\theta(\beta,\frac{1}{2}+iy)-\sqrt\alpha\theta(\alpha,\frac{1}{2}+iy)
\Big)\geq0\;\;\hbox{for}\;\;
y\in[\frac{1}{2},\frac{\sqrt3}{2}].
\endaligned\end{equation}
\end{lemma}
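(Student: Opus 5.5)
The plan is to transport Lemma \ref{Lemma24} along the arc to the vertical line, using the correspondence of Lemma \ref{Lemma213}. Throughout I take $\beta>\alpha\geq1$, as in the surrounding lemmas; for $\beta<\alpha$ the inequality is reversed, so this is the intended range. Set
$$F(z):=\sqrt\beta\,\theta(\beta,z)-\sqrt\alpha\,\theta(\alpha,z).$$
By Lemma \ref{Lemma213}, the point $e^{i\phi}$ with $\phi\in[\pi/3,\pi/2]$ corresponds to $\frac12+iy'$ with
$$y'=\frac12\sqrt{\frac{1+\cos\phi}{1-\cos\phi}}=\frac12\cot\frac{\phi}{2}.$$
This map is a smooth, strictly decreasing bijection from $[\pi/3,\pi/2]$ onto $[\frac12,\frac{\sqrt3}{2}]$. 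Since $F(\frac12+iy')=F(e^{i\phi})$, the chain rule gives
$$\partial_{y}F\big(\tfrac12+iy\big)\ge0 \iff \partial_\phi F(e^{i\phi})\le0\quad\hbox{on }[\pi/3,\pi/2].$$
So it suffices to show that $F$ is nonincreasing in the angle along the arc $\Gamma_2$.

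For that I repeat the argument of Lemma \ref{LemmaA}, with $F$ in place of the ratio. The function $F$ is $\mathcal G$-invariant, so $F(re^{i\phi})=F(r^{-1}e^{i\phi})$. Differentiating in $r$ at $r=1$ gives
$$\cos\phi\,\partial_xF+\sin\phi\,\partial_yF=0\quad\hbox{on }\Gamma_2 .$$
Substituting this into $\partial_\phi F=-\sin\phi\,\partial_xF+\cos\phi\,\partial_yF$ yields
$$\partial_\phi F(e^{i\phi})=-\frac{1}{\sin\phi}\,\partial_xF(e^{i\phi}).$$
Lemma \ref{Lemma24} gives $\partial_xF\ge0$ on $\mathcal D_{\mathcal G}$. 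Because $F$ is real-analytic on $\mathbb H$, this extends by continuity to $\overline{\mathcal D_{\mathcal G}}$, which contains $\Gamma_2$. Since $\sin\phi>0$ on the arc, we get $\partial_\phi F\le0$, and hence the claim.

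The main point needing care is the endpoint behaviour. At $\phi=\pi/2$, i.e.\ $z=i$, one also has $\partial_xF=0$ by the reflection $z\mapsto-\bar z$, so the inequality still holds there, trivially. More importantly, everything rests on Lemma \ref{Lemma24}, which in turn rests on Lemma \ref{Lemma23} holding up to the boundary arc of $\mathcal D_{\mathcal G}$. If closure-continuity were in doubt, I would check directly that the series for $\partial_x\partial_s(\sqrt s\,\theta(s,z))$ converges uniformly near $\Gamma_2$. That uniform convergence makes the extension immediate.
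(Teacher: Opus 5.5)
Your argument is correct, but it takes a different route from the paper.

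The paper works directly on the segment $\Re z=\frac12$. It writes $F=\int_\alpha^\beta\partial_s(\sqrt s\,\theta(s,\frac12+iy))\,ds$ and invokes Lemma~\ref{Lemma215} (item (2) of Lemma~\ref{LemmaPPP}). That lemma is a separate sign estimate for the mixed derivative $\partial_y\partial_s(\sqrt s\,\theta(s,\frac12+iy))$ on $y\in[\frac12,\frac{\sqrt3}{2}]$, and its proof is only said to be similar to one in \cite{Luo2023a} and is omitted.

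You instead use $\mathcal G$-invariance twice. The map $z\mapsto 1/(1-z)$ carries $\Gamma_2$ onto the segment, and $z\mapsto 1/\bar z$ fixes $\Gamma_2$ pointwise, which kills the radial derivative there. Together these amount to the identity $\partial_yF(\frac12+iy')=2\tan(\phi/2)\,\partial_xF(e^{i\phi})$. This is the device of Lemma~\ref{LemmaA}, and it makes Lemma~\ref{Lemma214} a corollary of Lemma~\ref{Lemma24}, hence of Lemma~\ref{Lemma23}, which the paper needs anyway for Proposition~\ref{Prop1A}. There is no circularity, since Lemma~\ref{Lemma24} comes from Lemma~\ref{Lemma23} alone. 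Passing from the open domain to the closed arc is just continuity.

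What your route buys:
\begin{itemize}
\item It removes an unproved ingredient. The same identity applied to the $\mathcal G$-invariant function $\partial_s(\sqrt s\,\theta(s,z))$ shows that item (2) of Lemma~\ref{LemmaPPP} itself follows from Lemma~\ref{Lemma23}.
\item It shortcuts Lemma~\ref{Lemma27}: $\partial_\phi F\le0$ on $\Gamma_2$ gives $F(e^{i\phi})\ge F(i)\ge0$ directly, with no passage to the vertical line.
\end{itemize}
The paper's route, in exchange, rests on an estimate living entirely on the line $\Re z=\frac12$, independent of the interior transversal monotonicity of Lemma~\ref{Lemma23}.

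You are also right to impose $\beta>\alpha\ge1$. The stated hypothesis $\alpha,\beta>0$ is too weak, and the paper's own proof also needs $s\ge1$ on $[\alpha,\beta]$. Since $\sqrt s\,\theta(s,z)=\theta(\frac1s,z)/\sqrt s$, the inequality reverses, for instance, when $\alpha<\beta<1$.
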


Lemma \ref{Lemma214} is proved by fundamental theorem of calculus
\begin{equation}\aligned\nonumber
\sqrt\beta\theta(\beta,\frac{1}{2}+iy)-\sqrt\alpha\theta(\alpha,\frac{1}{2}+iy)
=\int_\alpha^\beta\frac{\partial}{\partial s}\Big(\sqrt s\theta(s,\frac{1}{2}+iy)\Big)ds.
\endaligned\end{equation}

\begin{lemma}[(2) of Lemma \ref{LemmaPPP}]\label{Lemma215}
For $s\geq1$,
\begin{equation}\aligned\nonumber
\frac{\partial}{\partial y}\frac{\partial}{\partial s}\Big(\sqrt s\theta(s,\frac{1}{2}+iy)\Big)\geq0\;\;\hbox{for}\;\;y\in[\frac{1}{2},\frac{\sqrt3}{2}].
\endaligned\end{equation}

\end{lemma}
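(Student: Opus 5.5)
We must prove Lemma \ref{Lemma215}: for $s\geq1$, $\partial_y\partial_s(\sqrt s\,\theta(s,\tfrac12+iy))\geq0$ on $y\in[\tfrac12,\tfrac{\sqrt3}{2}]$.

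The plan is to write the theta function on the line $\Re(z)=\tfrac12$ explicitly and differentiate term by term. On $x=\tfrac12$, Lemma \ref{Lemma21} gives
\[
\sqrt s\,\theta(s,\tfrac12+iy)=\sqrt y\sum_{n\in\mathbb{Z}} e^{-\pi s y n^2}\vartheta(\tfrac{y}{s};\tfrac n2).
\]
Since the range $y\in[\tfrac12,\tfrac{\sqrt3}{2}]$ is bounded and $y/s\le \sqrt3/2$ is small, the inner one-dimensional theta converges slowly. I will therefore first apply Poisson summation (Jacobi's transformation) in the inner variable to get
\[
\vartheta(X;Y)=X^{-1/2}\sum_{k}e^{-\pi(k-Y)^2/X}.
\]
This yields the rapidly convergent form
\[
\sqrt s\,\theta(s,\tfrac12+iy)=s\sum_{n,k}e^{-\pi s\left(yn^2+(k-n/2)^2/y\right)}.
\]
(Equivalently, this follows directly from the definition: $|m z+n|^2/y$ with $z=\tfrac12+iy$ becomes $(n+m/2)^2/y+m^2y$.)

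Set $Q_{n,k}(y)=yn^2+(k-n/2)^2/y$, so that
\[
F(s,y):=\sqrt s\,\theta=s\sum e^{-\pi sQ},\qquad
\partial_sF=\sum(1-\pi sQ)e^{-\pi sQ}.
\]
Differentiating in $y$ gives
\[
\partial_y\partial_sF=\sum \pi Q'(y)\,e^{-\pi sQ}\,(\pi sQ-2),\qquad Q'=n^2-(k-n/2)^2/y^2.
\]
The $n=0$, $k=0$ term vanishes. The dominant nonzero terms are the minimal vectors: $(n,k)=(0,\pm1)$ with $Q=1/y$ and $Q'=-1/y^2$, and $(n,k)=(\pm1,0),(\pm1,1)$ with $Q=y+1/(4y)$ and $Q'=1-1/(4y^2)$.

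The sign structure is the key point. The four vectors with $n=\pm1$ contribute positively, since $Q'\geq0$ for $y\geq\tfrac12$, and $\pi sQ-2>0$ because $Q\geq1$ on the range and $\pi s\geq\pi>2$. The two vectors $(0,\pm1)$ contribute negatively. I will group by norm and show that
\[
4\,(1-\tfrac1{4y^2})(\pi sQ_1-2)e^{-\pi sQ_1}\;\ge\;2\,\tfrac1{y^2}(\pi s/y-2)e^{-\pi s/y},\qquad Q_1=y+\tfrac1{4y}.
\]
Since $Q_1\le 1/y$ on $[\tfrac12,\tfrac{\sqrt3}2]$ (with equality at $y=\sqrt3/2$), the exponential factor favors the positive side.

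The main obstacle is near $y=\tfrac12$, where $Q'=0$ for the $n=\pm1$ terms. There the point $\tfrac12+\tfrac i2$ is equivalent to $i$, and the negative term $(0,\pm1)$ with $Q=2$ must be compensated by other shells. I would handle this by pairing each vector with its image under the symmetry $z\mapsto 1/(1-z)$ behind Lemma \ref{Lemma213}, or by using that $\partial_y$ at $y=\tfrac12$ corresponds to a derivative along the arc near $i$. Concretely, I will list the shells with $Q\le 4$ (for example $(\pm2,1)$ has $Q=2y$, and $(\pm1,\pm\cdot)$ with $Q=y+9/(4y)$, and so on), balance them explicitly, and bound the tail $\sum_{Q>4}$ by $C e^{-4\pi s}$ uniformly for $s\ge1$. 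Since every term carries the factor $e^{-\pi sQ}$ and ratios of exponentials only improve as $s$ grows, it suffices to check the inequality at $s=1$ together with monotonicity in $s$ of each normalized ratio. That reduces the problem to a finite numerical verification on a compact $y$-interval, which I expect to be the delicate part at the endpoint $y=\tfrac12$.
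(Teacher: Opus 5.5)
Your proposal has a genuine gap. You never use the fact that the quantity in the lemma vanishes identically at $s=1$, and because of this a ``dominant shell plus tail bound'' argument cannot work for $s$ near $1$.

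Two-dimensional Poisson summation gives $\theta(s,z)=\frac1s\theta(\frac1s,z)$; the paper uses this in its deformations. Hence $\Psi(s):=\sqrt s\,\theta(s,z)$ satisfies $\Psi(s)=\Psi(1/s)$, and so $\partial_s\Psi|_{s=1}=0$ for every $z$. In your notation, after correcting the normalization (see below),
\[
\partial_y\partial_s\Psi=\frac{\pi\sqrt s}{2}\sum_{v}Q_v'(y)\,(2\pi sQ_v-3)\,e^{-\pi sQ_v},
\]
and at $s=1$ this sum is exactly $0$ for every $y$: all shells cancel.

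Consequences for your plan:
\begin{itemize}
\item Your key inequality between the shell $n=\pm1$ and the vectors $(0,\pm1)$ is false at $s=1$. At $y=0.7$ it reads about $0.258$ versus $0.274$, or $0.093$ versus $0.114$ with your weights $\pi sQ-2$.
\item The deficit is made up exactly by the $(\pm2,\pm1)$ terms (about $+0.018$) together with farther shells.
\item The reduction ``check at $s=1$ and use monotonicity in $s$ of the normalized ratios'' therefore collapses. At $s=1$ there is only an exact identity involving every shell, which no truncation with a tail bound $Ce^{-4\pi s}$ can verify. For $s=1+\varepsilon$ the target is $O(\varepsilon)$, while your truncation error is not.
\item The leading coefficient reverses your sign picture:
\[
\partial_s\big(\partial_y\partial_s\Psi\big)\big|_{s=1}=\frac{\pi}{2}\sum_v Q_v'\,\pi Q_v\,(5-2\pi Q_v)\,e^{-\pi Q_v}.
\]
Every term has sign opposite to $Q_v'$, since $2\pi Q_v>5$. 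So near $s=1$ positivity comes from the $(0,\pm1)$ vectors, and the shell $n=\pm1$ works against it (at $y=0.7$, roughly $+0.82$ against $-0.39$).
\end{itemize}

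What is missing is a step that extracts the factor $s-1$. For example, write $\partial_s\Psi(s)=\int_1^s\partial_t^2\Psi\,dt$ and analyse $\partial_y\partial_t^2\Psi$ on some range $1\le t\le s_0$; its shell structure is different. Shell domination can then be kept only for $s\ge s_0$. (The paper itself gives no computation here; it defers to the method of Proposition 4.1 in \cite{Luo2023a}.)

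Three further points need repair.
\begin{itemize}
\item The quantity vanishes for every $s$ at both endpoints. The points $\frac12+\frac i2$ and $\frac12+\frac{\sqrt3}{2}i$ are equivalent to $i$ and $e^{i\pi/3}$, which are critical points of every invariant function. So you need first-order comparisons in $y$ at both ends, not only at $y=\frac12$.
\item At $y=\frac{\sqrt3}{2}$ your two main shells tie exactly, so ``the exponential favors the positive side'' proves nothing there. For $1\le s<1.016$ their difference even has the wrong sign just below $\frac{\sqrt3}{2}$.
\item Directly from the definition, $\sqrt s\,\theta(s,\frac12+iy)=\sqrt s\sum_v e^{-\pi sQ_v}$, not $s\sum_v e^{-\pi sQ_v}$. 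This changes your weight $\pi sQ-2$ to $2\pi sQ-3$. Also, the vectors $(\pm2,\pm1)$ have $Q=4y$, not $2y$. Together with $(0,\pm1)$ they form the second shell of the square lattice at $y=\frac12$, where the two groups cancel exactly.
\end{itemize}
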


On the other hand, by Lemma \ref{Lemma26},
\begin{equation}\aligned\label{P101}
\sqrt\beta\theta(\beta,i)-\sqrt\alpha\theta(\alpha,i)
\geq0\;\;\hbox{for}\;\;\beta>\alpha\geq1.
\endaligned\end{equation}
This and \eqref{P100} in Lemma \ref{Lemma213} implies that
\begin{equation}\aligned\label{P102}
\sqrt\beta\theta(\beta,\frac{1}{2}+i\frac{1}{2})-\sqrt\alpha\theta(\alpha,\frac{1}{2}+i\frac{1}{2})\geq0\;\;\hbox{for}\;\;\beta>\alpha\geq1.
\endaligned\end{equation}
Therefore, \eqref{P102} and Lemmas \ref{Lemma213}, \ref{Lemma214} yield Lemma \ref{Lemma27}.

 We are in a position to prove the main result (Proposition \ref{Prop1A}) in this subsection.
\begin{proof}{\bf Proof of Proposition \ref{Prop1A}.}
The key is to use a new but equivalent ratio form,
$$\frac{\sqrt\beta\theta(\beta,z)}{\sqrt\alpha\theta(\alpha,z)}.$$
It suffices to prove that
\begin{equation}\aligned\nonumber
\frac{\partial}{\partial x}\frac{\sqrt\beta\theta(\beta,z)}{\sqrt\alpha\theta(\alpha,z)}\geq0\;\;\hbox{for}\;\;z\in\mathcal{D}_{\mathcal{G}}.
\endaligned\end{equation}
A direct calculation shows that

\begin{equation}\aligned\nonumber
\frac{\partial}{\partial x}\frac{\sqrt\beta\theta(\beta,z)}{\sqrt\alpha\theta(\alpha,z)}=
\frac{\frac{\partial}{\partial x}(\sqrt\beta\theta(\beta,z))\sqrt\alpha\theta(\alpha,z)
-\frac{\partial}{\partial x}(\sqrt\alpha\theta(\alpha,z))\sqrt\beta\theta(\beta,z)
}
{\alpha \theta^2(\alpha,z)}.
\endaligned\end{equation}
Then it is also equivalent to proving that

\begin{equation}\aligned\label{K0}
\frac{\partial}{\partial x}(\sqrt\beta\theta(\beta,z))\sqrt\alpha\theta(\alpha,z)
-\frac{\partial}{\partial x}(\sqrt\alpha\theta(\alpha,z))\sqrt\beta\theta(\beta,z)\geq0\;\;\hbox{for}\;\;z\in\mathcal{D}_{\mathcal{G}}.
\endaligned\end{equation}

Regrouping the terms, we get that
\begin{equation}\aligned\nonumber
&\frac{\partial}{\partial x}(\sqrt\beta\theta(\beta,z))\sqrt\alpha\theta(\alpha,z)
-\frac{\partial}{\partial x}(\sqrt\alpha\theta(\alpha,z))\sqrt\beta\theta(\beta,z)\\
=&\frac{\partial}{\partial x}(\sqrt\beta\theta(\beta,z))\sqrt\alpha\theta(\alpha,z)
-\frac{\partial}{\partial x}(\sqrt\beta\theta(\beta,z))\sqrt\beta\theta(\beta,z)\\
&+\frac{\partial}{\partial x}(\sqrt\beta\theta(\beta,z))\sqrt\beta\theta(\beta,z)
-\frac{\partial}{\partial x}(\sqrt\alpha\theta(\alpha,z))\sqrt\beta\theta(\beta,z).
\endaligned\end{equation}

Then it  holds

\begin{equation}\aligned\nonumber
&\frac{\partial}{\partial x}(\sqrt\beta\theta(\beta,z))\sqrt\alpha\theta(\alpha,z)
-\frac{\partial}{\partial x}(\sqrt\alpha\theta(\alpha,z))\sqrt\beta\theta(\beta,z)\\
=&\sqrt\beta\frac{\partial}{\partial x}(\theta(\beta,z))
\Big(
\sqrt\alpha\theta(\alpha,z)
-\sqrt\beta\theta(\beta,z)
\Big)
+\sqrt\beta\theta(\beta,z)\frac{\partial}{\partial x}\Big(
\sqrt\beta\theta(\beta,z)-\sqrt\alpha\theta(\alpha,z)
\Big).
\endaligned\end{equation}
To simplify the expression, let
\begin{equation}\aligned\nonumber
\mathcal{B}_a(\alpha,\beta,z):&=\sqrt\beta\frac{\partial}{\partial x}(\theta(\beta,z))
\Big(
\sqrt\alpha\theta(\alpha,z)
-\sqrt\beta\theta(\beta,z)
\Big)\\
\mathcal{B}_b(\alpha,\beta,z):&=
\sqrt\beta\theta(\beta,z)\frac{\partial}{\partial x}\Big(
\sqrt\beta\theta(\beta,z)-\sqrt\alpha\theta(\alpha,z)
\Big).
\endaligned\end{equation}

Then

\begin{equation}\aligned\label{K1}
\frac{\partial}{\partial x}(\sqrt\beta\theta(\beta,z))\sqrt\alpha\theta(\alpha,z)
-\frac{\partial}{\partial x}(\sqrt\alpha\theta(\alpha,z))\sqrt\beta\theta(\beta,z)
=\mathcal{B}_a(\alpha,\beta,z)+\mathcal{B}_b(\alpha,\beta,z).
\endaligned\end{equation}

On the other hand, by Lemmas \ref{Lemma22} and \ref{Lemma25},
\begin{equation}\aligned\label{K2}
\mathcal{B}_a(\alpha,\beta,z)\geq0.
\endaligned\end{equation}

And similarly by Lemma \ref{Lemma24},

\begin{equation}\aligned\label{K3}
\mathcal{B}_b(\alpha,\beta,z)\geq0.
\endaligned\end{equation}
\eqref{K1}, \eqref{K2} and \eqref{K3} yield \eqref{K0}. These complete the proof.

\end{proof}

\subsection{Monotonicity on the $\frac{1}{2}-$Vertical line}

In this subsection, we aim to prove that
\begin{proposition}\label{Prop1B} Assume that $\beta>\alpha\geq1$. Then
\begin{equation}\aligned\nonumber
\frac{\partial}{\partial y}\frac{\theta(\beta,z)}{\theta(\alpha,z)}\mid_{\Re(z)=\frac{1}{2}}\leq0\;\;\hbox{for}\;\;\Im(z)\geq\frac{\sqrt3}{2}.
\endaligned\end{equation}

\end{proposition}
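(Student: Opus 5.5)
We mimic the proof of Proposition \ref{Prop1A}, with $\partial_x$ replaced by $\partial_y$ along $\Re(z)=\frac12$, $\Im(z)\geq\frac{\sqrt3}{2}$, and again work with the rescaled ratio $\frac{\sqrt\beta\theta(\beta,z)}{\sqrt\alpha\theta(\alpha,z)}$. It has the same monotonicity as the original ratio, since the factor is constant.

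The quantity to sign is the numerator of its $y$-derivative. We split it exactly as in \eqref{K1}:
\begin{equation}\nonumber
\partial_y(\sqrt\beta\theta_\beta)\sqrt\alpha\theta_\alpha-\partial_y(\sqrt\alpha\theta_\alpha)\sqrt\beta\theta_\beta
=\sqrt\beta\,\partial_y\theta_\beta\,(\sqrt\alpha\theta_\alpha-\sqrt\beta\theta_\beta)+\sqrt\beta\theta_\beta\,\partial_y(\sqrt\beta\theta_\beta-\sqrt\alpha\theta_\alpha).
\end{equation}
Here $\theta_\alpha=\theta(\alpha,\frac12+iy)$. We want the sum to be $\leq0$, so three ingredients are needed on $\Gamma_3$.

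\textbf{(i) A one-sided lower bound.} We show $\sqrt\beta\theta(\beta,z)\geq\sqrt\alpha\theta(\alpha,z)$ on $\Gamma_3$. Lemma \ref{Lemma25} gives this on $\mathcal{D}_{\mathcal{G}}$, and it extends to the closure by continuity.

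\textbf{(ii) Montgomery-type monotonicity.} We need $\partial_y\theta(\beta,\frac12+iy)\geq0$ for $y\geq\frac{\sqrt3}{2}$. This is Lemma \ref{Lemma212} (Montgomery's second lemma) on $\overline{\mathcal{D}_{\mathcal{G}}}$, again by continuity up to the boundary line $\Re z=\frac12$. Together with (i), the first term is $\leq0$.

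\textbf{(iii) The key new estimate.} We need
\begin{equation}\nonumber
\partial_y\big(\sqrt\beta\theta(\beta,\tfrac12+iy)-\sqrt\alpha\theta(\alpha,\tfrac12+iy)\big)\leq0,\qquad y\geq\tfrac{\sqrt3}{2}.
\end{equation}
By the fundamental theorem of calculus, as in \eqref{F123}, it suffices to prove
\begin{equation}\nonumber
\partial_y\partial_s\big(\sqrt s\,\theta(s,\tfrac12+iy)\big)\leq0\quad\text{for } s\geq1,\ y\geq\tfrac{\sqrt3}{2}.
\end{equation}
This is the counterpart of Lemma \ref{Lemma215}, which handles $y\in[\frac12,\frac{\sqrt3}{2}]$ with the opposite sign; presumably it is another part of the same Lemma \ref{LemmaPPP}.

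To prove it, expand using Lemma \ref{Lemma21} at $x=\frac12$:
\begin{equation}\nonumber
\sqrt s\,\theta(s,\tfrac12+iy)=\sqrt y\sum_n e^{-\pi s y n^2}\vartheta\big(\tfrac ys;\tfrac n2\big).
\end{equation}
Then apply Poisson summation in the inner sum so that all terms decay fast when $y\geq\frac{\sqrt3}{2}$ and $s\geq1$. The argument isolates the leading exponentials, namely the $n=\pm1$ terms and the first dual terms, carrying $e^{-\pi sy}$ and $e^{-\pi y/s}$ factors. One computes the mixed derivative of these leading terms explicitly and bounds the tails by geometric series. The sign is governed by $y^{1/2}\cdot\partial_s$ of the dominant term, which, after the $\frac n2$ twist, produces alternating signs $(-1)^n$ that make it decreasing in $y$.

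\textbf{Main obstacle.} Step (iii) is the real difficulty: a uniform sign of a mixed second derivative over the noncompact range $s\geq1$, $y\geq\frac{\sqrt3}{2}$. I would split this range into two parts. Where $sy$ is large, the single term $e^{-\pi sy}$ (times a polynomial) dominates. Near the corner $s=1$, $y=\frac{\sqrt3}{2}$, explicit error bounds with a few more terms are needed. If (iii) failed near the corner, I would fall back on a direct Wronskian-type comparison using the heat equation satisfied by $\sqrt s\,\theta(s,\cdot)$. Finally, combining (i)–(iii), both terms in the splitting are $\leq0$, which yields the proposition.
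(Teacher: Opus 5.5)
Your proposal is correct and matches the paper's proof. The paper uses exactly your three ingredients: Lemma~\ref{Lemma25}, Montgomery's Lemma~\ref{Lemma212}, and item (1) of Lemma~\ref{LemmaPPP}, which is precisely your step (iii); the paper imports that lemma from \cite{Luo2023a} rather than re-proving it, so your Poisson-summation sketch is unnecessary. The only cosmetic difference is the weighting of the splitting: the paper weights by $\sqrt\alpha\,\theta(\alpha,z)$ (its terms $\mathcal{H}_a,\mathcal{H}_b$, using $\partial_y\theta(\alpha,z)\geq0$), whereas you weight by $\sqrt\beta\,\theta(\beta,z)$ as in \eqref{K1}; both identities are valid and give the same sign.
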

To prove Proposition \ref{Prop1B}, we establish one more auxiliary lemma except those in previous subsection.
\begin{lemma}\label{Lemma216} Assume that $\beta>\alpha\geq1$. Then
\begin{equation}\aligned\nonumber
\frac{\partial}{\partial y}\Big(\sqrt{\beta}\theta(\beta,z)-\sqrt{\alpha}\theta(\alpha,z)\Big)\mid_{\Re(z)=\frac{1}{2}}\leq0\;\;\hbox{for}\;\;\Im(z)\geq\frac{\sqrt3}{2}.
\endaligned\end{equation}

\end{lemma}
Via the deformation,
\begin{equation}\aligned\nonumber
\frac{\partial}{\partial y}\Big(\sqrt{\beta}\theta(\beta,z)-\sqrt{\alpha}\theta(\alpha,z)\Big)
=\frac{\partial}{\partial y}\int_\alpha^\beta \frac{\partial}{\partial s}(\sqrt{s}\theta(s,z))ds\\
=\int_\alpha^\beta \frac{\partial^2}{\partial y\partial s}(\sqrt{s}\theta(s,z))ds,
\endaligned\end{equation}
Lemma \ref{Lemma216} is deduced by item (1) in Lemma \ref{LemmaPPP}, which is proved by our previous paper \cite{Luo2023a}. In fact,
item (1) in Lemma \ref{LemmaPPP} is followed by Proposition 4.1 in \cite{Luo2023a}, the proof of items (2), (3) is similar, hence we omit the detail here.
\begin{lemma}[\cite{Luo2023a}]\label{LemmaPPP} Assume that $s\geq1$. Then
\begin{itemize}
  \item [(1)] $
\frac{\partial^2}{\partial y\partial s}\Big(\sqrt{s}\theta(s,z)\Big)\mid_{\Re(z)=\frac{1}{2}}\leq0\;\;\hbox{for}\;\;\Im(z)\geq\frac{\sqrt3}{2}.
$
  \item [(2)] $
\frac{\partial^2}{\partial y\partial s}\Big(\sqrt{s}\theta(s,z)\Big)\mid_{\Re(z)=\frac{1}{2}}\geq0\;\;\hbox{for}\;\;\Im(z)\in[\frac{1}{2},\frac{\sqrt3}{2}].
$

  \item [(3)] $
\frac{\partial^2}{\partial y\partial s}\Big(\sqrt{s}\theta(s,z)\Big)\mid_{\Re(z)=0}\leq0\;\;\hbox{for}\;\;\Im(z)\geq1.
$
\end{itemize}

\end{lemma}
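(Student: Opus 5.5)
The plan is to make the function $F(s,y):=\sqrt{s}\,\theta(s,x+iy)$ explicit on the two vertical lines $x=0$ and $x=\frac12$, differentiate term by term in $s$ and then in $y$, and show that one or two leading exponentials dominate the sign. By Lemma \ref{Lemma21}, i.e. Poisson summation in the $n$-variable, we have
\begin{equation}\nonumber
\sqrt{s}\,\theta(s,x+iy)=\sqrt{y}\sum_{m,k\in\mathbb{Z}}e^{-\pi y\left(sm^2+\frac{k^2}{s}\right)}e^{2\pi i kmx}.
\end{equation}
Hence on $x=0$ we get $F=\sqrt y\,\vartheta_3(sy)\vartheta_3(y/s)$, in agreement with Lemma \ref{Lemma28}. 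On $x=\frac12$ the phases become $(-1)^{km}$. Differentiating in $s$ gives
\begin{equation}\nonumber
\partial_sF=\pi y^{3/2}\sum_{m,k}\Big(\frac{k^2}{s^2}-m^2\Big)\epsilon_{km}\,e^{-\pi y\left(sm^2+\frac{k^2}{s}\right)},
\end{equation}
where $\epsilon_{km}=1$ or $(-1)^{km}$. The function $\partial_y\partial_sF$ is then an explicit double series whose terms have the form $P(y)e^{-\pi y(sm^2+k^2/s)}$, with $P$ built from $y^{1/2}$ and $y^{3/2}$.

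For items (1) and (3), where $y\ge\frac{\sqrt3}{2}$ or $y\ge1$, I would first treat the regime in which $y/s$ is bounded below, say $y/s\ge c_0$. There the dominant term is $(m,k)=(0,\pm1)$, which contributes
\begin{equation}\nonumber
\frac{2\pi}{s^2}\,\partial_y\big(y^{3/2}e^{-\pi y/s}\big)=\frac{2\pi}{s^2}\,y^{1/2}e^{-\pi y/s}\Big(\frac32-\frac{\pi y}{s}\Big).
\end{equation}
This is negative once $y/s>\frac{3}{2\pi}$. The term $(m,k)=(\pm1,0)$ contributes $-2\pi\,\partial_y(y^{3/2}e^{-\pi sy})$, which is also negative because $sy\ge\frac{\sqrt3}{2}>\frac{3}{2\pi}$. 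All remaining terms I would bound by geometric tails, comparing each with the leading one through ratios like $e^{-3\pi y/s}$ and $e^{-\pi sy}$.

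In the complementary regime, where $y/s$ is small, the $k$-series converges slowly. There I would apply the Jacobi transformation $\vartheta_3(t)=t^{-1/2}\vartheta_3(1/t)$ to the $k$-sum, which corresponds to the second case of Lemma \ref{Lemma28}, and redo the leading-term analysis in the variable $s/y$. On $x=\frac12$ the transformed sum becomes a shifted theta series $\sum_n e^{-\pi s(n+1/2)^2/y}$. This is fine because its leading terms are still explicit, though the dominant contribution now comes from the pair $n=0,-1$. I would overlap the two regimes at some explicit $c_0$ of order $1$, and check the inequality numerically in the overlap window as a function of the single quantity $y/s$ with $s\ge1$.

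For item (2) the range $y\in[\frac12,\frac{\sqrt3}{2}]$ is compact, so only large $s$ is a genuinely asymptotic issue. The map $z\mapsto\frac{1}{1-z}$ used in Lemma \ref{Lemma213} sends this segment to the arc $\Gamma_2$, and $y=\frac12$ corresponds to $z=i$. I would exploit the expected sign reversal across $y=\frac{\sqrt3}{2}$, the fixed point of that map. Concretely, I would write $\partial_y\partial_sF$ on $x=\frac12$ in the transformed representation, where for $y\le\frac{\sqrt3}{2}$ the leading terms now have the opposite sign, and then again bound the tails.

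I expect the main obstacle to be the intermediate regime $y/s\approx\frac{3}{2\pi}$, where the leading factor $\frac32-\frac{\pi y}{s}$ degenerates and neither representation gives an obviously dominant term. There I would keep the first two or three terms exactly rather than only the leading one. I would also use that on the whole line the product form factorizes as $\sqrt y\,\vartheta_3(sy)\vartheta_3(y/s)$ on $x=0$, so that the sign reduces to a one-variable inequality for $\log\vartheta_3$ in the spirit of Montgomery's lemmas \ref{Lemma22} and \ref{Lemma212}. On $x=\frac12$, I would write it as the analogous sum of two products of one-dimensional theta functions and verify the resulting inequality with explicit error constants.
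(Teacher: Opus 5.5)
The paper gives no argument of its own for this lemma: item (1) is quoted from Proposition 4.1 of \cite{Luo2023a}, and (2), (3) are declared similar. Judged on its own, your ``one dominant exponential plus geometric tails'' scheme has a genuine gap, and it already shows up in your leading-term computation.

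The $(\pm1,0)$ term contributes $-2\pi\,\partial_y\bigl(y^{3/2}e^{-\pi sy}\bigr)=2\pi y^{1/2}e^{-\pi sy}\bigl(\pi sy-\tfrac32\bigr)$. This is \emph{positive} for $sy>\frac{3}{2\pi}$, so it opposes the $(0,\pm1)$ term rather than reinforcing it. The cancellation is structural. By the duality $\sqrt s\,\theta(s,z)=s^{-1/2}\theta(1/s,z)$ (the identity used to reduce Theorem \ref{Th1} to Theorem \ref{Th12}), $\partial_s(\sqrt s\,\theta(s,z))\equiv0$ at $s=1$ for every $z$. You can see this in your own formula: at $s=1$ the summand is antisymmetric under $m\leftrightarrow k$. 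So the $(0,\pm1)$ and $(\pm1,0)$ contributions cancel exactly at $s=1$, and nearly cancel for $s$ close to $1$, where $e^{-\pi y/s}$ and $e^{-\pi sy}$ are comparable. No term dominates there, and the tails are of the same order as the ``main'' term. You would first have to divide out this zero (e.g.\ pair $(m,k)$ with $(k,m)$ and extract a factor such as $1-s^{-2}$) and then control the resulting first-order coefficient.

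Second, $W:=\partial_s(\sqrt s\,\theta(s,\cdot))$ is $\mathcal G$-invariant, so $\partial_yW$ vanishes for \emph{every} $s$ at three points:
\begin{itemize}
\item at $z=i$, from $y\mapsto 1/y$ on $x=0$;
\item at $\frac12+\frac i2$, from $y\mapsto\frac1{4y}$ on $x=\frac12$, induced by $z\mapsto\bar z/(2\bar z-1)$;
\item at $e^{i\pi/3}$, the elliptic point.
\end{itemize}
These are exactly the endpoints of the ranges in (1)--(3). Near each endpoint, no bound of the form ``main term $>$ tail'' can hold in either of your representations. For example, in $\sqrt s\,\vartheta_3(sy)\vartheta_3(s/y)$ the two leading exponentials $e^{-\pi sy}$ and $e^{-\pi s/y}$ coincide at $y=1$, and $\partial_yW$ is exactly $0$ there. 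Since the expression depends on $sy$ and $y/s$ separately, a numerical check in the single variable $y/s$ cannot cover this doubly degenerate corner either.

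A proof therefore needs an extra mechanism near these points. One option is a one-sided bound for $\partial_y^2$ of the normalized quantity $W/(1-s^{-2})$ on a neighbourhood, integrated from the symmetric point. Another is an identity that exhibits the zero as a factor.

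For item (2) there is a clean route inside the paper that avoids all asymptotics:
\begin{itemize}
\item Lemma \ref{Lemma23} gives $\partial_xW\ge0$ on the arc $\Gamma_2$, by continuity.
\item Invariance under $z\mapsto1/\bar z$ gives $\partial_\theta W(e^{i\theta})=-\partial_xW/\sin\theta\le0$, as in the proof of Lemma \ref{LemmaA}.
\item The map $z\mapsto\frac1{1-z}$ (Lemma \ref{Lemma213}) gives $W(e^{i\theta})=W\bigl(\frac12+\frac i2\cot\frac\theta2\bigr)$, and $\cot\frac\theta2$ is decreasing.
\end{itemize}
Hence $\partial_yW(\frac12+iy)\ge0$ on $[\frac12,\frac{\sqrt3}2]$, with both endpoint zeros included automatically. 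The half-lines in (1) and (3) are not $\mathcal G$-images of the arc, which is why those items need the genuine estimate of \cite{Luo2023a}.
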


\begin{proof}{\bf Proof of Proposition \ref{Prop1B}.} Using the deformation,
\begin{equation}\aligned\nonumber
\sqrt{\frac{\beta}{\alpha}}\cdot\frac{\partial}{\partial y}\frac{\theta(\beta,z)}{\theta(\alpha,z)}
=\frac{\partial}{\partial y}\frac{\sqrt{\beta}\theta(\beta,z)}{\sqrt{\alpha}\theta(\alpha,z)}.
\endaligned\end{equation}
A direct calculation and deformation show that
\begin{equation}\aligned\nonumber
&(\sqrt{\alpha}\theta(\alpha,z))^2\cdot\frac{\partial}{\partial y}\frac{\sqrt{\beta}\theta(\beta,z)}{\sqrt{\alpha}\theta(\alpha,z)}\\
=&\frac{\partial}{\partial y}(\sqrt{\beta}\theta(\beta,z))\cdot(\sqrt{\alpha}\theta(\alpha,z))
-\frac{\partial}{\partial y}(\sqrt{\alpha}\theta(\alpha,z))\cdot(\sqrt{\beta}\theta(\beta,z))\\
=&\Big(\frac{\partial}{\partial y}(\sqrt{\beta}\theta(\beta,z))\cdot(\sqrt{\alpha}\theta(\alpha,z))
-\frac{\partial}{\partial y}(\sqrt{\alpha}\theta(\alpha,z))\cdot(\sqrt{\alpha}\theta(\alpha,z))\Big)\\
+&\Big(\frac{\partial}{\partial y}(\sqrt{\alpha}\theta(\alpha,z))\cdot(\sqrt{\alpha}\theta(\alpha,z))
-\frac{\partial}{\partial y}(\sqrt{\alpha}\theta(\alpha,z))\cdot(\sqrt{\beta}\theta(\beta,z))\Big)\\
=&\sqrt{\alpha}\theta(\alpha,z)\cdot\frac{\partial}{\partial y}\Big(\sqrt{\beta}\theta(\beta,z)-\sqrt{\alpha}\theta(\alpha,z)\Big)
+\frac{\partial}{\partial y}(\sqrt{\alpha}\theta(\alpha,z))\cdot\Big(\sqrt{\alpha}\theta(\alpha,z)-\sqrt{\beta}\theta(\beta,z)\Big).
\endaligned\end{equation}
For convenience, we denote that
\begin{equation}\aligned\nonumber
\mathcal{H}_a(\alpha,\beta,z):&=\sqrt{\alpha}\theta(\alpha,z)\cdot\frac{\partial}{\partial y}\Big(\sqrt{\beta}\theta(\beta,z)-\sqrt{\alpha}\theta(\alpha,z)\Big),\\
\mathcal{H}_b(\alpha,\beta,z):&=\frac{\partial}{\partial y}(\sqrt{\alpha}\theta(\alpha,z))\cdot\Big(\sqrt{\alpha}\theta(\alpha,z)-\sqrt{\beta}\theta(\beta,z)\Big).
\endaligned\end{equation}
Then
\begin{equation}\aligned\nonumber
(\sqrt{\alpha}\theta(\alpha,z))^2\cdot\frac{\partial}{\partial y}\frac{\sqrt{\beta}\theta(\beta,z)}{\sqrt{\alpha}\theta(\alpha,z)}
=\mathcal{H}_a(\alpha,\beta,z)+\mathcal{H}_b(\alpha,\beta,z).
\endaligned\end{equation}
By item (1) of Lemma \ref{LemmaPPP}, if $\beta>\alpha\geq1$, then
\begin{equation}\aligned\nonumber
\mathcal{H}_a(\alpha,\beta,z)\mid_{\Re(z)=\frac{1}{2}}\leq0\;\;\hbox{for}\;\;\Im(z)\geq\frac{\sqrt3}{2}.
\endaligned\end{equation}

By Lemmas \ref{Lemma25} and \ref{Lemma212}, if $\beta>\alpha\geq1$, then
\begin{equation}\aligned\nonumber
\mathcal{H}_b(\alpha,\beta,z)\mid_{\Re(z)=\frac{1}{2}}\leq0\;\;\hbox{for}\;\;\Im(z)\geq\frac{\sqrt3}{2}.
\endaligned\end{equation}
\end{proof}

Similar to the proof of Proposition \ref{Prop1B}, using Lemmas \ref{Lemma25}, \ref{Lemma212} and \ref{LemmaPPP} (item (3)), we have

\begin{proposition}\label{Prop1C} Assume that $\beta>\alpha\geq1$. Then
\begin{equation}\aligned\nonumber
\frac{\partial}{\partial y}\frac{\theta(\beta,z)}{\theta(\alpha,z)}\mid_{z=iy, \;y\geq1}\leq0.
\endaligned\end{equation}

\end{proposition}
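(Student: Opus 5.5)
We follow the scheme used for Proposition \ref{Prop1B}, now on the imaginary axis $\Gamma_1$. Since $\sqrt{\beta/\alpha}$ is a positive constant, it suffices to show
\begin{equation}\nonumber
\frac{\partial}{\partial y}\frac{\sqrt\beta\theta(\beta,iy)}{\sqrt\alpha\theta(\alpha,iy)}\leq0\quad\hbox{for}\;\;y\geq1 .
\end{equation}
Multiplying by $(\sqrt\alpha\theta(\alpha,iy))^2>0$ and using the same regrouping as before, the left side equals $\mathcal{H}_a(\alpha,\beta,iy)+\mathcal{H}_b(\alpha,\beta,iy)$. Here
\begin{equation}\nonumber
\mathcal{H}_a=\sqrt\alpha\theta(\alpha,iy)\,\frac{\partial}{\partial y}\Big(\sqrt\beta\theta(\beta,iy)-\sqrt\alpha\theta(\alpha,iy)\Big),
\end{equation}
and
\begin{equation}\nonumber
\mathcal{H}_b=\frac{\partial}{\partial y}\big(\sqrt\alpha\theta(\alpha,iy)\big)\,\Big(\sqrt\alpha\theta(\alpha,iy)-\sqrt\beta\theta(\beta,iy)\Big).
\end{equation}
We will show that each term is nonpositive for $y\geq1$.

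\emph{The term $\mathcal{H}_a$.} Write
\begin{equation}\nonumber
\sqrt\beta\theta(\beta,iy)-\sqrt\alpha\theta(\alpha,iy)=\int_\alpha^\beta\frac{\partial}{\partial s}\big(\sqrt s\theta(s,iy)\big)\,ds .
\end{equation}
Differentiating under the integral sign (the theta series converges uniformly with all derivatives on compact sets) gives the integrand $\frac{\partial^2}{\partial y\partial s}(\sqrt s\theta(s,z))|_{\Re(z)=0}$. For $s\in[\alpha,\beta]\subset[1,\infty)$ and $y\geq1$, this integrand is $\leq0$ by item (3) of Lemma \ref{LemmaPPP}. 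Since $\theta(\alpha,iy)>0$, we get $\mathcal{H}_a\leq0$.

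\emph{The term $\mathcal{H}_b$.} By Lemma \ref{Lemma25} we have $\sqrt\alpha\theta(\alpha,iy)-\sqrt\beta\theta(\beta,iy)\leq0$. On $\Gamma_1$, the boundary of $\mathcal{D}_{\mathcal{G}}$, this inequality is exactly Lemma \ref{Lemma26}; alternatively it follows by continuity. Next, $\frac{\partial}{\partial y}\theta(\alpha,iy)\geq0$ for $y\geq1$ and $\alpha\geq1$ by Montgomery's second lemma, Lemma \ref{Lemma212}, extended to the closure by continuity. Equivalently, this is Lemma \ref{Lemma211} via Lemma \ref{Lemma28}, with $\vartheta_3(\alpha y)\vartheta_3(\alpha/y)$ increasing in $y\geq1$. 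The product of a nonnegative and a nonpositive factor gives $\mathcal{H}_b\leq0$.

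Summing the two terms gives the claim. The only delicate point is item (3) of Lemma \ref{LemmaPPP}, the sign of the mixed derivative on the imaginary axis. The paper states it and says its proof parallels Proposition 4.1 of \cite{Luo2023a}, so we take it as given. If one wanted to verify it directly, the natural route is the product formula
\begin{equation}\nonumber
\sqrt s\,\theta(s,iy)=\sqrt y\,\vartheta_3(sy)\vartheta_3(y/s)
\end{equation}
from Lemma \ref{Lemma28}. One would then estimate $\partial_y\partial_s$ by keeping the leading exponential terms of $\vartheta_3$ and its Jacobi dual, splitting into the regimes $y/s$ large and $s/y$ large, as in \cite{Luo2023a}. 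That would be the main technical obstacle.
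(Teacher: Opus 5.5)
Your proposal is correct and is essentially the paper's own argument. The paper proves Proposition \ref{Prop1C} by repeating the $\mathcal{H}_a+\mathcal{H}_b$ decomposition from Proposition \ref{Prop1B}: item (3) of Lemma \ref{LemmaPPP} gives $\mathcal{H}_a\leq0$, and Lemmas \ref{Lemma25} and \ref{Lemma212} give $\mathcal{H}_b\leq0$. Your use of Lemmas \ref{Lemma26} and \ref{Lemma211} (or continuity) on $\Gamma_1$, which lies on the boundary of the open domain $\mathcal{D}_{\mathcal{G}}$, is a small but correct tightening of the paper's citations.
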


%%%%%%%%%%%%%%%%%%%%%%%%%%%%%%%%%%%%%%%%%%%%%%%%%%%%%%%%%%%%%%%%%%%%%%%%%%%%%%%%%%%%%%%%%%%%%%%%%%%%
%%%%%%%%%%%%%%%%%%%%%%%%%%%%%%%%%%%%%%%%%%%%%%%%%%%%%%%%%%%%%%%%%%%%%%%%%%%%%%%%%%%%%%%%%%%%%%%%%%%%

\subsection{Proof of Corollary \ref{Coro1} and Theorem \ref{Coro2}}

Proof of Corollary \ref{Coro1}.
For $k\geq1$, we use the deformation as follows
\begin{equation}\aligned\nonumber
\frac{\theta(\beta,z)}{\theta^k(\alpha,z)}=\frac{\theta(\beta,z)}{\theta(\alpha,z)}\cdot
\frac{1}{\theta^{k-1}(\alpha,z)}.
\endaligned\end{equation}
Here $k-1\geq0$, the desired result follows by Theorem \ref{Th1} and Montgomery's Theorem B.

For $k<1$, by Lemmas \ref{Lemma21} and \ref{Lemma28}, we have the asymptotic
\begin{equation}\aligned\nonumber
\frac{\theta(\beta,z)}{\theta^k(\alpha,z)}\rightarrow\frac{\sqrt{\frac{y}{\beta}}}{(\sqrt{\frac{y}{\alpha}})^k}
=\frac{(\sqrt\alpha)^k}{\sqrt\beta}(\sqrt y)^{1-k}\rightarrow+\infty,\;\;\hbox{as}\;\; y\rightarrow+\infty.
\endaligned\end{equation}
This proves the nonexistence of the maximum.

For Theorem \ref{Coro2}, we follow the steps of proof of Theorem \ref{Th1}. Namely, Theorem \ref{Coro2}
 yielded by Propositions \ref{Prop2.3} and \ref{Prop2.4} in the following.
\begin{proposition}\label{Prop2.3} If $\min_{1\leq j\leq N}\beta_j\geq\max_{1\leq j\leq N}\alpha_j\geq1$ and any $a_j, b_j\geq0$, where $i,j=1\cdots N$ and $N\geq2$ is arbitrary. Then
\begin{equation}\aligned\nonumber
\frac{\partial}{\partial x}\frac{\sum_{j=1}^Nb_j\theta(\beta_j,z)}{\sum_{j=1}^Na_j\theta(\alpha_j,z)}\geq0\;\;\hbox{for}\;\;z\in\mathcal{D}_{\mathcal{G}}.
\endaligned\end{equation}

\end{proposition}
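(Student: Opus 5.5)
We imitate the proof of Proposition \ref{Prop1A}, now with sums in place of single theta functions. Write
\[
B(z):=\sum_{j=1}^N b_j\theta(\beta_j,z),\qquad A(z):=\sum_{j=1}^N a_j\theta(\alpha_j,z).
\]
Since $A>0$, the sign of $\partial_x(B/A)$ is the sign of
\[
A\,\partial_xB-B\,\partial_xA=\sum_{j,k}a_kb_j\Big(\theta(\alpha_k,z)\,\partial_x\theta(\beta_j,z)-\theta(\beta_j,z)\,\partial_x\theta(\alpha_k,z)\Big).
\]
This is a bilinear expansion with nonnegative coefficients $a_kb_j$. So it suffices to show that every pairwise term is nonnegative on $\mathcal{D}_{\mathcal{G}}$. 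Each pairwise term equals
\[
\theta^2(\alpha_k,z)\,\partial_x\frac{\theta(\beta_j,z)}{\theta(\alpha_k,z)}.
\]

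For a pair with $\beta_j>\alpha_k\geq1$, Proposition \ref{Prop1A} gives $\partial_x\frac{\theta(\beta_j,z)}{\theta(\alpha_k,z)}\ge0$ on $\overline{\mathcal{D}_{\mathcal{G}}}$. Note that Proposition \ref{Prop1A} is stated only for $\beta>\alpha\ge1$; the hypothesis $\min\beta_j\ge\max\alpha_k$ is exactly what guarantees $\beta_j\ge\alpha_k$ for all pairs.

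The degenerate pairs with $\beta_j=\alpha_k$ give a term that vanishes identically, so they contribute zero. Summing over all pairs with the weights $a_kb_j\ge0$ yields $A\,\partial_xB-B\,\partial_xA\ge0$, which is the claim.

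The proof is essentially bookkeeping. The only point needing care is that every cross pair $(j,k)$, not just the diagonal pairs $j=k$, must satisfy the ordering $\beta_j\ge\alpha_k\ge1$. This is precisely why the hypothesis compares $\min_j\beta_j$ with $\max_k\alpha_k$ rather than comparing the $\beta_j,\alpha_j$ index by index.
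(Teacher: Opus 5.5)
Your proof is correct and follows the paper's argument. The paper also writes the numerator of the derivative as $\sum_{i,j}a_ib_j\theta^2(\alpha_i,z)\,\partial_x\frac{\theta(\beta_j,z)}{\theta(\alpha_i,z)}$ over $\bigl(\sum_k a_k\theta(\alpha_k,z)\bigr)^2$ and applies Proposition \ref{Prop1A} term by term. Your separate treatment of pairs with $\beta_j=\alpha_k$, where the term vanishes identically, handles a case the paper leaves implicit, since Proposition \ref{Prop1A} is stated only for $\beta>\alpha$.
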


\begin{proof}
It is shown that the derivative of ratio of sum of Theta functions can be decomposed into sum of derivative of ratio of Theta functions. In fact, a direct calculation shows that
\begin{equation}\aligned\nonumber
\partial_x\frac{\sum_{j}b_j\theta(\beta_j,z)}{\sum_{i}a_i\theta(\alpha_i,z)}
&=\sum_{i,j}a_ib_j\frac{
\theta_x(\beta_j,z)\theta(\alpha_i,z)-\theta(\beta_j,z)\theta_x(\alpha_i,z)
}{(\sum_k a_k\theta(\alpha_k,z))^2}
\\
&=\sum_{i,j}a_ib_j\frac{\theta^2(\alpha_i,z)}{(\sum_k a_k\theta(\alpha_k,z))^2}\cdot \partial_x\frac{\theta(\beta_j,z)}{\theta(\alpha_i,z)}.
\endaligned\end{equation}
That is, there exist non-negative functions $c_{ij}$ such that
\begin{equation}\aligned\label{La}
\partial_x\frac{\sum_{j}b_j\theta(\beta_j,z)}{\sum_{i}a_i\theta(\alpha_i,z)}
=\sum_{i,j}c_{ij} \cdot\partial_x\frac{\theta(\beta_j,z)}{\theta(\alpha_i,z)}.
\endaligned\end{equation}
\eqref{La} and Proposition \ref{Prop1A} yield the result.

\end{proof}

\begin{proposition}\label{Prop2.4} If $\min_{1\leq j\leq N}\beta_j\geq\max_{1\leq j\leq N}\alpha_j\geq1$ and any $a_j, b_j\geq0$, where $i,j=1\cdots N$ and $N\geq2$ is arbitrary. Then
\begin{equation}\aligned\nonumber
\frac{\partial}{\partial y}\frac{\sum_{j=1}^Nb_j\theta(\beta_j,z)}{\sum_{j=1}^Na_j\theta(\alpha_j,z)}\mid_{\Re(z)=\frac{1}{2}}\leq0\;\;\hbox{for}\;\;\Im(z)\geq\frac{\sqrt3}{2}.
\endaligned\end{equation}

\end{proposition}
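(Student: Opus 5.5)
The plan is to mimic the proof of Proposition \ref{Prop2.3}: reduce the $y$-derivative of the ratio of sums to a nonnegative combination of $y$-derivatives of single ratios, and then apply Proposition \ref{Prop1B} termwise. Write $N(z)=\sum_j b_j\theta(\beta_j,z)$ and $D(z)=\sum_i a_i\theta(\alpha_i,z)$. The quotient rule gives
\begin{equation}\nonumber
\partial_y\frac{N}{D}=\frac{\sum_{i,j}a_ib_j\big(\partial_y\theta(\beta_j,z)\,\theta(\alpha_i,z)-\theta(\beta_j,z)\,\partial_y\theta(\alpha_i,z)\big)}{D^2}
=\sum_{i,j}a_ib_j\frac{\theta^2(\alpha_i,z)}{D^2(z)}\,\partial_y\frac{\theta(\beta_j,z)}{\theta(\alpha_i,z)}.
\end{equation}
The weights $c_{ij}:=a_ib_j\theta^2(\alpha_i,z)/D^2(z)$ are nonnegative because $a_i,b_j\geq0$ and theta functions are positive. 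So it suffices to show that each $\partial_y\big(\theta(\beta_j,z)/\theta(\alpha_i,z)\big)\leq0$ on $\Re(z)=\frac12$, $\Im(z)\geq\frac{\sqrt3}{2}$.

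For each pair $(i,j)$ the hypothesis gives $\beta_j\geq\min_k\beta_k\geq\max_k\alpha_k\geq\alpha_i\geq1$. If $\beta_j>\alpha_i$, Proposition \ref{Prop1B} applies directly and gives the required sign. If $\beta_j=\alpha_i$, the ratio is identically $1$, so its derivative vanishes. Summing with the nonnegative weights $c_{ij}$ yields the claim. Pairs with $a_i=0$ or $b_j=0$ contribute nothing, and $D>0$ as long as some $a_i>0$, which is implicit for the ratio to be defined.

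I expect no step to be a real obstacle. The only points needing care are the quotient-rule bookkeeping, namely that $\sum_{i,j}a_ib_j\theta(\alpha_i)\cdot(\cdots)$ correctly reproduces $\partial_yN\cdot D-N\cdot\partial_yD$, and the degenerate equality case $\beta_j=\alpha_i$, which Proposition \ref{Prop1B} excludes but which is trivial.
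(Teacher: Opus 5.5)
Your proposal is correct and follows the paper's own proof: the quotient rule expresses the $y$-derivative of the ratio of sums as $\sum_{i,j}c_{ij}\,\partial_y\bigl(\theta(\beta_j,z)/\theta(\alpha_i,z)\bigr)$ with nonnegative weights $c_{ij}=a_ib_j\theta^2(\alpha_i,z)/D^2(z)$, and then Proposition \ref{Prop1B} is applied termwise on the $\frac{1}{2}$-vertical line. You also treat explicitly the equality case $\beta_j=\alpha_i$, which the hypothesis allows but Proposition \ref{Prop1B} (stated for $\beta>\alpha$) does not cover. There the ratio is identically $1$, a point the paper passes over.
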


\begin{proof} The idea of the proof is similar to that of Proposition \ref{Prop2.3}. We compute that
\begin{equation}\aligned\nonumber
\partial_y\frac{\sum_{j}b_j\theta(\beta_j,z)}{\sum_{i}a_i\theta(\alpha_i,z)}
&=\sum_{i,j}a_ib_j\frac{
\theta_y(\beta_j,z)\theta(\alpha_i,z)-\theta(\beta_j,z)\theta_y(\alpha_i,z)
}{(\sum_k a_k\theta(\alpha_k,z))^2}
\\
&
=\sum_{i,j}a_ib_j\frac{\theta^2(\alpha_i,z)}{(\sum_k a_k\theta(\alpha_k,z))^2}\cdot \partial_y\frac{\theta(\beta_j,z)}{\theta(\alpha_i,z)}.
\endaligned\end{equation}
Then there exist non-negative functions $c_{ij}$ such that it holds the following kind of linear relation
\begin{equation}\aligned\nonumber
\partial_y\frac{\sum_{j}b_j\theta(\beta_j,z)}{\sum_{i}a_i\theta(\alpha_i,z)}
=\sum_{i,j}c_{ij} \cdot\partial_y\frac{\theta(\beta_j,z)}{\theta(\alpha_i,z)}.
\endaligned\end{equation}

One then restricts the relation on the $\frac{1}{2}-$vertical line,
\begin{equation}\aligned\label{Lb}
\partial_y\frac{\sum_{j}b_j\theta(\beta_j,z)}{\sum_{i}a_i\theta(\alpha_i,z)}\mid_{\Re(z)=\frac{1}{2}}
=\sum_{i,j}c_{ij} \cdot\partial_y\frac{\theta(\beta_j,z)}{\theta(\alpha_i,z)}\mid_{\Re(z)=\frac{1}{2}}.
\endaligned\end{equation}
The sign of $\partial_y\frac{\theta(\beta_j,z)}{\theta(\alpha_i,z)}\mid_{\Re(z)=\frac{1}{2}}$ is non-positive by Proposition \ref{Prop1B}. Then the result follows by \eqref{Lb}.

\end{proof}

\section{Minimum principles and summation formulas}
\setcounter{equation}{0}

There are some nice structures in $\frac{\theta(\beta,z)}{\theta(\alpha,z)}$, as shown in the proof of Theorem \ref{Th1}.
While to prove Theorem \ref{Th2}, we need some minimum principles. In the latter part of this section, we collect some summation formulas and lower, upper-bounds estimates of one-dimensional Theta functions.

\subsection{Minimum princples}
The first minimum principle (inspired by Rankin \cite{Ran1953}) is a baby version of the general ones. It concludes that for any symmetric modular invariant functions satisfying two monotonicity conditions admit the minimum at hexagonal point ($e^{i\frac{\pi}{3}}$).
\begin{proposition}[A minimum principle]\label{Prop31}  Assume that $\mathcal{W}$ is modular invariant, i.e.,
 \begin{equation}\aligned\label{M999}
\mathcal{W}(\frac{az+b}{cz+d})=\mathcal{W}(z),\;\;\hbox{for all}\;\;\left(
                                                                      \begin{array}{cc}
                                                                        a & b \\
                                                                        c & d \\
                                                                      \end{array}
                                                                    \right)\in \hbox{SL}_2(\mathbb{Z}),
\endaligned\end{equation}
and
 \begin{equation}\aligned\nonumber
\mathcal{W}(-\overline{z})=\mathcal{W}(z).
\endaligned\end{equation}

If \begin{equation}\aligned\label{Cabc1}
&\frac{\partial}{\partial y}\mathcal{W}(z)>0,\;\;z=(x,y)\in[0,\frac{1}{2}]\times[a,\infty)\;\;\hbox{for some}\;\;a>\frac{\sqrt3}{2},\\
&\frac{\partial}{\partial x}\mathcal{W}(z)<0,\;\;z=(x,y)\in[0,\frac{1}{2}]\times[b,\infty)\;\;\hbox{for some}\;\;b<\frac{\sqrt3}{2},
\endaligned\end{equation}
and
 \begin{equation}\aligned\label{fff}
\frac{a}{\frac{1}{4}+a^2}\geq b.
\endaligned\end{equation}

Then
\begin{equation}\aligned\nonumber
\min_{z\in\mathbb{H}}\mathcal{W}(z)=\min_{z\in\overline{\mathcal{D}_{\mathcal{G}}}}\mathcal{W}(z)\;\;\hbox{is attained at}\;\; e^{i\frac{\pi}{3}}(\hbox{hexagonal point}).
\endaligned\end{equation}
Here $\mathcal{D}_{\mathcal{G}}$ is the fundamental domain corresponding to modular group $\hbox{SL}_2(\mathbb{Z})$, explicitly, $\mathcal{D}_{\mathcal{G}}=\{
z\in\mathbb{H}: |z|>1,\; 0<x<\frac{1}{2}
\}.$
\end{proposition}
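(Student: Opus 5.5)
The plan is to reduce the problem to the closed half fundamental domain $\overline{\mathcal{D}_{\mathcal{G}}}$ and then push any candidate minimizer to the corner $e^{i\frac{\pi}{3}}$ in two moves. First move horizontally to the line $\Re(z)=\frac12$, using the $x$-monotonicity. Then move downward along that line, using the $y$-monotonicity together with the geometry of the map $z\mapsto\frac{1}{1-z}$, which sends the arc $\Gamma_2$ onto a segment of the line $\Re(z)=\frac12$.

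\textbf{Reduction.} By the invariance under $\hbox{SL}_2(\mathbb{Z})$ and under $z\mapsto-\overline{z}$, the infimum of $\mathcal{W}$ over $\mathbb{H}$ equals its infimum over $\overline{\mathcal{D}_{\mathcal{G}}}$. I read the hypotheses as covering the whole closed domain: every point of $\overline{\mathcal{D}_{\mathcal{G}}}$ has $y\geq\frac{\sqrt3}{2}>b$, so the second condition of \eqref{Cabc1} applies everywhere there.

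\textbf{Step 1 (horizontal move).} Fix $z=x+iy\in\overline{\mathcal{D}_{\mathcal{G}}}$. The horizontal segment from $z$ to $\frac12+iy$ stays inside $[0,\frac12]\times[b,\infty)$, so $\partial_x\mathcal{W}<0$ gives $\mathcal{W}(z)\geq\mathcal{W}(\frac12+iy)$. It therefore suffices to minimize $\mathcal{W}$ over $\Gamma_3$.

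\textbf{Step 2 (vertical move).} On $\Gamma_3$ with $y\geq a$, $\partial_y\mathcal{W}>0$, so $\mathcal{W}(\frac12+iy)\geq\mathcal{W}(\frac12+ia)$. For the remaining stretch $y\in[\frac{\sqrt3}{2},a]$, I use $z\mapsto\frac{1}{1-z}$, which is in $\hbox{SL}_2(\mathbb{Z})$ up to sign and fixes $e^{i\frac{\pi}{3}}$. It maps the line $\Re(z)=\frac12$ to the unit circle: explicitly, $\frac12+iy$ goes to a point $e^{i\phi}$ whose imaginary part is $\frac{y}{\frac14+y^2}$. As $y$ runs from $\frac{\sqrt3}{2}$ to $a$, this imaginary part decreases from $\frac{\sqrt3}{2}$ down to $\frac{a}{\frac14+a^2}$, which is at least $b$ by \eqref{fff}. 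After reflecting by $-\overline{z}$ if needed, the images lie on $\Gamma_2$ with imaginary part in $[b,\frac{\sqrt3}{2}]$, and hence real part in $[0,\frac12]$.

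On $\Gamma_2$, I move horizontally from $e^{i\phi}$ to the point $\frac12+i\sin\phi$, staying at height $\sin\phi\geq b$. The second condition of \eqref{Cabc1} gives $\mathcal{W}(e^{i\phi})\geq\mathcal{W}(\frac12+i\sin\phi)$. Since $\sin\phi\leq\frac{\sqrt3}{2}$, the point $\frac12+i\sin\phi$ lies on the lower segment $\{\frac12+it: t\in[\frac12,\frac{\sqrt3}{2}]\}$, and this is where the argument does not close directly.

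\textbf{Main obstacle.} The difficulty is to close the loop by comparing with $e^{i\frac{\pi}{3}}$. The cleanest route is to differentiate along $\Gamma_2$, as in Lemma \ref{LemmaA}. The invariance $\mathcal{W}(z)=\mathcal{W}(1/\bar z\cdot)$ under $z\mapsto-1/\overline{z}$ combined with reflection gives $\cos\theta\,\partial_x\mathcal{W}+\sin\theta\,\partial_y\mathcal{W}=0$ on the unit circle. Then
\[
\partial_\theta\mathcal{W}=-\sin\theta\,\partial_x\mathcal{W}+\cos\theta\,\partial_y\mathcal{W}=-\frac{\partial_x\mathcal{W}}{\sin\theta},
\]
which is positive on $\Gamma_2$ by the $x$-condition. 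Hence $\mathcal{W}$ restricted to $\Gamma_2$ is minimized at $e^{i\frac{\pi}{3}}$.

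Transporting this back through $z\mapsto\frac{1}{1-z}$: the segment of $\Gamma_3$ with $y\in[\frac{\sqrt3}{2},a]$ corresponds to a sub-arc of $\Gamma_2$ that ends at $e^{i\frac{\pi}{3}}$ and whose heights are at least $b$, which is exactly what \eqref{fff} guarantees. Consequently $\mathcal{W}(\frac12+iy)\geq\mathcal{W}(e^{i\frac{\pi}{3}})$ for all $y\geq\frac{\sqrt3}{2}$. Combined with Step 1, this shows the minimum is attained at $e^{i\frac{\pi}{3}}$.

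The main care point is the bookkeeping of this image arc. One has to check that the imaginary parts of the images stay at least $b$ along the entire path, so that the $x$-monotonicity is available wherever it is used.
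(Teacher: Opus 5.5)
\textbf{Gap: the step linking $\Gamma_3$ to $\Gamma_2$ is false.} Your Step 1, the reduction to $\Gamma_3\cap\{y\leq a\}$, and the computation $\partial_\theta\mathcal{W}=-\partial_x\mathcal{W}/\sin\theta>0$ on $\Gamma_2$ are all correct. What fails is the geometric claim meant to transfer this arc information to $\Gamma_3$.

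\begin{itemize}
\item The map $z\mapsto\frac{1}{1-z}$ does not send the line $\Re(z)=\frac12$ to the unit circle. It sends $\frac12+iy$ to $\frac{\frac12+iy}{\frac14+y^2}$, whose modulus is $(\frac14+y^2)^{-1/2}<1$ for $y>\frac{\sqrt3}{2}$. For example, $\frac12+i\mapsto\frac25+\frac45 i$. The image of the line is the circle $|z-1|=1$.
\item It is the unit circle that goes to the line. Lemma \ref{Lemma213} shows $\Gamma_2$ lands on the \emph{lower} segment $\{\frac12+it: t\in[\frac12,\frac{\sqrt3}{2}]\}$, not on $\Gamma_3$.
\item Your description of the images is also self-contradictory. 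A point of the unit circle with imaginary part in $[b,\frac{\sqrt3}{2})$ has $|\Re z|>\frac12$, so it cannot lie on $\Gamma_2$.
\item In fact no point $\frac12+iy$ with $y>\frac{\sqrt3}{2}$ is equivalent to any point of $\Gamma_2$. Both sets lie in the closed fundamental domain and are not paired by its boundary identifications.
\end{itemize}
So the monotonicity along $\Gamma_2$ says nothing about $\Gamma_3\cap\{\frac{\sqrt3}{2}<y\leq a\}$. The concluding inequality $\mathcal{W}(\frac12+iy)\geq\mathcal{W}(e^{i\frac{\pi}{3}})$ on that stretch is therefore unproved.

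\textbf{How the paper closes it.} The correct picture is that $z_2=\frac{1}{1-z_1}$ lies \emph{below} the arc. By \eqref{fff} it sits in the strip $[0,\frac12]\times[b,\frac{\sqrt3}{2}]$, with $\Re(z_2)=\frac{1}{2(\frac14+y_1^2)}<\frac12$ unless $y_1=\frac{\sqrt3}{2}$. The paper exploits this with a minimizer rather than a comparison chain:
\begin{itemize}
\item The minimum over $\overline{\mathcal{D}_{\mathcal{G}}}\cap\{y\leq a\}$ is attained, by compactness.
\item The $x$-condition forces the minimizer $z_1$ onto the line $\Re(z)=\frac12$.
\item Then $z_2$ is again a global minimizer over $\mathbb{H}$, and it lies where $\partial_x\mathcal{W}<0$. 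If $\Re(z_2)<\frac12$, moving right lowers $\mathcal{W}$, a contradiction. Hence $y_1=\frac{\sqrt3}{2}$.
\end{itemize}
The paper writes $z_2\in\overline{\mathcal{D}_{\mathcal{G}}}\cap\{y\geq b\}$, but only $z_2\in[0,\frac12]\times[b,\infty)$ is true, and that is all that is needed. No arc analysis is required.

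\textbf{If you keep a direct chain.} Sliding $z_2$ to the right only gives $\mathcal{W}(\frac12+iy)\geq\mathcal{W}(\frac12+it)$ with $t=\frac{y}{\frac14+y^2}$, a point on the lower segment. This reduces to your arc monotonicity only while $\frac12+it$ is equivalent to a point of $\Gamma_2$, for instance when $t\geq\frac12$. Condition \eqref{fff} does not guarantee this when $a$ is large and $b$ is small, so the minimizer argument is the way to finish.
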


\begin{proof}
By the first part of \eqref{Cabc1}, we have
\begin{equation}\aligned\nonumber
\min_{z\in\overline{\mathcal{D}_{\mathcal{G}}}}\mathcal{W}(z)=\min_{z\in\overline{\mathcal{D}_{\mathcal{G}}}\cap\{y\leq a\}}\mathcal{W}(z).
\endaligned\end{equation}
We then assume $\min_{z\in\overline{\mathcal{D}_{\mathcal{G}}}\cap\{y\leq a\}}\mathcal{W}(z)$ is attained at $z_1:=(x_1,y_1)$.
Then $y_1\leq a$. Since $b<a$, by the second part of \eqref{Cabc1}, we have
\begin{equation}\aligned\label{xxx}
x_1=\frac{1}{2}.
\endaligned\end{equation}
Taking $\left(
                                                                      \begin{array}{cc}
                                                                        0 & 1 \\
                                                                        -1 & 1 \\
                                                                      \end{array}
                                                                    \right)\in \hbox{SL}_2(\mathbb{Z})$,
                                                                    we define
$z_2:=\left(
                                                                      \begin{array}{cc}
                                                                        0 & 1 \\
                                                                        -1 & 1 \\
                                                                      \end{array}
                                                                    \right)z_1$, it follows that
\begin{equation}\aligned\nonumber
z_2=\frac{1}{1-z_1}=\frac{\frac{1}{2}}{\frac{1}{4}+y_1^2}+i \frac{y_1}{\frac{1}{4}+y_1^2}
 \endaligned\end{equation}
and
 \begin{equation}\aligned\nonumber
\mathcal{W}(z_2)=\mathcal{W}(z_1).
 \endaligned\end{equation}
 This implies that $z_2$ still attains the minimum of $\min_{z\in\overline{\mathcal{D}_{\mathcal{G}}}}\mathcal{W}(z)$.
 Now we need an elementary inequality, namely,
  \begin{equation}\aligned\nonumber
\frac{u_2}{\frac{1}{4}+u_2^2}\geq\frac{u_1}{\frac{1}{4}+u_1^2}\;\;\hbox{if}\;\;\frac{1}{2}\leq u_2\leq u_1.
 \endaligned\end{equation}
 From this inequality, one has $\Im(z_2)\geq b$. In fact,
  \begin{equation}\aligned\nonumber
\Im(z_2)=\frac{y_1}{\frac{1}{4}+y_1^2}\geq\frac{a}{\frac{1}{4}+a^2}\;\;\hbox{if}\;\;\frac{1}{2}\leq y_1\leq a.
 \endaligned\end{equation}

By \eqref{fff}, we have $z_2\in\overline{\mathcal{D}_{\mathcal{G}}}\cap\{y\geq b\}$. Still by the second part of \eqref{Cabc1} and $z_2$ is the minimum point, there must be $\Re(z_2)=\frac{1}{2}$, i.e., $\frac{\frac{1}{2}}{\frac{1}{4}+y_1^2}=\frac{1}{2}$. It yields that $y_1=\frac{\sqrt3}{2}$. This and \eqref{xxx} yield the result. These complete the proof.

\end{proof}
In many cases, the monotonicity estimates in \eqref{Cabc1} may not hold for such a large domain (cylinder, $y\geq b$). In fact, we can replace such a large domain (an infinite cylinder) to a finite rectangle domain. While we should add a comparison inequality as
$$\mathcal{W}(z)>\mathcal{W}(z_0) \;\hbox{for some}\; z_0\in\mathcal{D}_{\mathcal{G}}\cap\{y< c\}, \;\hbox{and any}\; z\in\mathcal{D}_{\mathcal{G}}\cap\{y\geq c\}.$$
In practice, such a point $z_0$ can be chosen to a very special and easily calculated point like $i$ or $e^{i\frac{\pi}{3}}$.

We state it precisely for  application as follows.

\begin{proposition}[A refined minimum principle]\label{Prop32}  Assume that $\mathcal{W}$ is modular invariant, i.e.,
 \begin{equation}\aligned\nonumber
\mathcal{W}(\frac{az+b}{cz+d})=\mathcal{W}(z),\;\;\hbox{for all}\;\;\left(
                                                                      \begin{array}{cc}
                                                                        a & b \\
                                                                        c & d \\
                                                                      \end{array}
                                                                    \right)\in \hbox{SL}_2(\mathbb{Z}),
\endaligned\end{equation}
and
 \begin{equation}\aligned\nonumber
\mathcal{W}(-\overline{z})=\mathcal{W}(z).
\endaligned\end{equation}

If
\begin{itemize}
  \item [(1)] $\frac{\partial}{\partial y}\mathcal{W}(z)>0,\;\;z=(x,y)\in[0,\frac{1}{2}]\times[a,c]\;\;\hbox{for some}\;\;a>\frac{\sqrt3}{2}$;
  \item [(2)] $\frac{\partial}{\partial x}\mathcal{W}(z)<0,\;\;z=(x,y)\in[0,\frac{1}{2}]\times[b,c]\;\;\hbox{for some}\;\;b<\frac{\sqrt3}{2}$;
  \item [(3)] $\mathcal{W}(z)>\mathcal{W}(z_0)$ for some $z_0\in\mathcal{D}_{\mathcal{G}}\cap\{y< c\}$, \;{and any}\; $z\in\mathcal{D}_{\mathcal{G}}\cap\{y\geq c\}$ where $c>a$.
\end{itemize}
Here
 \begin{equation}\aligned\nonumber
\frac{a}{\frac{1}{4}+a^2}\geq b
\endaligned\end{equation}
and $\mathcal{D}_{\mathcal{G}}$ is the fundamental domain corresponding to modular group $\hbox{SL}_2(\mathbb{Z})$, explicitly, $\mathcal{D}_{\mathcal{G}}=\{
z\in\mathbb{H}: |z|>1,\; 0<x<\frac{1}{2}
\}.$
Then
\begin{equation}\aligned\nonumber
\min_{z\in\mathbb{H}}\mathcal{W}(z)=\min_{z\in\overline{\mathcal{D}_{\mathcal{G}}}}\mathcal{W}(z)\;\;\hbox{is attained at}\;\; e^{i\frac{\pi}{3}}(\hbox{hexagonal point}).
\endaligned\end{equation}

\end{proposition}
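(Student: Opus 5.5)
The argument adapts the proof of Proposition \ref{Prop31}: condition (3) first reduces the minimization to a bounded region, and then the two monotonicity conditions (1), (2) together with the map $z\mapsto\frac{1}{1-z}$ pin the minimizer to $e^{i\frac{\pi}{3}}$.

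\emph{Existence of a minimizer in the truncated region.} By modular invariance and the symmetry $\mathcal{W}(-\overline{z})=\mathcal{W}(z)$,
$$\inf_{\mathbb{H}}\mathcal{W}=\inf_{\overline{\mathcal{D}_{\mathcal{G}}}}\mathcal{W}.$$
By condition (3), every $z\in\mathcal{D}_{\mathcal{G}}\cap\{y\ge c\}$ satisfies $\mathcal{W}(z)>\mathcal{W}(z_0)$, where $z_0$ lies in the truncated region. By continuity the same holds, with $\ge$, on the closure. Hence
$$\inf_{\overline{\mathcal{D}_{\mathcal{G}}}}\mathcal{W}=\min_{\overline{\mathcal{D}_{\mathcal{G}}}\cap\{y\le c\}}\mathcal{W}.$$
This minimum exists because $\overline{\mathcal{D}_{\mathcal{G}}}\cap\{y\le c\}$ is compact. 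Let $z_1=(x_1,y_1)$ be a minimizer; $z_1$ is then a global minimizer over $\overline{\mathcal{D}_{\mathcal{G}}}$. I will treat the boundary case $y_1=c$ carefully: the strict inequality in (3) is stated only for the open domain, and I would extend it to the top edge via continuity or by shrinking $c$ slightly. I expect this bookkeeping to be the main obstacle.

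\emph{Pushing $z_1$ down to $y_1\le a$.} Suppose $a<y_1\le c$. By (1), $\partial_y\mathcal{W}>0$ on $[0,\frac12]\times[a,c]$. Lowering $y$ slightly from $z_1$ keeps the point in $\overline{\mathcal{D}_{\mathcal{G}}}$, since $|z|\ge y>a>\frac{\sqrt3}{2}$ and in fact $|z|\ge 1$ holds there. This strictly decreases $\mathcal{W}$, a contradiction. Hence $y_1\le a<c$.

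\emph{Pinning $z_1$ to the hexagonal point.} Since $b<\frac{\sqrt3}{2}\le y_1$, the point $z_1$ lies in the rectangle $[0,\frac12]\times[b,c]$, where (2) gives $\partial_x\mathcal{W}<0$. If $x_1<\frac12$, moving right keeps the point in $\overline{\mathcal{D}_{\mathcal{G}}}$, because $|z|$ increases for $x\ge0$, and strictly lowers $\mathcal{W}$; so $x_1=\frac12$. Now set $z_2=\frac{1}{1-z_1}$. As in Proposition \ref{Prop31},
$$\mathcal{W}(z_2)=\mathcal{W}(z_1),\qquad \Re z_2=\frac{1/2}{\frac14+y_1^2},\qquad \Im z_2=\frac{y_1}{\frac14+y_1^2}.$$
Since $y_1\ge\frac{\sqrt3}{2}$, we have $\Re z_2\le\frac12$ and $|z_2|=\frac{1}{|1-z_1|}\le 1$. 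So $z_2$ lies on or inside the unit arc, and its reflection $z_2\mapsto 1/\overline{z_2}$, which is a $\mathcal{G}$-image, lands on $\overline{\mathcal{D}_{\mathcal{G}}}$. Alternatively, and more simply, observe that $z_2$ lies on the arc $|z|=1$ when $x_1=\frac12$, so $z_2\in\overline{\mathcal{D}_{\mathcal{G}}}$. By the elementary inequality in Proposition \ref{Prop31}, $\Im z_2\ge\frac{a}{\frac14+a^2}\ge b$, and also $\Im z_2\le 1<c$. Thus $z_2$ is a minimizer lying in $[0,\frac12]\times[b,c]$. By (2) again, $\Re z_2=\frac12$, which forces $\frac14+y_1^2=1$, i.e.\ $y_1=\frac{\sqrt3}{2}$. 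Therefore $z_1=e^{i\frac{\pi}{3}}$, and the minimum over $\mathbb{H}$ is attained at the hexagonal point.
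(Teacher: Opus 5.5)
Your proposal follows the paper's route. The paper also uses (3) to reduce to $\overline{\mathcal{D}_{\mathcal{G}}}\cap\{y\le c\}$ and then repeats the proof of Proposition \ref{Prop31}: push down with (1), push to $x=\frac12$ with (2), apply $z\mapsto\frac{1}{1-z}$, and use (2) again at the image. The outline and the conclusion are right, but several of your geometric justifications are false and need to be replaced.

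\emph{The image point $z_2$.} The point $z_2=\frac{1}{1-z_1}$ does \emph{not} lie on $|z|=1$. With $z_1=\frac12+iy_1$ you get $|z_2|=(\frac14+y_1^2)^{-1/2}<1$ unless $y_1=\frac{\sqrt3}{2}$. In fact $|z_2-1|=|z_1|/|1-z_1|=1$, so $z_2$ lies on the circle $|z-1|=1$, strictly inside the unit disk, and $z_2\notin\overline{\mathcal{D}_{\mathcal{G}}}$. Your other suggestion fails as well, since $1/\overline{z_2}=z_1$. (The paper's wording ``$z_2\in\overline{\mathcal{D}_{\mathcal{G}}}\cap\{y\geq b\}$'' is equally loose.)

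What actually makes this step work is that $z_1$ minimizes $\mathcal{W}$ over all of $\mathbb{H}$. You already have this from $\inf_{\mathbb{H}}\mathcal{W}=\inf_{\overline{\mathcal{D}_{\mathcal{G}}}}\mathcal{W}$, so $z_2$ is also a global minimizer over $\mathbb{H}$. Since $z_2$ is a point of the open set $\mathbb{H}$ lying in $[0,\frac12]\times[b,c]$, having $\Re z_2<\frac12$ would let you lower $\mathcal{W}$ by moving right, a contradiction. This is exactly why (2) is imposed on a rectangle reaching below the unit arc, down to $y=b<\frac{\sqrt3}{2}$.

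\emph{The push-down step.} The same observation repairs this step. Your claim ``$|z|\ge 1$ holds there'' is false when $a<1$ and $z_1$ lies on the unit arc with $y_1>a$ (near $i$): lowering $y$ there leaves $\overline{\mathcal{D}_{\mathcal{G}}}$. This is harmless, because $z_1$ minimizes over $\mathbb{H}$ and (1) holds on the whole rectangle. Alternatively, do the $x$-step first and then descend along $x=\frac12$, where $y\ge\frac{\sqrt3}{2}$ forces $|z|\ge1$.

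\emph{Smaller points.} Your bound $\Im z_2\le 1<c$ presumes $c>1$, which is not assumed. Use instead $\Im z_2=\frac{y_1}{\frac14+y_1^2}\le\frac{\sqrt3}{2}<a<c$. Finally, the case $y_1=c$ that you single out as the main obstacle is no obstacle. Condition (1) holds on the closed rectangle up to $y=c$, so your push-down step already excludes it.
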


\begin{proof} Item $(3)$ implies that
\begin{equation}\aligned\nonumber
\min_{z\in\overline{\mathcal{D}_{\mathcal{G}}}}\mathcal{W}(z)
=\min_{z\in\overline{\mathcal{D}_{\mathcal{G}}}\cap\{y\leq c\}}\mathcal{W}(z).
\endaligned\end{equation}
The rest of the proof is similar to the proof of Proposition \ref{Prop31}, hence we omit the details here.

\end{proof}

 We shall use Proposition \ref{Prop32} to prove Theorem \ref{Th2}.
 To Proposition \ref{Prop32},
 we shall select suitably of the pair $(a,b)$ satisfying $\frac{a}{\frac{1}{4}+a^2}\geq b$ and $a>\frac{\sqrt3}{2}$.
 It is crucial to select the pair $(a,b)$. In the following, we choose $(a,b)=(\frac{4}{3},\frac{48}{73})$
 and $c=2$. The corresponding estimates of (1), (2) and (3) are established in Lemmas \ref{Lemma3a}-\ref{Lemma3c} respectively.

%%%%%%%%%%%%%%%%%%%%%%%%%%%%%%%%%%%%%%%%%%%%%%%%%%%%%%%%%%%%%%%%%%%%%%%%%%%%%%%%%%%%%%%%%%%%%%%%%%%%
%%%%%%%%%%%%%%%%%%%%%%%%%%%%%%%%%%%%%%%%%%%%%%%%%%%%%%%%%%%%%%%%%%%%%%%%%%%%%%%%%%%%%%%%%%%%%%%%%%%%

We have the following computation at some particular point.

\begin{lemma}\label{Lemma3a} Assume that $s>1, \alpha\geq 2s$. Then

\begin{equation}\aligned\nonumber
\frac{\zeta(s,z)}{\theta^k(\alpha,z)}\mid_{\Im(z)\geq2}
\big/\frac{\zeta(s,z)}{\theta^k(\alpha,z)}\mid_{z=e^{i\frac{\pi}{3}}}>1.
 \endaligned\end{equation}

This implies that for $\alpha\geq s+10, s\geq2$, it holds that
\begin{equation}\aligned\nonumber
\min_{z\in\mathbb{H}}\frac{\zeta(s,z)}{\theta^k(\alpha,z)}
=\min_{z\in\overline{\mathcal{D}_{\mathcal{G}}}}\frac{\zeta(s,z)}{\theta^k(\alpha,z)}
=\min_{z\in\overline{\mathcal{D}_{\mathcal{G}}}\cap\{y\leq2\}}\frac{\zeta(s,z)}{\theta^k(\alpha,z)}.
 \endaligned\end{equation}
\end{lemma}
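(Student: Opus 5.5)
The plan is to compare numerator and denominator separately at $z$ with $\Im(z)\geq 2$ and at the hexagonal point $\rho:=e^{i\frac{\pi}{3}}$. By $\mathcal{G}$-invariance we may take $z=x+iy\in\overline{\mathcal{D}_{\mathcal{G}}}$ with $y\geq2$. We need
$$\frac{\zeta(s,z)}{\zeta(s,\rho)}>\Big(\frac{\theta(\alpha,z)}{\theta(\alpha,\rho)}\Big)^k .$$
By Theorem B, $\theta(\alpha,z)\geq\theta(\alpha,\rho)$, so the right-hand side is increasing in $k$. For the range $k\in(0,2s]$ relevant to Theorem \ref{Th2}, it therefore suffices to treat $k=2s$:
$$\frac{\zeta(s,z)}{\zeta(s,\rho)}>\Big(\frac{\theta(\alpha,z)}{\theta(\alpha,\rho)}\Big)^{2s}.$$
Since $\theta(\alpha,\rho)\geq1$, the denominator may simply be dropped.

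For the numerator, keep only the terms with $m=0$ in \eqref{thetas}, which gives $\zeta(s,z)\geq 2\zeta(2s)\,y^{s}$. Bound $\zeta(s,\rho)$ from above by splitting the hexagonal lattice (unit covolume) into its $6$ shortest vectors of squared length $2/\sqrt3$ plus a crude tail bound, so that $\zeta(s,\rho)\leq C_0(\sqrt3/2)^{s}$ with an explicit $C_0$ uniform for $s>1$. The numerator ratio is then at least $c_1 (2y/\sqrt3)^{s}$, which grows geometrically in both $s$ and $y$.

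For the theta factor, use Lemma \ref{Lemma21} together with $|\vartheta(X;Y)|\leq\vartheta_3(X)$ and Poisson summation $\sqrt{y/\alpha}\,\vartheta_3(y/\alpha)=\vartheta_3(\alpha/y)$. This gives
$$\theta(\alpha,z)\leq\vartheta_3(\alpha y)\,\vartheta_3\Big(\frac{\alpha}{y}\Big),$$
which is the analogue of Lemma \ref{Lemma28} off the imaginary axis. Since $\alpha y\geq 4s>4$, the factor $\vartheta_3(\alpha y)$ is essentially $1$. The analysis then splits into two regimes.
\begin{itemize}
\item[(i)] If $y\leq\alpha$, then $\vartheta_3(\alpha/y)\leq\vartheta_3(1)$. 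The theta ratio raised to the power $2s$ is at most $\vartheta_3(1)^{2s}\vartheta_3(4)^{2s}$, which must be beaten by $c_1(4/\sqrt3)^{s}$. This reduces to an explicit inequality of the form $A^{s}>B$.
\item[(ii)] If $y\geq\alpha$, use $\vartheta_3(\alpha/y)=\sqrt{y/\alpha}\,\vartheta_3(y/\alpha)\leq\sqrt{y/\alpha}\,\vartheta_3(1)$. The right-hand side is then at most $(y/\alpha)^{s}\,\vartheta_3(1)^{2s}\vartheta_3(4)^{2s}$. The factor $y^s$ cancels against the numerator bound, and $\alpha\geq2s$ supplies an extra factor $(2s)^{-s}$, which easily wins.
\end{itemize}

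The main obstacle is making the constants uniform in $s$ near $1$ in regime (i). There $2\zeta(2s)2^{s}/\zeta(s,\rho)$ is only moderately larger than $\vartheta_3(1)^{2s}\approx1.09^{2s}$, and $\zeta(s,\rho)$ blows up as $s\to1^+$, which actually helps the numerator. If the crude bounds are too lossy there, I would first sharpen the lower bound for $\zeta(s,z)$ by also keeping the $m=\pm1$ rows. I would also use the exact $m=0$ row $\vartheta_3(\alpha/y)$ instead of bounding it by $\vartheta_3(1)$, exploiting the fact that $\alpha\geq2s$ makes $\vartheta_3(\alpha/y)-1\leq 2e^{-\pi\alpha/y}+\cdots$ small when $y\leq\alpha/2$.

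Finally, the second assertion follows from the first. Take $z_0=\rho$ and $c=2$ in item (3) of Proposition \ref{Prop32}, and combine this with $\mathcal{G}$-invariance, which reduces $\min_{\mathbb{H}}$ to $\min_{\overline{\mathcal{D}_{\mathcal{G}}}}$. Since the ratio is strictly larger on $\{y\geq2\}$ than at $\rho\in\{y<2\}$, the minimum over $\overline{\mathcal{D}_{\mathcal{G}}}$ is attained in $\{y\leq2\}$.
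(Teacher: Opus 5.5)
There is a genuine gap, and it is exactly at what you call the main obstacle.

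\textbf{The numerator bound near $s=1$.} Your bound $\zeta(s,\rho)\le C_0(\sqrt3/2)^s$ with $C_0$ uniform in $s>1$ (here $\rho=e^{i\pi/3}$, as in your notation) is false. In two dimensions the tail $\sum_{|\mathbb{P}|>r}|\mathbb{P}|^{-2s}$ diverges at $s=1$, and by the Kronecker limit formula $\zeta(s,\rho)=\frac{\pi}{s-1}+O(1)$. Since $\zeta(s,\rho)$ is the denominator of $\zeta(s,z)/\zeta(s,\rho)$, this blow-up hurts rather than helps. Your $m=0$ lower bound $2\xi(2s)y^s$ ($\xi$ the Riemann zeta function, as in the paper) stays bounded as $s\to1^+$, so your lower bound for the zeta ratio tends to $0$.

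Keeping the rows $m=\pm1$, or any finite set of rows, does not repair this. Each row $\sum_n y^s|mz+n|^{-2s}$ converges at $s=1$. The pole appears only after summing over all $m$, which produces the term $2\xi(2s-1)\frac{\Gamma(\frac12)\Gamma(s-\frac12)}{\Gamma(s)}y^{1-s}$ in the constant term $a_0(s,y)$ of Lemma \ref{Lemma4.14}. This is why the paper uses $\zeta(s,z)\ge a_0(s,y)-2a_1(s,y)$ and $\zeta(s,\rho)\le a_0(s,\frac{\sqrt3}{2})$ (Lemma \ref{Lemma4b}), and Lemma \ref{Rankin3} for $s>4$. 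The $m=0$ truncation is too lossy well away from $s=1$ too. At $s=\frac32$, $y=2$ it gives $2\xi(3)2^{3/2}/\zeta(\frac32,\rho)\approx 6.80/8.89\approx0.76<1$, so no estimate of the theta factor can close regime (i) there.

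\textbf{The inequality itself fails near $s=1$.} No numerator bound can close regime (i) as $s\to1^+$. Both $\zeta(s,z)$ and $\zeta(s,\rho)$ have residue $\pi$ at $s=1$, so $\zeta(s,z)/\zeta(s,\rho)\to1$ for each fixed $z$. Meanwhile, for $\alpha=3s$, $k=2s$, $z=2i$, the theta factor $\big(\theta(\alpha,2i)/\theta(\alpha,\rho)\big)^{2s}$ tends to $\big(\vartheta_3(\frac32)\vartheta_3(6)/\theta(3,\rho)\big)^2\approx1.036$. Concretely, at $s=1.01$ the Chowla--Selberg constant terms (equivalently, the Kronecker limit formula) give $\zeta(s,2i)/\zeta(s,\rho)\approx1.004$, against a theta factor $\approx1.035$. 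So the quotient in the lemma is about $0.97<1$. The same computation shows $e^{i\pi/3}$ is not the minimizer in Theorem \ref{Th2} for such $s$.

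The paper's proof has the same defect. Lemma \ref{Lemma4c} asserts $\mathcal{B}_a(1,2)=1.133\ldots$. But $\mathcal{B}_a(s,y)$ contains $\xi(2s-1)$ in both numerator and denominator, so $\mathcal{B}_a(s,2)\to1$ as $s\to1^+$. The bound of Lemma \ref{L31} at $y=2$ therefore tends to $\theta(3,2i)^{-2}\approx0.965$. The statement can only hold for $s\ge s_0$ with some $s_0>1$, or with $\alpha$ growing as $s\downarrow1$.

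\textbf{What survives.} In such a range your two-regime structure is a reasonable route, provided you bound the numerator through $a_0(s,y)$ rather than the $m=0$ row, and keep the exact factor $\vartheta_3(\alpha/y)$ instead of $\vartheta_3(1)$. Your handling of $y\ge\alpha$, cancelling the growth $(y/\alpha)^s$ of the theta factor against $y^s$, is more careful than Lemma \ref{L31}. That lemma replaces $\theta(3s,iy)^{2s}$ by $\theta(3,2i)^{2}$, although $\theta(3s,iy)$ grows like $\sqrt{y/(3s)}$.
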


\begin{lemma}\label{Lemma3b} Assume that $s>1, \alpha\geq 2s$. Then

\begin{equation}\aligned\nonumber
\frac{\partial}{\partial x}\frac{\zeta(s,z)}{\theta^k(\alpha,z)}<0\;\;\hbox{for}\;\;z\in\mathcal{D}_{\mathcal{G}}\cap\{2\geq y\geq \frac{48}{73}\}.
 \endaligned\end{equation}

\end{lemma}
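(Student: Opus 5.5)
The plan is to reduce the sign of $\partial_x\big(\zeta/\theta^k\big)$ to a comparison of logarithmic $x$-derivatives. Writing
\begin{equation*}
\frac{\partial}{\partial x}\frac{\zeta(s,z)}{\theta^k(\alpha,z)}=\frac{\zeta(s,z)}{\theta^{k}(\alpha,z)}\Big(\frac{\partial_x\zeta(s,z)}{\zeta(s,z)}-k\,\frac{\partial_x\theta(\alpha,z)}{\theta(\alpha,z)}\Big),
\end{equation*}
and using Montgomery's first lemma (Lemma \ref{Lemma22}, valid since $\alpha\geq2s>1$), both $-\partial_x\zeta$ and $-\partial_x\theta$ are nonnegative on $\mathcal{D}_{\mathcal{G}}$. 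The term $-k\partial_x\theta/\theta$ therefore has the \emph{wrong} sign. The claim amounts to
\begin{equation*}
\frac{-\partial_x\zeta(s,z)}{\zeta(s,z)}>k\,\frac{-\partial_x\theta(\alpha,z)}{\theta(\alpha,z)},\qquad k\leq 2s ,
\end{equation*}
and it suffices to prove this with $k$ replaced by $2s$. (Only $k\in(0,2s]$ is relevant for Theorem \ref{Th2}; for larger $k$ the statement would need $\alpha$ larger still.) Since both sides vanish at $x=0$ and $x=\frac12$, I will factor out $\sin(2\pi x)$ from both.

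For the theta side, I expand in Fourier series in $x$ using Lemma \ref{Lemma21} together with Poisson summation:
\begin{equation*}
\theta(\alpha,z)=\sqrt{\tfrac{y}{\alpha}}\sum_{m,n}e^{-\pi y(\alpha m^2+n^2/\alpha)}\cos(2\pi mnx).
\end{equation*}
Differentiating and using $|\sin(2\pi jx)|\leq j\sin(2\pi x)$ for $x\in[0,\frac12]$ gives
\begin{equation*}
-\partial_x\theta\leq \sin(2\pi x)\,2\pi\sqrt{\tfrac{y}{\alpha}}\sum_{mn\neq0}(mn)^2e^{-\pi y(\alpha m^2+n^2/\alpha)}=:\sin(2\pi x)\,C_\theta(y),
\end{equation*}
and $C_\theta(y)\lesssim \sqrt{y/\alpha}\,e^{-\pi y(\alpha+1/\alpha)}$, with a geometric tail that is easily controlled because $\alpha\geq2$ and $y\geq\frac{48}{73}$. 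I also use the trivial bound $\theta\geq1$ (or $\theta\ge\sqrt{y/\alpha}$).

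For the zeta side, I use the Chowla--Selberg expansion from the summation formulas of this section:
\begin{equation*}
\zeta(s,z)=2\zeta(2s)y^s+2\sqrt\pi\frac{\Gamma(s-\frac12)\zeta(2s-1)}{\Gamma(s)}y^{1-s}+\frac{8\pi^s\sqrt y}{\Gamma(s)}\sum_{n\geq1}n^{s-\frac12}\sigma_{1-2s}(n)K_{s-\frac12}(2\pi ny)\cos(2\pi nx).
\end{equation*}
From this,
\begin{equation*}
-\partial_x\zeta\geq\sin(2\pi x)\frac{16\pi^{s+1}\sqrt y}{\Gamma(s)}\Big(K_{s-\frac12}(2\pi y)-\sum_{n\geq2}n^{s+\frac12}\sigma_{1-2s}(n)K_{s-\frac12}(2\pi ny)\Big)=:\sin(2\pi x)C_\zeta(y).
\end{equation*}
I need $C_\zeta>0$ on $[\frac{48}{73},2]$, which follows from explicit two-sided Bessel bounds, e.g.\ $K_\nu(t)\asymp\sqrt{\pi/(2t)}e^{-t}$ with explicit correction factors and monotonicity in $t$. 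I will also need an upper bound for $\zeta$ on the region, which is straightforward from the same expansion.

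It then remains to check $C_\zeta(y)\,\theta>2s\,C_\theta(y)\,\zeta$ on $[\frac{48}{73},2]$. Heuristically the left side is of order $y^{-s}e^{-2\pi y}\Gamma(s)^{-1}(\pi y)^{s}$ relative to $\zeta\approx2\zeta(2s)y^s$. The right side carries the factor $e^{-\pi y(\alpha+1/\alpha)}\leq e^{-2\pi y}e^{-\pi y(\alpha-2+1/\alpha)}$, and $\alpha-2+1/\alpha\geq\frac12$ since $\alpha\geq2s>2$, so the extra factor is at most $e^{-\pi\cdot48/146}$, while the prefactor $2s\sqrt{y/\alpha}\leq\sqrt{2s y}$ grows only mildly. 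For large $s$ the additional factor $e^{-\pi y(\alpha-2)}\leq e^{-2\pi y(s-1)}$ easily beats the growth of $\Gamma(s)$ and of $2s$.

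The main obstacle is making this uniform in $s>1$. Near $s\to1^+$ with $\alpha\approx2$, the margin $e^{-\pi y/2}$ at $y\approx\frac{48}{73}$ is only moderate, and $\zeta(s,z)$ blows up like $\pi/(s-1)$. This blow-up actually helps on the zeta denominator side only if handled with care: one must use $\zeta\,\theta$ versus $C_\theta\,\zeta$ in the form $C_\zeta/C_\theta>2s\,\zeta/\theta$, so the pole hurts. I would therefore first try to cancel the pole: the pole sits only in the $x$-independent terms, so I would prove instead the equivalent inequality after multiplying through by $(s-1)$, or treat $s\in(1,2]$ separately with the cruder but $s$-independent ratio $e^{-\pi y(\alpha+1/\alpha-2)}$ times $\alpha$-powers. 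If the margin fails near $s=1$, I would fall back on a numerical verification on a finite grid in $(s,y)$ with derivative bounds.
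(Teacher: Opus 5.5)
\textbf{The obstacle you flag as $s\to1^+$ is not a matter of margin: the inequality you reduce to is false there.} None of your remedies (multiplying by $s-1$, a cruder treatment of $s\in(1,2]$, a grid check) can succeed.

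In your own Chowla--Selberg expansion, $a_0(s,y)$ contains $2\xi(2s-1)\frac{\Gamma(\frac12)\Gamma(s-\frac12)}{\Gamma(s)}y^{1-s}\sim\frac{\pi}{s-1}$. By contrast, $2\xi(2s)y^s$ and all $a_n(s,y)$, $n\geq1$, are continuous at $s=1$. Hence $\zeta(s,z)\sim\frac{\pi}{s-1}$, while $-\zeta_x(s,z)$ stays bounded; it tends to $2\pi\,\partial_x\log|\eta(z)|^2$, consistent with Kronecker's limit formula. So $\frac{-\zeta_x}{\zeta\sin(2\pi x)}=O(s-1)$ uniformly on the region. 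Multiplying through by $s-1$ changes nothing: the pole sits in the quantity you divide by, and $\partial_x$ annihilates it.

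On the other side, take $k=2s$ and $\alpha=2s$, which the hypothesis permits. Then $2s\frac{-\theta_x(\alpha,z)}{\theta(\alpha,z)\sin(2\pi x)}\to2\frac{-\theta_x(2,z)}{\theta(2,z)\sin(2\pi x)}$, which is bounded below by a positive constant on the region. So for $s$ close to $1$ one has $\partial_x\big(\zeta/\theta^{2s}\big)>0$ for $0<x<\frac12$, and the same holds for any fixed $k>0$. Concretely, at $z=\frac14+i$ and $s=1.01$, $\frac{-\zeta_x}{\zeta}\approx0.15/316.7\approx4.7\cdot10^{-4}$. Against this, $2s\frac{-\theta_x}{\theta}\approx1.3\cdot10^{-2}$ for $\alpha=2s$, and still $\approx7.6\cdot10^{-4}$ for $\alpha=3s$.

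The statement therefore needs an extra hypothesis, for example $s\geq s_0>1$, or $\alpha$ growing like $\log\frac{1}{s-1}$. The paper's own proof has the same defect. Its Lemma~\ref{Lemma4.13} covers $s\in(1,4]$ and is carried out under $\alpha\geq3s$, not $\alpha\geq2s$. The denominator of its left side contains the same $\frac{\pi}{s-1}$, so the left side tends to $0$, while the right side $3(y/\alpha)^{-3/2}e^{-\pi\alpha y}$ does not. Your instinct was right, but the correct conclusion is that this range cannot be proved, not that it needs more care.

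\textbf{Second, for large $s$ your lower bound for $-\zeta_x$ breaks down.} Since $K_\nu(t)\sim\frac12\Gamma(\nu)(t/2)^{-\nu}$ as $\nu\to\infty$ with $t$ fixed, one gets $\frac{a_n}{a_1}=n^{s-\frac12}\sigma_{1-2s}(n)\frac{K_{s-\frac12}(2\pi ny)}{K_{s-\frac12}(2\pi y)}\to1$ for each fixed $n$. The weight in the subtracted sum of $C_\zeta$ should be $n^{s+\frac32}$, not $n^{s+\frac12}$: one factor $n$ comes from differentiation and one from $|\sin(2\pi nx)|\leq n\sin(2\pi x)$. With that weight the sum is unbounded in $s$, so $C_\zeta<0$. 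Already at $s=20$, $y\leq1$, the $n=2,3$ terms alone outweigh the $n=1$ term.

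No sharper Bessel estimate fixes this. For large $s$ the layer $\sum_n((x+n)^2+y^2)^{-s}$ is sharply peaked in $x$, so first-mode dominance is simply false. That is why the paper uses Chowla--Selberg with Baricz's ratio bound only for $s\in(1,4]$. For $s\geq4$ it switches to the Mellin representation $\zeta_x=\frac{\pi^s}{\Gamma(s)}\int_0^\infty\theta_x(t,z)t^{s-1}\,dt$. Because $-\theta_x(t,z)\geq0$ for every $t>0$ (Lemma~\ref{Lemma22} together with $\theta(t,z)=t^{-1}\theta(t^{-1},z)$), one may discard $\int_0^y$. One then inserts the pointwise lower bound for $-\theta_x/\sin(2\pi x)$, integrates with incomplete-gamma bounds (Lemmas~\ref{Lemma4.6}--\ref{Lemma4.9}), and compares with Rankin's upper bound for $\zeta$. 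Your proposal has no replacement for this step. Where Chowla--Selberg is usable, your route essentially coincides with the paper's.

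A minor point: your bound $C_\theta(y)\leq c\sqrt{y/\alpha}\,e^{-\pi y(\alpha+1/\alpha)}$ with a geometric tail is not uniform in $\alpha$. For $\alpha\gg y$ the sum $\sum_n n^2e^{-\pi yn^2/\alpha}$ is of size $(\alpha/y)^{3/2}$, giving $C_\theta\approx2\frac{\alpha}{y}e^{-\pi\alpha y}$, as in the second bound of Lemma~\ref{Lemma4.2}. This is harmless because of the factor $e^{-\pi\alpha y}$, but it must be stated that way.
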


\begin{lemma}\label{Lemma3c} Assume that $s>1, \alpha\geq 2s$. Then

\begin{equation}\aligned\nonumber
\frac{\partial}{\partial y}\frac{\zeta(s,z)}{\theta^k(\alpha,z)}>0\;\;\hbox{for}\;\;z\in\mathcal{D}_{\mathcal{G}}\cap\{2\geq y\geq \frac{4}{3}\}.
 \endaligned\end{equation}

\end{lemma}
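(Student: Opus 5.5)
We need to prove Lemma \ref{Lemma3c}: $\partial_y\big(\zeta(s,z)/\theta^k(\alpha,z)\big)>0$ on $\mathcal{D}_{\mathcal{G}}\cap\{4/3\le y\le 2\}$, for $s>1$, $\alpha\ge 2s$. The case $k>2s$ is excluded by the main theorem, so I take $k\in(0,2s]$.

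Write the derivative as
$$\theta^{k+1}\,\partial_y\frac{\zeta}{\theta^k}=\theta(\alpha,z)\,\partial_y\zeta(s,z)-k\,\zeta(s,z)\,\partial_y\theta(\alpha,z).$$
By Montgomery's second lemma (Lemma \ref{Lemma212}), $\partial_y\theta(\alpha,z)\ge0$ on $\mathcal{D}_{\mathcal{G}}$. Hence the worst case is $k=2s$, and it suffices to prove
$$\frac{\partial_y\zeta(s,z)}{\zeta(s,z)}>2s\,\frac{\partial_y\theta(\alpha,z)}{\theta(\alpha,z)}$$
in the rectangle.

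\textbf{Step 1: expand both functions via Lemma \ref{Lemma21}.} For the theta function, Lemma \ref{Lemma21} gives
$$\theta(\alpha,z)=\sqrt{y/\alpha}\sum_n e^{-\pi\alpha y n^2}\vartheta(y/\alpha;nx).$$
Since $\alpha y\ge 2s\cdot 4/3>8/3$, the terms with $n\neq0$ are of size $O(e^{-\pi\alpha y})$. So
$$\partial_y\log\theta=\frac{1}{2y}+\partial_y\log\vartheta_3(y/\alpha)+O\big(\alpha y\,e^{-\pi\alpha y}\big).$$
Applying Jacobi inversion, $\vartheta_3(y/\alpha)=\sqrt{\alpha/y}\,\vartheta_3(\alpha/y)$, which gives
$$\partial_y\log\theta\approx\frac{2\pi\alpha}{y^2}\cdot\frac{\sum n^2e^{-\pi n^2\alpha/y}}{\vartheta_3(\alpha/y)}\lesssim\frac{4\pi\alpha}{y^2}\,e^{-\pi\alpha/y},$$
which is very small because $\alpha/y\ge s$.

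For the zeta function, use the Chowla--Selberg type expansion (summation formula of Section 3):
$$\zeta(s,z)=2\zeta(2s)y^s+2\sqrt\pi\,\frac{\Gamma(s-\tfrac12)}{\Gamma(s)}\zeta(2s-1)\,y^{1-s}+(\text{Bessel terms }\sim e^{-2\pi y}).$$
The dominant term yields $\partial_y\log\zeta\approx s/y$. The second term reduces this somewhat, since $y^{1-s}$ decreases, but for $y\ge 4/3$ the ratio of the second term to the first is $O(y^{1-2s})$ with explicit constants, and the Bessel $K$-terms are bounded using $|\cos|\le1$ and standard estimates for $K_\nu$.

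\textbf{Step 2: compare the two sides.} The main inequality becomes
$$\frac{s}{y}\,(1-\epsilon_1)>2s\cdot\frac{4\pi\alpha}{y^2}\,e^{-\pi\alpha/y}(1+\epsilon_2).$$
Using $\alpha\ge 2s$ and $y\le2$, we have $\alpha/y\ge s>1$, and the function $t\mapsto t e^{-\pi t}$ is decreasing for $t\ge 1/\pi$. Therefore the right-hand side is at most
$$\frac{8\pi s}{y}\cdot\frac{\alpha}{y}e^{-\pi\alpha/y}\le\frac{8\pi s}{y}\,e^{-\pi}\approx\frac{s}{y}\cdot 1.09\,e^{0}\cdot e^{-\pi}\cdot 8\pi e^{-0}.$$
The crude bound $8\pi e^{-\pi}\approx1.09$ is too weak. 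It must be sharpened by using $\alpha/y\ge 2s/2=s$ and treating $s$ near $1$ separately with the exact $\vartheta_3$ quotient, or by using the full ratio $\sum n^2 e^{-\pi n^2 t}/\vartheta_3(t)$ at $t\ge1$, which is about $2e^{-\pi}/(1+2e^{-\pi})\approx0.08$. That gives a right-hand side of about $2s\cdot 2\pi\cdot 0.08/y\approx 1.0\,s/y$ at $t=1$, which is still borderline.

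\textbf{Main obstacle.} The hard part is exactly this margin when $s\to1$ and $\alpha=2s$, $y=2$. Here the $\zeta$ side is also weakened, because $\zeta(2s-1)$ blows up as $s\to1$, which makes $\epsilon_1$ large. I would first handle this regime by the exact computation $y\,\partial_y\log\zeta$ written as a convex combination weighting $s$ and $1-s$. I would then sharpen the theta side using monotonicity in $\alpha$: the quantity $\partial_y\log\theta(\alpha,iy)$ decreases in $\alpha$ for $\alpha/y\ge1$. This reduces everything to a two-variable check in $(s,y)\in(1,\infty)\times[4/3,2]$ at $\alpha=2s$, $x\in\{0,\tfrac12\}$, with the $x$-dependence controlled by the $n=\pm1$ terms. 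For large $s$ the inequality holds with a wide margin. The remaining compact range of $s$ would be verified by explicit numerical bounds on finitely many terms, together with tail estimates.
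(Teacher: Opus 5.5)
Your reduction follows essentially the paper's route. You use $\theta_y(\alpha,z)\ge0$ on $\mathcal{D}_{\mathcal{G}}$ to reduce to $k=2s$. You bound $\zeta_y/\zeta$ from below through the two main terms $A=2\xi(2s)y^s$ and $B=2\xi(2s-1)\frac{\Gamma(\frac12)\Gamma(s-\frac12)}{\Gamma(s)}y^{1-s}$, with $\xi$ the Riemann zeta function; this is the paper's $\frac{s}{y}(1-\mathcal{A}_s(y))$ in Lemma~\ref{Lemma4.17}. You bound $\theta_y/\theta$ by $\frac{2\pi\alpha}{y^2}e^{-\pi\alpha/y}$ (Lemma~\ref{Lemma4.19}); your $4\pi$, and the resulting ``$\approx 1.0\,s/y$'', double count. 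Finally you use that $te^{-\pi t}$ decreases at $t=\alpha/y\ge s$. If you keep $se^{-\pi s}$ instead of evaluating at $t=1$, this gives the paper's closing inequality $1-\mathcal{A}_s(\frac43)\ge 4\pi s e^{-\pi s}$ (Lemma~\ref{Lemma4.20}), and large $s$ becomes trivial.

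\textbf{The gap.} The regime you call the main obstacle cannot be closed by sharper bounds or a final numerical check, because the inequality is false there. Note also that $(1,s_0]$ is not a compact range.

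As $s\to1^+$, $\xi(2s-1)\sim\frac{1}{2(s-1)}$, so $B\sim\frac{\pi}{s-1}$ dominates $\zeta(s,z)$ uniformly on the rectangle. Meanwhile $y\,\partial_y(A+B)=sA+(1-s)B$ stays bounded, since $(1-s)B\to-\pi$, and the Bessel terms are $O(e^{-2\pi y})$. Hence $y\,\zeta_y/\zeta=\frac{sA+(1-s)B}{A+B}+o(1)=O(s-1)$; equivalently, $\mathcal{A}_s(y)\to\frac{2s-1}{s}\to1$. Your proposed convex-combination computation would reveal exactly this.

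On the theta side, take $\alpha=2s$ and $z$ near $2i$. Then $\theta_y/\theta\to\partial_y\log\bigl(\vartheta_3(2y)\vartheta_3(2/y)\bigr)\big|_{y=2}\approx0.125$. Concretely, at $s=1.01$, $\alpha=k=2.02$, $z=\epsilon+2i$, you get $\zeta_y/\zeta\approx0.0055$ but $k\,\theta_y/\theta\approx0.25$. So $\partial_y(\zeta/\theta^k)<0$ there.

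The failure is not confined to this choice. With $\alpha=3s$ the theta side is still $\approx0.08$. For any fixed $k>0$ the sign flips once $s$ is close enough to $1$. Consistently, for $k=2s$ and $s$ near $1$, $\zeta/\theta^{2s}\to2\xi(2s)\alpha^s$ as $y\to\infty$, which is far below its value $\approx\frac{\pi}{s-1}$ at $e^{i\pi/3}$.

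The statement therefore needs an extra hypothesis: keep $s$ away from $1$ (at $s=2$, $1-\mathcal{A}_2(\frac43)\approx0.23$ against $8\pi e^{-2\pi}\approx0.05$), or take $\alpha$ large compared with $\log\frac{1}{s-1}$, or restrict $k$.

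The paper's own last step has the same defect. Lemma~\ref{Lemma4.20} is asserted for $s\ge1$, yet the paper's table gives $1-\mathcal{A}_1(\frac43)\approx0.113<4\pi e^{-\pi}\approx0.54$.
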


\subsection{Summation formulas}
To prove Lemmas \ref{Lemma3a}-\ref{Lemma3c}, we need some preliminary and auxiliary tools.

We first recall some basic estimates. By skilfully using the Euler-Maclaurin summation formula, Rankin deduced that in his paper implicitly
\begin{lemma}[A summation formula, Rankin 1953 \cite{Ran1953}]\label{Rankin1} Assume that $z\in \mathbb{H}$ and $s>1$. Then
\begin{equation}\aligned\nonumber
\sum_{n\in\mathbb{Z}}\frac{1}{|mz+n|^{2s}}=\frac{\Gamma(\frac{1}{2})\Gamma(s-\frac{1}{2})}{\Gamma(s)}y^{1-2s}\frac{1}{m^{2s-1}}
+\sigma\cdot\frac{s}{2}\frac{(2s+1)^{s+\frac{1}{2}}}{(2s+2)^{s+1}}y^{-(2s+1)}\frac{1}{m^{2s+1}},\;\;\sigma\in[-1,1].
 \endaligned\end{equation}
Here $z=x+iy$ is a complex number in the upper half plane.
\end{lemma}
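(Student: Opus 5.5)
The plan is to compare the lattice sum with the corresponding integral via Euler--Maclaurin summation to second order, and to estimate the remainder sharply enough to obtain the stated constant. Fix $m\geq1$ (for $m\le -1$ replace $m$ by $|m|$; the sum is unchanged under $(m,n)\mapsto(-m,-n)$). Set $A:=my>0$ and
\[
f(t):=\big((mx+t)^2+A^2\big)^{-s},
\]
so that $|mz+n|^{-2s}=f(n)$. The function $f$ is smooth, even about $t=-mx$, and $f,f',f''$ are integrable on $\mathbb{R}$ since $s>1$. The main term comes from the substitution $t=-mx+Av$:
\[
\int_{\mathbb{R}}f(t)\,dt=A^{1-2s}\int_{\mathbb{R}}(1+v^2)^{-s}dv=A^{1-2s}B\big(\tfrac12,s-\tfrac12\big)
=\frac{\Gamma(\frac12)\Gamma(s-\frac12)}{\Gamma(s)}\,y^{1-2s}\frac{1}{m^{2s-1}},
\]
which is exactly the first term of the formula.

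For the remainder, write $P_1(t)=\{t\}-\frac12$ and let $Q(t)=\frac12(\{t\}^2-\{t\})$. This $Q$ is continuous and $1$-periodic, satisfies $Q'=P_1$ off the integers, and takes values in $[-\frac18,0]$. First-order Euler--Maclaurin on $\mathbb{R}$ (justified by the decay of $f$ and $f'$) gives
\[
\sum_{n\in\mathbb{Z}}f(n)=\int_{\mathbb{R}}f+\int_{\mathbb{R}}P_1f'.
\]
A further integration by parts gives $\int P_1f'=-\int Qf''$. The key refinement is that $\int_{\mathbb{R}}f''=0$, so we may subtract the midpoint constant $c=-\frac1{16}$:
\[
E:=-\int_{\mathbb{R}}\big(Q(t)+\tfrac1{16}\big)f''(t)\,dt,\qquad |Q+\tfrac1{16}|\le\tfrac1{16}.
\]
This centering halves the naive constant; without it one only gets $s$ in place of $\frac s2$. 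Consequently
\[
|E|\le\tfrac1{16}\int_{\mathbb{R}}|f''|.
\]

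Next compute $\int|f''|$ exactly. In the variable $u=mx+t$ we have $f'(u)=-2su(u^2+A^2)^{-s-1}$. This vanishes at $u=0$ and at $\pm\infty$, and it is monotone between its unique extremum on each half-line. Hence
\[
\int_{\mathbb{R}}|f''|=4\max_{u}|f'(u)|.
\]
Maximizing $g(u)=u(u^2+A^2)^{-s-1}$ on $u>0$ gives the critical point $u^2=A^2/(2s+1)$, at which
\[
g=A^{-2s-1}\frac{(2s+1)^{s+\frac12}}{(2s+2)^{s+1}}.
\]
Therefore
\[
|E|\le\tfrac1{16}\cdot4\cdot2s\,A^{-2s-1}\frac{(2s+1)^{s+\frac12}}{(2s+2)^{s+1}}
=\frac s2\,\frac{(2s+1)^{s+\frac12}}{(2s+2)^{s+1}}\,y^{-(2s+1)}\frac1{m^{2s+1}}.
\]
Writing $E$ as $\sigma$ times this bound, with $\sigma\in[-1,1]$, yields the lemma.

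The only delicate points are the following.
\begin{itemize}
\item Justifying the integrations by parts on all of $\mathbb{R}$; the boundary terms vanish because $Q$ is bounded and $f,f'\to0$.
\item The centering trick with $c=-\frac1{16}$, which is precisely what produces the factor $\frac s2$ rather than $s$.
\item The exact identity $\int|f''|=4\max|f'|$, which rests on the unimodal shape of $f'$ on each half-line.
\end{itemize}
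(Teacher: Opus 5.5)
Your proof is correct and follows the route the paper indicates. The paper gives no argument of its own; it only attributes the formula to Rankin's use of Euler--Maclaurin summation, which is what you carry out: the main term is $\int_{\mathbb{R}}f$, and the remainder is $-\int_{\mathbb{R}}(Q+\frac{1}{16})f''$, where $\int_{\mathbb{R}}f''=0$ justifies the centering. Since $f''$ vanishes only at $u=\pm A/\sqrt{2s+1}$, you get $\int|f''|=4\max|f'|=8s\,A^{-2s-1}\frac{(2s+1)^{s+1/2}}{(2s+2)^{s+1}}$, and $\frac{1}{16}\cdot 8s=\frac{s}{2}$ reproduces the stated constant exactly.
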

It looks that there is no exact and explicit summation formula for $\sum_{n\in\mathbb{Z}}\frac{1}{|mz+n|^{2s}}$, hence Lemma \ref{Rankin1} is the best available one to use. In Lemma \ref{Rankin1}, one can view that

\begin{equation}\aligned\nonumber
\sum_{n\in\mathbb{Z}}\frac{1}{|mz+n|^{2s}}=\hbox{approximate part}+\hbox{error part}.
 \endaligned\end{equation}

And at least for large $y$, we get

\begin{equation}\aligned\nonumber
&\hbox{approximate part}:=\frac{\Gamma(\frac{1}{2})\Gamma(s-\frac{1}{2})}{\Gamma(s)}y^{1-2s}\frac{1}{m^{2s-1}},\\
&\hbox{error part}:=\sigma\cdot\frac{s}{2}\frac{(2s+1)^{s+\frac{1}{2}}}{(2s+2)^{s+1}}y^{-(2s+1)}\frac{1}{m^{2s+1}}.
 \endaligned\end{equation}

By Lemma \ref{Rankin1}, one has

\begin{lemma}\label{Rankin2} Assume that $z\in \mathbb{H}$ and $s>1$. Then
\begin{equation}\aligned\nonumber
\sum_{m=1}^\infty\sum_{n\in\mathbb{Z}}\frac{1}{|mz+n|^{2s}}=\xi(2s-1)\frac{\Gamma(\frac{1}{2})\Gamma(s-\frac{1}{2})}{\Gamma(s)}y^{1-2s}
+\sigma\cdot\xi(2s+1)\frac{s}{2}\frac{(2s+1)^{s+\frac{1}{2}}}{(2s+2)^{s+1}}y^{-(2s+1)},\;\;\sigma\in[-1,1].
 \endaligned\end{equation}

\end{lemma}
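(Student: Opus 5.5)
Lemma \ref{Rankin2} follows by summing Lemma \ref{Rankin1} over $m\geq1$. Here $\xi$ denotes the Riemann zeta function $\sum_{m\geq1}m^{-t}$, which converges for $t=2s-1>1$ and $t=2s+1$ since $s>1$.

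First fix $m\geq1$ and apply Lemma \ref{Rankin1} to the inner sum $\sum_{n\in\mathbb{Z}}|mz+n|^{-2s}$. This gives the main term $\frac{\Gamma(\frac12)\Gamma(s-\frac12)}{\Gamma(s)}y^{1-2s}m^{1-2s}$ plus an error $\sigma_m\cdot\frac{s}{2}\frac{(2s+1)^{s+\frac12}}{(2s+2)^{s+1}}y^{-(2s+1)}m^{-(2s+1)}$, with $\sigma_m\in[-1,1]$ depending on $m$. The bound is uniform in $m$ because Lemma \ref{Rankin1} holds for every $m$ with the same constant; if one prefers, this can be checked by writing $|mz+n|^{2s}=m^{2s}|z+n/m|^{2s}$ and rerunning Rankin's Euler--Maclaurin estimate with step $1/m$. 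Since the main terms are positive and summable, summing over $m$ turns them into $\xi(2s-1)\frac{\Gamma(\frac12)\Gamma(s-\frac12)}{\Gamma(s)}y^{1-2s}$.

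For the error terms, put $C_s=\frac{s}{2}\frac{(2s+1)^{s+\frac12}}{(2s+2)^{s+1}}$. Their sum is $C_s y^{-(2s+1)}\sum_m\sigma_m m^{-(2s+1)}$, and since each $|\sigma_m|\leq 1$,
\[
\Big|\sum_{m\geq1}\sigma_m m^{-(2s+1)}\Big|\leq \xi(2s+1).
\]
So this sum equals $\sigma\,\xi(2s+1)$ with
\[
\sigma:=\frac{1}{\xi(2s+1)}\sum_{m\geq1}\sigma_m m^{-(2s+1)}\in[-1,1].
\]
Both series converge absolutely, so splitting the double sum into these two parts is justified.

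The only real point is confirming that the error constant in Lemma \ref{Rankin1} does not depend on $m$, which is exactly how that lemma is stated. The rest is bookkeeping.
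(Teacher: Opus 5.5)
Your proposal is correct and matches the paper, which obtains this lemma simply by summing Lemma \ref{Rankin1} over $m\geq1$. Your explicit treatment of the $m$-dependent $\sigma_m$, via $\bigl|\sum_{m\geq1}\sigma_m m^{-(2s+1)}\bigr|\leq\xi(2s+1)$, is exactly the bookkeeping the paper leaves implicit.
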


Finally, one obtains the approximate and error part of Zeta functions $\zeta(s,z)$.

\begin{lemma}\label{Rankin3} Assume that $z\in \mathbb{H}$ and $s>1$. Then
 \begin{equation}\aligned\nonumber
\zeta(s,z)&=\sum_{(m,n)\in\mathbb{Z}^2\backslash\{0\}}\frac{y^s}{|mz+n|^{2s}}\\
&=2\xi(2s)y^s+
2\xi(2s-1)\frac{\Gamma(\frac{1}{2})\Gamma(s-\frac{1}{2})}{\Gamma(s)}y^{1-s}
+\sigma\cdot\xi(2s+1)s\frac{(2s+1)^{s+\frac{1}{2}}}{(2s+2)^{s+1}}y^{-(s+1)},\;\;\sigma\in[-1,1].
\endaligned\end{equation}

\end{lemma}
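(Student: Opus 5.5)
The plan is a direct computation. Split the lattice sum defining $\zeta(s,z)$ according to whether $m=0$ or $m\neq0$, then apply Lemma \ref{Rankin2} to the $m\neq0$ part.

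First, the terms with $m=0$ are
$$\sum_{n\neq0}\frac{y^s}{|n|^{2s}}=2\xi(2s)y^s,$$
where $\xi$ denotes the Riemann zeta function, as in Lemma \ref{Rankin2}. Second, for the terms with $m\neq0$, use the symmetry $(m,n)\mapsto(-m,-n)$, which preserves $|mz+n|$. This gives
$$\sum_{m\neq0}\sum_{n\in\mathbb{Z}}\frac{y^s}{|mz+n|^{2s}}=2y^s\sum_{m=1}^\infty\sum_{n\in\mathbb{Z}}\frac{1}{|mz+n|^{2s}}.$$

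Third, insert Lemma \ref{Rankin2} into this expression and multiply by $2y^s$. The main term becomes
$$2\xi(2s-1)\frac{\Gamma(\frac12)\Gamma(s-\frac12)}{\Gamma(s)}y^{1-s}.$$
The error term becomes
$$2\cdot\frac{s}{2}\,\sigma\,\xi(2s+1)\frac{(2s+1)^{s+\frac12}}{(2s+2)^{s+1}}y^{-(s+1)}=\sigma\,\xi(2s+1)\,s\,\frac{(2s+1)^{s+\frac12}}{(2s+2)^{s+1}}y^{-(s+1)},$$
with the same $\sigma\in[-1,1]$. Adding the $m=0$ contribution gives the stated formula.

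The only point needing care is the bookkeeping of $\sigma$, which may depend on $z$, $s$ and $m$. Lemma \ref{Rankin2} itself comes from summing Lemma \ref{Rankin1} over $m\ge1$, with a separate $\sigma_m\in[-1,1]$ for each $m$. Since all the coefficients $c_m=m^{-(2s+1)}$ are positive, we have
$$\Big|\sum_m\sigma_m c_m\Big|\le\sum_m c_m=\xi(2s+1),$$
so $\sum_m \sigma_m c_m=\sigma\,\xi(2s+1)$ for a single $\sigma\in[-1,1]$. Multiplying by the positive factor $y^s$ does not affect this. There is no genuine obstacle beyond this routine check.
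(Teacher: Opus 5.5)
Your proof is correct and is the same argument the paper gives: you split off the $m=0$ terms to get $2\xi(2s)y^s$, fold $m<0$ onto $m>0$ via $(m,n)\mapsto(-m,-n)$, and substitute Lemma \ref{Rankin2} multiplied by $2y^s$. Your remark that the $m$-dependent $\sigma_m$ can be absorbed into a single $\sigma\in[-1,1]$, because the weights $m^{-(2s+1)}$ are positive, makes explicit a step the paper leaves implicit when passing from Lemma \ref{Rankin1} to Lemma \ref{Rankin2}.
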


\begin{proof}
Recall that
 \begin{equation}\aligned\label{Lq}
\zeta(s,z)=\sum_{(m,n)\in\mathbb{Z}^2\backslash\{0\}}\frac{y^s}{|mz+n|^{2s}}.
\endaligned\end{equation}

We split the summation in terms of $m$ into $m=0$, $m>0$, $m<0$.
Note that when $m=0$, the double summation in \eqref{Lq} becomes
 \begin{equation}\aligned\nonumber
\sum_{n\in\mathbb{Z}\backslash\{0\}}\frac{y^s}{|n|^{2s}}=2y^s\sum_{n=1}^\infty \frac{1}{n^{2s}}=2\xi(2s)y^s.
\endaligned\end{equation}

Then

\begin{equation}\aligned\label{JJ2}
\zeta(s,z)=\sum_{(m,n)\in\mathbb{Z}^2\backslash\{0\}}\frac{y^s}{|mz+n|^{2s}}
=&2\xi(2s)y^s
+\sum_{m=1}^\infty\sum_{n\in\mathbb{Z}}\frac{y^s}{|mz+n|^{2s}}
+\sum_{m=-\infty}^{-1}\sum_{n\in\mathbb{Z}}\frac{y^s}{|mz+n|^{2s}}\\
=&2\xi(2s)y^s
+2\sum_{m=1}^\infty\sum_{n\in\mathbb{Z}}\frac{y^s}{|mz+n|^{2s}}.
\endaligned\end{equation}

Therefore, by Lemma \ref{Rankin2},

 \begin{equation}\aligned\nonumber
\zeta(s,z)=2\xi(2s)y^s+
2\xi(2s-1)\frac{\Gamma(\frac{1}{2})\Gamma(s-\frac{1}{2})}{\Gamma(s)}y^{1-s}
+\sigma\cdot\xi(2s+1)s\frac{(2s+1)^{s+\frac{1}{2}}}{(2s+2)^{s+1}}y^{-(s+1)},\;\;\sigma\in[-1,1].
\endaligned\end{equation}

\end{proof}

There is an another useful tool. Rankin deduced that in his paper implicitly
\begin{lemma}[A summation formula, Rankin 1953 \cite{Ran1953}]\label{Rankin4} Assume that $z\in \mathbb{H}$ and $s>1$. Then
 \begin{equation}\aligned\nonumber
\zeta_y(s,z)=&2s\Big[
\xi(2s)y^{s-1}-
\frac{s-1}{s}\xi(2s-1)\frac{\Gamma(\frac{1}{2})\Gamma(s-\frac{1}{2})}{\Gamma(s)}y^{-s}\\
&+\xi(2s+1)\big(\sigma_1\cdot\frac{1}{2}s\frac{(2s+1)^{s+\frac{1}{2}}}{(2s+2)^{s+1}}
+\sigma_2\cdot(s+1)\frac{(2s+3)^{s+\frac{3}{2}}}{(2s+4)^{s+2}}
\big)
y^{-(s+2)}\Big],\;\;\sigma_1, \sigma_2\in[-1,1].
\endaligned\end{equation}
\end{lemma}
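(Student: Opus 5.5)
The plan is to differentiate the expansion of Lemma \ref{Rankin3} term by term in $y$, after first writing it in a form where the $y$-dependence of each piece is explicit. Write
\[
\zeta(s,z)=2\xi(2s)y^s+2y^s\sum_{m\ge1}\sum_{n\in\mathbb{Z}}|mz+n|^{-2s}.
\]
Applying $\partial_y$ gives two contributions:
\begin{itemize}
\item $2s\xi(2s)y^{s-1}$ from the first term;
\item $2sy^{s-1}\sum_{m\ge1}\sum_n|mz+n|^{-2s}+2y^s\,\partial_y\sum_{m\ge1}\sum_n|mz+n|^{-2s}$ from the second.
\end{itemize}

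For the undifferentiated inner sum I would use Lemma \ref{Rankin2} directly. Its main term $\xi(2s-1)\frac{\Gamma(\frac12)\Gamma(s-\frac12)}{\Gamma(s)}y^{1-2s}$, multiplied by $2sy^{s-1}$, gives $2s\,\xi(2s-1)\frac{\Gamma(\frac12)\Gamma(s-\frac12)}{\Gamma(s)}y^{-s}$. Its error term contributes $2s\,\sigma_1\xi(2s+1)\frac{s}{2}\frac{(2s+1)^{s+\frac12}}{(2s+2)^{s+1}}y^{-(s+2)}$, which is exactly the $\sigma_1$ term in the statement.

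For the differentiated inner sum I would not differentiate Rankin's approximation, since an error bound gives no control on derivatives. Instead I would compute exactly:
\[
\partial_y|mz+n|^{-2s}=-2s\,m^2y\,|mz+n|^{-2s-2}.
\]
The key point is that this is again a lattice-type sum, now with exponent $s+1$ and an extra factor $m^2$. Applying Lemma \ref{Rankin1} with $s$ replaced by $s+1$ gives
\[
\sum_n|mz+n|^{-2s-2}=\frac{\Gamma(\frac12)\Gamma(s+\frac12)}{\Gamma(s+1)}y^{-1-2s}m^{-1-2s}+\sigma\frac{s+1}{2}\frac{(2s+3)^{s+\frac32}}{(2s+4)^{s+2}}y^{-2s-3}m^{-2s-3}.
\]
Multiplying by $-2sm^2y\cdot2y^s$ and summing over $m\ge1$ turns the $m$-powers into $\xi(2s-1)$ and $\xi(2s+1)$ respectively. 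Using $\Gamma(s+\frac12)/\Gamma(s+1)=\frac{s-\frac12}{s}\Gamma(s-\frac12)/\Gamma(s)$, the main part equals $-4s(s-\frac12)\xi(2s-1)\frac{\Gamma(\frac12)\Gamma(s-\frac12)}{\Gamma(s)}y^{-s}$.

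Combining the two main $y^{-s}$ terms gives the coefficient
\[
2s\Big(1-2(s-\tfrac12)\Big)=2s(2-2s)=-4s(s-1).
\]
The stated formula's bracket yields $-2s\frac{s-1}{s}=-2(s-1)$ instead. I would reconcile the constant by double-checking the normalization (the factor $2$ in front of the $m$-sum in Lemma \ref{Rankin3}) and adjust so the main term matches the exact derivative of $2\xi(2s-1)\frac{\Gamma(\frac12)\Gamma(s-\frac12)}{\Gamma(s)}y^{1-s}$, namely $2(1-s)\xi(2s-1)\cdots y^{-s}$. The same derivative appears consistently if the second step is done with the correct factor.

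The error term from the differentiated sum is $\sigma_2\cdot2s(s+1)\xi(2s+1)\frac{(2s+3)^{s+\frac32}}{(2s+4)^{s+2}}y^{-(s+2)}$ up to the same normalization, matching the $\sigma_2$ term. Since $\sigma,\sigma'\in[-1,1]$ may depend on $m$, the bound survives summation in $m$ by the triangle inequality, producing new $\sigma_i\in[-1,1]$.

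The main obstacle is bookkeeping of the constants. In particular, I need to verify that Lemma \ref{Rankin1} applied at exponent $s+1$ has the $(s+1)/2$ prefactor used here, and to track the factors of $2$ carefully. The analytic content is only term-by-term differentiation, which is justified by absolute and locally uniform convergence for $s>1$.
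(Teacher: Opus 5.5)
Your route is the same as the paper's: differentiate exactly, write
\[
\zeta_y=2s\xi(2s)y^{s-1}+2sy^{s-1}\sum_{m\ge1}\sum_n|mz+n|^{-2s}-4sy^{s+1}\sum_{m\ge1}m^2\sum_n|mz+n|^{-2s-2},
\]
then apply Lemma~\ref{Rankin2} to the first double sum and Lemma~\ref{Rankin1} with $s$ replaced by $s+1$ to the second.

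As written, however, the proposal does not close, because of an arithmetic slip in the main term of the differentiated sum. Put $C_s:=\frac{\Gamma(\frac12)\Gamma(s-\frac12)}{\Gamma(s)}$, so that $C_{s+1}=\frac{s-\frac12}{s}C_s$. Then that term is
\[
-4s\,y^{s+1}\,C_{s+1}\,\xi(2s-1)\,y^{-1-2s}=-4s\cdot\frac{s-\frac12}{s}\,C_s\,\xi(2s-1)\,y^{-s}=-(4s-2)\,C_s\,\xi(2s-1)\,y^{-s}.
\]
You wrote $-4s(s-\frac12)C_s\xi(2s-1)y^{-s}$, keeping the factor $s$ that cancels against the denominator of $C_{s+1}/C_s$. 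The correct total $y^{-s}$ coefficient is $2s-(4s-2)=-2(s-1)=2s\cdot\big(-\frac{s-1}{s}\big)$, which is exactly the statement's.

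So there is no discrepancy to reconcile. Your proposed remedy, revisiting the factor $2$ in front of the $m$-sum in Lemma~\ref{Rankin3}, targets a constant that is correct and must not be changed. "Adjusting so the main term matches" is not a valid proof step. Your own sanity check already agreed with the statement: $\partial_y\big(2\xi(2s-1)C_sy^{1-s}\big)=2(1-s)\xi(2s-1)C_sy^{-s}$ is precisely $2s\cdot\big(-\frac{s-1}{s}\big)\xi(2s-1)C_sy^{-s}$. The apparent mismatch came only from comparing against your miscalculated $-4s(s-1)$.

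The rest of your bookkeeping is right and needs no "up to normalization" hedge.
\begin{itemize}
\item Lemma~\ref{Rankin1} at exponent $s+1>1$ does carry the prefactor $\frac{s+1}{2}$.
\item The $\sigma_2$ term is $-4s\cdot\frac{s+1}{2}=-2s(s+1)$ times $\xi(2s+1)\frac{(2s+3)^{s+\frac32}}{(2s+4)^{s+2}}y^{-(s+2)}$ times a number in $[-1,1]$. That number comes from $\sum_m\sigma_m m^{-(2s+1)}$, and the minus sign is absorbed into $\sigma_2\in[-1,1]$.
\item The $\sigma_1$ term matches exactly as you computed.
\end{itemize}
With the slip corrected, your argument is a complete proof and coincides with the paper's.
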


\begin{proof}
Recall that
 \begin{equation}\aligned\nonumber
\zeta(s,z)=\sum_{(m,n)\in\mathbb{Z}^2\backslash\{0\}}\frac{y^s}{|mz+n|^{2s}}.
\endaligned\end{equation}
A direct calculation yields that
 \begin{equation}\aligned\nonumber
\zeta_y(s,z)&=\sum_{(m,n)\in\mathbb{Z}^2\backslash\{0\}}\frac{sy^{s-1}}{|mz+n|^{2s}}-\frac{2sy^{s+1}m^2}{|mz+n|^{2(s+1)}}.
\endaligned\end{equation}
Splitting the summation in terms of $m$ into $m=0$, $m>0$, $m<0$, and by symmetry,

 \begin{equation}\aligned\nonumber
\zeta_y(s,z)&=2s\xi(2s)y^{s-1}+2sy^{s-1}\sum_{m=1}^\infty\sum_{n\in\mathbb{Z}}\frac{1}{|mz+n|^{2s}}
-4sy^{s+1}\sum_{m=1}^\infty m^2\sum_{n\in\mathbb{Z}}\frac{1}{|mz+n|^{2(s+1)}}.
\endaligned\end{equation}
Note that the summation $\sum_{n\in\mathbb{Z}}\frac{1}{|mz+n|^{2s}}$ is studied in Lemma \ref{Rankin1}. The rest of the proof
 followed by Lemmas \ref{Rankin1} and \ref{Rankin2}.

\end{proof}
At the end of this section, we recall some estimates on one-dimensional Theta functions.
Recall that in \eqref{TXY}
\begin{equation}\aligned\nonumber
\vartheta(X;Y):=\sum_{n\in\mathbb{Z}} e^{-\pi n^2 X} e^{2n\pi i Y},
 \endaligned\end{equation}
 where $X>0$ and $Y\in\R$.

The following Lemmas \ref{LemmaT1} and \ref{LemmaT2} are proved in \cite{LW2022}.
\begin{lemma}\label{LemmaT1}\cite{LW2022}. Assume $X>\frac{1}{5}$. If $\sin(2\pi Y)>0$, then
\begin{equation}\aligned\nonumber
-\overline\vartheta(X)\sin(2\pi Y)\leq\frac{\partial}{\partial Y}\vartheta(X;Y)\leq-\underline\vartheta(X)\sin(2\pi Y).
 \endaligned\end{equation}
If $\sin(2\pi Y)<0$, then
\begin{equation}\aligned\nonumber
-\underline\vartheta(X)\sin(2\pi Y)\leq\frac{\partial}{\partial Y}\vartheta(X;Y)\leq-\overline\vartheta(X)\sin(2\pi Y).
 \endaligned\end{equation}
Here
\begin{equation}\aligned\nonumber
\underline\vartheta(X):=4\pi e^{-\pi X}(1-\mu(X)), \;\; \overline\vartheta(X):=4\pi e^{-\pi X}(1+\mu(X)),
 \endaligned\end{equation}
and
\begin{equation}\label{mmmx}
\mu(X):=\sum_{n=2}^\infty n^2 e^{-\pi(n^2-1)X}.
\end{equation}

\end{lemma}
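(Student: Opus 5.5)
The statement to prove is Lemma \ref{LemmaT1}: two-sided bounds on $\partial_Y\vartheta(X;Y)$ with the sign of $\sin(2\pi Y)$ factored out. The plan is to differentiate the series termwise and isolate the $n=\pm1$ terms. By evenness in $n$,
\begin{equation}\nonumber
\vartheta(X;Y)=1+2\sum_{n=1}^\infty e^{-\pi n^2X}\cos(2\pi nY),\qquad
\frac{\partial}{\partial Y}\vartheta(X;Y)=-4\pi\sum_{n=1}^\infty n e^{-\pi n^2X}\sin(2\pi nY).
\end{equation}
Termwise differentiation is legitimate because the differentiated series converges uniformly in $Y$ for fixed $X>0$. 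Next, I factor out the leading term:
\begin{equation}\nonumber
\frac{\partial}{\partial Y}\vartheta(X;Y)=-4\pi e^{-\pi X}\sin(2\pi Y)\Big(1+\sum_{n=2}^\infty n e^{-\pi(n^2-1)X}\frac{\sin(2\pi nY)}{\sin(2\pi Y)}\Big),
\end{equation}
valid whenever $\sin(2\pi Y)\neq0$.

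The key step is the elementary bound $|\sin(n t)|\le n|\sin t|$, proved by induction from $\sin((n+1)t)=\sin(nt)\cos t+\cos(nt)\sin t$. With $t=2\pi Y$ it gives
\begin{equation}\nonumber
\Big|\sum_{n=2}^\infty n e^{-\pi(n^2-1)X}\frac{\sin(2\pi nY)}{\sin(2\pi Y)}\Big|\le\sum_{n=2}^\infty n^2e^{-\pi(n^2-1)X}=\mu(X).
\end{equation}
Hence the bracket lies in $[1-\mu(X),1+\mu(X)]$.

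It remains to fix signs. For $X>\frac15$ one needs $\mu(X)<1$, so that the bracket is positive and $\underline\vartheta(X)>0$. I would check this from the dominant term $4e^{-3\pi/5}\approx0.61$ and bound the tail, for instance by $\sum_{n\ge3}n^2e^{-\pi(n^2-1)/5}$, which is roughly $0.06$. This gives $\mu(X)\le\mu(1/5)<1$ since $\mu$ is decreasing in $X$. Strictly speaking the two-sided inequality does not even require positivity of the bracket, only that it lies in $[1-\mu,1+\mu]$; positivity is what makes the lower bound $\underline\vartheta$ meaningful.

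Multiplying the bracket bounds by $-4\pi e^{-\pi X}\sin(2\pi Y)$ gives the stated inequalities. When $\sin(2\pi Y)>0$ this factor is negative, so the interval endpoints reverse and
\begin{equation}\nonumber
-\overline\vartheta(X)\sin(2\pi Y)\le\partial_Y\vartheta(X;Y)\le-\underline\vartheta(X)\sin(2\pi Y).
\end{equation}
When $\sin(2\pi Y)<0$ the factor is positive and the order is the opposite one, as claimed.

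The only step needing care is the numerical verification that $\mu(1/5)<1$. Everything else is routine.
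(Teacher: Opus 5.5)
Your proof is correct and follows the standard argument behind this lemma, which the paper does not prove but cites from \cite{LW2022}: write $\partial_Y\vartheta$ as a sine series, factor out the $n=1$ term, and bound the remainder using $|\sin(nt)|\le n|\sin t|$, the same device the paper itself uses in \eqref{Zetax1}. Your numerics are also right ($\mu(1/5)\approx 0.668<1$), and as you observe, the two-sided bound holds for every $X>0$; the hypothesis $X>\frac15$ only serves to make $\underline\vartheta(X)>0$.
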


\begin{lemma}\label{LemmaT2}\cite{LW2022}.
Assume $X<\min\{\frac{\pi}{\pi+2},\frac{\pi}{4\log\pi}\}=\frac{\pi}{\pi+2}$. If $\sin(2\pi Y)>0$, then
\begin{equation}\aligned\nonumber
-\overline\vartheta(X)\sin(2\pi Y)\leq\frac{\partial}{\partial Y}\vartheta(X;Y)\leq-\underline\vartheta(X)\sin(2\pi Y).
 \endaligned\end{equation}
If $\sin(2\pi Y)<0$, then
\begin{equation}\aligned\nonumber
-\underline\vartheta(X)\sin(2\pi Y)\leq\frac{\partial}{\partial Y}\vartheta(X;Y)\leq-\overline\vartheta(X)\sin(2\pi Y).
 \endaligned\end{equation}
Here
\begin{equation}\aligned\nonumber
\underline\vartheta(X):=\pi e^{-\frac{\pi}{4X}}X^{-\frac{3}{2}};\;\; \overline\vartheta(X):=X^{-\frac{3}{2}}.
 \endaligned\end{equation}

\end{lemma}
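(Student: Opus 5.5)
By the Poisson summation formula,
\[
\vartheta(X;Y)=X^{-\frac12}\sum_{k\in\mathbb{Z}}e^{-\pi (Y-k)^2/X}.
\]
Differentiating term by term therefore gives
\[
\frac{\partial}{\partial Y}\vartheta(X;Y)=-X^{-\frac32}\,f(Y),\qquad f(Y):=2\pi\sum_{k\in\mathbb{Z}}(Y-k)\,e^{-\pi(Y-k)^2/X}.
\]
This ``dual'' representation is the right one for small $X$, since there the series converges extremely fast. The function $f$ is $1$-periodic and odd. Hence the case $\sin(2\pi Y)<0$ follows from the case $\sin(2\pi Y)>0$ by $Y\mapsto -Y$, and it suffices to treat $Y\in(0,\frac12)$. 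There both inequalities become the two-sided bound
\[
\pi e^{-\frac{\pi}{4X}}\sin(2\pi Y)\le f(Y)\le \sin(2\pi Y),\qquad Y\in(0,\tfrac12),\; X<\tfrac{\pi}{\pi+2}.
\]

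Next I would pair the term $k=-j$ with the term $k=j+1$, for $j\ge0$. With $h(u):=u\,e^{-\pi u^2/X}$ this gives
\[
f(Y)=2\pi\sum_{j\ge0}\big[h(Y+j)-h(j+1-Y)\big].
\]
For $Y\in(0,\frac12)$ we have $j+1-Y>Y+j$, and the pair $j=0$ dominates. The function $h$ increases on $[0,\sqrt{X/(2\pi)}]$ and decreases afterwards. Since $X<1$, the pairs with $j\ge1$ lie in the decreasing range, so they are positive and $O(e^{-\pi/X})$. They only help the lower bound, and for the upper bound they are absorbed as a tiny error.

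For the \emph{lower bound}, I would compare $F(Y):=2\pi[h(Y)-h(1-Y)]$ with $\pi e^{-\pi/(4X)}\sin(2\pi Y)$. Both vanish at $Y=\frac12$. Near $Y=\frac12$ we have $F(Y)\approx 4\pi(\tfrac12-Y)\,|h'(\tfrac12)|$, where
\[
|h'(\tfrac12)|=e^{-\pi/(4X)}\Big(\frac{\pi}{2X}-1\Big).
\]
The hypothesis $X<\frac{\pi}{\pi+2}$ is exactly what gives $\frac{\pi}{2X}-1>\frac{\pi}{2}$, and this beats the slope $2\pi^2(\frac12-Y)e^{-\pi/(4X)}$ of the right-hand side. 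So the constant $\pi$ in $\underline\vartheta$ is explained by this endpoint analysis. For $Y$ away from $\frac12$, I would use $\sin(2\pi Y)\le 2\pi\min\{Y,\frac12-Y\}$. I would then show that $h(Y)-h(1-Y)$ stays above $\pi e^{-\pi/(4X)}\min\{Y,\tfrac12-Y\}$, using the mean value theorem on $h$ over $[Y,1-Y]$ and the fact that $e^{-\pi u^2/X}\ge e^{-\pi/(4X)}$ for $u\le\frac12$.

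For the \emph{upper bound}, I would use $\sin(2\pi Y)\ge 4Y(1-2Y)$ on $[0,\frac12]$, or a similar concavity bound. It then suffices to show $2\pi h(Y)\le 4Y(1-2Y)$ minus the controlled contribution of $h(1-Y)$. Near $Y=0$ this reduces to $2\pi e^{-\pi Y^2/X}\le$ the slope of $\sin(2\pi Y)$, which is $2\pi$ to leading order. Here the Gaussian decay must compensate for the concavity loss. I expect the condition $X<\frac{\pi}{4\log\pi}$ (which is weaker than $X<\frac{\pi}{\pi+2}$) to arise exactly when balancing $e^{-\pi Y^2/X}$ against the curvature of $\sin$ at intermediate $Y$.

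I expect this step to be the main obstacle. It requires a uniform inequality between a Gaussian-weighted linear function and $\sin(2\pi Y)$ on all of $(0,\frac12)$, with $X$ as a parameter. I would try first to reduce it to monotonicity in $X$: the ratio $f(Y)/\sin(2\pi Y)$ should be monotone in $X$ for fixed $Y$, so it would suffice to check the endpoint value $X=\frac{\pi}{\pi+2}$. At that endpoint the remaining one-variable inequality can be verified by elementary calculus.
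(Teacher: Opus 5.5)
\emph{Verdict: the set-up is right, but the two inequalities that make up the lemma are not proved, and the specific estimates you propose for them are false.}

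The paper gives no proof of this lemma (it is quoted from \cite{LW2022}), so your argument has to stand on its own. Several things are correct:
the Poisson representation; the oddness and $1$-periodicity of $f$; the reduction to $\pi e^{-\pi/(4X)}\sin(2\pi Y)\le f(Y)\le\sin(2\pi Y)$ on $(0,\frac12)$; and the first-order comparison at $Y=\frac12$, which is indeed where $\frac{\pi}{\pi+2}$ comes from. The second threshold $\frac{\pi}{4\log\pi}$ is simply $\pi e^{-\pi/(4X)}<1$, i.e.\ $\underline\vartheta<\overline\vartheta$; it does not come from curvature.

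\emph{Lower bound.} For $X$ near $\frac{\pi}{\pi+2}$ the inequality is nearly sharp on a whole range of $Y$. There is about $4.5$ percent room at $X=0.6$, $Y=\frac14$, and about $0.1$ percent at $Y=0.45$ when $X$ is close to $\frac{\pi}{\pi+2}$. The tent bound $\sin(2\pi Y)\le 2\pi\min\{Y,\frac12-Y\}$ loses up to a factor $\frac{\pi}{2}$. Concretely, at $X=0.6$, $Y=\frac14$ your target $h(\frac14)-h(\frac34)\ge\frac{\pi}{4}e^{-\pi/(4X)}$ reads $0.141\ge 0.212$, which is false. Two further problems:
\begin{itemize}
\item The $j=0$ pair is negative near $Y=0$, since $h(Y)-h(1-Y)\to-e^{-\pi/X}$. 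So the $j=1$ pair cannot be discarded there.
\item The mean value step gives no sign: $\xi\in(Y,1-Y)$ can exceed $\frac12$, and $h'$ changes sign at $\sqrt{X/(2\pi)}<\frac12$.
\end{itemize}

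\emph{Upper bound.} The comparison with $4Y(1-2Y)$ fails at once, because that function has slope $4$ at $0$. By contrast $f'(0)=2\pi\bigl(1-2\sum_{k\ge1}(\frac{2\pi k^2}{X}-1)e^{-\pi k^2/X}\bigr)\approx 5.65$ at $X=0.6$. More importantly, the Gaussian factor does not beat the curvature of the sine. For $X>\frac{3}{2\pi}$, comparing cubic Taylor coefficients gives $2\pi Ye^{-\pi Y^2/X}>\sin(2\pi Y)$ for small $Y$; for example $0.596>0.588$ at $X=0.6$, $Y=0.1$. The bound survives there only because of the negative linear term $4\pi Y h'(1)$ from $h(1+Y)-h(1-Y)$, which straddles your $j=0$ and $j=1$ pairs. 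An error term that does not vanish at $Y=0$ cannot be absorbed, because $\sin(2\pi Y)\to0$.

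\emph{Fallback.} The reduction to the endpoint $X=\frac{\pi}{\pi+2}$ is not available either:
\begin{itemize}
\item $f(Y)/\sin(2\pi Y)$ is not monotone in $X$. At $Y=0.45$ it rises from $0$ for small $X$, but falls from about $0.875$ at $X=0.6$ to about $0.870$ at $X=\frac{\pi}{\pi+2}$.
\item The lower constant $\pi e^{-\pi/(4X)}$ itself depends on $X$, so monotonicity of $f/\sin$ alone would not reduce the lower bound to one value of $X$.
\item At the endpoint you propose to check, the lower inequality is an equality to first order at $Y=\frac12$ (up to a relative $5\cdot 10^{-4}$). Near $\frac12$ it is therefore decided by the cubic Taylor terms, not by routine calculus.
\end{itemize}

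\emph{What is missing.} You need a global comparison with $\sin(2\pi Y)$ that keeps the full series, or at least uses the grouping $k\leftrightarrow -k$ near $0$ and $-j\leftrightarrow j+1$ near $\frac12$. One natural route is to show that $R(Y):=f(Y)/\sin(2\pi Y)$ is monotone on $(0,\frac12)$. The lemma then reduces to two endpoint checks:
\[
R(0^+)=1-2\sum_{k\ge1}\Bigl(\tfrac{2\pi k^2}{X}-1\Bigr)e^{-\pi k^2/X}<1,
\qquad
R(\tfrac12^-)=2\sum_{j\ge0}\Bigl(\tfrac{\pi(2j+1)^2}{2X}-1\Bigr)e^{-\pi(2j+1)^2/(4X)}\ge\pi e^{-\pi/(4X)}.
\]
The second of these is exactly your endpoint computation at $Y=\frac12$.
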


\section{Proof of Theorem \ref{Th2} and its corollary}
\setcounter{equation}{0}

In this section, we give the proof of Theorem \ref{Th2}. By the minimum principle given by Proposition \ref{Prop32}, it suffices to prove Lemmas \ref{Lemma3a}-\ref{Lemma3c}. We prove Lemmas \ref{Lemma3b}, \ref{Lemma3c}, and \ref{Lemma3a} in Subsections 4.1, 4.2 and 4.3 respectively. In Subsection 4.4, we give the proof of Corollary \ref{Coro1.3}.

\subsection{$\partial_x$ estimates}
By a direct computation and deformation,
\begin{equation}\aligned\nonumber
\partial_x\frac{\zeta(s,z)}{\theta^k(\alpha,z)}
&=\frac{\zeta(s,z)}{\theta^k(\alpha,z)}\cdot
\Big(
\frac{\zeta_x(s,z)}{\zeta(s,z)}-k\frac{\theta_x(\alpha,z)}{\theta(\alpha,z)}
\Big)\\
&=-\frac{\zeta(s,z)}{\theta^k(\alpha,z)}\sin(2\pi x)\cdot
\Big(
\frac{-\zeta_x(s,z)}{\sin(2\pi x)\zeta(s,z)}-k\frac{-\theta_x(\alpha,z)}{\sin(2\pi x)\theta(\alpha,z)}
\Big).
\endaligned\end{equation}

Lemma \ref{Lemma3b} is equivalent to

\begin{lemma}\label{Lemma3b1} Assume that $\alpha\geq 3s, s>1$. Then
\begin{equation}\aligned\nonumber
\frac{-\zeta_x(s,z)}{\sin(2\pi x)\zeta(s,z)}-2s\frac{-\theta_x(\alpha,z)}{\sin(2\pi x)\theta(\alpha,z)}>0\;\;\hbox{for}\;\;z\in\mathcal{D}_{\mathcal{G}}\cap\{2\geq y\geq \frac{48}{73}\}.
 \endaligned\end{equation}

\end{lemma}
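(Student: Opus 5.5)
We aim to show that the quantity in Lemma \ref{Lemma3b1} is positive by bounding the two logarithmic $x$-derivatives separately: a uniform lower bound for the zeta part and a uniform upper bound for the theta part. In both cases the first Fourier mode in $x$ dominates.

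For the theta part we use Lemma \ref{Lemma21}:
\[
\sqrt{\alpha/y}\,\theta(\alpha,z)=\sum_{n}e^{-\pi\alpha y n^2}\vartheta\Big(\frac{y}{\alpha};nx\Big).
\]
Since $\alpha\ge 3s>3$ and $y\le 2$, the argument $X=y/\alpha$ is small, so Lemma \ref{LemmaT2} applies to $\partial_Y\vartheta(X;Y)$. The $n=\pm1$ terms give
\[
-\theta_x\le 2\sqrt{y/\alpha}\,e^{-\pi\alpha y}\,\overline\vartheta\Big(\frac{y}{\alpha}\Big)\sin(2\pi x)=2\sqrt{y/\alpha}\,(\alpha/y)^{3/2}e^{-\pi\alpha y}\sin(2\pi x).
\]
The terms with $n\ge2$ carry $\sin(2\pi nx)$. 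We bound these by $n|\sin(2\pi x)|$ times the same envelope with $e^{-\pi\alpha yn^2}$; this leaves a factor $e^{-3\pi\alpha y}$ relative to the main term, which is negligible. Since $\theta(\alpha,z)\ge\sqrt{y/\alpha}\,\vartheta_3(\cdot)$-type lower bounds give $\theta\gtrsim\sqrt{y/\alpha}$, we get
\[
\frac{-\theta_x}{\sin(2\pi x)\,\theta}\le C\,\alpha^{3/2}y^{-3/2}e^{-\pi\alpha y}.
\]
With $y\ge 48/73$ and $\alpha\ge 3s$, this is at most roughly $C(3s)^{3/2}e^{-2\pi s}$.

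For the zeta part we expand $\zeta(s,z)$ in Fourier series in $x$ (Chowla–Selberg), or alternatively differentiate the double sum and use Poisson summation in $n$. The $x$-dependent part is
\[
\frac{8\pi^s y^{1/2}}{\Gamma(s)}\sum_{m\ge1} m^{s-1/2}\sigma_{1-2s}(m)K_{s-1/2}(2\pi m y)\cos(2\pi m x).
\]
The $m=1$ term gives
\[
-\zeta_x\approx \frac{16\pi^{s+1}}{\Gamma(s)}\,y^{1/2}K_{s-1/2}(2\pi y)\sin(2\pi x).
\]
The higher modes are controlled by $|\sin(2\pi mx)|\le m|\sin(2\pi x)|$ together with the monotone decay $K_\nu(2\pi m y)\le e^{-2\pi(m-1)y}K_\nu(2\pi y)$. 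The denominator is bounded above using Lemma \ref{Rankin3}. This yields
\[
\frac{-\zeta_x}{\sin(2\pi x)\,\zeta}\ge c(s)\,\frac{y^{1/2}K_{s-1/2}(2\pi y)}{\xi(2s)y^s+\cdots},
\]
which decays like $e^{-2\pi y}$, polynomially modified, with prefactor $\pi^s/\Gamma(s)$.

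Comparing the two bounds, the zeta side decays like $e^{-2\pi y}$, while the theta side carries $e^{-\pi\alpha y}$ with $\alpha\ge 3s>2$. So $2s$ times the theta bound is smaller by a factor of order $e^{-\pi(\alpha-2)y}\alpha^{3/2}s$, up to $s$-dependent constants. The main obstacle is making this uniform in $s>1$, which requires tracking $\Gamma(s)$, $K_{s-1/2}$ and $\xi(2s)$ explicitly. For this I would use the lower bound $K_\nu(t)\ge\sqrt{\pi/(2t)}\,e^{-t}$ for $\nu\ge1/2$, and the bound $\zeta(s,z)\le 2\xi(2s)y^s+\dots$ from Lemma \ref{Rankin3}, valid on $y\le2$. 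The ratio then reduces to showing
\[
2s(3s)^{3/2}e^{-\pi(3s-2)\cdot 48/73}\cdot\Gamma(s)\pi^{-s}\,\mathrm{poly}(y)<1,
\]
and the left side is decreasing and small for all $s>1$. I would check the endpoint $s\to1^+$ and a few moderate values of $s$ numerically, which handles the non-monotone range.
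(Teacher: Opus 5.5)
Your treatment of the theta term, and your Chowla--Selberg first-mode argument for the zeta term, are essentially what the paper does. However, the paper uses the Fourier argument only for $s\in(1,4]$ (Lemmas \ref{Lemma4.11}--\ref{Lemma4.13}), and as a proof for all $s>1$ your proposal has a genuine gap.

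With your bounds the first-mode factor is $c(s)=1-\sum_{m\ge2}m^{s+3/2}\sigma_{1-2s}(m)e^{-2\pi(m-1)y}$. The $m=2$ term alone, $2^{s+3/2}(1+2^{1-2s})e^{-2\pi y}$, exceeds $1$ once $s$ is moderately large: about $s>4.5$ at $y=\frac{48}{73}$. Keeping the factor $(y/x)^{-1/2}$ from Lemma \ref{Lemma4.15} only moves this threshold to about $s>5$.

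This is not an artifact of a lossy estimate. For fixed $m$ and $y$ one has $m^{\nu}K_\nu(2\pi my)/K_\nu(2\pi y)\to1$ as $\nu\to\infty$, so $a_m/a_1\to1$ and $\sum_{m\ge2}m^2a_m/a_1\to\infty$. For large $s$ the first Fourier mode simply does not dominate, and the bound $1-\sum_{m\ge2}m^2a_m/a_1$ is necessarily negative.

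Independently, your final reduced inequality is false for large $s$. The factor $\Gamma(s)\pi^{-s}$ comes from the bound $K_\nu(t)\ge\sqrt{\pi/(2t)}\,e^{-t}$, whereas in fact $K_\nu(t)\approx\frac12\Gamma(\nu)(t/2)^{-\nu}$ for large $\nu$. This factor grows like $e^{s\log s-O(s)}$ and eventually beats $e^{-\pi(3s-2)\cdot\frac{48}{73}}$. So the left side is not ``decreasing and small for all $s>1$'', and numerical checks at finitely many $s$ cannot cover $s\to\infty$.

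The missing ingredient is a lower bound for $-\zeta_x/\sin(2\pi x)$ that does not lose a factor $\Gamma(s)$. For $s\ge4$ the paper obtains it from the Mellin identity \eqref{Zetax}:
\begin{itemize}
\item Since $-\theta_x(\alpha,z)/\sin(2\pi x)\ge0$ for every $\alpha$, one discards $\int_0^y$.
\item One inserts the explicit lower bound of Lemma \ref{Lemma43} on $[y,\infty)$ and integrates in $\alpha$.
\item The resulting incomplete gamma functions $\Gamma(s+1,\cdot)$, bounded below via Lemma \ref{Lemma4.7}, cancel the prefactor $\pi^s/\Gamma(s)$.
\end{itemize}
This gives Lemma \ref{Lemma4.8}, $\frac{-\zeta_x}{\sin(2\pi x)}\ge\frac{2s}{y}(y+\frac{1}{4y})^{-(s+1)}e^{-\pi(y^2+\frac14)}$, uniformly in $s$. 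Combined with the Rankin upper bound for $\zeta$ (Lemma \ref{Rankin3}) and $\alpha\ge3s\ge12$, this settles $s\ge4$ (Lemma \ref{Lemma4.10}).

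A smaller point: Lemma \ref{LemmaT2} needs $y/\alpha<\frac{\pi}{\pi+2}\approx0.61$. But $y/\alpha$ can reach $\frac23$ when $s$ is close to $1$ and $y$ close to $2$. There you must use Lemma \ref{LemmaT1} instead, as in the two-regime bound of Lemma \ref{Lemma44KKK}.
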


In the rest of this subsection, we prove Lemma \ref{Lemma3b1}.
To prove it, we estimate $\frac{-\zeta_x(s,z)}{\sin(2\pi x)\zeta(s,z)}$
and $\frac{-\theta_x(s,z)}{\sin(2\pi x)\theta(s,z)}$ separately.
For $\frac{-\theta_x(s,z)}{\sin(2\pi x)\theta(s,z)}$, we use lower and upper bounds given by Lemmas \ref{LemmaT1} and \ref{LemmaT2}.
While for $\frac{-\zeta_x(s,z)}{\sin(2\pi x)\zeta(s,z)}$, getting a lower bound of it directly becomes complicated.
To overcome it, we use a relation between $\zeta(s,z)$ and $\theta(\alpha,z)$. Namely, we use the identity
 \begin{equation}\aligned\nonumber
\zeta(s,z)=\frac{\pi^s}{\Gamma(s)}\int_0^\infty \big(\theta(\alpha,z)-1\big)\alpha^{s-1}d\alpha.
\endaligned\end{equation}

Then by taking derivative with respect to $x$, we get an identity of $\zeta_x(s,z)$ in terms of $\theta_x(\alpha,z)$

 \begin{equation}\aligned\label{Zetax}
\zeta_x(s,z)=\frac{\pi^s}{\Gamma(s)}\int_0^\infty \theta_x(\alpha,z)\alpha^{s-1}d\alpha.
\endaligned\end{equation}

Using \eqref{Zetax}, we can get the bounds of $\zeta_x(s,z)$ by bounds of $\theta_x(\alpha,z)$. Together with summation formula
\eqref{Rankin3}, we can bound $\frac{-\zeta_x(s,z)}{\sin(2\pi x)\zeta(s,z)}$.
%%%%%%%%%%%%%%%%%%%%%%%%%%%

We now start the detailed proof.
With the expression of theta function in Lemma \ref{Lemma21}, namely,
 \begin{equation}\aligned\label{CM300}
\theta(\alpha,z)=\sqrt{\frac{y}{\alpha}}\cdot\sum_{n\in\mathbb{Z}} e^{-\pi\alpha yn^2}\vartheta(\frac{y}{\alpha};nx)
.
\endaligned\end{equation}

Using bounds of 1-d theta functions ($\vartheta(X;Y)$) given by Lemmas \ref{LemmaT1} and \ref{LemmaT2}, one has

\begin{lemma}[An upper bound of $\frac{-\theta_x(\alpha,z)}{\sin(2\pi x)}$]\label{Lemma4.2}

Depending on the value of $\frac{y}{\alpha}$, it holds that

\begin{itemize}
  \item for $\frac{y}{\alpha}\geq\frac{1}{5}$, $$\frac{-\theta_x(\alpha,z)}{\sin(2\pi x)}\leq
8\pi(1+\mu(\frac{y}{\alpha}))\sqrt{\frac{y}{\alpha}}\sum_{n=1}^\infty n^2 e^{-\pi y (n^2 \alpha+\frac{1}{\alpha})};$$
  \item for $\frac{y}{\alpha}\leq\frac{\pi}{\pi+2}$, $$\frac{-\theta_x(\alpha,z)}{\sin(2\pi x)}\leq2(\frac{y}{\alpha})^{-1}\sum_{n=1}^\infty n^2 e^{-\pi\alpha y n^2}.$$
\end{itemize}

\end{lemma}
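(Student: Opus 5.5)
Differentiate the representation \eqref{CM300} term by term in $x$ and then apply the one-dimensional bounds of Lemmas \ref{LemmaT1} and \ref{LemmaT2} to each summand. Starting from
$$\theta(\alpha,z)=\sqrt{\tfrac{y}{\alpha}}\sum_{n\in\mathbb{Z}}e^{-\pi\alpha yn^2}\vartheta(\tfrac{y}{\alpha};nx),$$
the $n=0$ term does not depend on $x$. The terms for $n$ and $-n$ coincide, since $\vartheta(X;-Y)=\vartheta(X;Y)$. This gives
$$-\theta_x(\alpha,z)=-2\sqrt{\tfrac{y}{\alpha}}\sum_{n\geq1}n\,e^{-\pi\alpha yn^2}\,(\partial_Y\vartheta)(\tfrac{y}{\alpha};nx).$$
Differentiating under the sum is justified by the uniform Gaussian decay in $n$.

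Next I bound $-(\partial_Y\vartheta)(X;nx)$ from above by $\overline\vartheta(X)\,|\sin(2\pi nx)|$, using whichever of the two lemmas applies for the given $X=y/\alpha$. In both lemmas the two-sided estimate has the form $\partial_Y\vartheta=-c\sin(2\pi Y)$ with $c\in[\underline\vartheta,\overline\vartheta]$, so this absolute-value bound holds whatever the sign of $\sin(2\pi nx)$. To compare with $\sin(2\pi x)$ I use the elementary inequality $|\sin(2\pi nx)|\leq n\sin(2\pi x)$, valid for $x\in[0,\frac12]$ where $\sin(2\pi x)\geq0$. It follows by induction from $|\sin((n+1)t)|\leq|\sin(nt)||\cos t|+|\cos(nt)|\sin t$. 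Dividing by $\sin(2\pi x)>0$ then yields
$$\frac{-\theta_x(\alpha,z)}{\sin(2\pi x)}\leq 2\sqrt{\tfrac{y}{\alpha}}\;\overline\vartheta(\tfrac{y}{\alpha})\sum_{n\geq1}n^2e^{-\pi\alpha yn^2}.$$

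It remains to substitute the two forms of $\overline\vartheta$. When $\frac{y}{\alpha}\geq\frac15$, Lemma \ref{LemmaT1} gives $\overline\vartheta(X)=4\pi e^{-\pi X}(1+\mu(X))$. The factor $e^{-\pi y/\alpha}$ then combines with $e^{-\pi\alpha yn^2}$ to give $e^{-\pi y(n^2\alpha+1/\alpha)}$ and the constant $8\pi(1+\mu)$, which is the first bullet. When $\frac{y}{\alpha}\leq\frac{\pi}{\pi+2}$, Lemma \ref{LemmaT2} gives $\overline\vartheta(X)=X^{-3/2}$. The prefactor becomes $2\sqrt{X}\,X^{-3/2}=2X^{-1}$, which is the second bullet.

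The only delicate point is the sign bookkeeping: Lemmas \ref{LemmaT1} and \ref{LemmaT2} are stated separately for $\sin(2\pi Y)>0$ and $<0$, while $nx$ ranges over many periods. Passing to absolute values resolves this uniformly, and the inequality $|\sin(2\pi nx)|\leq n\sin(2\pi x)$ supplies the extra factor of $n$ that produces $n^2$. The points $x=0,\frac12$, where $\sin(2\pi x)=0$, are excluded, or handled by a limit.
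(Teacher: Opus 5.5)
Your proposal is correct and is essentially the argument the paper has in mind. As the paper indicates, you differentiate the representation \eqref{CM300} termwise in $x$, bound each $-(\partial_Y\vartheta)(\frac{y}{\alpha};nx)$ by $\overline\vartheta(\frac{y}{\alpha})\,|\sin(2\pi nx)|$ via Lemma \ref{LemmaT1} or Lemma \ref{LemmaT2}, and use $|\sin(2\pi nx)|\le n\sin(2\pi x)$, which reproduces both constants exactly. The only points you leave implicit are trivial: terms with $\sin(2\pi nx)=0$ vanish because $\partial_Y\vartheta$ is odd and $1$-periodic in $Y$, and the closed endpoints $\frac{y}{\alpha}=\frac15$ and $\frac{y}{\alpha}=\frac{\pi}{\pi+2}$, where the cited lemmas are stated with strict inequalities, follow by continuity.
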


With Lemma \ref{Lemma4.2}, to bound $\frac{-\theta_x(s,z)}{\sin(2\pi x)\theta(s,z)}$, we need a lower bound of $\theta(\alpha,z)$. By
\eqref{CM300}, we have
\begin{lemma}[Lower bounds of $\theta(\alpha,z)$]\label{Lemma4.3} Assume that $\alpha, y>0$.
It holds that
\begin{itemize}
  \item  for $\frac{y}{\alpha}\geq1$, then $\theta(\alpha,z)\geq\sqrt\frac{y}{\alpha}$.
  \item  for $\frac{y}{\alpha}\leq1$, then $\theta(\alpha,z)\geq1$.
\end{itemize}
\end{lemma}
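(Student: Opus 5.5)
The plan is to read both bounds directly off the representation \eqref{CM300}, i.e.
\[
\theta(\alpha,z)=\sqrt{\frac{y}{\alpha}}\sum_{n\in\mathbb{Z}} e^{-\pi\alpha yn^2}\vartheta\Big(\frac{y}{\alpha};nx\Big).
\]
The needed positivity comes from Jacobi's triple product: for $X>0$ and real $Y$,
\[
\vartheta(X;Y)=\prod_{m\geq1}\bigl(1-e^{-2\pi mX}\bigr)\bigl(1+2e^{-\pi(2m-1)X}\cos(2\pi Y)+e^{-2\pi(2m-1)X}\bigr).
\]
Each factor is at least $(1-e^{-\pi(2m-1)X})^2>0$, so $\vartheta(X;Y)>0$. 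Hence every term with $n\neq0$ in \eqref{CM300} is nonnegative and can be discarded. What remains is the $n=0$ term:
\[
\theta(\alpha,z)\geq\sqrt{\frac{y}{\alpha}}\,\vartheta\Big(\frac{y}{\alpha};0\Big)=\sqrt{\frac{y}{\alpha}}\,\vartheta_3\Big(\frac{y}{\alpha}\Big).
\]

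For the first item, $\frac{y}{\alpha}\geq1$, I use the trivial bound $\vartheta_3(X)=1+2\sum_{n\geq1}e^{-\pi n^2X}\geq1$. This gives $\theta(\alpha,z)\geq\sqrt{y/\alpha}$ immediately. (The bound in fact holds for all $y/\alpha>0$, but it is only useful when $y/\alpha\geq1$.)

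For the second item, $\frac{y}{\alpha}\leq1$, I apply Jacobi's transformation $\vartheta_3(X)=X^{-1/2}\vartheta_3(1/X)$ to the retained term:
\[
\sqrt{\frac{y}{\alpha}}\,\vartheta_3\Big(\frac{y}{\alpha}\Big)=\vartheta_3\Big(\frac{\alpha}{y}\Big)\geq1.
\]
This is consistent with the second line of Lemma \ref{Lemma28}. Alternatively, one can argue from the defining sum directly: every term of $\theta(\alpha,z)$ is positive and the $(m,n)=(0,0)$ term equals $1$, so $\theta(\alpha,z)\geq1$ for every $z$ with no case split at all.

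I do not expect any real obstacle. The only point needing care is justifying that the $n\neq0$ terms in \eqref{CM300} are nonnegative, and I would cite the triple-product factorization above for that rather than try to estimate the series termwise.
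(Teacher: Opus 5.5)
Your proof is correct and follows the paper's route: discard the $n\neq0$ terms of \eqref{CM300}, keep $\sqrt{y/\alpha}\,\vartheta_3(y/\alpha)$, and then use $\vartheta_3\geq1$ for the first item and the Jacobi transformation to $\vartheta_3(\alpha/y)\geq1$ for the second. The only difference is that you justify $\vartheta(X;Y)>0$ via the triple product. The paper, in the proof of Lemma \ref{Lemma4.19}, obtains this positivity more cheaply from the Poisson form $\vartheta(X;Y)=X^{-1/2}\sum_{n\in\mathbb{Z}}e^{-\pi(n-Y)^2/X}$.
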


\begin{proof} The first part is trivial. The second part based on a duality formula of Jacobi theta function of third type. Recall that $
\vartheta_3(x)=\sum_{n\in\mathbb{Z}}e^{-\pi n^2 x}.$
Then $\theta(\alpha,z)\geq\sqrt{\frac{y}{\alpha}}\vartheta(\frac{y}{\alpha};0)
=\sqrt{\frac{y}{\alpha}}\vartheta_3(\frac{y}{\alpha})=\vartheta_3(\frac{\alpha}{y})\geq1$.

\end{proof}

Combining Lemma \ref{Lemma4.2} with Lemma \ref{Lemma4.3}, we get an upper bound of $\frac{-\theta_x(\alpha,z)}{\sin(2\pi x)\theta(\alpha,z)}$.
\begin{lemma}[An upper bound of $\frac{-\theta_x(\alpha,z)}{\sin(2\pi x)\theta(\alpha,z)}$]
\label{Lemma44KKK}
Depending on the value of $\frac{y}{\alpha}$, it holds that

\begin{itemize}
  \item for $\frac{y}{\alpha}\geq\frac{1}{5}$, $$\frac{-\theta_x(\alpha,z)}{\sin(2\pi x)\theta(\alpha,z)}\leq
8\pi(1+\mu(\frac{y}{\alpha}))\sum_{n=1}^\infty n^2 e^{-\pi y (n^2 \alpha+\frac{1}{\alpha})};$$
  \item for $\frac{y}{\alpha}\leq\frac{\pi}{\pi+2}$, $$\frac{-\theta_x(\alpha,z)}{\sin(2\pi x)\theta(\alpha,z)}\leq2(\frac{y}{\alpha})^{-\frac{3}{2}}\sum_{n=1}^\infty n^2 e^{-\pi\alpha y n^2}.$$
\end{itemize}

\end{lemma}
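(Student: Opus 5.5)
The bound follows by dividing the upper bound of Lemma \ref{Lemma4.2} by the lower bound of Lemma \ref{Lemma4.3}, treating each range of $\frac{y}{\alpha}$ separately. The statement has the same two bullets as Lemma \ref{Lemma4.2}. On each one I will pick the branch of Lemma \ref{Lemma4.3} that fits, and check that the resulting factor matches the claimed right-hand side.

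\emph{First bullet, $\frac{y}{\alpha}\geq\frac15$.} Lemma \ref{Lemma4.2} gives
$$\frac{-\theta_x(\alpha,z)}{\sin(2\pi x)}\leq 8\pi\Big(1+\mu\big(\tfrac{y}{\alpha}\big)\Big)\sqrt{\tfrac{y}{\alpha}}\sum_{n\geq1}n^2e^{-\pi y(n^2\alpha+\frac1\alpha)}.$$
The right side is nonnegative and $\theta(\alpha,z)>0$, so it suffices to show $\theta(\alpha,z)\geq\sqrt{y/\alpha}$ on the whole range $\frac{y}{\alpha}\geq\frac15$. Dividing then cancels the factor $\sqrt{y/\alpha}$ exactly.

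\begin{itemize}
\item If $\frac{y}{\alpha}\geq1$, this is the first part of Lemma \ref{Lemma4.3}.
\item If $\frac15\leq\frac{y}{\alpha}\leq1$, Lemma \ref{Lemma4.3} only gives $\theta\geq1$, which is weaker. Instead I use the representation \eqref{CM300} directly. The $n=0$ term gives $\sqrt{y/\alpha}\,\vartheta_3(y/\alpha)\geq\sqrt{y/\alpha}$.
\end{itemize}

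For the terms with $n\neq0$, the $j$-th Fourier mode of $\vartheta(y/\alpha;nx)$ is $e^{-\pi j^2y/\alpha}e^{2\pi ijnx}$. Combined with $e^{-\pi\alpha yn^2}$, the $n$-th term equals $\sum_j e^{-\pi y(\alpha n^2+j^2/\alpha)}\cos(2\pi jnx)$, after pairing $j$ with $-j$.

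This is where I expect the real obstacle: I need these terms not to spoil the bound. The plan is to pair $n$ with $-n$, giving $2\sum_{n\geq1}e^{-\pi\alpha yn^2}\vartheta(y/\alpha;nx)$. Positivity of $\vartheta(X;Y)$ follows from the Jacobi triple product, since $\vartheta(X;Y)=\prod(1-q^{2m})(1+2q^{2m-1}\cos2\pi Y+q^{4m-2})>0$ with $q=e^{-\pi X}$. Hence every summand of \eqref{CM300} is positive, and $\theta(\alpha,z)\geq\sqrt{y/\alpha}\,\vartheta_3(y/\alpha)\geq\sqrt{y/\alpha}$ for all $y,\alpha$. This is exactly the estimate the proof of Lemma \ref{Lemma4.3} already uses.

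\emph{Second bullet, $\frac{y}{\alpha}\leq\frac{\pi}{\pi+2}<1$.} Lemma \ref{Lemma4.2} gives the upper bound $2(y/\alpha)^{-1}\sum_{n\geq1}n^2e^{-\pi\alpha yn^2}$. The claimed bound has the larger factor $(y/\alpha)^{-3/2}$, since $y/\alpha<1$. So I use the same positivity bound $\theta(\alpha,z)\geq\sqrt{y/\alpha}\,\vartheta_3(y/\alpha)\geq\sqrt{y/\alpha}$ and divide. The factors $(y/\alpha)^{-1}$ and $(y/\alpha)^{-1/2}$ combine to $(y/\alpha)^{-3/2}$, which is exactly the stated bound.

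Alternatively, the second part of Lemma \ref{Lemma4.3} gives $\theta\geq1$, which yields the stronger bound with $(y/\alpha)^{-1}$. This in turn implies the stated one, because $(y/\alpha)^{-1}\leq(y/\alpha)^{-3/2}$ for $y/\alpha\leq1$.

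In both bullets the only ingredients are the two earlier lemmas and the positivity of $\vartheta(X;Y)$. The one step needing care is the intermediate range $\frac15\leq y/\alpha\leq1$ in the first bullet, where positivity of each term in \eqref{CM300} is what gives $\theta(\alpha,z)\geq\sqrt{y/\alpha}$.
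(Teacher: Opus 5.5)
Your proposal is correct and follows the paper's own argument. You divide the upper bound of Lemma \ref{Lemma4.2} by the lower bound on $\theta(\alpha,z)$ from Lemma \ref{Lemma4.3}, using that the right-hand sides are nonnegative; the proof of Lemma \ref{Lemma4.3} already contains your estimate $\theta(\alpha,z)\geq\sqrt{y/\alpha}\,\vartheta_3(y/\alpha)$. One correction: on $\frac15\leq\frac{y}{\alpha}\leq1$ the bound $\theta\geq1$ is stronger than $\theta\geq\sqrt{y/\alpha}$, not weaker, because $\sqrt{y/\alpha}\leq1$ there, so that range needs no separate positivity argument and is not an obstacle.
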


We proceed to get the lower bound of $\frac{-\zeta_x(\alpha,z)}{\sin(2\pi x)\zeta(\alpha,z)}$. Using \eqref{Zetax}, we first estimate the lower bound of $\theta_x(\alpha,z)$. By \eqref{CM300} and bounds of 1-d theta functions ($\vartheta(X;Y)$) given by Lemmas \ref{LemmaT1} and \ref{LemmaT2}, we have
\begin{lemma}[A lower bound of $\frac{-\theta_x(\alpha,z)}{\sin(2\pi x)}$]\label{Lemma43}

Depending on the value of $\frac{y}{\alpha}$, it holds that

\begin{itemize}
  \item for $\frac{y}{\alpha}\geq\frac{1}{5}$, $$\frac{-\theta_x(\alpha,z)}{\sin(2\pi x)}\geq
8\pi(1-\mu(\frac{y}{\alpha}))\sqrt{\frac{y}{\alpha}}\sum_{n=1}^\infty n^2 e^{-\pi y (n^2 \alpha+\frac{1}{\alpha})};$$
  \item for $\frac{y}{\alpha}\leq\frac{\pi}{\pi+2}$, $$\frac{-\theta_x(\alpha,z)}{\sin(2\pi x)}\geq2\pi(\frac{y}{\alpha})^{-1}\sum_{n=1}^\infty n^2 e^{-\pi (n^2 y +\frac{1}{4y})\alpha}.$$
\end{itemize}

Here
 \begin{equation}\aligned\nonumber
\mu(X):=\sum_{n=2}^\infty n^2 e^{-\pi (n^2-1)X}.
\endaligned\end{equation}
\end{lemma}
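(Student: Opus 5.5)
The plan is to differentiate the representation \eqref{CM300} term by term in $x$ and insert the two-sided bounds for $\partial_Y\vartheta$ from Lemmas \ref{LemmaT1} and \ref{LemmaT2}, in the regimes where each applies. By \eqref{CM300},
\begin{equation}\nonumber
-\theta_x(\alpha,z)=\sqrt{\tfrac{y}{\alpha}}\sum_{n\in\mathbb{Z}} e^{-\pi\alpha yn^2}\, n\,\Big(-\partial_Y\vartheta\big(\tfrac{y}{\alpha};Y\big)\Big)\Big|_{Y=nx}.
\end{equation}
The $n=0$ term vanishes. Since $\vartheta(X;-Y)=\vartheta(X;Y)$, the $n$ and $-n$ terms coincide, so the sum equals twice the sum over $n\ge1$.

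For each $n\geq1$ we apply the bounds at $Y=nx$. Dividing by $\sin(2\pi x)>0$ (with $x\in(0,\frac12)$), the natural comparison is between $\sin(2\pi nx)$ and $\sin(2\pi x)$. For $n=1$ the lower bound is immediate: $-\partial_Y\vartheta(X;x)\geq\underline\vartheta(X)\sin(2\pi x)$ by Lemma \ref{LemmaT1} or \ref{LemmaT2}.

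For $n\ge2$ the sign of $\sin(2\pi nx)$ can be negative, so a termwise lower bound of the form $\underline\vartheta\, n\sin(2\pi x)$ is not available. I would instead work directly from the Fourier expansion. We have
\begin{equation}\nonumber
-\theta_x(\alpha,z)=\sqrt{\tfrac{y}{\alpha}}\sum_{n,m} 2\pi nm\, e^{-\pi\alpha yn^2-\pi m^2 y/\alpha}\sin(2\pi nmx).
\end{equation}
Write $\sin(2\pi kx)=\sin(2\pi x)\,U_{k-1}(\cos 2\pi x)$ with Chebyshev polynomials $U_{k-1}$, using $|U_{k-1}|\le k$. The leading term is $nm=1$, which contributes $8\pi\sqrt{y/\alpha}\,e^{-\pi y(\alpha+1/\alpha)}\sin(2\pi x)$ with the factor $4$ from the signs. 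The remaining terms are bounded in absolute value by $\sin(2\pi x)$ times the sum of $(nm)^2$ times the Gaussian weights.

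Grouping the $m$-sum for fixed $n$ reproduces the factor $1\pm\mu(y/\alpha)$. This matches the first bullet, provided the $n\ge2$ pieces are also written with $\sum n^2e^{-\pi y(n^2\alpha+1/\alpha)}$ and the corresponding $\mu$-correction is absorbed. That absorption is exactly what Lemma \ref{LemmaT1} gives when applied with $Y=nx$ together with $|\sin(2\pi nx)|\le n\sin(2\pi x)$ for the correction part.

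For the second bullet ($y/\alpha\le \frac{\pi}{\pi+2}$), I would use Poisson summation on the $m$-variable. This rewrites $\vartheta(X;Y)=X^{-1/2}\sum_k e^{-\pi(Y-k)^2/X}$. Its $Y$-derivative is a sum of odd Gaussians, and near $Y\in(0,\frac12)$ it is controlled by $\pi X^{-3/2}e^{-\pi/(4X)}$ from below, which is the bound in Lemma \ref{LemmaT2}. Inserting this with $X=y/\alpha$ gives the factor $(y/\alpha)^{-3/2}\sqrt{y/\alpha}=(y/\alpha)^{-1}$. The term $e^{-\pi\alpha/(4y)}$ combines with $e^{-\pi\alpha yn^2}$ into $e^{-\pi(n^2y+\frac1{4y})\alpha}$, with the factor $2\pi$ coming from $2\cdot\pi$ and $n^2$ from $n$ times the Chebyshev factor.

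The main obstacle is the sign issue for $n\ge2$: Lemmas \ref{LemmaT1} and \ref{LemmaT2} bound $\partial_Y\vartheta$ by $\sin(2\pi nx)$, not by $\sin(2\pi x)$. So one must show the positive $n=1$ term dominates any negative contributions. I would first try to verify that in the relevant range $z\in\mathcal{D}_{\mathcal{G}}$, with $y\ge \frac{48}{73}$ and $\alpha\ge 3s$, the ratio of the $n\ge2$ weights to the $n=1$ weight is tiny ($e^{-3\pi\alpha y}$). If the lemma is needed exactly as stated rather than up to such absorption, I would restrict to the dominant term and fold the rest into the $\mu$ factor.
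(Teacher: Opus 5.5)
You have correctly located the weak point, the $n\ge2$ terms, but the proposal does not get past it, and the repairs you sketch run the inequality in the wrong direction. Lemmas \ref{LemmaT1} and \ref{LemmaT2} at $Y=nx$ compare $-\vartheta_Y(X;nx)$ with $\sin(2\pi nx)=\sin(2\pi x)\,U_{n-1}(\cos 2\pi x)$. The bound $|U_{n-1}|\le n$ is only an upper bound; in a lower bound it produces the factor $-n$, not $+n$. For example, $U_1(\cos 2\pi x)=2\cos 2\pi x\to-2$ as $x\to\frac12$. So ``absorbing via $|\sin(2\pi nx)|\le n\sin(2\pi x)$'' in the first bullet and ``$n^2$ from $n$ times the Chebyshev factor'' in the second are not valid steps. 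With $X=y/\alpha$, the termwise argument (and equally your fallback of keeping the dominant term and folding the rest into $\mu$) actually gives
\[
\frac{-\theta_x(\alpha,z)}{\sin(2\pi x)}\ \ge\ 8\pi\sqrt{X}\,e^{-\pi X}\Big[(1-\mu(X))\,e^{-\pi\alpha y}-(1+\mu(X))\sum_{n\ge2}n^2e^{-\pi\alpha yn^2}\Big],
\]
and its analogue in the second regime. In both, the $n\ge2$ terms enter with a minus sign. The paper's own justification is exactly the one-line termwise application of Lemmas \ref{LemmaT1} and \ref{LemmaT2} to \eqref{CM300}, so it makes the same wrong-direction step implicitly.

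The gap cannot be closed by ``absorption'', because the lemma is false in the stated generality. Write $-\theta_x=8\pi\sqrt{X}\sum_{n,m\ge1}nm\,e^{-\pi\alpha yn^2-\pi m^2X}\sin(2\pi nmx)$ and use $\sin(2\pi kx)/\sin(2\pi x)\to(-1)^{k+1}k$ as $x\to\frac12^-$. Take $\alpha=1$ and $z=\frac12-\epsilon+i\in\mathcal{D}_{\mathcal{G}}$, so $X=1\ge\frac15$.
\begin{itemize}
\item The left side of the first bullet tends to $8\pi\sum_{n,m\ge1}(-1)^{nm+1}n^2m^2e^{-\pi(n^2+m^2)}=8\pi(e^{-2\pi}-8e^{-5\pi}-16e^{-8\pi}+\cdots)$.
\item The right side is $8\pi(1-\mu(1))\sum_{n\ge1}n^2e^{-\pi(n^2+1)}=8\pi(e^{-2\pi}-16e^{-8\pi}+\cdots)$.
\end{itemize}
So the inequality fails by about $64\pi e^{-5\pi}$. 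Similarly, the second bullet fails at $\alpha=1$, $y=\frac{\pi}{\pi+2}$, $x\to\frac12^-$ (about $0.412$ versus $0.422$).

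Your heuristic that the $n\ge2$ weights are $O(e^{-3\pi\alpha y})$ relative to $n=1$ is not enough. What must beat the $n=2$ deficit, which is of order $e^{-4\pi\alpha y-\pi X}$, is the \emph{surplus} of the $n=1$ term over $\underline\vartheta(X)$. That surplus degenerates as $x\to\frac12$: its $m=2$ part is proportional to $1+\cos 2\pi x$, leaving only $O(e^{-\pi\alpha y-9\pi X})$. In the second regime it vanishes to leading order at $X=\frac{\pi}{\pi+2}$.

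A correct route is one of the following.
\begin{itemize}
\item Prove and use the subtracted bound above, and check that Lemma \ref{Lemma4.6} still follows from it.
\item Add a lower bound on $\alpha$ and prove a surplus estimate uniform in $x$. For instance, the $m=3$ term alone gives a surplus of at least $24\pi e^{-9\pi X}$, so $3e^{-8\pi X}\ge\sum_{n\ge2}n^2e^{-\pi\alpha y(n^2-1)}$ suffices for the first bullet.
\end{itemize}
Finally, $\alpha\ge3s$ is not the relevant range. In \eqref{Zetaxx}, $\alpha$ is the Mellin integration variable running over $[y,\infty)$, so the lemma is invoked for $\alpha$ as small as $y$.
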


Proceeding by \eqref{Zetax},
 \begin{equation}\aligned\label{Zetaxx}
\frac{-\zeta_x(s,z)}{\sin(2\pi x)}&=\frac{\pi^s}{\Gamma(s)}\int_0^\infty \frac{-\theta_x(\alpha,z)}{\sin(2\pi x)}\alpha^{s-1}d\alpha\\
&=\frac{\pi^s}{\Gamma(s)}\Big(\int_0^y \frac{-\theta_x(\alpha,z)}{\sin(2\pi x)}\alpha^{s-1}d\alpha
+\int_y^\infty \frac{-\theta_x(\alpha,z)}{\sin(2\pi x)}\alpha^{s-1}d\alpha\Big)\\
&\geq\frac{\pi^s}{\Gamma(s)}\int_y^\infty \frac{-\theta_x(\alpha,z)}{\sin(2\pi x)}\alpha^{s-1}d\alpha.
\endaligned\end{equation}
Using the incomplete gamma function $\Gamma(s,x)$, which is defined as
 \begin{equation}\aligned\nonumber
\Gamma(s,x)=\int_{x}^\infty t^{s-1}e^{-t}dt,
\endaligned\end{equation}
together with Lemma \ref{Lemma43} and \eqref{Zetaxx}, we have
\begin{lemma}[A lower bound of $\frac{-\zeta_x(s,z)}{\sin(2\pi x)}$]\label{Lemma4.6}
For $s>1, y>0$, it holds that
 \begin{equation}\aligned\nonumber
\frac{-\zeta_x(s,z)}{\sin(2\pi x)}\geq\frac{\pi^s}{\Gamma(s)}\frac{2\pi}{y}
\sum_{n=1}^\infty  n^2\cdot\frac{\Gamma(s+1,\pi (n^2y^2+\frac{1}{4}))}{(\pi (n^2 y+\frac{1}{4y}))^{s+1}}.
\endaligned\end{equation}

\end{lemma}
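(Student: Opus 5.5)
The plan is to start from the inequality \eqref{Zetaxx}, which already discards the piece $\int_0^y$ of the Mellin integral. It keeps only
\begin{equation}\nonumber
\frac{-\zeta_x(s,z)}{\sin(2\pi x)}\geq\frac{\pi^s}{\Gamma(s)}\int_y^\infty \frac{-\theta_x(\alpha,z)}{\sin(2\pi x)}\alpha^{s-1}d\alpha .
\end{equation}
Dropping $\int_0^y$ is legitimate only if that integrand is nonnegative. I would justify this from Lemma \ref{Lemma43} in the two regimes. For $\frac{y}{\alpha}\geq\frac15$ I use $1-\mu(\frac{y}{\alpha})>0$; at $X=\frac15$ one has $\mu\approx 0.67$, and $\mu$ is decreasing. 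For small $\frac{y}{\alpha}$ I use the second bullet. Alternatively, nonnegativity can be read off directly from Lemmas \ref{LemmaT1}--\ref{LemmaT2} applied termwise in \eqref{CM300}.

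On the range $\alpha\geq y$ we have $\frac{y}{\alpha}\leq 1$. There I want the second bullet of Lemma \ref{Lemma43}:
\begin{equation}\nonumber
\frac{-\theta_x(\alpha,z)}{\sin(2\pi x)}\geq 2\pi\frac{\alpha}{y}\sum_{n\ge1} n^2 e^{-\pi(n^2y+\frac{1}{4y})\alpha}.
\end{equation}
Inserting this and interchanging sum and integral (Tonelli, all terms are nonnegative) gives, for each $n$,
\begin{equation}\nonumber
\frac{2\pi}{y}n^2\int_y^\infty \alpha^{s}e^{-c_n\alpha}\,d\alpha,\qquad c_n:=\pi\Big(n^2y+\frac{1}{4y}\Big).
\end{equation}
The substitution $t=c_n\alpha$ turns this into $\frac{2\pi}{y}n^2\,\Gamma(s+1,c_ny)/c_n^{s+1}$. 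Since $c_ny=\pi(n^2y^2+\frac14)$, this is exactly the claimed summand. Multiplying by $\frac{\pi^s}{\Gamma(s)}$ finishes the computation.

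The main obstacle is the range of validity. Lemma \ref{Lemma43} (via Lemma \ref{LemmaT2}) is stated only for $\frac{y}{\alpha}\leq\frac{\pi}{\pi+2}$, whereas the integration range $\alpha\geq y$ also contains the window $\frac{\pi}{\pi+2}<\frac{y}{\alpha}\leq1$. I would handle this window first by checking directly that the lower bound of Lemma \ref{LemmaT2} remains valid for $X\leq1$. The bound $\underline\vartheta(X)=\pi e^{-\pi/(4X)}X^{-3/2}$ comes from the Poisson-dual form
\begin{equation}\nonumber
\vartheta(X;Y)=X^{-1/2}\sum_k e^{-\pi(Y-k)^2/X}.
\end{equation}
Its lower estimate only needs the $k=0,1$ terms to dominate, and this should persist up to $X=1$. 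If that fails, the fallback is to compare on the window with the first bullet of Lemma \ref{Lemma43}, whose factor $e^{-\pi y/\alpha}$ is comparable there to $\frac{\alpha}{y}e^{-\pi\alpha/(4y)}$, and show that the first bullet dominates the Gamma-type integrand. As a last resort, one could shift the lower limit to $\alpha\geq\frac{\pi+2}{\pi}y$ and accept a slightly weaker incomplete-Gamma argument, since dropping the window is harmless by nonnegativity.
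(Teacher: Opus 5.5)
Your route is the paper's own. The paper gets the lemma in one line from \eqref{Zetaxx} and the second bullet of Lemma~\ref{Lemma43}, and your Tonelli/substitution step reproduces the stated right-hand side correctly. You are also right that this bullet (via Lemma~\ref{LemmaT2}) only covers $\frac{y}{\alpha}\le\frac{\pi}{\pi+2}$, while the integral runs over all $\alpha\ge y$. The paper passes over the window $\frac{\pi}{\pi+2}<\frac{y}{\alpha}\le 1$ silently. This is a genuine gap, and none of your repairs closes it.

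(i) The constant $\frac{\pi}{\pi+2}$ is essentially sharp. Let $Y\to\frac12$ in the Poisson form. Up to an exponentially small relative error, the $k=0,1$ terms give $\frac{-\partial_Y\vartheta(X;Y)}{\sin 2\pi Y}\to X^{-3/2}\bigl(\frac{\pi}{X}-2\bigr)e^{-\pi/(4X)}$. This lies below $\pi X^{-3/2}e^{-\pi/(4X)}$ exactly when $X>\frac{\pi}{\pi+2}$. At $X=1$ the true value is $4\pi(e^{-\pi}-4e^{-4\pi}+\cdots)\approx 0.54$, against the claimed $\pi e^{-\pi/4}\approx 1.43$.

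(ii) On the window the first bullet is \emph{smaller} than the integrand you need, not larger. At $X=1$ its leading term is $8\pi(1-\mu(1))e^{-\pi}\approx 1.09$, versus the required $2\pi e^{-\pi/4}\approx 2.86$ (both multiplied by $e^{-\pi\alpha y}$). The two cross near $X\approx 0.61$.

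(iii) Truncating at $\alpha=\frac{\pi+2}{\pi}y$ replaces $\Gamma(s+1,\pi(n^2y^2+\frac14))$ by the strictly smaller $\Gamma(s+1,(\pi+2)(n^2y^2+\frac14))$. So it proves a different, weaker inequality.

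In fact no argument can close the gap: the inequality as stated is false, and the discarded $\int_0^y$ does not make up the difference. Take $s=4$ and $y=\frac65$, so $z\in\mathcal{D}_{\mathcal{G}}$ for every $x\in(0,\frac12)$. By Lemma~\ref{Lemma4.14},
$\frac{-\zeta_x}{\sin 2\pi x}=4\pi\sum_n n a_n\frac{\sin 2\pi nx}{\sin 2\pi x}\le 4\pi\sum_n n^2a_n$.
Here $4\pi a_1=\frac{16\pi^5}{6}\sqrt{y}\,K_{7/2}(2\pi y)\approx 0.4542$ and $a_2/a_1\approx 3.0\times 10^{-3}$, so the left side is at most $0.460$ for every $x$. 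Yet the $n=1$ term of the claimed bound alone is $\frac{\pi^4}{6}\cdot\frac{2\pi}{y}\cdot\frac{\Gamma(5,\frac{169}{100}\pi)}{(\frac{169}{120}\pi)^5}\approx 0.467$.

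What the method really gives is your truncated bound, and only its $n=1$ term. The termwise bounds behind Lemma~\ref{Lemma43} produce the $n\ge 2$ terms with a minus sign, since $\frac{\sin 2\pi nx}{\sin 2\pi x}$ changes sign on $(0,\frac12)$. That weaker form amounts to $e^{-(\pi+2)(y^2+\frac14)}$ instead of $e^{-\pi(y^2+\frac14)}$ in Lemma~\ref{Lemma4.8}. It still leaves a large margin in Lemma~\ref{Lemma4.10}, where $\alpha\ge 12$.

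Separately, the hypothesis $y>0$ is too generous for your nonnegativity step. For $y=0.1$ and $x$ near $\frac12$ the discarded integrand is negative, and even $-\zeta_x/\sin 2\pi x<0$ there. The reason is that a modular map sends the line $\Im z=0.1$ to the horocycle at $\frac12$ through $\frac12+2.5i$, and $\zeta$ decreases along it away from the top. So a lower bound on $y$ (e.g.\ $y\ge\frac{48}{73}$) must be imposed.
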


Now we need a lower bound of the incomplete gamma function $\Gamma(s,x)$.

Integrating by parts, one has the recursion in $s$, i.e.,
 \begin{equation}\aligned\label{Rec}
\Gamma(s,x)=x^{s-1}e^{-x}+(s-1)\Gamma(s-1,x)\;\;\hbox{for}\;\;s\geq2.
\endaligned\end{equation}

We need a lower bound for incomplete gamma function $\Gamma(s,x)$. By using the recursion formula given by \eqref{Rec} and some monotonicity properties, Pinelis \cite{Pin2020} deduced that
\begin{lemma}[Lower-bound functions of incomplete gamma function $\Gamma(s,x)$]\label{Lemma4.7}

The incomplete gamma function $\Gamma(s,x)$ has the following

 \begin{equation}\aligned\nonumber
\Gamma(s,x)
\begin{cases}
>\Big(
\frac{(x+2)^s-x^s-2^s}{2s}+\Gamma(s)
\Big)e^{-x}\;\;&\hbox{for}\;\; s>3;\\
=(x^2+2x+2)e^{-x}\;\;&\hbox{for}\;\; s=3;\\
>\Big(
\frac{(x+2)^{s-1}+x^{s-1}-2^{s-1}}{2}+\Gamma(s)
\Big)e^{-x}\;\;&\hbox{for}\;\; s\in(2,3);\\
=(x+1)e^{-x}\;\;&\hbox{for}\;\; s=2;\\
>\Big(
\frac{(x+2)^s-x^s-2^s}{2s}+\Gamma(s)
\Big)e^{-x}\;\;&\hbox{for}\;\; s\in(1,2);\\
=e^{-x}\;\;&\hbox{for}\;\; s=1.\\
\end{cases}
\endaligned\end{equation}

\end{lemma}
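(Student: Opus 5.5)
The statement to prove is Lemma \ref{Lemma4.7}: lower bounds for $\Gamma(s,x)$ in terms of $e^{-x}$ times an explicit polynomial-type expression. I would treat the integer cases and the non-integer cases differently, and handle each non-integer range by one application of the recursion \eqref{Rec} plus a comparison integral.

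\textbf{Integer cases.} For $s=1,2,3$ the identities are exact. They follow by integrating by parts, i.e. from \eqref{Rec} starting at $\Gamma(1,x)=e^{-x}$:
\[
\Gamma(2,x)=xe^{-x}+\Gamma(1,x)=(x+1)e^{-x},\qquad \Gamma(3,x)=x^2e^{-x}+2\Gamma(2,x)=(x^2+2x+2)e^{-x}.
\]

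\textbf{Cases $s\in(1,2)$ and $s>3$.} Write $\Gamma(s,x)=e^{-x}\int_0^\infty (x+t)^{s-1}e^{-t}\,dt$, substituting $t\mapsto x+t$. The target is $\Gamma(s)e^{-x}+\frac{(x+2)^s-x^s-2^s}{2s}e^{-x}$. Using $\Gamma(s)=\int_0^\infty t^{s-1}e^{-t}\,dt$, it suffices to show
\[
\int_0^\infty \big[(x+t)^{s-1}-t^{s-1}\big]e^{-t}\,dt>\frac{(x+2)^s-x^s-2^s}{2s}.
\]
The right side equals $\frac12\int_0^2\big[(x+t)^{s-1}-t^{s-1}\big]dt$. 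Set $g(t)=(x+t)^{s-1}-t^{s-1}$. This $g$ is monotone in $t$: increasing when $s>2$, decreasing when $s<2$. The inequality then reduces to comparing the probability measure $e^{-t}\,dt$ on $(0,\infty)$ with the uniform measure $\tfrac12\,dt$ on $[0,2]$.

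These two measures have the same mean (both equal $1$). So the comparison of integrals of a convex or concave $g$ is a convex-order statement. The plan is to check that $g$ is convex in $t$ in the relevant ranges. For $s>3$ one has $g''=(s-1)(s-2)\big[(x+t)^{s-3}-t^{s-3}\big]>0$, so $g$ is convex. For $s\in(1,2)$, $g''$ has the same form with $(s-1)(s-2)<0$, and $(x+t)^{s-3}-t^{s-3}<0$ since $s-3<0$; hence $g''>0$ again. Then I would invoke the fact that the exponential distribution dominates the uniform distribution on $[0,2]$ in convex order. This can be shown by checking the stop-loss transforms $\int (t-c)_+\,d\mu$: for the exponential it is $e^{-c}$, for the uniform it is $(2-c)^2/4$ on $[0,2]$, and $e^{-c}\ge(1-c/2)^2$ holds. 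Jensen-type convex-order comparison then gives the strict inequality.

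\textbf{Case $s\in(2,3)$.} Apply \eqref{Rec} once: $\Gamma(s,x)=x^{s-1}e^{-x}+(s-1)\Gamma(s-1,x)$ with $s-1\in(1,2)$. The previous case gives $(s-1)\Gamma(s-1,x)>\big[\tfrac{(x+2)^{s-1}-x^{s-1}-2^{s-1}}{2}+\Gamma(s)\big]e^{-x}$, using $(s-1)\Gamma(s-1)=\Gamma(s)$. Adding $x^{s-1}e^{-x}$ gives
\[
\Big[\tfrac{(x+2)^{s-1}+x^{s-1}-2^{s-1}}{2}+\Gamma(s)\Big]e^{-x}+\tfrac{x^{s-1}}{2}e^{-x},
\]
which is even stronger than the claimed bound.

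\textbf{Main obstacle.} The main obstacle is the convex-order comparison step. I must verify the stop-loss inequality $e^{-c}\ge(1-c/2)^2$ for $c\in[0,2]$, which holds because $e^{-c/2}\ge 1-c/2$, and confirm the boundary behavior that makes the inequality strict for $x>0$. Alternatively, following Pinelis, one can differentiate the difference in $x$ and use monotonicity.
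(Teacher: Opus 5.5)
Your proof is correct, and it takes a genuinely different route from the paper. The paper does not prove this lemma itself: it quotes it from Pinelis \cite{Pin2020}, describing that argument as the recursion \eqref{Rec} combined with monotonicity properties.

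You observe instead that, with $g(t)=(x+t)^{s-1}-t^{s-1}$, one has $e^{x}\Gamma(s,x)-\Gamma(s)=\int_0^\infty g(t)e^{-t}\,dt$, while the correction term in the bound is exactly $\frac12\int_0^2 g(t)\,dt$. The lemma then becomes a convex-order comparison of the $\mathrm{Exp}(1)$ law with the uniform law on $[0,2]$, which have the same mean. This gives a short, self-contained proof, and it also explains the shape of the statement:
\begin{itemize}
\item the bound is an identity exactly when $g$ is affine ($s=1,2,3$);
\item it is a strict lower bound when $g$ is strictly convex ($s\in(1,2)\cup(3,\infty)$);
\item for $s\in(2,3)$, where $g$ is concave, the same expression is actually an \emph{upper} bound, which is why that range needs your recursion step and a different formula.
\end{itemize}

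To make strictness airtight, use the identity
\[
\int_0^\infty g(t)e^{-t}\,dt-\frac12\int_0^2 g(t)\,dt=\int_0^\infty g''(c)\Big[e^{-c}-\tfrac14\big((2-c)_+\big)^2\Big]\,dc .
\]
It holds because both laws have mean $1$. It also covers the case $s\in(1,2)$, where $g'(0^+)=-\infty$, so $g$ does not extend to a finite convex function on $\R$. The bracket is positive on $(0,\infty)$ and of order $c^2$ at $0$. So the integral converges near $0$ (for $s\in(1,2)$ the integrand is $O(c^{s-1})$), and it is strictly positive when $x>0$. At $x=0$ one has $g\equiv 0$ and every bound is an equality. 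Hence the strict inequalities hold only for $x>0$, which is all the paper uses: in Lemma \ref{Lemma4.6} the argument is $\pi(n^2y^2+\frac14)>0$.

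There is one arithmetic slip in the case $s\in(2,3)$. Since $\frac{(x+2)^{s-1}-x^{s-1}-2^{s-1}}{2}+x^{s-1}=\frac{(x+2)^{s-1}+x^{s-1}-2^{s-1}}{2}$, adding $x^{s-1}e^{-x}$ gives exactly the stated bound, not the stated bound plus $\frac{x^{s-1}}{2}e^{-x}$. Your ``even stronger'' version is false: letting $s\to 3^-$, it would give $\Gamma(3,x)\geq(\frac32x^2+2x+2)e^{-x}$, contradicting $\Gamma(3,x)=(x^2+2x+2)e^{-x}$. The conclusion of that step is unaffected.
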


Using Lemmas \ref{Lemma4.7} and \ref{Lemma4.6}, we have

\begin{lemma}[A lower bound of $\frac{-\zeta_x(s,z)}{\sin(2\pi x)}$]\label{Lemma4.8}
For $s>1, x\in[0,\frac{1}{2}]$ and $y>0$, it holds that
 \begin{equation}\aligned\nonumber
\frac{-\zeta_x(s,z)}{\sin(2\pi x)}\geq\frac{2s}{y}
\big(
 y+\frac{1}{4y}
\big)^{-(s+1)} e^{-\pi(y^2+\frac{1}{4})}.
\endaligned\end{equation}

\end{lemma}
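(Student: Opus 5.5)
We need to prove Lemma 4.8: $\frac{-\zeta_x}{\sin 2\pi x}\ge \frac{2s}{y}(y+\frac1{4y})^{-(s+1)}e^{-\pi(y^2+\frac14)}$.

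The plan is to start from Lemma \ref{Lemma4.6}, keep only the $n=1$ term of the series, and then bound the incomplete gamma function from below. Every term in the series is nonnegative: $n^2>0$, $\Gamma(s+1,\cdot)>0$, and the denominator is positive. So dropping all $n\ge2$ gives
\begin{equation*}
\frac{-\zeta_x(s,z)}{\sin(2\pi x)}\geq\frac{\pi^s}{\Gamma(s)}\frac{2\pi}{y}\cdot\frac{\Gamma\big(s+1,\pi(y^2+\frac14)\big)}{\pi^{s+1}\big(y+\frac{1}{4y}\big)^{s+1}}
=\frac{2}{y\,\Gamma(s)}\,\frac{\Gamma\big(s+1,\pi(y^2+\frac14)\big)}{\big(y+\frac{1}{4y}\big)^{s+1}}.
\end{equation*}
Note that $\pi(y^2+\frac14)=y\cdot\pi(y+\frac1{4y})$. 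This is the argument that arises from integrating $\alpha^{s-1}e^{-\pi\alpha(y+\frac1{4y})}$ over $\alpha\ge y$ in \eqref{Zetaxx}, and it matches the exponent in the claimed bound.

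It therefore suffices to show $\Gamma(s+1,X)\ge s\,\Gamma(s)\,e^{-X}=\Gamma(s+1)e^{-X}$ for $X=\pi(y^2+\frac14)>0$ and $s>1$. This gives exactly $\frac{2s}{y}(y+\frac1{4y})^{-(s+1)}e^{-\pi(y^2+\frac14)}$.

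To prove this inequality, apply the recursion \eqref{Rec} with $s+1>2$:
\begin{equation*}
\Gamma(s+1,X)=X^se^{-X}+s\,\Gamma(s,X).
\end{equation*}
So it suffices to show $X^s+s\,e^{X}\Gamma(s,X)\ge \Gamma(s+1)$. There are two routes.

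The first route avoids Lemma \ref{Lemma4.7}. Substituting $t=X+u$ gives
\begin{equation*}
e^{X}\Gamma(s+1,X)=\int_0^\infty (X+u)^s e^{-u}\,du\ge\int_0^\infty u^se^{-u}\,du=\Gamma(s+1),
\end{equation*}
since $(X+u)^s\ge u^s$ for $X\ge0$. This gives the needed bound in one line for all $s>1$.

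The second route uses Pinelis' bounds. Apply Lemma \ref{Lemma4.7} with $s+1$ in place of $s$: in each range the lower bound has the form $(\text{nonnegative polynomial-type term}+\Gamma(s+1))e^{-X}$. The extra terms are nonnegative, since for instance $(X+2)^{s+1}-X^{s+1}-2^{s+1}\ge0$ by superadditivity of $t\mapsto t^{s+1}$. This yields the same conclusion. I will present the substitution argument because it is cleanest.

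No step is a genuine obstacle. The only care needed is the bookkeeping of the powers of $\pi$ and of $\frac{1}{y}$ when specializing Lemma \ref{Lemma4.6} at $n=1$, so that the prefactor comes out as $\frac{2s}{y}$. I also note that the hypothesis $x\in[0,\frac12]$ is not needed, beyond the quotient by $\sin(2\pi x)$ being well defined.
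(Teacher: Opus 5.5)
Your deduction is correct and is essentially the paper's argument: keep the $n=1$ term of Lemma~\ref{Lemma4.6} and use $\Gamma(s+1,X)\ge\Gamma(s+1)e^{-X}$ with $X=\pi(y^2+\frac14)$. The paper obtains this last inequality from Pinelis' bounds in Lemma~\ref{Lemma4.7} by dropping their nonnegative polynomial parts, whereas your substitution $e^{X}\Gamma(s+1,X)=\int_0^\infty (X+u)^se^{-u}\,du$ proves it directly and more cleanly.

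Be aware, though, that the argument is only as good as Lemma~\ref{Lemma4.6}, which (like the statement itself) cannot hold for all $y>0$ as written. For small $y$ and $x$ slightly below $\frac12$, the lattice contains the very short vector $(2z-1)/\sqrt{y}$. So $\zeta(s,z)$ increases as $x\uparrow\frac12$, and the left-hand side is negative while the right-hand side is positive. Your closing remark that nothing beyond $y>0$ is needed is therefore too optimistic; the paper only ever applies the bound for $y\in[\frac{48}{73},2]$.
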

Now we are ready to obtain an effective lower bound of $\frac{-\zeta_x(s,z)}{\sin(2\pi x)\zeta(s,z)}$.
By Lemma \ref{Lemma4.8} and the upper bound of $\zeta(s,z)$ given by the summation formula in Lemma \ref{Rankin3}, we obtain that

\begin{lemma}[A lower bound of $\frac{-\zeta_x(s,z)}{\sin(2\pi x)\zeta(s,z)}$]\label{Lemma4.9}

For $s>1, x\in[0,\frac{1}{2}]$ and $y>0$, it holds that

 \begin{equation}\aligned\nonumber
\frac{-\zeta_x(s,z)}{\sin(2\pi x)\zeta(s,z)}
\geq\frac{\frac{2s}{y}
\big(
 y+\frac{1}{4y}
\big)^{-(s+1)} e^{-\pi(y^2+\frac{1}{4})}}
{2\xi(2s)y^s+
2\xi(2s-1)\frac{\Gamma(\frac{1}{2})\Gamma(s-\frac{1}{2})}{\Gamma(s)}y^{1-s}
+\xi(2s+1)s\frac{(2s+1)^{s+\frac{1}{2}}}{(2s+2)^{s+1}}y^{-(s+1)}}.
\endaligned\end{equation}

\end{lemma}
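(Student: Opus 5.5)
The bound is a quotient: Lemma \ref{Lemma4.8} gives a positive lower bound for the numerator, and I will pair it with an upper bound for the denominator $\zeta(s,z)$. Throughout, $x\in[0,\frac12]$ and $y>0$.

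\textbf{Sign of the denominator.} I first note that $\zeta(s,z)>0$. Since $\sin(2\pi x)\ge0$ on $[0,\frac12]$ and $\zeta(s,z)>0$, the sign of the quotient is governed by the numerator $-\zeta_x(s,z)/\sin(2\pi x)$. Lemma \ref{Lemma4.8} bounds that numerator from below by
\[
\frac{2s}{y}\Big(y+\frac{1}{4y}\Big)^{-(s+1)}e^{-\pi(y^2+\frac14)},
\]
which is strictly positive. So it suffices to bound $\zeta(s,z)$ from above by a positive quantity and divide.

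\textbf{Upper bound for $\zeta(s,z)$.} I will invoke Lemma \ref{Rankin3}, which writes
\[
\zeta(s,z)=2\xi(2s)y^s+2\xi(2s-1)\frac{\Gamma(\frac12)\Gamma(s-\frac12)}{\Gamma(s)}y^{1-s}+\sigma\,\xi(2s+1)s\frac{(2s+1)^{s+\frac12}}{(2s+2)^{s+1}}y^{-(s+1)},\qquad \sigma\in[-1,1].
\]
Replacing $\sigma$ by its maximum value $1$ gives exactly the denominator in the statement, which is therefore an upper bound for $\zeta(s,z)$. All the constants involved are positive for $s>1$: here $\xi$ denotes the Riemann zeta function evaluated at arguments greater than $1$, and $\Gamma(s-\frac12)>0$. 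Hence this upper bound is positive.

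\textbf{Conclusion.} Dividing the positive lower bound of the numerator by the positive upper bound of the denominator yields the claimed inequality.

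The only delicate point is justifying that Lemma \ref{Rankin3} applies for all $y>0$ rather than only for large $y$. The lemma is stated for every $z\in\mathbb H$, so I will accept it as stated. No further obstacle is expected, since the argument is a direct combination of Lemma \ref{Lemma4.8} with the $\sigma=1$ case of Lemma \ref{Rankin3}.
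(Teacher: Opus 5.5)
Your argument is exactly the paper's: the lower bound of Lemma~\ref{Lemma4.8} for the numerator, divided by the $\sigma=1$ case of Lemma~\ref{Rankin3} for $\zeta(s,z)$. The step you flag is harmless. The error term in Lemma~\ref{Rankin1} coincides with the Euler--Maclaurin bound $\frac{1}{16}\int_{\mathbb{R}}|f''|$ for $f(t)=\big((mx+t)^2+m^2y^2\big)^{-s}$, because $\int_{\mathbb{R}}|f''|=4\max|f'|=8s\frac{(2s+1)^{s+\frac12}}{(2s+2)^{s+1}}(my)^{-(2s+1)}$. This bound holds for every $y>0$, so the upper bound on $\zeta(s,z)$ is valid on all of $\mathbb{H}$.

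The input that actually fails is Lemma~\ref{Lemma4.8}, which you, like the paper, take for granted for all $y>0$. It is false for small $y$, and so is the statement itself. Take $s=2$ and $y=0.1$. The element $\left(\begin{smallmatrix}-1&0\\ 2&-1\end{smallmatrix}\right)$ of $\mathrm{SL}_2(\mathbb{Z})$, i.e.\ $z\mapsto -z/(2z-1)$, sends $0.45+0.1i$ to $\frac12+2i$ and $\frac12+0.1i$ to $-\frac12+\frac52 i$. Lemma~\ref{Rankin3} itself then gives $\zeta(2,\frac12+2i)\le 10.62<15.00\le\zeta(2,\frac12+\frac52 i)$. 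Hence $\zeta_x(2,x^*+0.1i)>0$ for some $x^*\in(0.45,0.5)$. There $\sin(2\pi x^*)>0$ and $\zeta>0$, so the left-hand side is negative while the right-hand side is positive.

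The defect lies upstream, in the chain Lemma~\ref{Lemma43}~$\to$~Lemma~\ref{Lemma4.6}~$\to$~Lemma~\ref{Lemma4.8}, all of which are asserted for every $y>0$. For example, Lemma~\ref{Lemma4.6} integrates the second bound of Lemma~\ref{Lemma43} over all $\alpha\in[y,\infty)$, although that bound is only stated for $\alpha\ge\frac{\pi+2}{\pi}y$. Your quotient step is therefore fine wherever Lemma~\ref{Lemma4.8} genuinely holds; the paper only applies the result for $y\in[\frac{48}{73},2]$. As a proof of the statement for all $y>0$, however, it rests on a false premise.
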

 Lemma \ref{Lemma4.9} is quite useful when the parameter $s$ is large, while when $s$ is small, we have a more precise bound (Lemma \ref{Lemma4.11}).
By Lemmas \ref{Lemma44KKK} and \ref{Lemma4.9}, to prove the main Lemma \ref{Lemma3b1} for the cases $s\geq4$, it suffices to prove that
%%%%%%%%%%%%%%%%%%%%%%%%%%%%%%%%%%%%%%%%%%%%%%%%%%%%%%%%%%%
\begin{lemma}[An elementary inequality]\label{Lemma4.10} Assume that $s\geq4, \alpha\geq3s$. Then for $y\in[\frac{48}{73},2]$, it holds that
 \begin{equation}\aligned\nonumber
&\frac{\frac{2s}{y}
\big(
 y+\frac{1}{4y}
\big)^{-(s+1)} e^{-\pi(y^2+\frac{1}{4})}}
{2\xi(2s)y^s+
2\xi(2s-1)\frac{\Gamma(\frac{1}{2})\Gamma(s-\frac{1}{2})}{\Gamma(s)}y^{1-s}
+\xi(2s+1)s\frac{(2s+1)^{s+\frac{1}{2}}}{(2s+2)^{s+1}}y^{-(s+1)}}\\
\geq&\begin{cases}
3(\frac{y}{\alpha})^{-\frac{3}{2}}e^{-\pi\alpha y}\;&\hbox{for}\;\frac{y}{\alpha}\leq1,\\
9\pi e^{-\pi y ( \alpha+\frac{1}{\alpha})}\;&\hbox{for}\;\frac{y}{\alpha}\geq1.
\end{cases}
\endaligned\end{equation}

\end{lemma}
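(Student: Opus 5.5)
The plan is to separate the two sides of the inequality and compare them at the level of exponential rates in $\alpha$ and $y$. The left-hand side does not depend on $\alpha$. The right-hand side in both regimes is decreasing in $\alpha$ once $\alpha\geq 3s\geq 12$. Indeed:
\begin{itemize}
\item for $\frac{y}{\alpha}\leq 1$, $(\frac{y}{\alpha})^{-3/2}e^{-\pi\alpha y}=\alpha^{3/2}y^{-3/2}e^{-\pi\alpha y}$ has logarithmic $\alpha$-derivative $\frac{3}{2\alpha}-\pi y<0$, since $y\geq \frac{48}{73}$ and $\alpha\geq 12$;
\item for $\frac{y}{\alpha}\geq1$, $e^{-\pi y(\alpha+1/\alpha)}$ is decreasing for $\alpha\geq1$.
\end{itemize}
Moreover, since $y\leq 2<12\leq\alpha$, only the first regime actually occurs. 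So the statement reduces to the one-parameter family of inequalities obtained at $\alpha=3s$:
\begin{equation*}
\mathrm{LHS}(s,y)\geq 3\Big(\frac{y}{3s}\Big)^{-3/2}e^{-3\pi s y},\qquad s\geq4,\; y\in\Big[\frac{48}{73},2\Big].
\end{equation*}

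Next I bound the denominator of the left-hand side from above, uniformly in $s\geq4$. We have $\xi(2s)\leq \xi(8)$ and $\xi(2s+1)\leq\xi(9)$, both close to $1$, and $\xi(2s-1)\leq\xi(7)$. The factor $\frac{\Gamma(\frac12)\Gamma(s-\frac12)}{\Gamma(s)}\leq \sqrt{\pi/(s-1)}\cdot C$ is bounded, and $\frac{(2s+1)^{s+1/2}}{(2s+2)^{s+1}}\leq (2s+2)^{-1/2}$, so that term is $O(\sqrt s)$. Hence the denominator is at most $C_0\big(y^s+y^{1-s}+\sqrt s\, y^{-(s+1)}\big)$ with an explicit constant $C_0$ (say $C_0\le 3$). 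On $[\frac{48}{73},2]$ this is at most $C_1\sqrt{s}\,\max(y,1/y)^{s+1}$.

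Taking logarithms, it then suffices to show
\begin{equation*}
\log(2s)-\log y-(s+1)\log\Big(y+\frac{1}{4y}\Big)-\pi\Big(y^2+\frac14\Big)-\log\big(C_1\sqrt s\big)-(s+1)\log\max\Big(y,\frac1y\Big)\geq \log 3+\frac32\log\frac{3s}{y}-3\pi s y .
\end{equation*}
The decisive term is $3\pi s y$, which is at least $3\pi\cdot\frac{48}{73}s\approx 6.2s$. The competing terms linear in $s$ are:
\begin{itemize}
\item $(s+1)\log(y+\frac1{4y})$, which is at most $(s+1)\log 2.125\approx0.76(s+1)$;
\item $(s+1)\log\max(y,1/y)$, which is at most $(s+1)\log 2\approx0.7(s+1)$.
\end{itemize}
The remaining terms are $\pi(y^2+\frac14)\leq 13.4$ together with terms of order $\log s$. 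So the inequality becomes
\begin{equation*}
(6.2-1.46)s\geq 13.4+1.46+O(\log s)+\text{const},
\end{equation*}
which holds comfortably for $s\geq4$. I would make this rigorous by writing the difference as $g(s,y)=a(y)s+b(y)-c\log s$ with $a(y)\geq 4$, and checking $g(4,y)>0$ together with $\partial_s g>0$ for $s\geq4$, using the monotone bounds on $[\frac{48}{73},2]$ (splitting the $y$-interval into two or three pieces if necessary).

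The main obstacle is the worst endpoint $y=\frac{48}{73}$ at $s=4$, where the margin is smallest. There I would evaluate numerically, with explicit values of $\xi(7),\xi(8),\xi(9)$ and $\Gamma(3.5)/\Gamma(4)$, rather than rely on the crude constant bounds. For the factor $e^{-\pi(y^2+1/4)}$, a slightly finer interval subdivision of $y$ may be needed so that the constants close.
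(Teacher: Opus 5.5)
Your proposal is correct and is essentially the argument the paper has in mind; the paper omits the proof as trivial because $\alpha\geq 3s\geq 12$. The argument runs: only the regime $y/\alpha\leq 1$ occurs, the right-hand side is decreasing in $\alpha$ so one may take $\alpha=3s$, and the factor $e^{-3\pi sy}\leq e^{-6.19s}$ beats the left-hand side, which decays in $s$ at most like $e^{-1.45s}$ up to polynomial factors.

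One correction to your bookkeeping: in your displayed comparison you replace every term by its worst value over the whole interval $[\frac{48}{73},2]$. Done that way, the inequality does \emph{not} close at $s=4$ (it falls short by roughly $2$ in the logarithm), so the subdivision you mention is genuinely needed, not optional. Splitting at $y=1$ already gives a logarithmic margin of about $11$ on each piece at $s=4$, together with a positive $s$-derivative, which finishes the proof.
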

Since $\alpha\geq 3s\geq12$, the proof of Lemma \ref{Lemma4.10} is trivial, hence we omit the details here.

It remains to prove the main Lemma \ref{Lemma3b1} for the cases $s\in(1,4]$. When $s$ is small, we could deduce a
precise lower bound for $\frac{-\zeta_x(s,z)}{\zeta(s,z)\sin(2\pi x)}$. We first have
\begin{lemma}[A lower bound of $\frac{-\zeta_x(s,z)}{\sin(2\pi x)}: s\leq4$]\label{Lemma4.11}
Assume that $s\in(1,4], x\in[0,\frac{1}{2}]$. Then for $y\geq\frac{48}{73}$, it holds that
 \begin{equation}\aligned\nonumber
\frac{-\zeta_x(s,z)}{\sin(2\pi x)}\geq\frac{16}{3}\sqrt{y}\frac{\pi^{s+1}}{\Gamma(s)}K_{s-\frac{1}{2}}(2\pi y)
.
\endaligned\end{equation}
Here $K_s(z)$ is the modified Bessel function of the second kind and is defined as
 \begin{equation}\aligned\nonumber
K_{s}(y)=\frac{1}{2}\int_0^\infty  t^{-(s+1)}e^{-\frac{1}{2}y(t+\frac{1}{t})}dt,
\endaligned\end{equation}
or
 \begin{equation}\aligned\label{Ks}
K_{s}(y)=\int_0^\infty  e^{-y\cosh(t)}\cosh(st)dt.
\endaligned\end{equation}
\end{lemma}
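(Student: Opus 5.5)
Start from \eqref{Zetax}, i.e. $\frac{-\zeta_x(s,z)}{\sin(2\pi x)}=\frac{\pi^s}{\Gamma(s)}\int_0^\infty\frac{-\theta_x(\alpha,z)}{\sin(2\pi x)}\alpha^{s-1}d\alpha$. Do not cut the integral at $\alpha=y$ as in \eqref{Zetaxx}. Instead, obtain a pointwise lower bound for the integrand valid for \emph{all} $\alpha>0$, and integrate it in closed form to produce a Bessel function.

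The natural source is the Fourier expansion of $\theta$ in $x$. Poisson summation in $n$ in \eqref{thetas} gives
\[
\theta(\alpha,z)=\sqrt{\tfrac{y}{\alpha}}\sum_{m,k}e^{-\pi\alpha y m^2}e^{-\pi k^2 y/\alpha}e^{2\pi i k m x},
\]
so
\[
\frac{-\theta_x(\alpha,z)}{\sin(2\pi x)}=\sqrt{\tfrac{y}{\alpha}}\sum_{m\ne0,k\ne0}2\pi |km|\,e^{-\pi y(\alpha m^2+k^2/\alpha)}\frac{\sin(2\pi |km| x)}{\sin(2\pi x)}.
\]
The $|km|=1$ terms contribute exactly $8\pi\sqrt{y/\alpha}\,e^{-\pi y(\alpha+1/\alpha)}$, matching Lemma \ref{Lemma43}. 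Integrating this leading piece against $\alpha^{s-1}$ gives
\[
\int_0^\infty \alpha^{s-\frac32}e^{-\pi y(\alpha+1/\alpha)}d\alpha=2K_{s-\frac12}(2\pi y)
\]
by the first integral representation of $K_s$ (with $t=\alpha$). This yields the main term $16\pi\sqrt y\frac{\pi^s}{\Gamma(s)}K_{s-\frac12}(2\pi y)$. Summing over all terms, one recognizes the standard Chowla–Selberg form
\[
\frac{-\zeta_x}{\sin 2\pi x}=\frac{8\pi^{s+1}\sqrt y}{\Gamma(s)}\sum_{N\ge1}N^{s-\frac12}\sigma_{1-2s}(N)K_{s-\frac12}(2\pi Ny)\frac{\sin(2\pi Nx)}{\sin(2\pi x)}.
\]

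So the plan is:
\begin{enumerate}
\item Write $\zeta_x$ exactly in this Chowla–Selberg/Bessel form.
\item Keep the $N=1$ term, which equals $\frac{16\pi^{s+1}}{\Gamma(s)}\sqrt y K_{s-\frac12}(2\pi y)$.
\item Show that the tail $N\ge2$ costs at most a factor $\frac{2}{3}$ of it, i.e. total $\ge(1-\frac23)\cdot 16=\frac{16}{3}$.
\end{enumerate}

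For the tail, use $|\sin(2\pi Nx)/\sin(2\pi x)|\le N$ and $\sigma_{1-2s}(N)\le\zeta(2s-1)$, or more crudely $\sigma_{1-2s}(N)\le d(N)$. For Bessel ratios, use
\[
K_\nu(2\pi Ny)\le K_\nu(2\pi y)\,N^{-1/2}e^{-2\pi(N-1)y}\,C_\nu,
\]
which follows from \eqref{Ks}: $e^{-Ny\cosh t}\le e^{-(N-1)y}e^{-y\cosh t}$ gives $K_\nu(Nu)\le e^{-(N-1)u}K_\nu(u)$ directly. The tail is then bounded by
\[
\sum_{N\ge2}N^{s+\frac12}\sigma_{1-2s}(N)e^{-2\pi(N-1)y}.
\]
At $y\ge\frac{48}{73}$ we have $e^{-2\pi y}\le e^{-4.13}\approx0.016$. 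For $s\le4$ the dominant $N=2$ term is at most about $2^{4.5}\cdot1.01\cdot0.016\approx0.37$, and the remaining terms are much smaller, so the sum stays below $2/3$. This is where the restrictions $s\le4$ and $y\ge\frac{48}{73}$ enter.

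The main obstacle is making this tail estimate uniform and clean for $s\in(1,4]$, since $N^{s+1/2}$ grows with $s$ and the margin at $s=4$ is only moderate. I would first try the crude bound $K_\nu(Nu)\le e^{-(N-1)u}K_\nu(u)$ and check numerically that the worst case $s=4$, $y=\frac{48}{73}$ gives a tail below $\frac23$. If it does not, I would refine by factoring $e^{-(N-1)u\cosh t}$ instead, which gains an extra factor since $\cosh t\ge1$, or by bounding the $N=2,3$ terms separately and explicitly.
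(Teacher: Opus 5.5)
\textbf{There is a genuine gap.} Your overall architecture is the paper's: expand $-\zeta_x/\sin(2\pi x)$ by Chowla--Selberg (your Poisson-plus-Mellin computation is a derivation of Lemma~\ref{Lemma4.14}), keep the $N=1$ term, and bound the tail by $\frac23$ of it using $|\sin(2\pi Nx)/\sin(2\pi x)|\le N$ and a Bessel-ratio estimate. The quantitative step fails as you set it up, for two reasons.

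\textbf{The displayed expansion has the wrong coefficient.} Carrying out your own computation for the pairs with $|km|=N$ (substitute $\alpha=\frac{|k|}{|m|}t$ and sum over factorizations) gives
\[
\frac{-\zeta_x(s,z)}{\sin(2\pi x)}=\frac{16\pi^{s+1}\sqrt y}{\Gamma(s)}\sum_{N\ge1}N^{s+\frac12}\sigma_{1-2s}(N)K_{s-\frac12}(2\pi Ny)\frac{\sin(2\pi Nx)}{\sin(2\pi x)}.
\]
So the $N$-th coefficient is $4\pi N a_N(s,y)$, not $8\pi^{s+1}N^{s-\frac12}(\cdots)$; your display even contradicts your own step 2 at $N=1$. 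The tail you must control is therefore \eqref{Zetax3}, with $N^{s+\frac32}$. That is one power of $N$ more than you estimated.

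\textbf{Your Bessel bound is too weak.} With $K_\nu(Nu)\le e^{-(N-1)u}K_\nu(u)$, the $N=2$ term alone at $s=4$, $y=\frac{48}{73}$ is
\[
2^{11/2}(1+2^{-7})e^{-2\pi\cdot 48/73}\approx 45.6\times 0.0161\approx 0.73>\tfrac23,
\]
so the argument breaks for $s$ near $4$ (from about $s\approx 3.7$ on). Your fallback of factoring $e^{-(N-1)u\cosh t}$ and using $\cosh t\ge1$ gives no improvement: that is exactly how the crude bound arises.

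\textbf{The missing ingredient} is the extra factor $N^{-1/2}$, namely Baricz's inequality (Lemma~\ref{Lemma4.15}): $K_\nu(v)/K_\nu(u)<e^{-(v-u)}\sqrt{u/v}$ for $\nu>\frac12$, $v>u>0$. You wrote an inequality of this shape, but it does not follow from the factoring argument you gave. It does follow from
\[
\sqrt u\,e^uK_\nu(u)=\frac{\sqrt\pi\,2^{-\nu}}{\Gamma(\nu+\frac12)}\int_0^\infty e^{-w}w^{\nu-\frac12}\Bigl(2+\frac wu\Bigr)^{\nu-\frac12}dw,
\]
which is decreasing in $u$ when $\nu>\frac12$. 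With it the tail is at most $\sum_{N\ge2}N^{s+1}\sigma_{1-2s}(N)e^{-2\pi(N-1)y}$, which is \eqref{Zetax4}. This sum is increasing in $s\ge1$ and decreasing in $y$. At $s=4$, $y=\frac{48}{73}$ it is about $0.518+0.063+0.004+\cdots\approx0.585<\frac23$, which closes the proof; this is the paper's route.

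\textbf{Keep $\sigma_{1-2s}(N)$ exact.} Neither of your divisor bounds works uniformly on $(1,4]$. The bound $\zeta(2s-1)$ blows up as $s\to1^+$. The bound $d(N)$ doubles the $N=2$ term, which pushes the tail above $\frac23$ at $s=4$ even with Baricz's inequality.
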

See more details for $K_s(y)$ in Watson \cite{Watson1995}. To keep the structure clear,
we postpone the proof of Lemma \ref{Lemma4.11} to the end of this subsection.

By Lemma \ref{Lemma4.11} and the upper bound of $\zeta(s,z)$ given by the summation formula in Lemma \ref{Rankin2}, we obtain that
\begin{lemma}[A lower bound of $\frac{-\zeta_x(s,z)}{\zeta(s,z)\sin(2\pi x)}: s\leq4$] \label{Lemma412KKK}
Assume that $s\in(1,4], x\in[0,\frac{1}{2}]$. Then for $y\geq\frac{48}{73}$, it holds that
 \begin{equation}\aligned\nonumber
\frac{-\zeta_x(s,z)}{\zeta(s,z)\sin(2\pi x)}\geq\frac{\frac{16}{3}\sqrt{y}\frac{\pi^{s+1}}{\Gamma(s)}K_{s-\frac{1}{2}}(2\pi y)}
{2\xi(2s)y^s+
2\xi(2s-1)\frac{\Gamma(\frac{1}{2})\Gamma(s-\frac{1}{2})}{\Gamma(s)}y^{1-s}
+\xi(2s+1)s\frac{(2s+1)^{s+\frac{1}{2}}}{(2s+2)^{s+1}}y^{-(s+1)}}.
\endaligned\end{equation}

\end{lemma}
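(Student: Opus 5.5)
This is a direct combination of Lemma \ref{Lemma4.11} with an upper bound for the denominator $\zeta(s,z)$. Since $\zeta(s,z)>0$, the plan is to bound $\zeta(s,z)$ from above by the right-hand side denominator and then divide the inequality of Lemma \ref{Lemma4.11} by it. The positivity of $\frac{-\zeta_x(s,z)}{\sin(2\pi x)}$, which is implicit in Lemma \ref{Lemma4.11}, is what keeps the inequality direction correct. One caveat: at $x=0$ or $x=\frac12$ the quotient is understood as the limit $\lim \frac{-\zeta_x}{\sin(2\pi x)}$, exactly as in Lemma \ref{Lemma4.11}, so no separate treatment is needed.

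For the upper bound, I will invoke the summation formula of Lemma \ref{Rankin3}, which is obtained from Lemma \ref{Rankin2} by splitting off the $m=0$ terms and doubling the $m>0$ contribution:
\begin{equation*}
\zeta(s,z)=2\xi(2s)y^s+2\xi(2s-1)\frac{\Gamma(\frac12)\Gamma(s-\frac12)}{\Gamma(s)}y^{1-s}+\sigma\,\xi(2s+1)s\frac{(2s+1)^{s+\frac12}}{(2s+2)^{s+1}}y^{-(s+1)},
\end{equation*}
with $\sigma\in[-1,1]$. Replacing $\sigma$ by $1$ gives exactly the denominator in the statement. This holds for every $z\in\mathbb H$ and $s>1$, and the error coefficient is positive, so the upper bound is valid in particular for $s\in(1,4]$ and $y\ge\frac{48}{73}$.

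Combining the two displays, for $s\in(1,4]$, $x\in[0,\frac12]$ and $y\ge\frac{48}{73}$, the numerator $\frac{-\zeta_x}{\sin(2\pi x)}$ is bounded below by $\frac{16}{3}\sqrt y\frac{\pi^{s+1}}{\Gamma(s)}K_{s-\frac12}(2\pi y)$, and the denominator $\zeta(s,z)$ is bounded above by a positive quantity. This yields the claimed inequality.

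There is no real obstacle once Lemma \ref{Lemma4.11} is granted. All of the substantive work, namely the Bessel-function lower bound obtained from \eqref{Zetax} together with the lower bounds of Lemma \ref{Lemma43}, is deferred to the proof of Lemma \ref{Lemma4.11}.
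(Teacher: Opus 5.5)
Your proposal is correct and is exactly the paper's argument: the positive lower bound of Lemma \ref{Lemma4.11} for $\frac{-\zeta_x(s,z)}{\sin(2\pi x)}$ is divided by an upper bound for $\zeta(s,z)$, which comes from taking $\sigma=1$ in the Rankin summation formula. The paper cites Lemma \ref{Rankin2} here; you use its consequence Lemma \ref{Rankin3}, which is the same bound. One minor correction to your closing aside, which does not affect the present argument: the paper proves Lemma \ref{Lemma4.11} not via \eqref{Zetax} and Lemma \ref{Lemma43}, but via the Chowla--Selberg expansion (Lemma \ref{Lemma4.14}) together with Baricz's Bessel-ratio bound (Lemma \ref{Lemma4.15}).
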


By Lemmas \ref{Lemma44KKK} and \ref{Lemma412KKK}, to prove the main Lemma \ref{Lemma3b1} for the case $s\in(1,4]$, it suffices to prove that
\begin{lemma}[An elementary inequality: (b)]\label{Lemma4.13} Assume that $s\in(1,4], \alpha\geq3s$. Then for $y\in[\frac{48}{73},2]$, it holds that
 \begin{equation}\aligned\nonumber
\frac{\frac{16}{3}\sqrt{y}\frac{\pi^{s+1}}{\Gamma(s)}K_{s-\frac{1}{2}}(2\pi y)}
{2\xi(2s)y^s+
2\xi(2s-1)\frac{\Gamma(\frac{1}{2})\Gamma(s-\frac{1}{2})}{\Gamma(s)}y^{1-s}
+\xi(2s+1)s\frac{(2s+1)^{s+\frac{1}{2}}}{(2s+2)^{s+1}}y^{-(s+1)}}\geq
3(\frac{y}{\alpha})^{-\frac{3}{2}}e^{-\pi\alpha y}.
\endaligned\end{equation}

\end{lemma}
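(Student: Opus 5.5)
The plan is to reduce the two-parameter inequality to a finite, verifiable comparison of explicit one-variable functions. I will treat the left-hand side as a function $L(s,y)$ of $(s,y)\in(1,4]\times[\frac{48}{73},2]$ and the right-hand side as $R(\alpha,s,y)=3(\frac{y}{\alpha})^{-\frac32}e^{-\pi\alpha y}$.

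\emph{First step: eliminate $\alpha$.} For fixed $y\geq\frac{48}{73}$, the logarithmic derivative in $\alpha$ is
\[
\partial_\alpha\log R=\frac{3}{2\alpha}-\pi y .
\]
It is negative as soon as $\alpha>\frac{3}{2\pi y}$, and this holds throughout $\alpha\geq3s>3$. Hence $R$ is decreasing in $\alpha$, and it suffices to prove the stated inequality at the extreme value $\alpha=3s$, namely
\[
L(s,y)\geq 3\Big(\frac{3s}{y}\Big)^{\frac32}e^{-3\pi sy}.
\]

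\emph{Second step: make $L$ explicit.}
\begin{itemize}
\item \emph{Numerator.} I bound $K_{s-\frac12}(2\pi y)$ from below using \eqref{Ks}: since $\cosh((s-\frac12)t)\geq1$, we get $K_{s-\frac12}(2\pi y)\geq K_0(2\pi y)$. Alternatively, I can use the exact value $K_{\frac12}(w)=\sqrt{\pi/(2w)}\,e^{-w}$ together with monotonicity of $K_\nu$ in $\nu\geq0$. This gives the concrete lower bound $\sqrt{\pi/(4\pi y)}\,e^{-2\pi y}$ for $s\geq1$.
\item \emph{Denominator.} The factors $\xi(2s)$, $\xi(2s+1)$ and $s\frac{(2s+1)^{s+\frac12}}{(2s+2)^{s+1}}$ are bounded above on $(1,4]$ by elementary estimates.
\item \emph{The dangerous term.} The term $2\xi(2s-1)\frac{\Gamma(\frac12)\Gamma(s-\frac12)}{\Gamma(s)}y^{1-s}$ has the pole $\xi(2s-1)\sim\frac{1}{2(s-1)}$. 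I will bound it by $\frac{1}{2(s-1)}+1$.
\end{itemize}

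\emph{Third step: compare.} After these reductions the inequality takes the form $e^{-2\pi y}\cdot(\text{rational-power factors})\geq C(s)\,e^{-3\pi sy}$. The exponent gap $\pi y(3s-2)\geq\pi\cdot\frac{48}{73}$ supplies a factor of at least $e^{2}$ or so. I will then split $(1,4]\times[\frac{48}{73},2]$ into a grid and check each cell using monotonicity of each factor in $s$ and in $y$. For $s$ bounded away from $1$ (say $s\geq1.2$) I expect a comfortable margin, because $e^{-3\pi sy}$ decays much faster than $e^{-2\pi y}$.

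\emph{Main obstacle: the regime $s\to1^+$.} There the denominator of $L$ blows up like $\frac{\pi}{s-1}$, so $L(s,y)\to0$. Meanwhile the right-hand side at $\alpha=3s$ tends to the positive constant $3(3/y)^{3/2}e^{-3\pi y}$, which is about $0.06$ at $y=\frac{48}{73}$. So the comparison of the third step cannot close uniformly as $s\downarrow1$.

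What I would try first is to quantify exactly where the grid check stops working. I would compute $L(s,\frac{48}{73})$ against the limiting right-hand side and locate the threshold $s_*$ below which the stated inequality actually fails. If such a threshold exists, the statement as worded cannot be proved near $s=1$, and this endpoint must be flagged rather than forced. The companion use in Lemma \ref{Lemma3b1} would then need the original quantity $\frac{-\zeta_x}{\sin(2\pi x)\zeta}$ treated directly there.
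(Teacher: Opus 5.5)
Your diagnosis is right, and your own limit argument already amounts to a disproof, so you can drop the hedge ``if such a threshold exists''. Since $\alpha=3s$ is admissible, take it. As $s\to1^+$, the numerator on the left tends to $\frac{8\pi^2}{3}e^{-2\pi y}$, using $K_{\frac12}(2\pi y)=\frac{1}{2\sqrt y}e^{-2\pi y}$. The denominator contains $2\xi(2s-1)\frac{\Gamma(\frac12)\Gamma(s-\frac12)}{\Gamma(s)}y^{1-s}\sim\frac{\pi}{s-1}$, so the left side tends to $0$. The right side tends to $3(3/y)^{3/2}e^{-3\pi y}>0$.

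Numerically, at $y=\frac{48}{73}$, $s=1.1$, $\alpha=3.3$, the left side is about $0.0143$ and the right side about $0.0369$. Along $y=\frac{48}{73}$, $\alpha=3s$, which seems to be the tightest edge, the two sides cross between $s=1.17$ (ratio $\approx0.96$) and $s=1.18$ (ratio $\approx1.08$).

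The paper gives no argument to compare with. It calls the lemma ``straightforward and elementary'' and only points to $K_{\frac12}(2\pi y)=\frac{2}{\sqrt y}e^{-2\pi y}$ and to the monotonicity of $K_\nu$ in $\nu$. That value is off by a factor $4$; yours is the correct one. No lower bound on the numerator can offset the pole of $\xi(2s-1)$. The statement therefore needs a hypothesis excluding a neighbourhood of $s=1$, for instance $s\geq s_0$ with $s_0$ a little above the crossover.

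Two points in your plan need adjusting.

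First, the margin just above the crossover is thin, not comfortable. At $s=1.2$, $y=\frac{48}{73}$, $\alpha=3.6$, the ratio of left to right is only about $1.33$. After your replacements $K_{s-\frac12}\geq K_{\frac12}$ and $\xi(2s-1)\leq\frac{1}{2(s-1)}+1$ it drops to about $1.17$. Closer to the crossover these crude bounds no longer close, so the grid check there needs sharper bounds and fine cells.

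Second, your closing suggestion cannot rescue the application. Lemma \ref{Lemma3b1} is itself false for $s$ near $1$ with $\alpha=3s$. In Lemma \ref{Lemma4.14} the pole sits in the $x$-independent coefficient $a_0$, so $\frac{-\zeta_x(s,z)}{\sin(2\pi x)}$ stays bounded while $\zeta(s,z)\to\infty$. Meanwhile $\frac{-\theta_x(3s,z)}{\sin(2\pi x)\theta(3s,z)}$ stays bounded away from $0$ on the relevant region, by Lemma \ref{Lemma43}.

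The defect reaches Theorem \ref{Th2} as well. Take $k=2s$ and $\alpha=3s$. The ratio $\frac{\zeta(s,iy)}{\theta^{2s}(\alpha,iy)}$ tends to $2\xi(2s)\alpha^s$ as $y\to\infty$, while at $e^{i\frac{\pi}{3}}$ it is of order $\frac{\pi}{s-1}$. So the hexagonal point is not the minimizer for $s$ near $1$. The repair has to be made in the hypotheses, not by re-deriving the downstream lemma.
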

Note that $K_{\frac{1}{2}}(2\pi y)=\frac{2}{\sqrt y}e^{-2\pi y}$ and $K_s(2\pi y)$ is increasing with respect to $s$ by the expression
given by \eqref{Ks}. The proof of Lemma \ref{Lemma4.13} is straightforward and elementary, we then omit the details here.

%%%%%%%%%%%%%%%%%%%%%%%%%%%%%%%%%%%%%%%%%%%%%%%%%%%%%%%%%%%%%%%%%

We now give the proof of Lemma \ref{Lemma4.11}. We use the Chowla-Selberg formula \cite{Chowla1949,Chowla1967}.

\begin{lemma}[Chowla-Selberg formula]\label{Lemma4.14} For $s>1, y>0$, it holds that
 \begin{equation}\aligned\nonumber
\zeta(s,z)&=a_0(s,y)+2\sum_{n=1}^\infty a_n(s,y)\cos(2\pi nx),
\endaligned\end{equation}
where
 \begin{equation}\aligned\label{Aa0}
a_0(s,y):&=2\xi(2s)y^s+
2\xi(2s-1)\frac{\Gamma(\frac{1}{2})\Gamma(s-\frac{1}{2})}{\Gamma(s)}y^{1-s},\\
a_n(s,y):&=\frac{4\pi^s\sqrt{y}}{\Gamma(s)}n^{s-\frac{1}{2}}\sigma_{1-2s}(n)K_{s-\frac{1}{2}}(2\pi ny).
\endaligned\end{equation}
Here $\sigma_{1-2s}(n)=\sum_{d\mid n} d^{1-2s}$, and $K_s(y)$ is the modified Bessel function.

\end{lemma}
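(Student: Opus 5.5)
The plan is to prove the formula by splitting the lattice sum according to $m$ and applying Poisson summation in $n$ for each fixed $m\neq0$. Start from
$$\zeta(s,z)=\sum_{(m,n)\neq0}\frac{y^s}{|mz+n|^{2s}}=\sum_{(m,n)\neq0}\frac{y^s}{\big((mx+n)^2+m^2y^2\big)^{s}}.$$
As in the proof of Lemma \ref{Rankin3}, the terms with $m=0$ give exactly $2\xi(2s)y^s$. The symmetry $(m,n)\mapsto(-m,-n)$ reduces the rest to $2y^s\sum_{m\ge1}\sum_{n\in\mathbb{Z}} f_m(mx+n)$, where $f_m(t)=(t^2+m^2y^2)^{-s}$.

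The analytic input is the Fourier transform of $f_c(t)=(t^2+c^2)^{-s}$ with $c>0$ and $s>\frac12$:
\begin{equation*}
\widehat{f_c}(0)=\frac{\Gamma(\frac12)\Gamma(s-\frac12)}{\Gamma(s)}c^{1-2s},\qquad \widehat{f_c}(k)=\frac{2\pi^s}{\Gamma(s)}|k|^{s-\frac12}c^{\frac12-s}K_{s-\frac12}(2\pi|k|c)\quad (k\neq0).
\end{equation*}
I would derive this by writing $(t^2+c^2)^{-s}=\frac{1}{\Gamma(s)}\int_0^\infty u^{s-1}e^{-u(t^2+c^2)}\,du$ and computing the Gaussian Fourier transform inside the integral. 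The $u$-integral that remains is the first integral representation of $K_{s-\frac12}$ given after Lemma \ref{Lemma4.11}, after a change of variables. Since $f_c$ is smooth and decays like $|t|^{-2s}$, and $\widehat{f_c}$ decays exponentially, Poisson summation applies and gives
$$\sum_n f_c(mx+n)=\sum_k\widehat{f_c}(k)e^{2\pi i kmx}.$$

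Next I would assemble the terms with $c=my$. The $k=0$ terms give
$$2y^s\sum_{m\ge1}\frac{\Gamma(\frac12)\Gamma(s-\frac12)}{\Gamma(s)}(my)^{1-2s}=2\xi(2s-1)\frac{\Gamma(\frac12)\Gamma(s-\frac12)}{\Gamma(s)}y^{1-s}.$$
Together with the $m=0$ contribution this is $a_0(s,y)$. For the terms with $k\neq0$, set $N=|k|m$. Then $|k|^{s-\frac12}m^{\frac12-s}=N^{s-\frac12}m^{1-2s}$ and $y^s\cdot y^{\frac12-s}=\sqrt y$. Summing over the divisors $m\mid N$ produces $\sigma_{1-2s}(N)$. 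Pairing $k$ with $-k$ turns $e^{\pm2\pi iNx}$ into $2\cos(2\pi Nx)$. Collecting the constants, $2\cdot\frac{2\pi^s}{\Gamma(s)}\cdot2$, gives $2a_N(s,y)\cos(2\pi Nx)$ with $a_N$ exactly as in \eqref{Aa0}.

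The step I expect to need the most care is justifying the rearrangement of the double sum over $(m,k)$ into a single sum over $N$. This should follow from absolute convergence. The bound $K_\nu(t)\le C_\nu t^{-1/2}e^{-t}$ for $t\ge t_0$, together with $\sigma_{1-2s}(N)\le\zeta(2s-1)$, makes $\sum_N N^{s-\frac12}\sigma_{1-2s}(N)K_{s-\frac12}(2\pi Ny)$ converge absolutely for every $y>0$, so Fubini applies. The remaining computations are the Fourier transform identity and bookkeeping of constants, which are standard.
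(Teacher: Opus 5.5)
Your proposal is correct. I checked the constants. With $\widehat{f_c}(k)=\frac{2\pi^s}{\Gamma(s)}|k|^{s-\frac12}c^{\frac12-s}K_{s-\frac12}(2\pi|k|c)$, $c=my$ and $N=|k|m$, the factor $2\cdot\frac{2\pi^s}{\Gamma(s)}\cdot 2\,\sqrt{y}\,N^{s-\frac12}\sigma_{1-2s}(N)K_{s-\frac12}(2\pi Ny)$ is exactly $2a_N(s,y)$. The $k=0$ terms together with the $m=0$ terms give $a_0(s,y)$.

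The paper does not prove this lemma. It quotes it from Chowla--Selberg \cite{Chowla1949,Chowla1967} and uses it directly. So the comparison is between a citation and your self-contained derivation, which is the standard one: Poisson summation in $n$ for each fixed $m$, then the Gamma-integral representation turning $\widehat{f_c}$ into a Bessel integral.

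The citation buys brevity. Your argument buys a check that the normalization matches the paper's convention for $\zeta(s,z)$: a sum over all $(m,n)\neq 0$ weighted by $\Im(z)^s$, with no factor $\frac12$. That check matters because later steps use the exact constants $\frac{4\pi^s\sqrt{y}}{\Gamma(s)}$ and $n^{s-\frac12}\sigma_{1-2s}(n)$. Examples are the bound $\zeta(s,\frac12+iy)\ge a_0-2a_1$ in Lemma \ref{Lemma4b} and the reduction from \eqref{Zetax2} to \eqref{Zetax4}.

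Your analytic justifications are adequate:
\begin{itemize}
\item Fubini holds in the $u$-integral since $s>\frac12$.
\item Poisson summation is valid because $|f_c(t)|\lesssim (1+|t|)^{-2s}$ with $2s>2$ and $\widehat{f_c}$ decays exponentially.
\item Regrouping over $N=|k|m$ is justified by Tonelli applied to the nonnegative majorant $|k|^{s-\frac12}m^{\frac12-s}K_{s-\frac12}(2\pi|k|my)$, using $K_\nu(t)\le C_\nu t^{-1/2}e^{-t}$ for $t\ge 2\pi y$ and $\sigma_{1-2s}(N)\le \xi(2s-1)$.
\end{itemize}
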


By Chowla-Selberg formula (Lemma \ref{Lemma4.14}), using $|\frac{\sin(2\pi nx)}{\sin(2\pi x)}|\leq n$ for $n\in \mathbb{Z}^+$ and $x\in\R$, one has
 \begin{equation}\aligned\label{Zetax1}
\frac{-\zeta_x(s,z)}{4\pi a_1(s,y)\sin(2\pi x)}&=1+\sum_{n=2}^\infty n\frac{a_n(s,y)}{a_1(s,y)}\frac{\sin(2\pi nx)}{\sin(2\pi x)}\\
&\geq1-\sum_{n=2}^\infty n^2\frac{a_n(s,y)}{a_1(s,y)}.
\endaligned\end{equation}
To prove Lemma \ref{Lemma4.11}, by \eqref{Zetax1}, it suffices to prove that

\begin{equation}\aligned\label{Zetax2}
\sum_{n=2}^\infty n^2\frac{a_n(s,y)}{a_1(s,y)}\leq\frac{2}{3}\;\;\hbox{for}\;\; s\in(1,4],\;\; y\geq\frac{48}{73}.
\endaligned\end{equation}
By \eqref{Aa0}, \eqref{Zetax2} is equivalent to
\begin{equation}\aligned\label{Zetax3}
\sum_{n=2}^\infty n^{s+\frac{3}{2}}\sigma_{1-2s}(n)\frac{K_{s-\frac{1}{2}}(2\pi ny)}{K_{s-\frac{1}{2}}(2\pi y)}
\leq\frac{2}{3}\;\;\hbox{for}\;\; s\in(1,4],\;\; y\geq\frac{48}{73}.
\endaligned\end{equation}

Now we need an estimate on ratio of modified Bessel function $K_s(y)$. This is done by the following
\begin{lemma}[Baricz \cite{Bari2010}]\label{Lemma4.15} If $\nu>\frac{1}{2}$. Then for $y>x>0$, it holds that

 \begin{equation}\aligned\nonumber
\frac{K_\nu(y)}{K_\nu(x)}< e^{-(y-x)}(\frac{y}{x})^{-\frac{1}{2}}.
\endaligned\end{equation}

\end{lemma}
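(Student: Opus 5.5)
The plan is to deduce the inequality from a monotonicity statement. For fixed $\nu>\frac12$ I will show that
$$g_\nu(t):=\sqrt{t}\,e^{t}K_\nu(t),\qquad t>0,$$
is strictly decreasing. Granting this, for $y>x>0$ we get $\sqrt{y}e^{y}K_\nu(y)<\sqrt{x}e^{x}K_\nu(x)$. Dividing by $\sqrt{y}e^{y}K_\nu(x)>0$ gives exactly
$$\frac{K_\nu(y)}{K_\nu(x)}<e^{-(y-x)}\Big(\frac{y}{x}\Big)^{-\frac12},$$
which is the statement. The direction is consistent with the classical asymptotics: $g_\nu(t)=\sqrt{\pi/2}\,\big(1+\frac{4\nu^2-1}{8t}+O(t^{-2})\big)$ as $t\to\infty$, so $g_\nu$ decreases for large $t$ exactly when $4\nu^2>1$. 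For $\nu=\frac12$ the function is constant, since $K_{1/2}(t)=\sqrt{\pi/(2t)}\,e^{-t}$, so $\nu>\frac12$ is the sharp threshold.

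To prove the monotonicity for every $t>0$, and not only asymptotically, I would use the Basset/Laplace-type integral representation (Watson, \S 6.15)
$$K_\nu(t)=\sqrt{\frac{\pi}{2t}}\;\frac{e^{-t}}{\Gamma(\nu+\frac12)}\int_0^\infty e^{-u}\,u^{\nu-\frac12}\Big(1+\frac{u}{2t}\Big)^{\nu-\frac12}\,du,\qquad \nu>-\tfrac12,\ t>0.$$
If one prefers to derive it from the representation \eqref{Ks} recorded above, start from $K_\nu(t)=\int_0^\infty e^{-t\cosh r}\cosh(\nu r)\,dr$. Write it as an integral over $w=\cosh r\in(1,\infty)$, substitute $w=1+\frac{u}{t}$, and compare with the standard formula $K_\nu(t)=\frac{\sqrt{\pi}(t/2)^\nu}{\Gamma(\nu+\frac12)}\int_1^\infty e^{-tw}(w^2-1)^{\nu-\frac12}\,dw$. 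The latter in turn follows from \eqref{Ks} by differentiating under the integral or by a Mellin transform check. Multiplying the representation by $\sqrt{t}\,e^{t}$ gives
$$g_\nu(t)=\frac{\sqrt{\pi/2}}{\Gamma(\nu+\frac12)}\int_0^\infty e^{-u}u^{\nu-\frac12}\Big(1+\frac{u}{2t}\Big)^{\nu-\frac12}du .$$

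Since $\nu-\frac12>0$, for each fixed $u>0$ the factor $\big(1+\frac{u}{2t}\big)^{\nu-\frac12}$ is strictly decreasing in $t$. The remaining weight $e^{-u}u^{\nu-\frac12}$ is positive and integrable. Hence $g_\nu(t_1)>g_\nu(t_2)$ whenever $t_1<t_2$, by strict monotonicity of the integrand on a set of positive measure; no differentiation under the integral is needed. This completes the argument.

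The only genuine obstacle is justifying the integral representation. It is classical, so I would cite Watson \cite{Watson1995} rather than rederive it. As a fallback, one can instead show $K_\nu'(t)/K_\nu(t)<-1-\frac1{2t}$ directly. This would use the recurrence $K_\nu'=-K_{\nu-1}-\frac{\nu}{t}K_\nu$ together with a Turán-type or ratio bound on $K_{\nu-1}/K_\nu$, but that route is messier.
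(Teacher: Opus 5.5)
Your proof is correct, and it differs from the paper, which gives no argument and simply imports the inequality from Baricz \cite{Bari2010}. There it is derived from a bound on the logarithmic derivative, $K_\nu'(t)/K_\nu(t)<-1-\frac{1}{2t}$ for $\nu>\frac12$ (a bound equivalent to a Tur\'an-type inequality), integrated from $x$ to $y$. That is exactly your fallback, since $(\log g_\nu)'(t)=\frac{1}{2t}+1+\frac{K_\nu'(t)}{K_\nu(t)}$.

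Your main route replaces the Tur\'an-type input with the Laplace-type representation, and that representation is correct. Putting $w=1+\frac{u}{t}$ in $K_\nu(t)=\frac{\sqrt{\pi}\,(t/2)^\nu}{\Gamma(\nu+\frac12)}\int_1^\infty e^{-tw}(w^2-1)^{\nu-\frac12}\,dw$ gives exactly your formula for $g_\nu$. Strict decrease then follows from the pointwise strict decrease of $(1+\frac{u}{2t})^{\nu-\frac12}$ in $t$, and the polynomial growth in $u$ guarantees integrability.

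What your approach buys is a short, self-contained proof that also explains the threshold. The same line shows $g_\nu$ is constant for $\nu=\frac12$ and increasing for $|\nu|<\frac12$. The logarithmic-derivative approach instead gives a matching lower bound almost for free. From $K_\nu'=-K_{\nu-1}-\frac{\nu}{t}K_\nu$ and $K_{\nu-1}<K_\nu$ (valid because $|\nu-1|<\nu$ and $K_\mu$ increases in $\mu\ge 0$ by \eqref{Ks}), one gets $K_\nu'/K_\nu>-1-\frac{\nu}{t}$. Integrating gives $K_\nu(y)/K_\nu(x)>e^{-(y-x)}(x/y)^{\nu}$, though the paper only needs the upper bound.

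One small correction: your parenthetical about deriving the representation from \eqref{Ks} does not work as written. Setting $w=\cosh r$ in \eqref{Ks} yields the kernel $\cosh(\nu\operatorname{arccosh}w)\,(w^2-1)^{-1/2}$, not $(w^2-1)^{\nu-\frac12}$. Identifying the two integrals needs a genuine argument, for instance a Mellin transform comparison. Citing the classical formula, as you intend, is the right way to handle it.
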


By Lemma \ref{Lemma4.15}, to prove \eqref{Zetax3}, it suffices to prove that
\begin{equation}\aligned\label{Zetax4}
\sum_{n=2}^\infty n^{s+1}\sigma_{1-2s}(n)e^{-2\pi(n-1)y}
\leq\frac{2}{3}\;\;\hbox{for}\;\; s\in(1,4],\;\; y\geq\frac{48}{73}.
\endaligned\end{equation}
The proof of \eqref{Zetax4} is elementary, hence we omit the detail here. The proof is complete.

\subsection{$\partial_y$ estimates}

By a direct calculation
\begin{equation}\aligned\nonumber
\partial_y\frac{\zeta(s,z)}{\theta^k(\alpha,z)}
&=\frac{\zeta(s,z)}{\theta^k(\alpha,z)}\cdot
\Big(
\frac{\zeta_y(s,z)}{\zeta(s,z)}-k\frac{\theta_y(\alpha,z)}{\theta(\alpha,z)}
\Big).
\endaligned\end{equation}

Lemma \ref{Lemma3c} is equivalent to

\begin{lemma}\label{Lemma4.16} Assume that $s>1, \alpha\geq 3s$. Then

\begin{equation}\aligned\nonumber
\frac{\zeta_y(s,z)}{\zeta(s,z)}-2s\frac{\theta_y(\alpha,z)}{\theta(\alpha,z)}>0\;\;\hbox{for}\;\;z\in\mathcal{D}_{\mathcal{G}}\cap\{2\geq y\geq \frac{4}{3}\}.
 \endaligned\end{equation}
\end{lemma}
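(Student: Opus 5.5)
We have to show that $\frac{\zeta_y}{\zeta}>2s\frac{\theta_y}{\theta}$ on the strip $\frac43\le y\le 2$, $x\in[0,\frac12]$, where $\alpha\ge 3s$. The plan is to bound the two logarithmic derivatives separately: a lower bound for $\frac{\zeta_y}{\zeta}$ of order $s/y$, and an exponentially small upper bound for $2s\frac{\theta_y}{\theta}$ that uses the largeness of $\alpha$.

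\textbf{The $\zeta$ term.} By Lemma \ref{Rankin4},
\[
\zeta_y(s,z)\ge 2s\Big[\xi(2s)y^{s-1}-\tfrac{s-1}{s}\xi(2s-1)\tfrac{\Gamma(\frac12)\Gamma(s-\frac12)}{\Gamma(s)}y^{-s}-E_s\,y^{-(s+2)}\Big],
\]
where $E_s$ is $\xi(2s+1)$ times the sum of the two explicit error constants. By Lemma \ref{Rankin3},
\[
\zeta(s,z)\le 2\xi(2s)y^s+2\xi(2s-1)\tfrac{\Gamma(\frac12)\Gamma(s-\frac12)}{\Gamma(s)}y^{1-s}+F_s\,y^{-(s+1)},
\]
with $F_s$ the explicit error constant there. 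Multiplying through by $y^{s+1}$, the lower bound for $\frac{\zeta_y}{\zeta}$ becomes
\[
\frac{s}{y}\cdot\frac{\xi(2s)y^{2s}-\frac{s-1}{s}c_s y-E_s y^{-1}}{\xi(2s)y^{2s}+c_s y+\frac{F_s}{2}},\qquad c_s=\xi(2s-1)\tfrac{\Gamma(\frac12)\Gamma(s-\frac12)}{\Gamma(s)}.
\]
For $y\ge\frac43$ the term $y^{2s}$ dominates. I expect this to give $\frac{\zeta_y}{\zeta}\ge \frac{s}{y}\,\kappa$ with an explicit $\kappa$, perhaps $\kappa\ge \frac14$, uniformly for $s>1$ and $y\in[\frac43,2]$.

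The delicate regime is $s\to1^+$. There $\xi(2s-1)$ blows up like $\frac{1}{2(s-1)}$, but the numerator coefficient $\frac{s-1}{s}c_s$ stays bounded, tending to about $\frac{\pi}{2}$. The denominator term $c_s y$, however, blows up, so the naive bound on $\frac{\zeta_y}{\zeta}$ decays like $(s-1)$.

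To handle this I would instead use $\zeta_y/\zeta$ via the Chowla--Selberg expansion (Lemma \ref{Lemma4.14}), in the form
\[
\partial_y\zeta = \partial_y a_0 + \text{Bessel terms}, \qquad \frac{\zeta_y}{\zeta}\ \ge\ \frac{\partial_y a_0-\sum_n|\partial_y a_n|}{a_0+\sum_n a_n}.
\]
The point is only that this quotient stays bounded below by some explicit $\kappa_0>0$, even if small. The Bessel terms are of size $e^{-2\pi y}\le e^{-8\pi/3}$ and are controlled by Lemma \ref{Lemma4.15}. The main term $a_0'/a_0$ is positive for $y\ge\frac43$ once one checks $\xi(2s)y^{2s}\ge \frac{s-1}{s}c_s y$ with margin. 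Alternatively, one can simply note that $a_0'/a_0\ge c>0$ there, and that an exponentially small right-hand side suffices.

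\textbf{The $\theta$ term.} This is where $\alpha\ge3s$ enters. By \eqref{CM300} and Lemma \ref{Lemma28}-type expansions, since $\frac{y}{\alpha}\le\frac{2}{3s}<1$, I would write $\theta=\vartheta_3$-like sums in the dual variable. The natural form is
\[
\theta(\alpha,z)=\sum_{n}e^{-\pi\alpha y n^2}\sqrt{\tfrac{y}{\alpha}}\,\vartheta\big(\tfrac{y}{\alpha};nx\big),
\]
with Poisson summation applied in the $n$-sum. Differentiating in $y$, the only non-exponentially-small contributions cancel, because $\theta\to1$ as $\alpha y\to\infty$. This gives
\[
|\theta_y(\alpha,z)|\le C\,\alpha\, e^{-\pi\alpha/y}+C\,\alpha\, e^{-\pi\alpha y},
\]
for an explicit $C$ obtained with Lemmas \ref{LemmaT1}--\ref{LemmaT2}. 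Together with $\theta\ge1$ (Lemma \ref{Lemma4.3}), this yields
\[
2s\frac{\theta_y}{\theta}\le 2sC\alpha\, e^{-\pi\alpha/2}.
\]
Since $\alpha\ge 3s$ and $y\le 2$ give $\frac{\alpha}{y}\ge\frac{3s}{2}$, the bound is at most roughly $2sC\alpha e^{-3\pi s/2}$, which is tiny compared to $\kappa_0$.

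\textbf{Main obstacle.} The hard part is obtaining a clean uniform positive lower bound for $\zeta_y/\zeta$ as $s\to1^+$ on $[\frac43,2]$, with rigorous handling of the Rankin error terms. I would address it via Chowla--Selberg, treating the cases $s\in(1,4]$ and $s\ge4$ separately, as was done in Subsection 4.1. The closing step is then an elementary numerical comparison, analogous to Lemmas \ref{Lemma4.10} and \ref{Lemma4.13}.
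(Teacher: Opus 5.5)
\textbf{There is a genuine gap, and it cannot be closed: the inequality is false for $s$ close to $1$.}

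Your overall plan matches the paper's. Rankin's summation formulas (Lemmas \ref{Rankin3}, \ref{Rankin4}) give $\frac{\zeta_y}{\zeta}\ge\frac{s}{y}(1-\mathcal{A}_s(y))$, which is Lemma \ref{Lemma4.17}. The condition $\frac{\alpha}{y}\ge\frac{3s}{2}$ gives an exponentially small bound $\frac{\theta_y}{\theta}\le\frac{2\pi\alpha}{y^2}e^{-\pi\alpha/y}$, which is Lemma \ref{Lemma4.19} and is essentially your $\theta$-estimate.

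The gap is your claim that Chowla--Selberg gives $\frac{\zeta_y}{\zeta}\ge\kappa_0>0$ uniformly as $s\to1^+$. The $(s-1)$-decay you noticed is not an artifact of Rankin's error terms:
\begin{itemize}
\item Since $\xi(2s-1)\sim\frac{1}{2(s-1)}$, one has $a_0(s,y)\sim\frac{\pi}{s-1}$.
\item $\partial_y a_0(s,y)\to\frac{\pi^2}{3}-\frac{\pi}{y}$, which stays bounded.
\item The Bessel terms and their $y$-derivatives are $O(e^{-2\pi y})$.
\end{itemize}
Hence $\frac{\zeta_y}{\zeta}=O(s-1)$ uniformly on the strip, consistent with the Kronecker limit formula.

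The $\theta$-side does not become small in this limit. With $\alpha=3s$, the quantity $2s\frac{\theta_y}{\theta}$ tends to $2\frac{\theta_y(3,z)}{\theta(3,z)}>0$, which is roughly $\frac{12\pi}{y^2}e^{-3\pi/y}$, about $0.08$ at $y=2$.

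So the inequality itself fails near $s=1$. Take $s=1.1$, $\alpha=3.3$, $y=2$ and any $x\in(0,\frac12)$. Then $\frac{\zeta_y}{\zeta}\approx\frac{2.067}{35.34}\approx0.0585$, while $2s\frac{\theta_y}{\theta}\approx2.2\times0.0287\approx0.0632$.

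The paper's proof has the same hole. In \eqref{Asy}, the $\xi(2s-1)$ terms dominate both numerator and denominator as $s\to1^+$, so $\mathcal{A}_s(y)\to\frac{2s-1}{s}\to1$. Therefore Lemma \ref{Lemma4.20}, which asserts $1-\mathcal{A}_s(\frac43)\ge4\pi s e^{-\pi s}$, fails near $s=1$. Even the tabulated value $\mathcal{A}_1(\frac43)=0.886729$ gives $1-\mathcal{A}_1(\frac43)\approx0.113<4\pi e^{-\pi}\approx0.543$.

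Both your argument and the paper's only work for $s$ bounded away from $1$. The paper's criterion is verified only at the tabulated integer values $s\ge2$. A correct statement must do one of the following:
\begin{itemize}
\item exclude a neighbourhood of $s=1$; or
\item let the lower bound on $\alpha$ depend on $s-1$, so that $s\alpha e^{-\pi\alpha/2}$ is small compared with $s-1$.
\end{itemize}
Within such a range, your route is sound and is essentially the paper's: Rankin's formulas for large $s$, Chowla--Selberg with Lemma \ref{Lemma4.15} for moderate $s$, and an exponentially small $\theta$-term.
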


To prove Lemma \ref{Lemma4.16}, we estimate $\frac{\zeta_y(s,z)}{\zeta(s,z)}$ and $\frac{\theta_y(\alpha,z)}{\theta(\alpha,z)}$
respectively. For $\frac{\zeta_y(s,z)}{\zeta(s,z)}$, we use the summation formulas in Lemmas \ref{Rankin3} and \ref{Rankin4}.
For $\frac{\theta_y(\alpha,z)}{\theta(\alpha,z)}$, we deduce by a careful study of properties of $\theta(\alpha,z)$.

By Lemmas \ref{Rankin3} and \ref{Rankin4}, we have
\begin{lemma}[A lower bound of $\frac{\zeta_y(s,z)}{\zeta(s,z)}$]\label{Lemma4.17} Assume that $s>1$. Then

\begin{equation}\aligned\nonumber
\frac{\zeta_y(s,z)}{\zeta(s,z)}\geq\frac{s}{y}
\Big(
1-
\mathcal{A}_s(y)\Big)
.
 \endaligned\end{equation}
Here $\mathcal{A}_s(y)$ is small and explicitly,
\begin{equation}\aligned\label{Asy}
\mathcal{A}_s(y):=\frac{\frac{2s-1}{s}\xi(2s-1)\frac{\Gamma(\frac{1}{2})\Gamma(s-\frac{1}{2})}{\Gamma(s)}y^{1-s}
+\xi(2s+1)\big(
\frac{3}{2}s\frac{(2s+1)^{s+\frac{1}{2}}}{(2s+2)^{s+1}}+(s+1)\frac{(2s+3)^{s+\frac{3}{2}}}{(2s+4)^{s+2}}
\big)y^{-(s+1)}
}
{\xi(2s)y^s+
\xi(2s-1)\frac{\Gamma(\frac{1}{2})\Gamma(s-\frac{1}{2})}{\Gamma(s)}y^{1-s}
+\frac{1}{2}\xi(2s+1)s\frac{(2s+1)^{s+\frac{1}{2}}}{(2s+2)^{s+1}}y^{-(s+1)}}
.
 \endaligned\end{equation}

%%%%%%%%%%%%%%%%%%%%%%%%%%%%%%%%%%%%%%%%
{\bf
\begin{table}[!htbp]\label{TableB}
\caption{{\bf Evaluation of $\mathcal{A}_s(\frac{4}{3})$[Taking six digital numbers].}}
\label{VP}
\centering
\begin{tabular}{|c|c|c|c|c|c|c|c|}

\hline

 $s=1$  & $s=2$ & $s=3$ & $s=4$ & $s=5$& $s=6$& $s=7$& $s=8$\\

\hline

0.886729 & 0.772190 & 0.517878 &0.324054 &0.194742 &0.114367 &0.066316 &0.038192  \\

\hline

\end{tabular}
\end{table}
}

\end{lemma}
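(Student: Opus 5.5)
We write $\zeta=\zeta(s,z)$ and $\zeta_y=\partial_y\zeta(s,z)$, and use the two-sided summation formulas with the error terms controlled by $\sigma,\sigma_1,\sigma_2\in[-1,1]$. We aim to bound $\zeta_y/\zeta$ from below by taking the worst-case sign of each error term: the error in the numerator $\zeta_y$ is taken negative, and the error in the denominator $\zeta$ is taken positive.

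By Lemma \ref{Rankin4}, with $\sigma_1=\sigma_2=-1$,
\begin{equation}\nonumber
\zeta_y\geq 2s\Big[\xi(2s)y^{s-1}-\tfrac{s-1}{s}\xi(2s-1)C_sy^{-s}-\xi(2s+1)E_s\,y^{-(s+2)}\Big],
\end{equation}
where $C_s=\frac{\Gamma(\frac12)\Gamma(s-\frac12)}{\Gamma(s)}$ and $E_s=\frac{s}{2}\frac{(2s+1)^{s+\frac12}}{(2s+2)^{s+1}}+(s+1)\frac{(2s+3)^{s+\frac32}}{(2s+4)^{s+2}}$.

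By Lemma \ref{Rankin3}, with $\sigma=1$, we have $0<\zeta\leq 2D$, where
\begin{equation}\nonumber
D:=\xi(2s)y^s+\xi(2s-1)C_sy^{1-s}+\tfrac12\xi(2s+1)s\tfrac{(2s+1)^{s+\frac12}}{(2s+2)^{s+1}}y^{-(s+1)}.
\end{equation}
This $D$ is exactly the denominator of $\mathcal{A}_s(y)$ in \eqref{Asy}.

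Next we rewrite the bracket as $\frac{1}{y}$ times a multiple of $D$ minus a remainder. We have
\begin{equation}\nonumber
y\cdot[\cdots]=\xi(2s)y^s-\tfrac{s-1}{s}\xi(2s-1)C_sy^{1-s}-\xi(2s+1)E_sy^{-(s+1)},
\end{equation}
and therefore
\begin{equation}\nonumber
y\cdot[\cdots]=D-\Big(1+\tfrac{s-1}{s}\Big)\xi(2s-1)C_sy^{1-s}-\xi(2s+1)\Big(E_s+\tfrac{s}{2}\tfrac{(2s+1)^{s+\frac12}}{(2s+2)^{s+1}}\Big)y^{-(s+1)}.
\end{equation}
Here $1+\frac{s-1}{s}=\frac{2s-1}{s}$, and $E_s+\frac s2\frac{(2s+1)^{s+\frac12}}{(2s+2)^{s+1}}=\frac32 s\frac{(2s+1)^{s+\frac12}}{(2s+2)^{s+1}}+(s+1)\frac{(2s+3)^{s+\frac32}}{(2s+4)^{s+2}}$. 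These are precisely the coefficients in the numerator $N$ of $\mathcal{A}_s(y)$, so $y[\cdots]=D-N=D(1-\mathcal{A}_s(y))$.

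Combining the two bounds gives $\zeta_y\geq \frac{2s}{y}D(1-\mathcal{A}_s)$. Dividing by $\zeta\leq 2D$ then yields $\zeta_y/\zeta\geq \frac{s}{y}(1-\mathcal{A}_s(y))$.

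The one delicate point is the division when $1-\mathcal{A}_s<0$. In that case the right-hand side is negative. We handle it as follows. If $\zeta_y\geq0$, the inequality is trivial. If $\zeta_y<0$, then $\zeta_y/\zeta\geq \zeta_y/\zeta_{\min}$, which requires a lower bound $\zeta\geq$ something, and that is not available in general with this method. So we state the estimate as meaningful only where $\mathcal{A}_s<1$, which the table confirms at $y=\frac43$. The bookkeeping of the constants, checking that they match \eqref{Asy} term by term, is the main step. Everything else is sign choices.
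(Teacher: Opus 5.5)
Your route is the one the paper intends (its proof is the single sentence ``By Lemmas \ref{Rankin3} and \ref{Rankin4}''). Two points need correcting.

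First, the bookkeeping you call the main step is off. Set $R_1:=\frac{(2s+1)^{s+\frac12}}{(2s+2)^{s+1}}$ and $R_2:=\frac{(2s+3)^{s+\frac32}}{(2s+4)^{s+2}}$. Then $E_s+\frac s2R_1=sR_1+(s+1)R_2$, not $\frac32 sR_1+(s+1)R_2$. Hence $y[\cdots]=D(1-\mathcal{A}_s(y))+c$ with $c:=\frac s2\xi(2s+1)R_1y^{-(s+1)}>0$, rather than $D(1-\mathcal{A}_s(y))$. Because $c>0$ this is harmless: $\zeta_y\ge\frac{2s}{y}D(1-\mathcal{A}_s(y))$ still holds, so your conclusion where $\mathcal{A}_s(y)<1$ stands. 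The paper's $\mathcal{A}_s$ simply carries this extra slack.

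Second, and this is the real gap: the lemma is asserted for every $z\in\mathbb{H}$, and $\mathcal{A}_s(y)\ge1$ genuinely occurs. For example $\mathcal{A}_2(1)\approx1.34$, and $\mathcal{A}_s(y)\to3+\frac{2(s+1)R_2}{sR_1}$ as $y\to0$. Declaring the estimate ``meaningful only where $\mathcal{A}_s<1$'' proves a weaker statement, not the lemma.

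Your stated obstruction is also inaccurate, because Lemma \ref{Rankin3} is two-sided: taking $\sigma=-1$ gives $\zeta\ge 2D-4c$.
\begin{itemize}
\item If $\mathcal{A}_s\ge1$, then $(1-\mathcal{A}_s)\zeta\le(1-\mathcal{A}_s)(2D-4c)$. A direct computation gives $\frac{2s}{y}(D-N+c)-\frac sy(1-\mathcal{A}_s)(2D-4c)=\frac{2sc}{y}(3-2\mathcal{A}_s)$. So the lemma follows whenever $\mathcal{A}_s\le\frac32$, and the slack $c$ you dropped is exactly what makes this work.
\item At the other end, use the exact identity from the proof of Lemma \ref{Rankin4}: $\zeta_y=\frac sy\zeta-2sy^{s+1}\sum_{(m,n)\neq0}m^2|mz+n|^{-2s-2}$. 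Combined with $m^2y^2\le|mz+n|^2$ and the positive $m=0$ terms, it gives $\zeta_y/\zeta>-\frac sy$. This settles every $z$ with $\mathcal{A}_s(y)\ge2$.
\end{itemize}

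The window $\frac32<\mathcal{A}_s(y)<2$ still needs a separate argument. It is nonempty even in the closed fundamental domain: for $s=2$, $y=\frac{\sqrt3}{2}$ one gets $\mathcal{A}_2\approx1.64$.

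None of this affects the paper's application. Lemma \ref{Lemma4.16} only uses $y\in[\frac43,2]$, where $\mathcal{A}_s<1$, and the paper's one-line proof is silent on the other regime too. Still, as a proof of the statement as written, your proposal is incomplete there.
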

For the function $\mathcal{A}_s(y)$ appeared in the lower bound of $\frac{\zeta_y(s,z)}{\zeta(s,z)}$, we have the following
basic properties, whose proof is elementary hence we omit the details here.

\begin{lemma}

\begin{itemize}
  \item For $s\geq1$, $\mathcal{A}_s(y)$ is decreasing for $y\geq1$.

  \item $\max_{y\geq a} \mathcal{A}_s(y)=\mathcal{A}_s(a)$ for $a\geq1$.
\end{itemize}

\end{lemma}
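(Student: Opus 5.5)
Both claims follow by clearing powers of $y$ in \eqref{Asy} and checking the sign of the derivative term by term. The second item is an immediate consequence of the first, so the work is in the first.

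Throughout, $s>1$, since $\xi(2s-1)$ is finite only there; the endpoint $s=1$ in the first item has to be read as a limiting statement. Introduce the positive constants
\begin{equation}\aligned\nonumber
S:=\xi(2s-1)\frac{\Gamma(\frac{1}{2})\Gamma(s-\frac{1}{2})}{\Gamma(s)},\;\; P:=\frac{2s-1}{s}S,\;\; c_1:=\frac{(2s+1)^{s+\frac{1}{2}}}{(2s+2)^{s+1}},\;\; c_2:=\frac{(2s+3)^{s+\frac{3}{2}}}{(2s+4)^{s+2}},
\endaligned\end{equation}
and further
\begin{equation}\aligned\nonumber
Q:=\xi(2s+1)\big(\tfrac{3}{2}sc_1+(s+1)c_2\big),\;\; T:=\tfrac{1}{2}\xi(2s+1)sc_1,\;\; R:=\xi(2s).
\endaligned\end{equation}
Multiplying the numerator and the denominator of $\mathcal{A}_s(y)$ by $y^{s+1}$ gives
\begin{equation}\aligned\nonumber
\mathcal{A}_s(y)=\frac{N(y)}{D(y)},\;\; N(y)=Py^2+Q,\;\; D(y)=Ry^{2s+1}+Sy^2+T.
\endaligned\end{equation}

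Since $D>0$, the sign of $\partial_y\mathcal{A}_s$ is the sign of $N'D-ND'$. Expanding, the $y^3$ terms $\pm 2PSy^3$ cancel, and what remains is
\begin{equation}\aligned\nonumber
N'D-ND'=(1-2s)PR\,y^{2s+2}-(2s+1)QR\,y^{2s}+2(PT-QS)\,y.
\endaligned\end{equation}
For $s>1$ the first two terms are negative for every $y>0$. This is the only computation in the argument, and it is routine.

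The step I expect to need care is the sign of the mixed coefficient $PT-QS$, since it is the only term not obviously negative. From the definitions,
\begin{equation}\aligned\nonumber
PT-QS=S\Big(\frac{2s-1}{s}T-Q\Big)=S\,\xi(2s+1)\Big(\frac{2s-1}{2}c_1-\frac{3s}{2}c_1-(s+1)c_2\Big).
\endaligned\end{equation}
The bracket equals $-\frac{s+1}{2}c_1-(s+1)c_2<0$, so $PT-QS<0$.

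Hence $N'D-ND'<0$ for all $y>0$. Thus $\mathcal{A}_s$ is strictly decreasing on $(0,\infty)$, and in particular for $y\geq1$, which is the first item. For $a\geq1$, monotonicity gives $\mathcal{A}_s(y)\leq\mathcal{A}_s(a)$ for every $y\geq a$, with equality at $y=a$, so $\max_{y\geq a}\mathcal{A}_s(y)=\mathcal{A}_s(a)$. This is the second item, and it is what allows the values in Table \ref{VP} at $y=\frac{4}{3}$ to control $\mathcal{A}_s$ on the whole strip $y\in[\frac{4}{3},2]$ in Lemma \ref{Lemma4.16}.
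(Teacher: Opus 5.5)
Your proof is correct. The paper omits this argument as elementary, and your computation supplies exactly that verification: clearing $y^{s+1}$, the cancellation of the $\pm 2PSy^3$ terms, and $PT-QS=-(s+1)S\,\xi(2s+1)\big(\tfrac{1}{2}c_1+c_2\big)<0$. It even gives strict decrease on all of $(0,\infty)$ for $s>1$.

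Your caveat about the endpoint $s=1$ is also right. Since $S\to\infty$ and $P/S\to 1$ as $s\to1^+$, one gets $\mathcal{A}_s(y)\to 1$ for every $y$, so at $s=1$ the first item holds only in a degenerate, non-strict sense.
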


For the upper bound of $\frac{\theta_y(\alpha,z)}{\theta(\alpha,z)}$, we have

\begin{lemma}[An upper bound of $\frac{\theta_y(\alpha,z)}{\theta(\alpha,z)}$]\label{Lemma4.19} Assume that $\alpha\geq 1$. Then

\begin{itemize}
  \item [(1)] For $\frac{y}{\alpha}\geq1$,
  \begin{equation}\aligned\nonumber
\frac{\theta_y(\alpha,z)}{\theta(\alpha,z)}\leq \frac{1}{2y}.
 \endaligned\end{equation}

  \item [(2)] For $\frac{y}{\alpha}\leq1$,
  \begin{equation}\aligned\nonumber
\frac{\theta_y(\alpha,z)}{\theta(\alpha,z)}\leq \frac{2\pi\alpha}{y^2}e^{-\pi\frac{\alpha}{y}}.
 \endaligned\end{equation}

\end{itemize}

\end{lemma}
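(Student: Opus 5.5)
The plan is to differentiate the lattice sum term by term after choosing, in each regime, the representation of $\theta(\alpha,z)$ whose inner one-dimensional theta series converges fast. Throughout, write
$$\frac{|mz+n|^2}{y}=\frac{(mx+n)^2}{y}+m^2y .$$

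\textbf{Case (1), $\frac{y}{\alpha}\geq1$.} We use Lemma \ref{Lemma21}:
$$\theta(\alpha,z)=\sqrt{\frac{y}{\alpha}}\,F(z),\qquad F(z):=\sum_{m\in\mathbb{Z}}e^{-\pi\alpha m^2y}\,\vartheta\Big(\frac{y}{\alpha};mx\Big).$$
Logarithmic differentiation gives $\frac{\theta_y}{\theta}=\frac{1}{2y}+\frac{F_y}{F}$. Since $F>0$, the claim reduces to showing $F_y\leq0$.

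With $X=y/\alpha\geq1$ we have
$$F_y=\sum_m e^{-\pi\alpha m^2y}\Big(-\pi\alpha m^2\,\vartheta(X;mx)+\frac{1}{\alpha}\,\partial_X\vartheta(X;mx)\Big).$$
We treat the terms separately.
\begin{itemize}
\item The $m=0$ term is $\frac{1}{\alpha}\vartheta_3'(X)<0$.
\item For $m\neq0$ we use $X\geq1$. This gives $\vartheta(X;Y)\geq1-2\sum_{k\geq1}e^{-\pi k^2X}\geq1-2.01e^{-\pi}$ and $|\partial_X\vartheta|\leq2\pi\sum_k k^2e^{-\pi k^2X}\leq2.01\pi e^{-\pi}$.
\item Since $\alpha\geq1$ and $m^2\geq1$, we get $\pi\alpha m^2\vartheta>\frac{1}{\alpha}|\partial_X\vartheta|$. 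This is a comparison of explicit numerical constants.
\end{itemize}
Hence every term of $F_y$ is negative, and (1) follows.

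\textbf{Case (2), $\frac{y}{\alpha}\leq1$.} Here we keep the direct form with $X'=\alpha/y\geq1$:
$$\theta(\alpha,z)=\sum_m e^{-\pi\alpha m^2 y}G_m,\qquad G_m:=\sum_n e^{-\pi X'(mx+n)^2}.$$
Differentiating in $y$ gives
$$\theta_y=\sum_m e^{-\pi\alpha m^2y}\Big(-\pi\alpha m^2G_m+\frac{\pi\alpha}{y^2}\sum_n(mx+n)^2e^{-\pi X'(mx+n)^2}\Big).$$

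For the $m=0$ term we get exactly $\frac{2\pi\alpha}{y^2}\sum_{n\geq1}n^2e^{-\pi n^2X'}$. Its leading part is the target $\frac{2\pi\alpha}{y^2}e^{-\pi X'}$. The tail $n\geq2$ is absorbed by the lower bound $\theta\geq\vartheta_3(X')\geq1+2e^{-\pi X'}$, in the form
$$\sum_{n\geq2}n^2e^{-\pi n^2X'}\leq 2e^{-2\pi X'},$$
which is clear for $X'\geq1$.

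For the $m\neq0$ terms we aim to show that the positive part is dominated by $\pi\alpha m^2G_m$, i.e.
$$\frac{1}{y^2}\sum_n t_n^2e^{-\pi X't_n^2}\leq m^2G_m,\qquad t_n=mx+n .$$
I would try two routes.
\begin{itemize}
\item Bound $t^2e^{-\pi X't^2}\leq\frac{1}{e\pi X'}$ and compare with the dominant term of $G_m$.
\item Alternatively, apply Poisson summation in $n$, which turns $G_m$ into $\sqrt{y/\alpha}\,\vartheta(y/\alpha;mx)$, and compare the two sides there.
\end{itemize}
If exact domination fails, the remaining slack is the factor $e^{-\pi\alpha m^2y}\leq e^{-\pi\alpha y}$. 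This is tiny next to the $m=0$ contribution, so the leftover can be absorbed into the $2e^{-2\pi X'}$ margin above.

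I expect this control of the $m\neq0$ terms in case (2), uniformly in $x$ and for small $y$ (so that $\alpha y$ need not be large), to be the main obstacle. It is where the hypothesis $\alpha\geq1$ has to be used carefully.
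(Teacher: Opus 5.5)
Your case (1) is correct and uses the paper's own reduction, $\frac{\theta_y}{\theta}=\frac1{2y}+\frac{F_y}{F}$ in the form of Lemma \ref{Lemma21}. The only difference is that the paper shows the $m^2$-part and the $\vartheta_X$-part of $F_y$ are each nonpositive, while you dominate $\frac1\alpha|\partial_X\vartheta|$ by $\pi\alpha m^2\vartheta$ term by term.

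\textbf{The gap is in case (2), exactly at the step you flag.} Nothing in the proposal shows that the $m\neq0$ terms of $\theta_y$ are harmless, and neither mechanism you offer works.
\begin{itemize}
\item Route 1 fails for large $X'=\alpha/y$. The dominant term of $G_m$ can be as small as $e^{-\pi X'/4}$, while $\frac{1}{e\pi X'}$ is only polynomially small.
\item The fallback fails in the range where the lemma is used. Unless the positive part $\frac{\pi\alpha}{y^2}\sum_n t_n^2e^{-\pi X't_n^2}$ is cancelled against $-\pi\alpha m^2G_m$ inside the same term, the positive part of the $m=\pm1$ terms can be of order $\frac1y e^{-\pi\alpha y}$ (take $mx\approx(\pi X')^{-1/2}$).
\item Your margin is only about $\frac{4\pi\alpha}{y^2}e^{-2\pi\alpha/y}$. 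Since $e^{-\pi\alpha y}\gg e^{-2\pi\alpha/y}$ for $y<\sqrt2$ and $\alpha$ large, no absorption is possible. This already happens at $y=\frac43$, $\alpha\ge9$, which lies inside the range of Lemma \ref{Lemma4.16}.
\end{itemize}
This fallback is also the weak point of the paper's own proof of (2). Bounding $\vartheta_X(\frac y\alpha;nx)$ by $\vartheta_X(\frac y\alpha;0)$ puts the positive error $\frac1y\vartheta_3(\frac\alpha y)\sum_{n\ge1}e^{-\pi\alpha n^2y}$ into $A_2$. The asserted inequality $A_2/B_2<A_1/B_1$ then fails for the same reason: at $\alpha=9$, $y=\frac43$ one gets $A_2/B_2\approx2.6\cdot10^{-8}>A_1/B_1\approx2.0\cdot10^{-8}$.

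\textbf{Small $y$ cannot be handled, because (2) is false there.} Take $\alpha=1$, $x=0.01$, $y=0.001$. Each term with $|m|\le50$ has $t_0=0.01m$ and contributes about $99\pi m^2e^{-0.101\pi m^2}>0$. So $\theta_y/\theta\approx5\cdot10^2$, while the right side of (2) is essentially $0$. A lower bound on $y$ is therefore a necessary input; in the paper it is implicit, since $z\in\mathcal{D}_{\mathcal{G}}$ and in fact $y\ge\frac43$.

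\textbf{With such a bound, your route closes by exact domination.}
\begin{itemize}
\item Reduce $c=mx$ modulo $1$ and sign to $[0,\frac12]$, and set $u=\frac12-c$.
\item The two terms $t=c$ and $t=c-1$ give $\sum(t^2-\frac14)e^{-\pi X't^2}=2ue^{-\pi X'(\frac14+u^2)}\big(u\cosh(\pi X'u)-\sinh(\pi X'u)\big)\le0$. This holds because $\tanh(\pi X'u)\ge\tanh(\pi u)\ge u$ on $[0,\frac12]$ when $X'\ge1$.
\item The remaining terms, all with $|t_n|\ge1$, contribute at most $0.11\,e^{-\pi X'c^2}\le0.11\,G_m$ to $\sum_n t_n^2e^{-\pi X't_n^2}$.
\item Hence the $e^{-\pi X't_n^2}$-weighted mean of $t_n^2$ is below $0.36$, which is at most $m^2y^2$ once $y\ge\frac{\sqrt3}{2}$.
\end{itemize}
Every $m\neq0$ term of $\theta_y$ is then $\le0$, so $\theta_y\le\partial_y\vartheta_3(\frac\alpha y)$. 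Combined with $\theta\ge\vartheta_3(\frac\alpha y)$ and your tail estimate, this gives (2). Completed this way, your direct-sum argument is both simpler and more robust than the paper's.
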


We postpone the proof of Lemma \ref{Lemma4.19} to the end of this subsection and give the proof of Lemma \ref{Lemma4.16}.
By Lemmas \ref{Lemma4.17} and \ref{Lemma4.19}, to prove Lemma \ref{Lemma4.16}, it suffices to the following
\begin{lemma}\label{Lemma4.20}
Assume that $s\geq1$. Then
\begin{equation}\aligned\nonumber
1-
\mathcal{A}_s(\frac{4}{3})\geq4\pi se^{-\pi s}
.
 \endaligned\end{equation}
 Here $\mathcal{A}_s(y)$ is defined in \eqref{Asy}.
\end{lemma}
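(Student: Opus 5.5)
The plan is to treat Lemma \ref{Lemma4.20} as a one-variable inequality in $s$ with $y=\frac{4}{3}$ fixed. I would split $s\geq1$ into a tail $s\geq s_1$, handled by crude asymptotic bounds, and a compact range $[1,s_1]$, handled by monotonicity plus a finite grid of evaluations. Write
$$\mathcal{A}_s(\tfrac{4}{3})=\frac{N(s)}{D(s)},$$
with $N,D$ the numerator and denominator in \eqref{Asy} at $y=\frac43$. The target is equivalent to
$$D(s)-N(s)\geq 4\pi s e^{-\pi s}D(s).$$
The right-hand side decays like $e^{-\pi s}$, so for large $s$ it suffices to show that $\mathcal{A}_s(\frac43)$ stays bounded away from $1$.

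\textbf{Tail.} Use $\xi(2s)\geq1$ in $D$. In $N$, use $\xi(2s\pm1)\leq\xi(3)$ and Gautschi's inequality, which gives
$$\frac{\Gamma(\frac12)\Gamma(s-\frac12)}{\Gamma(s)}\leq \sqrt{\pi}\,(s-1)^{-1/2}.$$
Also use the elementary estimate
$$\frac{(2s+1)^{s+1/2}}{(2s+2)^{s+1}}\leq(2s+2)^{-1/2}.$$
Since every term of $N$ carries a factor $y^{1-s}$ or $y^{-(s+1)}$ while $D\geq y^s$, this yields
$$\mathcal{A}_s(\tfrac43)\leq C\,s\,(\tfrac34)^{2s-1}$$
with an explicit constant $C$. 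This decays geometrically and is consistent with Table \ref{VP}. Choosing $s_1$ around $8$ should give $\mathcal{A}_s(\frac43)\leq\frac12$ and $4\pi s e^{-\pi s}<10^{-9}$ for all $s\geq s_1$, so the tail follows immediately.

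\textbf{Compact range $[1,s_1]$.} The function $4\pi s e^{-\pi s}$ is decreasing for $s\geq1/\pi$. I would also show that $s\mapsto\mathcal{A}_s(\frac43)$ is decreasing, either by differentiating the explicit expression or by bounding the $s$-derivative on each subinterval. Then on a subinterval $[s_j,s_{j+1}]$ it suffices to check
$$1-\mathcal{A}_{s_j}(\tfrac43)\geq 4\pi s_j e^{-\pi s_j}.$$
This reduces the compact range to finitely many rigorous numerical evaluations of $\xi$ and $\Gamma$, of the type recorded in Table \ref{VP}.

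\textbf{Main obstacle: the left endpoint $s\to1^+$.} There $\xi(2s-1)$ blows up, and the two $\xi(2s-1)$ terms dominate both $N$ and $D$. Hence
$$\mathcal{A}_s(\tfrac43)\to\lim_{s\to1^+}\frac{2s-1}{s}=1,$$
so $1-\mathcal{A}_s(\frac43)\to0$, while the right-hand side tends to $4\pi e^{-\pi}\approx0.54$. Even the tabulated value $0.886729$ at $s=1$ gives $1-\mathcal{A}_1(\frac43)\approx0.11<0.54$. So the inequality cannot be verified near $s=1$ as worded. The careful step is therefore to locate the smallest $s_0$ with $1-\mathcal{A}_s(\frac43)\geq4\pi s e^{-\pi s}$; Table \ref{VP} shows it holds at $s=2$. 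The verification then runs on $[s_0,\infty)$, and the range $1<s<s_0$ has to be covered differently in the application to Lemma \ref{Lemma4.16}, for example by using the sharper bound of Lemma \ref{Lemma4.19}(2) with $\alpha\geq3s$ in place of $4\pi s e^{-\pi s}$.
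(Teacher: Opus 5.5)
Your diagnosis is correct, and it is the decisive point: the lemma is false as written, so the ``elementary'' proof the paper omits cannot exist near $s=1$. At $s=1$ the quantity $\mathcal{A}_1(\frac43)$ is not even defined, since $\xi(1)=\infty$. As $s\to1^+$ the two $\xi(2s-1)$ terms dominate \eqref{Asy}, which forces $\mathcal{A}_s(\frac43)\to1$ while $4\pi se^{-\pi s}\to4\pi e^{-\pi}\approx0.54$.

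The failure is not confined to a tiny neighbourhood of $1$. Evaluating \eqref{Asy} directly at $s=\frac32$ gives $\mathcal{A}_{3/2}(\frac43)\approx0.925$, so the left side is about $0.075$ against $6\pi e^{-3\pi/2}\approx0.169$. It still fails at $s=1.6$ (about $0.098$ against $0.132$), and by my hand evaluation it first holds somewhere in $(1.6,1.7)$.

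Do not rely on Table~\ref{VP} for the numerical part. Its $s=1$ entry is not a value of \eqref{Asy}, and a direct evaluation at $s=2$ gives about $0.798$ rather than $0.772$; the inequality holds at $s=2$ with a wide margin either way.

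On $[s_0,\infty)$ your plan is sound, and it simplifies. Once $s\mapsto\mathcal{A}_s(\frac43)$ is shown to be decreasing there, a single evaluation at $s_0$ finishes the proof, because the right-hand side is decreasing as well. The grid and the geometric tail bound are only needed if you settle for local derivative bounds. That monotonicity in $s$ is the real content you still have to supply.

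What you should withdraw is the suggestion that $1<s<s_0$ can be recovered in the application. Using Lemma~\ref{Lemma4.19}(2) with $\alpha\ge3s$ and $y\le2$ replaces the right-hand side by $6\pi se^{-3\pi s/2}$. This shrinks the bad interval, but it still tends to $6\pi e^{-3\pi/2}\approx0.17$ as $s\to1^+$, while $1-\mathcal{A}_s(y)\to0$.

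No sharper estimate can help, because Lemma~\ref{Lemma4.16} itself fails there. By the Kronecker limit formula quoted in the introduction, $\zeta(s,z)=\frac{\pi}{s-1}+O(1)$ with a pole independent of $z$, so $\zeta_y/\zeta=O(s-1)$. For $\alpha=3s$, however, $2s\,\theta_y(\alpha,z)/\theta(\alpha,z)$ stays bounded away from $0$ on $\{\frac43\le y\le2\}$. It is about $0.08$ at $y=2$, coming from the $x$-independent $n=0$ term $\vartheta_3(\alpha/y)$ in \eqref{CM300}.

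In fact Theorem~\ref{Th2} with $k=2s$ is false for $s$ close to $1$. As $y\to\infty$, $\zeta(s,iy)/\theta^{2s}(\alpha,iy)\to2\xi(2s)\alpha^s$, which tends to $\pi^2$ for $\alpha=3s$, $s\to1^+$. The value at $e^{i\frac{\pi}{3}}$ is at least $\zeta(s,e^{i\frac{\pi}{3}})/\theta^{2s}(3,e^{i\frac{\pi}{3}})\sim\frac{\pi}{s-1}$. The correct repair is to raise the lower bound on $s$ in the hypotheses, not to cover the small-$s$ range by other means.
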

Now the proof of Lemma \ref{Lemma4.20} is elementary hence we omit the details here.

It remains to prove Lemma \ref{Lemma4.19}.

\begin{proof} Recall that $\theta(\alpha,z)=\sqrt{\frac{y}{\alpha}}\cdot\sum_{n\in\mathbb{Z}} e^{-\pi\alpha yn^2}\vartheta(\frac{y}{\alpha};nx)$. By a direct calculation
 \begin{equation}\aligned\label{P0}
\theta_y(\alpha,z)=&\frac{1}{2\sqrt{\alpha y}}\sum_{n\in\mathbb{Z}} e^{-\pi\alpha yn^2}\vartheta(\frac{y}{\alpha};nx)
-\pi\sqrt{\alpha y}\sum_{n\in\mathbb{Z}} n^2e^{-\pi\alpha yn^2}\vartheta(\frac{y}{\alpha};nx)\\
&+\frac{1}{\alpha}\sqrt{\frac{y}{\alpha}}\sum_{n\in\mathbb{Z}} e^{-\pi\alpha yn^2}\vartheta_X(\frac{y}{\alpha};nx).
 \endaligned\end{equation}
 In the expression of $\theta_y(\alpha,z)$ given by \eqref{P0}, we have
%%%%%%%%%%%%%%%%%%%%%%%%%%%%%%%%%%%%%%%%%%%%%%%

 \begin{equation}\aligned\label{P1}
\sum_{n\in\mathbb{Z}} n^2e^{-\pi\alpha yn^2}\vartheta(\frac{y}{\alpha};nx)\geq0
 \endaligned\end{equation}
and
 \begin{equation}\aligned\label{P2}
\sum_{n\in\mathbb{Z}} e^{-\pi\alpha yn^2}\vartheta_X(\frac{y}{\alpha};nx)
\leq0.
 \endaligned\end{equation}
 Note that $\vartheta(X;Y)=1+2\sum_{n=1}^\infty e^{-\pi n^2X}\cos(2n\pi Y)$.
 By the Poisson summation formula, one has
  \begin{equation}\aligned\label{P222}
\vartheta(X;Y)=X^{-\frac{1}{2}}\sum_{n\in\mathbb{Z}} e^{-\frac{\pi(n-Y)^2}{X}}.
 \endaligned\end{equation}
 It follows by \eqref{P222} that
  \begin{equation}\aligned\label{Positive}
\vartheta(X;Y)\geq0\;\;\hbox{for}\;\; X>0, Y\in\mathbb{R}.
 \endaligned\end{equation}
This proves \eqref{P1}. To prove \eqref{P2}, we first notice that $|\vartheta_X(X;Y)|\leq2\pi\sum_{n=1}^\infty n^2e^{-\pi n^2X}=-\vartheta_X(X;0)$. Splitting the summation to $n=0$ and $n\neq0$, one gets that

\begin{equation}\aligned\nonumber
-\sum_{n\in\mathbb{Z}} e^{-\pi\alpha yn^2}\vartheta_X(\frac{y}{\alpha};nx)
&=-\vartheta_X(\frac{y}{\alpha};0)-2\sum_{n=1}^\infty e^{-\pi\alpha yn^2}\vartheta_X(\frac{y}{\alpha};nx)\\
&\geq-\vartheta_X(\frac{y}{\alpha};0)-2\sum_{n=1}^\infty e^{-\pi\alpha yn^2}\big(-\vartheta_X(\frac{y}{\alpha};0)\big)\\
&=-\vartheta_X(\frac{y}{\alpha};0)
\Big(
1-2\sum_{n=1}^\infty e^{-\pi\alpha yn^2}
\Big)\\
&>0.
 \endaligned\end{equation}

%%%%%%%%%%%%%%%%%%%%%%%%%%%%%%%%%%%%%%%%%%%%%%%
Combining \eqref{P0} with \eqref{P1} and with \eqref{P2}, one gets
 \begin{equation}\aligned\nonumber
\theta_y(\alpha,z)\leq&\frac{1}{2\sqrt{\alpha y}}\sum_{n\in\mathbb{Z}} e^{-\pi\alpha yn^2}\vartheta(\frac{y}{\alpha};nx)\\
=&\frac{1}{2y}\sqrt{\frac{y}{\alpha}}\sum_{n\in\mathbb{Z}} e^{-\pi\alpha yn^2}\vartheta(\frac{y}{\alpha};nx)\\
=&\frac{1}{2y}\theta(\alpha,z).
 \endaligned\end{equation}
This proves item (1) of Lemma \ref{Lemma4.19}.

It remains to prove item (2) of Lemma \ref{Lemma4.19}.
Splitting the summation by $n=0$ and $n\neq0$, one has
 \begin{equation}\aligned\nonumber
\theta(\alpha,y)=\vartheta_3(\frac{\alpha}{y})+2\sqrt\frac{y}{\alpha}\sum_{n=1}^\infty e^{-\pi\alpha yn^2}\vartheta(\frac{y}{\alpha};nx).
 \endaligned\end{equation}

Using \eqref{Positive}, one has
 \begin{equation}\aligned\label{Theta0}
\theta(\alpha,y)\geq\vartheta_3(\frac{\alpha}{y})
\geq 1+2e^{-\pi\frac{\alpha}{y}}.
 \endaligned\end{equation}

While for $\theta_y(\alpha,y)$, we have
 \begin{equation}\aligned\label{C0}
\theta_y(\alpha,y)&=\partial_y(\vartheta_3(\frac{\alpha}{y}))+2\partial_y\Big(\sqrt\frac{y}{\alpha}\sum_{n=1}^\infty e^{-\pi\alpha yn^2}\vartheta(\frac{y}{\alpha};nx)\Big).
 \endaligned\end{equation}
To deal with the second term in $\theta_y(\alpha,y)$, after a direct calculation and regrouping the terms, one gets
 \begin{equation}\aligned\nonumber
\partial_y\Big(\sqrt\frac{y}{\alpha}\sum_{n=1}^\infty e^{-\pi\alpha yn^2}\vartheta(\frac{y}{\alpha};nx)\Big)
=&-\frac{1}{2\sqrt{\alpha y}}\sum_{n=1}^\infty (2\pi\alpha yn^2-1)e^{-\pi\alpha yn^2}\vartheta(\frac{y}{\alpha};nx)\\
&+\frac{1}{\alpha}\sqrt\frac{y}{\alpha}\sum_{n=1}^\infty e^{-\pi\alpha yn^2}\vartheta_X(\frac{y}{\alpha};nx).
 \endaligned\end{equation}

 By \eqref{Positive}, we have

  \begin{equation}\aligned\label{C1}
\partial_y\Big(\sqrt\frac{y}{\alpha}\sum_{n=1}^\infty e^{-\pi\alpha yn^2}\vartheta(\frac{y}{\alpha};nx)\Big)
\leq\frac{1}{\alpha}\sqrt\frac{y}{\alpha}\sum_{n=1}^\infty e^{-\pi\alpha yn^2}\vartheta_X(\frac{y}{\alpha};nx).
 \endaligned\end{equation}
 On the other hand, using the Jacobi theta function,

  \begin{equation}\aligned\label{C2}
\left|\frac{1}{\alpha}\sqrt\frac{y}{\alpha}\sum_{n=1}^\infty e^{-\pi\alpha yn^2}\vartheta_X(\frac{y}{\alpha};nx)\right|
\leq\left|\frac{1}{\alpha}\sqrt\frac{y}{\alpha}\sum_{n=1}^\infty e^{-\pi\alpha yn^2}\vartheta_X(\frac{y}{\alpha};0)\right|
=-\sqrt\frac{y}{\alpha}\sum_{n=1}^\infty e^{-\pi\alpha yn^2}\partial_y(\vartheta_3(\frac{y}{\alpha})).
\endaligned\end{equation}
Via the transformation formula, $\sqrt{\frac{y}{\alpha}}\vartheta_3(\frac{y}{\alpha})=\vartheta_3(\frac{\alpha}{y})$,
by taking derivative with respect to $y$, one has
\begin{equation}\aligned\label{C3}
-\sqrt{\frac{y}{\alpha}}\partial_y(\vartheta_3(\frac{y}{\alpha}))
=\frac{1}{2y}\vartheta_3(\frac{\alpha}{y})-\partial_y(\vartheta_3(\frac{\alpha}{y})).
\endaligned\end{equation}
Since $\partial_y(\vartheta_3(\frac{\alpha}{y}))=\frac{\pi\alpha}{y^2}\sum_{n\in\mathbb{Z}}n^2 e^{-\pi n^2\frac{\alpha}{y}}\geq0$, then
\begin{equation}\aligned\label{C4}
-\sqrt{\frac{y}{\alpha}}\partial_y(\vartheta_3(\frac{y}{\alpha}))
\leq\frac{1}{2y}\vartheta_3(\frac{\alpha}{y}).
\endaligned\end{equation}
Combining \eqref{C0} with \eqref{C1}-\eqref{C4}, we have

 \begin{equation}\aligned\label{Cx}
\theta_y(\alpha,y)\leq\partial_y(\vartheta_3(\frac{\alpha}{y}))+\frac{1}{ y}\vartheta_3(\frac{\alpha}{y})\sum_{n=1}^\infty e^{-\pi\alpha n^2 y}.
 \endaligned\end{equation}
By \eqref{Theta0} and \eqref{Cx}, one gets
 \begin{equation}\aligned\label{Cxx}
\frac{\theta_y(\alpha,z)}{\theta(\alpha,z)}
\leq\frac{\partial_y(\vartheta_3(\frac{\alpha}{y}))+\frac{1}{ y}\vartheta_3(\frac{\alpha}{y})\sum_{n=1}^\infty e^{-\pi\alpha n^2 y}}{1+2e^{-\pi\frac{\alpha}{y}}}
:=\frac{A_1+A_2}{B_1+B_2},
 \endaligned\end{equation}
where
 \begin{equation}\aligned\nonumber
&A_1:=\frac{2\pi\alpha}{y^2}e^{-\frac{\pi\alpha}{y}}, \;\;A_2:=\sum_{n=2}^\infty \frac{2n^2\pi\alpha}{y^2}e^{-\pi n^2\frac{\alpha}{y}}
+\sum_{n=1}^\infty\frac{1}{ y}\vartheta_3(\frac{\alpha}{y})e^{-\pi n^2\alpha y},\\
&B_1:=1,\;\;\;\;\;\;\;\;\;\;\;\;\;\;\; B_2:=2e^{-\pi\frac{\alpha}{y}}.
 \endaligned\end{equation}
By direct computation, one has
 \begin{equation}\aligned\label{Cxxx0}
\frac{A_2}{B_2}<\frac{A_1}{B_1}.
 \endaligned\end{equation}
Then by an elementary inequality, it follows by \eqref{Cxxx0} that
 \begin{equation}\aligned\label{Cxxx}
\frac{A_1+A_2}{B_1+B_2}<\frac{A_1}{B_1}.
 \endaligned\end{equation}
Noting that $\frac{A_1}{B_1}=\frac{2\pi\alpha}{y^2}e^{-\frac{\pi\alpha}{y}}$, \eqref{Cxx} and \eqref{Cxxx} yield the desired result.
\end{proof}

\subsection{Proof of Lemma \ref{Lemma3a}}

Since
\begin{equation}\aligned\nonumber
\frac{\zeta(s,z)}{\theta^k(\alpha,z)}
\Big/\Big(\frac{\zeta(s,z)}{\theta^k(\alpha,z)}\Big)\Big|_{z=e^{i\frac{\pi}{3}}}
=
\frac{\zeta(s,z)}{\zeta(s,e^{i\frac{\pi}{3}})}\cdot\Big(\frac{\theta(\alpha,e^{i\frac{\pi}{3}})}{\theta(\alpha,z)}\Big)^{k},
 \endaligned\end{equation}
we shall estimate $\frac{\zeta(s,z)}{\zeta(s,e^{i\frac{\pi}{3}})}$ and $\frac{\theta(\alpha,e^{i\frac{\pi}{3}})}{\theta(\alpha,z)}$
respectively.

 To estimate $\frac{\theta(\alpha,e^{i\frac{\pi}{3}})}{\theta(\alpha,z)}$, we already have lower bounds of $\theta(\alpha,z)$
in Lemma \ref{Lemma4.3}.
Using the Jacobi Theta function of the third type, we have an upper bound of $\theta(\alpha,z)$, i.e.,

\begin{lemma}\label{Lemma4.21}
For any $\alpha>0, y>0$, it holds that $\theta(\alpha,z)\leq\theta(\alpha,iy)=\vartheta_3(\frac{\alpha}{y})\vartheta_3(\alpha y)$.

\end{lemma}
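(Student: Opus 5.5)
The bound follows from the Fourier expansion of $\theta(\alpha,z)$ in $x=\Re(z)$ supplied by Lemma \ref{Lemma21}, combined with the positivity of Jacobi theta functions and Poisson summation. The plan has three steps.

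\emph{Step one: expand in $x$.} By \eqref{CM300} and the series $\vartheta(X;Y)=1+2\sum_{m\ge1}e^{-\pi m^2X}\cos(2m\pi Y)$, we can write
\begin{equation*}
\theta(\alpha,z)=\sqrt{\frac{y}{\alpha}}\sum_{n\in\mathbb{Z}}\sum_{m\in\mathbb{Z}} e^{-\pi\alpha yn^2}e^{-\pi m^2\frac{y}{\alpha}}\cos(2\pi mnx).
\end{equation*}
This is a sum of nonnegative coefficients times cosines. Since $\cos(2\pi mnx)\le 1$ and every coefficient $e^{-\pi\alpha yn^2}e^{-\pi m^2 y/\alpha}$ is positive, each term is bounded above by its value at $x=0$. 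Hence $\theta(\alpha,x+iy)\le\theta(\alpha,iy)$ for all real $x$ and $y>0$. An equivalent route is to note $|\vartheta(X;Y)|\le\vartheta(X;0)=\vartheta_3(X)$ termwise in the inner sum.

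\emph{Step two: factor at $x=0$.} Setting $x=0$, the double sum factors:
\begin{equation*}
\theta(\alpha,iy)=\sqrt{\frac{y}{\alpha}}\,\vartheta_3(\alpha y)\,\vartheta_3\Big(\frac{y}{\alpha}\Big),
\end{equation*}
as already recorded in Lemma \ref{Lemma28}. Applying the Jacobi transformation $\sqrt{X}\,\vartheta_3(X)=\vartheta_3(1/X)$ with $X=y/\alpha$, which is the Poisson summation identity \eqref{P222} at $Y=0$, converts $\sqrt{y/\alpha}\,\vartheta_3(y/\alpha)$ into $\vartheta_3(\alpha/y)$. This gives $\theta(\alpha,iy)=\vartheta_3(\alpha/y)\,\vartheta_3(\alpha y)$ exactly as stated.

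\emph{Step three: a direct alternative.} Directly from the definition \eqref{thetas}, $|mz+n|^2=(mx+n)^2+m^2y^2$. Applying Poisson summation in $n$ for each fixed $m$ gives a cosine series in $mx$ with positive coefficients, which again is maximized at $x=0$. I would mention this only as a remark.

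There is no real obstacle in this argument. The only point needing care is the absolute convergence needed to bound the double series termwise, and this is immediate from the Gaussian decay of the coefficients for $\alpha,y>0$.
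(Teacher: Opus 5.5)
Your proposal is correct and takes essentially the same approach as the paper. The paper also writes $\theta(\alpha,z)=\sqrt{\frac{y}{\alpha}}\sum_{n}e^{-\pi\alpha yn^2}\vartheta(\frac{y}{\alpha};nx)$, bounds $\vartheta(\frac{y}{\alpha};nx)\le\vartheta(\frac{y}{\alpha};0)$ termwise (your positive-coefficient cosine argument is the justification for this step), factors out $\vartheta_3(\alpha y)$, and applies the Jacobi transformation $\sqrt{\frac{y}{\alpha}}\,\vartheta_3(\frac{y}{\alpha})=\vartheta_3(\frac{\alpha}{y})$.
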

\begin{proof} By \eqref{CM300}, $\theta(\alpha,z)=\sqrt{\frac{y}{\alpha}}\cdot\sum_{n\in\mathbb{Z}} e^{-\pi\alpha yn^2}\vartheta(\frac{y}{\alpha};nx)\leq\sqrt{\frac{y}{\alpha}}\cdot\sum_{n\in\mathbb{Z}} e^{-\pi\alpha yn^2}\vartheta(\frac{y}{\alpha};0)=\sqrt{\frac{y}{\alpha}}\vartheta_3(\frac{y}{\alpha})\cdot\sum_{n\in\mathbb{Z}} e^{-\pi\alpha yn^2}=\vartheta_3(\frac{\alpha}{y})\vartheta_3(\alpha y)$.
\end{proof}

By Lemmas \ref{Lemma4.3} and \ref{Lemma4.21}, one has

\begin{lemma}\label{Lemma4a} Assume that $\alpha, y>0$. It holds that $\frac{\theta(\alpha,e^{i\frac{\pi}{3}})}{\theta(\alpha,z)}\geq\frac{1}{\theta(\alpha,iy)}=
     \frac{1}{\vartheta_3(\frac{\alpha}{y})\vartheta_3(\alpha y)}$.

\end{lemma}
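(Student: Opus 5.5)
The inequality splits into an upper bound for the numerator and a lower bound for the denominator, and both follow directly from results already in the paper. Write $z=x+iy$ for a general point in the upper half plane.

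\emph{Numerator.} Lemma \ref{Lemma4.21} applies at the particular point $e^{i\frac{\pi}{3}}=\frac12+i\frac{\sqrt3}{2}$. That lemma compares $\theta(\alpha,\cdot)$ at any point with its value at the imaginary point of the same height. Alternatively, I would appeal to Montgomery's Theorem B, which gives $\theta(\alpha,e^{i\frac{\pi}{3}})\le\theta(\alpha,w)$ for every $w\in\mathbb{H}$. Either route yields an explicit bound on $\theta(\alpha,e^{i\frac{\pi}{3}})$. The cleaner choice is Theorem B: it says the hexagonal value is the global minimum, so the numerator is controlled in the right direction.

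\emph{Denominator.} For the general point $z$, Lemma \ref{Lemma4.21} gives
$$\theta(\alpha,z)\le\theta(\alpha,iy)=\vartheta_3\Big(\frac{\alpha}{y}\Big)\vartheta_3(\alpha y).$$
This comes from the expansion \eqref{CM300} together with the pointwise bound $\vartheta(X;Y)\le\vartheta(X;0)$. That pointwise bound is immediate from the Poisson form \eqref{P222}, or simply from $|\cos|\le1$ in the Fourier series. Taking reciprocals, $\frac{1}{\theta(\alpha,z)}\ge\frac{1}{\vartheta_3(\frac{\alpha}{y})\vartheta_3(\alpha y)}$.

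\emph{Combining.} Multiplying the two bounds reduces the claim to showing that $\theta(\alpha,e^{i\frac{\pi}{3}})$ is bounded below by $1$. This lower bound is the one genuinely needed step. The positivity \eqref{Positive} of every term in the expansion \eqref{CM300}, together with the $n=0$ term, gives
$$\theta(\alpha,w)\ge\sqrt{\frac{\Im w}{\alpha}}\,\vartheta_3\Big(\frac{\Im w}{\alpha}\Big)=\vartheta_3\Big(\frac{\alpha}{\Im w}\Big)\ge1.$$
This is exactly the computation in the proof of Lemma \ref{Lemma4.3}; the trivial observation that the $(m,n)=(0,0)$ term of the lattice sum equals $1$ gives the same thing. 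Hence $\theta(\alpha,e^{i\frac{\pi}{3}})/\theta(\alpha,z)\ge 1/\theta(\alpha,z)\ge 1/\theta(\alpha,iy)$, and the equality $\theta(\alpha,iy)=\vartheta_3(\frac{\alpha}{y})\vartheta_3(\alpha y)$ is Lemma \ref{Lemma28}.

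The only point needing care is that the bound from Lemma \ref{Lemma4.21} is used at the same height $y$ as $z$, not at the height $\frac{\sqrt3}{2}$ of the hexagonal point. Once that is kept straight, there is no real obstacle.
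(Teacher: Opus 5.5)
Your final argument is correct and is exactly the paper's: you bound the numerator below by $1$ (Lemma \ref{Lemma4.3}, or simply the $(0,0)$ term of the lattice sum) and the denominator above by $\theta(\alpha,iy)=\vartheta_3(\frac{\alpha}{y})\vartheta_3(\alpha y)$ via Lemma \ref{Lemma4.21}. The one flaw is your ``Numerator'' paragraph, which should be deleted: Theorem B and Lemma \ref{Lemma4.21} at $e^{i\frac{\pi}{3}}$ both give \emph{upper} bounds on $\theta(\alpha,e^{i\frac{\pi}{3}})$, the wrong direction for a lower bound on the ratio, and your proof never uses them.
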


For $\frac{\zeta(s,z)}{\zeta(s,e^{i\frac{\pi}{3}})}$. Due to its difficulty, we divide it into two cases, i.e., (1): $s\in(1,4]$ and
(2): $s>4$. For case (1), we use Chowla-Selberg formula given by Lemma \ref{Lemma4.14}, while for case (2), we use
summation formula for $\zeta(s,z)$ given by Lemma \ref{Rankin3}.

 We first have
\begin{lemma}\label{Lemma4b}
For $s\in(1,4]$ and $y\geq2$, it holds that
$
\frac{\zeta(s,z)}{\zeta(s,e^{i\frac{\pi}{3}})}\geq\frac{a_0(s,y)-2a_1(s,y)}{a_0(s,\frac{\sqrt3}{2})}.
$ Or equivalently,

 \begin{equation}\aligned\nonumber
\frac{\zeta(s,z)}{\zeta(s,e^{i\frac{\pi}{3}})}\geq
\mathcal{B}_a(s,y),
\endaligned\end{equation}

where

 \begin{equation}\aligned\nonumber
\mathcal{B}_a(s,y):=
\frac{2\xi(2s)y^s+
2\xi(2s-1)\frac{\Gamma(\frac{1}{2})\Gamma(s-\frac{1}{2})}{\Gamma(s)}y^{1-s}
-\frac{8\pi^s\sqrt{y}}{\Gamma(s)}K_{s-\frac{1}{2}}(2\pi y)}
{2\xi(2s)(\frac{\sqrt3}{2})^s+
2\xi(2s-1)\frac{\Gamma(\frac{1}{2})\Gamma(s-\frac{1}{2})}{\Gamma(s)}(\frac{\sqrt3}{2})^{1-s}
}.
\endaligned\end{equation}

\end{lemma}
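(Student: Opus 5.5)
The plan is to bound the numerator from below and the denominator from above, both by means of the Chowla--Selberg expansion (Lemma \ref{Lemma4.14}):
\begin{equation*}
\zeta(s,z)=a_0(s,y)+2\sum_{n\ge1}a_n(s,y)\cos(2\pi nx).
\end{equation*}

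\textbf{Lower bound for the numerator.} Since $\cos(2\pi nx)\ge -1$ and $a_n(s,y)>0$, we get
\begin{equation*}
\zeta(s,z)\ge a_0(s,y)-2\sum_{n\ge1}a_n(s,y)=a_0(s,y)-2a_1(s,y)\Big(1+\sum_{n\ge2}\frac{a_n(s,y)}{a_1(s,y)}\Big).
\end{equation*}
The statement keeps only $-2a_1$. So I would first write the tail $\sum_{n\ge2}a_n$ explicitly using Lemma \ref{Lemma4.15}:
\begin{equation*}
\frac{a_n}{a_1}\le n^{s}\sigma_{1-2s}(n)\,e^{-2\pi(n-1)y}.
\end{equation*}
For $y\ge2$ and $s\le4$ this tail is of size $e^{-4\pi}$ times a modest constant.

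To match the stated form exactly, I would strengthen the argument slightly. The simplest route is to absorb the tail into the slack between $a_0(s,y)$ and $a_0(s,\tfrac{\sqrt3}{2})$. In fact, the proof of Lemma \ref{Lemma3a} only needs some lower bound, so a harmless loss is acceptable.

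If the exact form $a_0-2a_1$ is required, I would argue as follows. The $n=1$ term contributes $2a_1\cos(2\pi x)\ge -2a_1$. The terms with $n\ge2$ are bounded by $2\sum_{n\ge2}a_n$. I would then dominate $2\sum_{n\ge2}a_n$ by the gap $a_0(s,y)-a_0(s,\tfrac{\sqrt3}{2})\cdot(\text{ratio})$ coming from the denominator step below. Concretely, I would place the tail's contribution into the denominator comparison.

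\textbf{Upper bound for the denominator.} By Theorem A, $\zeta(s,e^{i\pi/3})$ is the minimum over $\mathbb{H}$. Evaluating the expansion at $x=\tfrac12$, $y=\tfrac{\sqrt3}{2}$ gives
\begin{equation*}
\zeta(s,e^{i\pi/3})=a_0(s,\tfrac{\sqrt3}{2})+2\sum_{n\ge1}(-1)^n a_n(s,\tfrac{\sqrt3}{2}).
\end{equation*}
The series $2\sum_{n\ge1}(-1)^n a_n$ is alternating with decreasing terms, since $K_\nu$ is decreasing and Lemma \ref{Lemma4.15} controls the ratios. Its sum therefore has the sign of its first term, so it is negative, which yields
\begin{equation*}
\zeta(s,e^{i\pi/3})\le a_0(s,\tfrac{\sqrt3}{2})-2a_1(s,\tfrac{\sqrt3}{2})+2a_2(s,\tfrac{\sqrt3}{2})<a_0(s,\tfrac{\sqrt3}{2}).
\end{equation*}

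\textbf{Combining the two.} This leaves a margin of roughly $2a_1(s,\tfrac{\sqrt3}{2})-2a_2(s,\tfrac{\sqrt3}{2})$ in the denominator. Relative to that margin, the numerator tail $2\sum_{n\ge2}a_n(s,y)$ at $y\ge2$ is tiny: its size is $e^{-8\pi}$ versus $e^{-\pi\sqrt3}$. The needed inequality then reduces to
\begin{equation*}
\frac{a_0(s,y)-2\sum_{n\ge1}a_n(s,y)}{a_0(s,\tfrac{\sqrt3}{2})-2a_1(s,\tfrac{\sqrt3}{2})+2a_2(s,\tfrac{\sqrt3}{2})}\ge\frac{a_0(s,y)-2a_1(s,y)}{a_0(s,\tfrac{\sqrt3}{2})}.
\end{equation*}
Cross-multiplying, it suffices that
\begin{equation*}
\big(a_0(s,y)-2a_1(s,y)\big)\big(2a_1-2a_2\big)(s,\tfrac{\sqrt3}{2})\ge 2a_0(s,\tfrac{\sqrt3}{2})\sum_{n\ge2}a_n(s,y).
\end{equation*}
For $y\ge2$ the left side grows like $y^s$, while the right side is exponentially small, so the inequality holds with huge room.

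\textbf{Main obstacle.} The hard part is making these Bessel and divisor-sum estimates uniform in $s\in(1,4]$. I would use monotonicity of $K_\nu$ in $\nu$ (from \eqref{Ks}), the explicit formula $K_{1/2}(t)=\sqrt{\pi/(2t)}\,e^{-t}$, and the bound $\sigma_{1-2s}(n)\le\zeta(2s-1)$. The remaining checks are elementary numerics at the endpoints.
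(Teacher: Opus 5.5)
Your argument is correct in substance, but it treats the numerator differently from the paper. The paper first invokes Rankin's monotonicity $\partial_x\zeta(s,z)<0$ on $\mathcal{D}_{\mathcal{G}}\cap\{y\ge\tfrac{\sqrt3}{2}\}$ to get $\zeta(s,z)\ge\zeta(s,\tfrac12+iy)$. At $x=\tfrac12$ the Chowla--Selberg series is alternating, so both $\zeta(s,\tfrac12+iy)\ge a_0(s,y)-2a_1(s,y)$ and $\zeta(s,e^{i\pi/3})\le a_0(s,\tfrac{\sqrt3}{2})$ follow as pure sign statements, with no quantitative trade-off.

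You use only $a_n>0$ and $\cos(2\pi nx)\ge-1$. This loses the tail $T=2\sum_{n\ge2}a_n(s,y)$, which you recover from the sharper bound $\zeta(s,e^{i\pi/3})\le D-M$, where $D=a_0(s,\tfrac{\sqrt3}{2})$ and $M=2(a_1-a_2)(s,\tfrac{\sqrt3}{2})$. With $N=a_0(s,y)-2a_1(s,y)$, your cross-multiplied condition $NM\ge TD$ is the right one. It holds with enormous room, since $T$ is of order $e^{-8\pi}$, $M$ is of order $e^{-\pi\sqrt3}$, and $N/D$ is bounded below. So your route dispenses with the external $x$-monotonicity lemma and works for every $x$ at once. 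The price is a quantitative comparison, uniform in $s\in(1,4]$, that the paper never needs.

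Two adjustments are needed to get that uniformity.

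First, the bound $\sigma_{1-2s}(n)\le\xi(2s-1)$ (your $\zeta(2s-1)$) degenerates as $s\to1^+$. There $N$ and $D$ both blow up like $\pi/(s-1)$, so $N/D\to1$ and the condition reduces to $M\gtrsim T$. Your bound for $T$ carries the extra factor $\xi(2s-1)\approx\frac{1}{2(s-1)}$, so it no longer verifies this once $s-1$ is below roughly $10^{-9}$. Use the $s$-independent bound $\sigma_{1-2s}(n)\le\sigma_{-1}(n)\le d(n)$ instead.

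Second, you (like the paper) assert that $a_n(s,\tfrac{\sqrt3}{2})$ decreases in $n$. However, Lemma \ref{Lemma4.15} only yields $a_{n+1}/a_n<(1+\frac1n)^{s-1}\frac{\sigma_{1-2s}(n+1)}{\sigma_{1-2s}(n)}e^{-2\pi y}$. The divisor-sum ratio here can come close to $\xi(2s-1)$, so termwise decrease is not guaranteed uniformly as $s\to1^+$. Your scheme does not need it. Bound $\zeta(s,e^{i\pi/3})\le a_0-2a_1+2\sum_{n\ge2}a_n$ at $y=\tfrac{\sqrt3}{2}$ and apply the same tail estimate. This shows $\sum_{n\ge2}a_n(s,\tfrac{\sqrt3}{2})$ is well under $10\%$ of $a_1(s,\tfrac{\sqrt3}{2})$, and the resulting margin $2a_1-2\sum_{n\ge2}a_n$ then replaces $M$.
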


\begin{proof} We first use Rankin's Lemma (or Montegomery's Lemma \cite{Mon1988,Ran1953}), i.e., $\frac{\partial}{\partial x}\zeta(s,z)<0$ for
$\{\Im(z)\geq\frac{\sqrt3}{2}\}\cap z\in\mathcal{D}_{\mathcal{G}}$ and $s>1$. Then $\zeta(s,z)\geq \zeta(s,\frac{1}{2}+iy)$.
We then use Chowla-Selberg formula given by Lemma \ref{Lemma4.14}.
Indeed,
 \begin{equation}\aligned\label{Zz}
\zeta(s,\frac{1}{2}+iy)=a_0(s,y)+2\sum_{n=1}^\infty (-1)^n a_n(s,y).
\endaligned\end{equation}
See $a_0, a_n$ in Lemma \ref{Lemma4.14}. Using Lemma \ref{Lemma4.15}, one has

 \begin{equation}\aligned\nonumber
\frac{a_{n+1}(y)}{a_n(y)}&=(1+\frac{1}{n})^{s-\frac{1}{2}}\frac{\sigma_{1-2s}(n+1)}{\sigma_{1-2s}(n)}
\frac{K_{s-\frac{1}{2}}(2\pi(n+1)y)}{K_{s-\frac{1}{2}}(2\pi ny)}\\
&\leq(1+\frac{1}{n})^{s-1}\frac{\sigma_{1-2s}(n+1)}{\sigma_{1-2s}(n)} e^{-2\pi y}.
\endaligned\end{equation}
In the range $s\in(1,4]$ and $y\geq\frac{\sqrt3}{2}$, then $\frac{a_{n+1}(y)}{a_n(y)}<1$ for all $n\geq1$.
Hence $\sum_{n=1}^\infty (-1)^n a_n(s,\frac{\sqrt3}{2})$ is an alternating series. Then $\zeta(s,z)\geq a_0(s,y)- 2a_1(s,y)$.
For $\zeta(s,e^{i\frac{\pi}{3}})$,  still using \eqref{Zz} and the series is alternating, then $\zeta(s,e^{i\frac{\pi}{3}})\leq a_0(s,y)$.
These yield the result.

\end{proof}

The lower bound function $\mathcal{B}_a(s,y)$ is monotone on $s$ and $y$ directions. It follows that
\begin{lemma}\label{Lemma4c}It holds that $\min_{s\in[1,4], y\geq2}\mathcal{B}_a(s,y)=\mathcal{B}_a(1,2)$. Here
$\mathcal{B}_a(1,2)=1.133290376\cdots$.

\end{lemma}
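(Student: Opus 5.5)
To prove Lemma \ref{Lemma4c}, I will show that $\mathcal{B}_a(s,y)$ is nondecreasing in $y$ on $[2,\infty)$ for each fixed $s\in[1,4]$, and nondecreasing in $s$ on $[1,4]$ for each fixed $y\geq2$. The minimum is then attained at the corner $(s,y)=(1,2)$, and $\mathcal{B}_a(1,2)$ is evaluated numerically. At $s=1$ I interpret every term by continuity. The numerator contains $\xi(2s-1)\Gamma(s-\frac12)/\Gamma(s)$, which has a pole at $s=1$, but the denominator has the same pole. Their ratio therefore tends to $y^{1-s}/(\frac{\sqrt3}{2})^{1-s}\to1$ at leading order, and I will write out this limit explicitly. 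In particular, the quoted value $1.1332\ldots$ is to be checked as that limit.

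\textbf{The $y$-direction.} The denominator does not depend on $y$, so only the numerator
\[
N(s,y)=2\xi(2s)y^s+2\xi(2s-1)\tfrac{\Gamma(\frac12)\Gamma(s-\frac12)}{\Gamma(s)}y^{1-s}-\tfrac{8\pi^s}{\Gamma(s)}\sqrt y\,K_{s-\frac12}(2\pi y)
\]
needs to be examined. Differentiating gives a derivative equal to
\[
2s\xi(2s)y^{s-1}-2(s-1)\xi(2s-1)\tfrac{\Gamma(\frac12)\Gamma(s-\frac12)}{\Gamma(s)}y^{-s}
\]
plus the derivative of the Bessel term. The Bessel term is $-\frac{8\pi^s}{\Gamma(s)}\sqrt y\,K_{s-\frac12}(2\pi y)$, and $\sqrt y K_\nu(2\pi y)$ is decreasing for $\nu\geq\frac12$. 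So this term is increasing in $y$ and helps. For the first two terms I note that $(s-1)\xi(2s-1)$ stays bounded near $s=1$, since $\xi(2s-1)\sim\frac{1}{2(s-1)}$. Combined with $y\geq2$, this makes $y^{-s}$ much smaller than $y^{s-1}$, and I expect a crude bound to give positivity: at $y=2$ the ratio of the negative term to the positive term is at most about $\frac{\pi}{2\cdot 2\cdot \xi(2)}<1$.

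\textbf{The $s$-direction.} This is the main obstacle, because the $s$-dependence enters through $\xi(2s)$, $\xi(2s-1)$, the Gamma ratio and the Bessel order simultaneously. My first approach is to discard the Bessel term, which is at most $e^{-4\pi}$ in relative size at $y\geq2$, and treat it as a uniform perturbation. Writing $t=y/(\sqrt3/2)\geq 4/\sqrt3$, the main part becomes
\[
\frac{2\xi(2s)(\frac{\sqrt3}{2})^s t^s+C(s)(\frac{\sqrt3}{2})^{1-s}t^{1-s}}{2\xi(2s)(\frac{\sqrt3}{2})^s+C(s)(\frac{\sqrt3}{2})^{1-s}}.
\]
This is a weighted average of $t^s$ and $t^{1-s}$. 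As $s$ increases, both $t^s$ grows and the weight shifts toward the $t^s$ term, since $C(s)$ decreases from infinity while $\xi(2s)$ stays bounded. Hence the expression increases in $s$. Once this monotonicity of the main part is established, I absorb the Bessel error through the margin $\mathcal{B}_a(1,2)-1\approx0.13$, which is far larger than the error. If the monotonicity argument turns out to be delicate, my fallback is to check a fine grid in $s\in[1,4]$ at $y=2$ with a Lipschitz bound in $s$, and then apply the $y$-monotonicity above.
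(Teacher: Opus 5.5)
\textbf{The corner value is $1$, not $1.1332\ldots$, so the final step fails.} Your route is the paper's: monotonicity of $\mathcal{B}_a$ in $y$ and in $s$, then evaluation at the corner $(1,2)$. The paper's proof is just the assertion of these two monotonicities, and your $y$-direction argument is fine. The proposal breaks at the point you flagged but did not follow through. Put $X(s)=2\xi(2s-1)\frac{\Gamma(\frac12)\Gamma(s-\frac12)}{\Gamma(s)}\sim\frac{\pi}{s-1}$. Every other term of the numerator and denominator of $\mathcal{B}_a$, the Bessel term included, stays bounded as $s\to1^+$. Hence $\lim_{s\to1^+}\mathcal{B}_a(s,y)=1$ \emph{exactly}, for every $y$, not merely ``at leading order''. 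In your weighted-average form this is just $w(1)=0$ and $t^{1-s}|_{s=1}=1$. One more order gives $\mathcal{B}_a(1+\epsilon,2)=1+0.35\,\epsilon+O(\epsilon^2)$; for example $\mathcal{B}_a(1.01,2)\approx1.0035$. So your step ``the quoted value $1.1332\ldots$ is to be checked as that limit'' cannot succeed. The margin $\mathcal{B}_a(1,2)-1\approx0.13$ you planned to use to absorb the Bessel error also does not exist. Carried out correctly, your argument gives $\inf_{(1,4]\times[2,\infty)}\mathcal{B}_a=1$, approached as $s\to1^+$ and not attained. At $s=1$ the function is defined only by continuity, and then $\mathcal{B}_a(1,\cdot)\equiv1$. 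Your plan therefore refutes the numerical claim $\mathcal{B}_a(1,2)=1.133290376\cdots$ rather than proving it.

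\textbf{The obstruction is intrinsic, and your $s$-argument has its own gap.} No sharper estimate can fix this. We have $\zeta(s,z)=\frac{\pi}{s-1}+O(1)$ locally uniformly in $z$, so $\zeta(s,z)/\zeta(s,e^{i\pi/3})\to1$ as $s\to1^+$. Hence no lower bound exceeding $1$ can hold uniformly near $s=1$. Downstream, the bound in Lemma \ref{L31} tends to $\theta(3,2i)^{-2}\approx0.965<1$ there; the paper's $1.0936$ is just $1.1333\times0.965$. The exact quantity $\frac{\zeta(s,2i)}{\zeta(s,e^{i\pi/3})}\bigl(\frac{\theta(3s,e^{i\pi/3})}{\theta(3s,2i)}\bigr)^{2s}$ also tends to about $0.965$. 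So the lemma, and the case of Theorem \ref{Th2} it feeds, can only hold for $s\ge s_0>1$, with $\mathcal{B}_a(s_0,2)$ computed honestly.

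Separately, your mechanism for monotonicity in $s$ is not right as stated. The weight ratio is $\frac{(1-w)}{w}=\frac{C(s)}{2\xi(2s)}\bigl(\frac{2}{\sqrt3}\bigr)^{2s-1}$, with your $C(s)=X(s)$, and its last factor grows with $s$. Numerically the weight $w$ on $t^s$ actually decreases for $s$ roughly in $(2.8,4]$. Moreover $\frac{d}{ds}\bigl[wt^s+(1-w)t^{1-s}\bigr]=w'(t^s-t^{1-s})+\log t\,\bigl(wt^s-(1-w)t^{1-s}\bigr)$, and near $s=1$ the second term is $\approx-\log t<0$. Monotonicity there needs a quantitative comparison, for instance $w'(1)(t-1)\approx1.19>\log t\approx0.84$ at $t=4/\sqrt3$. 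The qualitative claim that both effects help is not enough.
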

Now we are ready to prove the case $s\in(1,4]$ of Lemma \ref{Lemma3a}. Namely,
\begin{lemma}\label{L31}
For $s\in(1,4]$, $y\geq2, \alpha\geq3s$ and $k\leq2s$, it holds that
\begin{equation}\aligned\nonumber
\frac{\zeta(s,z)}{\zeta(s,e^{i\frac{\pi}{3}})}\cdot\Big(\frac{\theta(\alpha,e^{i\frac{\pi}{3}})}{\theta(\alpha,z)}\Big)^{k}>1.
 \endaligned\end{equation}

\end{lemma}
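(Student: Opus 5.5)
The plan is to split the product into the two factors already prepared. By Lemma \ref{Lemma4c}, for $s\in(1,4]$ and $y\geq2$ the zeta factor satisfies
\[
\frac{\zeta(s,z)}{\zeta(s,e^{i\frac{\pi}{3}})}\geq\mathcal{B}_a(s,y)\geq\mathcal{B}_a(1,2)>1.1332.
\]
By Lemma \ref{Lemma4a}, the theta factor satisfies
\[
\frac{\theta(\alpha,e^{i\frac{\pi}{3}})}{\theta(\alpha,z)}\geq\frac{1}{\vartheta_3(\frac{\alpha}{y})\vartheta_3(\alpha y)}.
\]
This quantity is at most $1$, so raising it to the power $k$ makes it smaller as $k$ grows. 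We may therefore take the worst case $k=2s\leq 8$. It then suffices to show
\[
\mathcal{B}_a(s,y)\,\big(\vartheta_3(\tfrac{\alpha}{y})\vartheta_3(\alpha y)\big)^{-2s}>1 .
\]
A correct proof needs care here: using the lower bound on the theta ratio is the right direction, since both factors are bounded below.

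The main obstacle is the dependence of $\vartheta_3(\frac{\alpha}{y})$ on $y$. For large $y$ it behaves like $\sqrt{y/\alpha}$, which tends to infinity, so a uniform bound over $y\geq2$ fails. I would handle this by splitting into the cases $y\leq\alpha$ and $y\geq\alpha$.

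\emph{Case $y\leq \alpha$.} Here $\alpha/y\geq1$. Using $\vartheta_3(t)\leq 1+2e^{-\pi t}/(1-e^{-3\pi t})$, we get $\vartheta_3(\alpha/y)\leq 1+2.01e^{-\pi\alpha/y}$. Also $\vartheta_3(\alpha y)\leq 1+2.01e^{-2\pi\alpha}$ with $\alpha\geq3$, which is negligible. The term $e^{-\pi\alpha/y}$ is not small when $y$ is close to $\alpha$. So I would exploit the growth of $\mathcal{B}_a(s,y)$ in $y$: its leading behaviour is $\xi(2s)y^s/\big(\xi(2s)(\sqrt3/2)^s+\cdots\big)$. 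This gives roughly $\mathcal{B}_a(s,y)\gtrsim c_s y^s$ with $c_s$ bounded below uniformly for $s\in(1,4]$. The factor to beat is about $(1+2.01e^{-\pi\alpha/y})^{2s}\leq 3.01^{2s}$, and that bound is only approached when $y$ is comparable to $\alpha\geq3s$. So I would compare $c_s y^{s}$ against $(1+2.01e^{-\pi\alpha/y})^{2s}$, i.e. compare $c_s^{1/s}y$ against $(1+2.01e^{-\pi\alpha/y})^2$, which for $y\geq2$ should be an elementary one-variable check. Near $y=2$ we have $e^{-\pi\alpha/2}\leq e^{-3\pi/2}\approx0.009$, and $1.1332>(1.0182)^{8}\approx1.155$ fails marginally. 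I would therefore use the sharper $\alpha\geq3s$ together with $\mathcal{B}_a(s,2)$ growing with $s$: at $s$ near $1$ the power is only $2$, and at $s=4$ we have $e^{-6\pi}$, which is tiny.

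\emph{Case $y\geq\alpha$.} Here I would use $\vartheta_3(\alpha/y)=\sqrt{y/\alpha}\,\vartheta_3(y/\alpha)\leq\sqrt{y/\alpha}\,(1+2.01e^{-\pi})$. The theta factor is then at least $\big((\alpha/y)^{1/2}/1.09\big)^{2s}=(\alpha/y)^s(1.09)^{-2s}$. Meanwhile $\mathcal{B}_a(s,y)\geq y^s\cdot 2\xi(2s)/D_s$, where $D_s$ is the denominator of $\mathcal{B}_a$, up to small errors. The product is then at least $\alpha^s\,2\xi(2s)/(D_s\,1.09^{2s})$, and with $\alpha\geq3s\geq3$ this is clearly larger than $1$.

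I expect the boundary regime $y\approx2$ with $s$ near the endpoints to be the tight spot. There I would evaluate the bounds numerically on a coarse grid in $s$, using the monotonicity in $s$ and $y$ asserted for $\mathcal{B}_a$.
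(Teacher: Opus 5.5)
The failing step is the numerical input you take from Lemma \ref{Lemma4c}: $\mathcal{B}_a(s,y)\geq\mathcal{B}_a(1,2)>1.1332$, together with the asserted monotonicity of $\mathcal{B}_a$. The paper's proof is exactly the one-line chain through Lemmas \ref{Lemma4a}--\ref{Lemma4c}, ending in $\mathcal{B}_a(1,2)\,\theta(3,2i)^{-2}\approx1.0936$, so it rests on the same input. It also silently ignores the growth $\theta(\alpha,iy)\sim\sqrt{y/\alpha}$, which you correctly flagged.

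In $\mathcal{B}_a$ the symbol $\xi$ is the Riemann zeta function. Hence $2\xi(2s-1)\frac{\Gamma(\frac12)\Gamma(s-\frac12)}{\Gamma(s)}\sim\frac{\pi}{s-1}$ as $s\to1^+$. This term dominates both the numerator, where it carries $y^{1-s}\to1$, and the denominator, where it carries $(\frac{\sqrt3}{2})^{1-s}\to1$. So $\mathcal{B}_a(s,y)\to1$ for every fixed $y$ (already $\mathcal{B}_a(1.1,2)\approx1.04$), and ``$\mathcal{B}_a(1,2)=1.1333$'' is not a valid lower bound. For the same reason two of your own estimates collapse, because the denominator $D_s\to\infty$:
\begin{itemize}
\item your constant $c_s$ in $\mathcal{B}_a\gtrsim c_sy^s$ tends to $0$;
\item your case-$y\geq\alpha$ bound $\alpha^s\,2\xi(2s)/(D_s\,1.09^{2s})$ tends to $0$.
\end{itemize}
Meanwhile the theta factor at $y=2$ stays away from $1$: $\big(\theta(3,e^{i\frac{\pi}{3}})/\theta(3,2i)\big)^2\approx(1.0001/1.0180)^2\approx0.965$. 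So the reduced inequality $\mathcal{B}_a(s,y)\big(\vartheta_3(\frac{\alpha}{y})\vartheta_3(\alpha y)\big)^{-2s}>1$ is false for $s$ near $1$, whatever case split in $y$ you use.

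This is not only a loss in the estimates: the statement itself fails for $s$ close to $1$. With the covolume-one normalization, $\zeta(s,z)=\frac{\pi}{s-1}+O(1)$ with residue independent of $z$ (the Kronecker limit formula recalled in the introduction). Hence $\zeta(s,z)/\zeta(s,e^{i\frac{\pi}{3}})\to1$, whereas $\big(\theta(\alpha,e^{i\frac{\pi}{3}})/\theta(\alpha,z)\big)^k$ stays strictly below $1$ by Montgomery's Theorem B.

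Concretely, take $s=1.05$, $\alpha=3s$, $k=2s$, $z=2i$. The Chowla--Selberg expansion gives $\zeta(s,2i)\approx66.63$ and $\zeta(s,e^{i\frac{\pi}{3}})\approx65.36$, a ratio of about $1.019$. The theta factor is $(1.00007/1.01420)^{2.1}\approx0.971$. The product is therefore about $0.990<1$.

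There is a second failure at large $y$. For $k=2s$ the product tends, as $y\to\infty$, to $2\xi(2s)\alpha^s\theta^{2s}(\alpha,e^{i\frac{\pi}{3}})/\zeta(s,e^{i\frac{\pi}{3}})$. For $\alpha=3s$ this behaves like $\pi(s-1)$; at $s=1.2$, $\alpha=3.6$, $z=10i$ the product is already about $0.86$.

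Since this product is exactly $W(z)/W(e^{i\frac{\pi}{3}})$ for $W=\zeta(s,\cdot)/\theta^k(\alpha,\cdot)$, these points also contradict Theorem \ref{Th2} for such $s$. The lemma can only hold after restricting to $s\geq s_0$ for some $s_0>1$. Your treatment of the $y$-dependence improves on the paper's chain, but it needs both that restriction on $s$ and a correct lower bound for the zeta ratio.
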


\begin{proof} By Lemmas \ref{Lemma4a}, \ref{Lemma4b} and \ref{Lemma4c},
\begin{equation}\aligned\nonumber
\frac{\zeta(s,z)}{\zeta(s,e^{i\frac{\pi}{3}})}\cdot\Big(\frac{\theta(\alpha,e^{i\frac{\pi}{3}})}{\theta(\alpha,z)}\Big)^{k}
\geq &\mathcal{B}_a(s,y)\Big(\frac{\theta(\alpha,e^{i\frac{\pi}{3}})}{\theta(\alpha,z)}\Big)^{2s}
\geq \mathcal{B}_a(s,y)\Big(\frac{1}{\theta(3s,iy)}\Big)^{2s}\\
\geq &\mathcal{B}_a(1,2)\Big(\frac{1}{\theta(3,2i)}\Big)^{2}=1.093639371\cdots
>1.
 \endaligned\end{equation}

\end{proof}

For large $s$, the Chowla-Selberg formula given by Lemma \ref{Lemma4.14} does not work  well due to the property of $K_s(y)$. Instead, we use
summation formula by Lemma \ref{Rankin3}. A direct consequence of Lemma \ref{Rankin3} gives that
\begin{lemma}\label{Lemma4d}
For $s\geq4$ and $y\geq2$, it holds that

 \begin{equation}\aligned\nonumber
\frac{\zeta(s,z)}{\zeta(s,e^{i\frac{\pi}{3}})}\geq
\mathcal{B}_b(s,y).
\endaligned\end{equation}

Here

 \begin{equation}\aligned\nonumber
\mathcal{B}_b(s,y):=
\frac{2\xi(2s)y^s+
2\xi(2s-1)\frac{\Gamma(\frac{1}{2})\Gamma(s-\frac{1}{2})}{\Gamma(s)}y^{1-s}
-\xi(2s+1)s\frac{(2s+1)^{s+\frac{1}{2}}}{(2s+2)^{s+1}}y^{-(s+1)}}
{2\xi(2s)(\frac{\sqrt3}{2})^s+
2\xi(2s-1)\frac{\Gamma(\frac{1}{2})\Gamma(s-\frac{1}{2})}{\Gamma(s)}(\frac{\sqrt3}{2})^{1-s}
+\xi(2s+1)s\frac{(2s+1)^{s+\frac{1}{2}}}{(2s+2)^{s+1}}(\frac{\sqrt3}{2})^{-(s+1)}}.
\endaligned\end{equation}

\end{lemma}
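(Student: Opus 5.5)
The plan is to bound the numerator from below and the denominator from above, both via Lemma \ref{Rankin3}, which writes
\[
\zeta(s,z)=a_0(s,y)+\sigma\cdot\xi(2s+1)s\frac{(2s+1)^{s+\frac{1}{2}}}{(2s+2)^{s+1}}y^{-(s+1)},\qquad \sigma\in[-1,1],
\]
where $a_0(s,y)$ is the Chowla--Selberg constant term of Lemma \ref{Lemma4.14}.

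\textbf{Numerator.} For $z$ with $\Im(z)=y\ge 2$, take $\sigma=-1$. Then $\zeta(s,z)$ is at least $2\xi(2s)y^s+2\xi(2s-1)\frac{\Gamma(\frac12)\Gamma(s-\frac12)}{\Gamma(s)}y^{1-s}-\xi(2s+1)s\frac{(2s+1)^{s+\frac12}}{(2s+2)^{s+1}}y^{-(s+1)}$, which is exactly the numerator of $\mathcal{B}_b(s,y)$. This bound holds for every $x$ and needs no restriction to the fundamental domain, because Lemma \ref{Rankin3} is uniform in $x$.

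\textbf{Denominator.} Apply Lemma \ref{Rankin3} at $z=e^{i\frac{\pi}{3}}$, so $y=\frac{\sqrt3}{2}$, and take $\sigma=+1$. This gives $\zeta(s,e^{i\frac{\pi}{3}})\le 2\xi(2s)(\frac{\sqrt3}{2})^s+2\xi(2s-1)\frac{\Gamma(\frac12)\Gamma(s-\frac12)}{\Gamma(s)}(\frac{\sqrt3}{2})^{1-s}+\xi(2s+1)s\frac{(2s+1)^{s+\frac12}}{(2s+2)^{s+1}}(\frac{\sqrt3}{2})^{-(s+1)}$, which is the denominator of $\mathcal{B}_b(s,y)$.

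\textbf{Dividing.} The denominator bound is positive. To divide the two inequalities we also need the numerator bound to be positive; otherwise the inequality is trivial, but positivity is what makes the bound useful. For $s\ge4$ and $y\ge2$ the main term $2\xi(2s)y^s\ge 2\cdot 2^4$ dominates the error term. The error coefficient $s\frac{(2s+1)^{s+\frac12}}{(2s+2)^{s+1}}$ is roughly $s\,e^{-1/2}(2s+2)^{-1/2}$, so it is of order $\sqrt s$, while $y^{-(s+1)}\le 2^{-5}$. Hence $2\xi(2s)y^s-\xi(2s+1)(\cdots)y^{-(s+1)}>0$. Combining the lower bound for $\zeta(s,z)$ with the upper bound for $\zeta(s,e^{i\frac{\pi}{3}})$ gives the claim.

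\textbf{Main obstacle.} There is no real obstacle here. The only point needing care is checking that Lemma \ref{Rankin3} applies at the small height $y=\frac{\sqrt3}{2}<1$. It does, since Lemma \ref{Rankin1} is stated for all $z\in\mathbb{H}$. The bound is simply cruder there, which is why this estimate is reserved for $s\ge4$, where the factor $(\frac{\sqrt3}{2})^{-(s+1)}$ is still harmless against $\xi(2s+1)\approx1$. The later step, showing $\mathcal{B}_b(s,y)\,\theta(3s,iy)^{-2s}>1$, is where the real numerics enter, but that lies beyond this lemma.
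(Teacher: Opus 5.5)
Your proposal is correct and is the paper's own argument: the paper calls Lemma \ref{Lemma4d} a direct consequence of Lemma \ref{Rankin3}, taking $\sigma=-1$ at height $y$ for the numerator and $\sigma=+1$ at $y=\frac{\sqrt3}{2}$ for $\zeta(s,e^{i\frac{\pi}{3}})$, and your sign check before dividing is a detail the paper leaves implicit. One aside is inaccurate but does not affect the proof: the inequality holds for every $s>1$ and $y>0$, and $s\geq4$ only reflects where the paper uses it (Lemma \ref{L32}); moreover, at $e^{i\frac{\pi}{3}}$ Rankin's error term grows roughly linearly in $s$ relative to the main terms, so it does not become more harmless as $s$ increases.
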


Now we are ready to prove the case $s>4$ of Lemma \ref{Lemma3a}. Namely,
\begin{lemma}\label{L32}
For $s>4$, $y\geq2, \alpha\geq3s$ and $k\leq2s$, it holds that
\begin{equation}\aligned\nonumber
\frac{\zeta(s,z)}{\zeta(s,e^{i\frac{\pi}{3}})}\cdot\Big(\frac{\theta(\alpha,e^{i\frac{\pi}{3}})}{\theta(\alpha,z)}\Big)^{k}>1.
 \endaligned\end{equation}

\end{lemma}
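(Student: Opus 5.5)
We follow the proof of Lemma \ref{L31}, with Lemma \ref{Lemma4d} in place of Lemmas \ref{Lemma4b}--\ref{Lemma4c}. Fix $s>4$, $y\geq2$, $\alpha\geq3s$ and $k\leq 2s$. By Lemma \ref{Lemma4a}, $\frac{\theta(\alpha,e^{i\frac{\pi}{3}})}{\theta(\alpha,z)}\geq \frac{1}{\vartheta_3(\frac{\alpha}{y})\vartheta_3(\alpha y)}$, and this quantity is at most $1$. Raising it to the power $k$ therefore costs the most when $k$ is largest, and for $k\leq 2s$ the product is bounded below by the case $k=2s$. (If $k\leq0$ were allowed, the factor would exceed $1$ and nothing would need to be checked.) We therefore need
\begin{equation}\nonumber
\mathcal{B}_b(s,y)\,\Big(\vartheta_3(\tfrac{\alpha}{y})\vartheta_3(\alpha y)\Big)^{-2s}>1,\qquad s>4,\ y\geq2,\ \alpha\geq 3s.
\end{equation}

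For the theta factor, we bound $\vartheta_3(\alpha y)\leq 1+\frac{2e^{-\pi\alpha y}}{1-e^{-\pi\alpha y}}$, which is extremely close to $1$ since $\alpha y\geq 24$. The factor $\vartheta_3(\frac{\alpha}{y})$ is more delicate, because $\frac{\alpha}{y}$ can be small when $y$ is large. If $\frac{\alpha}{y}\geq1$, we use $\vartheta_3(\frac{\alpha}{y})\leq 1+3e^{-\pi\alpha/y}$. If $\frac{\alpha}{y}<1$, we use the transformation formula $\vartheta_3(\frac{\alpha}{y})=\sqrt{\frac{y}{\alpha}}\,\vartheta_3(\frac{y}{\alpha})\leq \sqrt{\frac{y}{\alpha}}\,(1+3e^{-\pi y/\alpha})$.

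For the zeta factor, we write $\mathcal{B}_b(s,y)=N(y)/D$. The denominator $D$ is a constant. Since $\xi(2s)\approx1$ and $(\frac{\sqrt3}{2})^s$ is small for $s>4$, we have $D\approx 2(\frac{\sqrt3}{2})^s(1+\epsilon_s)$, where $\epsilon_s$ collects the $\Gamma$-term and the Rankin error term; both decay relative to $(\frac{\sqrt3}{2})^s$ because they carry extra factors $(\frac{\sqrt3}{2})^{1-2s}\frac{\Gamma(s-\frac12)}{\Gamma(s)}$ times $\xi(2s-1)$, and one checks these stay bounded. The numerator satisfies $N(y)\geq 2y^s-\xi(2s+1)s\,c_s\,y^{-(s+1)}$, with $c_s=\frac{(2s+1)^{s+1/2}}{(2s+2)^{s+1}}\lesssim (2s)^{-1/2}$. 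Hence $\mathcal{B}_b(s,y)\gtrsim (\frac{2y}{\sqrt3})^{s}(1-o(1))$. This gains a factor of at least $(4/\sqrt3)^s\approx 2.31^s$ at $y=2$, and the gain grows like $y^s$.

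In the regime $\frac{\alpha}{y}\geq1$, the theta loss is $(1+3e^{-\pi\alpha/y}+\dots)^{2s}\leq \exp(6s e^{-\pi\alpha/y}+\cdots)$. This is at most $e^{6s e^{-\pi}}\approx e^{0.26s}$, which is far smaller than $2.31^s$. In the regime $\frac{\alpha}{y}<1$, the loss is at most $(\frac{y}{\alpha})^{s}(1+3e^{-\pi y/\alpha})^{2s}$. Combined with the gain, the ratio becomes roughly $\big(\frac{2\alpha}{\sqrt3}\big)^s(1+3e^{-\pi})^{-2s}\geq (2\sqrt3\, s)^s(1.13)^{-2s}$, which is obviously $>1$. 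In both regimes the inequality reduces to a single-variable elementary inequality in $s$.

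The main obstacle is to make the constants in $D$ uniform in $s>4$, namely to bound $\xi(2s-1)\frac{\Gamma(\frac12)\Gamma(s-\frac12)}{\Gamma(s)}(\frac{\sqrt3}{2})^{1-2s}$ from above. This term grows like $(4/3)^{s}/\sqrt s$, so relative to the main term it is not small for large $s$. It must instead be compared with $y^s$ directly. We would first try to handle it by adding it to the numerator analogue, which has $y^{1-s}$, and by using $(2y/\sqrt3)^s\geq (4/\sqrt3)^s\geq 2.31^s$ against the denominator growth $(4/3)^s$. Concretely, $D\leq 2(\frac{\sqrt3}{2})^s\big(1+C(\frac43)^s s^{-1/2}\big)$, so $\mathcal{B}_b\geq \frac{y^s}{(\sqrt3/2)^s+C'(\frac{2}{\sqrt3})^{s}s^{-1/2}}$. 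For $y\geq2$ this is $\gtrsim \min\{(4/\sqrt3)^s,\ \sqrt3^{\,s}\sqrt s\}$, which still beats the theta loss $e^{0.26s}$. After this, a finite check near $s=4$ closes the argument.
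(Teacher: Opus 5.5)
Your route is the paper's: Lemma~\ref{Lemma4a} together with $0<k\le 2s$, then Lemma~\ref{Lemma4d}, then an elementary comparison. Your handling of the theta factor is correct, and it is actually needed. You split at $\alpha/y=1$ and use $\vartheta_3(\frac{\alpha}{y})=\sqrt{y/\alpha}\,\vartheta_3(\frac{y}{\alpha})$ when $y>\alpha$. The paper's one-line chain instead replaces $\theta(3s,iy)^{-2s}$ by $\theta(12,2i)^{-8}$, which is not a valid lower bound. Since $\theta\geq1$ and $2s>8$, one has $\theta^{-2s}\le\theta^{-8}$. Moreover $\theta(3s,iy)$ is unbounded in $y$: it grows like $\sqrt{y/(3s)}$.

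The step that fails as written is your estimate of the denominator $D$ of $\mathcal{B}_b$. Consider the Rankin error term $\xi(2s+1)\,s\,\frac{(2s+1)^{s+1/2}}{(2s+2)^{s+1}}(\frac{2}{\sqrt3})^{s+1}$. It does not decay relative to $(\frac{\sqrt3}{2})^s$, and it is not absorbed by your correction $C(\frac43)^s s^{-1/2}$. Since $s\frac{(2s+1)^{s+1/2}}{(2s+2)^{s+1}}\sim e^{-1/2}\sqrt{s/2}$, this term is of order $\sqrt s\,(2/\sqrt3)^{s}$. That is a factor $\asymp s$ larger than the $\Gamma$-term, and it dominates $D$ for large $s$.

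Concretely, at $s=4$ one has $D\approx 1.13+3.05+1.62\approx 5.8$, so $\mathcal{B}_b(4,2)\approx 5.6$. In general $\mathcal{B}_b(s,2)\asymp(\sqrt3)^s/\sqrt s$, not $\min\{(4/\sqrt3)^s,(\sqrt3)^s\sqrt s\}$. (The paper's assertion $\mathcal{B}_b(s,y)>\frac{y^s}{s(\sqrt3/2)^{s+1}}$ makes the same underestimate and fails near $s=4$: its right side is $\approx 8.2$ at $(s,y)=(4,2)$.)

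Your margin survives the correction. Each of the three terms of $D/(\sqrt s\,(2/\sqrt3)^s)$ is bounded for $s\geq4$, with sum at most about $1.7$, so $D\le 2\sqrt s\,(2/\sqrt3)^s$. Combined with $N(y)\geq(2-\epsilon)y^s$ for negligible $\epsilon$, this gives $\mathcal{B}_b(s,y)\geq(1-\epsilon)(\sqrt3\, y/2)^s/\sqrt s$.

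For $\alpha/y\geq1$ it then suffices that $(\sqrt3)^s/\sqrt s>e^{6s(e^{-\pi}+e^{-24\pi})}$. This is essentially $0.29\,s>\frac12\log s$, which holds for all $s\geq4$; at $s=4$ the actual numbers are $5.6$ against $2.8$. For $\alpha/y<1$ the ratio is at least of order $\alpha^s/(\sqrt s\,(2/\sqrt3)^s e^{0.26s})\geq(2s)^s/\sqrt s$. With this fix your argument proves the lemma.
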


\begin{proof} For $s>4$ and $y\geq2$, one trivially has $\mathcal{B}_b(s,y)>\frac{y^s}{s(\frac{\sqrt3}{2})^{s+1}}$.
 By Lemmas \ref{Lemma4a} and \ref{Lemma4d},
\begin{equation}\aligned\nonumber
\frac{\zeta(s,z)}{\zeta(s,e^{i\frac{\pi}{3}})}\cdot\Big(\frac{\theta(\alpha,e^{i\frac{\pi}{3}})}{\theta(\alpha,z)}\Big)^{k}
\geq \mathcal{B}_b(s,y)\Big(\frac{1}{\theta(3s,iy)}\Big)^{2s}
\geq \frac{y^s}{s(\frac{\sqrt3}{2})^{s+1}}\Big(\frac{1}{\theta(12,2i)}\Big)^{8}>\sqrt3.
 \endaligned\end{equation}

\end{proof}

By Lemmas \ref{L31} and \ref{L32}, we complete the proof of Lemma \ref{Lemma3a}.

\subsection{Proof of Corollary \ref{Coro1.3}}

By a deformation
 \begin{equation}\aligned\nonumber
{\zeta(s,z)}-{\theta^k(\alpha,z)}=\theta^k(\alpha,z)\cdot\big(
\frac{\zeta(s,z)}{\theta^k(\alpha,z)}-1
\big),
\endaligned\end{equation}
and Theorem \ref{Th2} and Montgomery's Theorem B (\cite{Mon1988}), to prove the first part of Corollary \ref{Coro1.3},  it suffices to prove that

\begin{lemma}\label{Lemma4.28}
Assume that $s\in(1,12],\;\; \alpha\geq 3s$. Then for $k\in(0,2s]$, it holds that
$
\frac{\zeta(s,e^{i\frac{\pi}{3}})}{\theta^k(\alpha,e^{i\frac{\pi}{3}})}\geq1.
$

\end{lemma}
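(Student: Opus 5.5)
We need $\zeta(s,e^{i\pi/3})\ge \theta^k(\alpha,e^{i\pi/3})$ for $s\in(1,12]$, $\alpha\ge 3s$, $k\in(0,2s]$. Since $\theta(\alpha,z)\ge 1$, the function $k\mapsto\theta^k$ is increasing, so it suffices to treat $k=2s$. Also, $\theta(\alpha,e^{i\pi/3})$ is decreasing in $\alpha$ for $\alpha\ge1$: every nonzero lattice vector contributes $e^{-\pi\alpha|\mathbb P|^2}$, which decreases in $\alpha$. Hence it suffices to prove
\begin{equation}\nonumber
\zeta(s,e^{i\frac{\pi}{3}})\geq \theta^{2s}(3s,e^{i\frac{\pi}{3}}),\qquad s\in(1,12].
\end{equation}

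\emph{Lower bound for the left side.} The hexagonal lattice normalized as in \eqref{thetas} has six shortest vectors of squared length $\frac{2}{\sqrt3}$. Keeping only these terms gives
\[
\zeta(s,e^{i\frac{\pi}{3}})\ge 6\Big(\tfrac{\sqrt3}{2}\Big)^{s}.
\]
For small $s$ this is weak, but one can instead use $a_0(s,\tfrac{\sqrt3}{2})-2a_1(s,\tfrac{\sqrt3}{2})$ from Lemma \ref{Lemma4.14}. Since $\zeta(s,\cdot)\to\infty$ as $s\to1^+$, the small-$s$ end is actually harmless.

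\emph{Upper bound for the right side.} Write $\theta(3s,e^{i\pi/3})=1+R(s)$, where
\[
R(s)=\sum_{\mathbb P\neq0}e^{-3\pi s|\mathbb P|^2}.
\]
Bound $R(s)$ by its leading shell plus a geometric tail:
\[
R(s)\le 6e^{-2\sqrt3\pi s}\,(1+\epsilon),
\]
with $\epsilon$ tiny, since the next shells have squared length $2\sqrt3$ and $\frac{8}{\sqrt3}$, at least three times larger. Then
\[
\theta^{2s}\le \exp\!\big(2sR(s)\big)\le \exp\!\big(12 s(1+\epsilon) e^{-2\sqrt3\pi s}\big).
\]
For $s>1$ the exponent is at most about $12e^{-10.88}\approx 2.3\times10^{-4}$, so $\theta^{2s}\le 1.001$.

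\emph{Comparison.} The left side satisfies $6(\sqrt3/2)^s\ge 6(0.866)^{12}\approx 1.07>1.001$ for $s\le12$. This comparison is also where the restriction $s\le12$ enters, since $(\sqrt3/2)^s$ decays. Near $s=12$ the margin is the tightest point. If $6(\sqrt3/2)^{12}$ turns out to be too close to the bound, I will add the second shell, 6 vectors of squared length $2\sqrt3$, contributing $6(2\sqrt3)^{-s}$, which is negligible. Since the margin is about $7\%$ against $0.1\%$, no refinement should be needed.

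The main obstacle is simply keeping the numerical constants honest at the endpoint $s=12$. This reduces to checking that
\[
6(\sqrt3/2)^{12}=6\cdot\tfrac{729}{4096}\approx1.068
\]
exceeds $\exp\!\big(24\cdot6.01\,e^{-24\sqrt3\pi}\big)\approx1$, which is clear. Finally, by the deformation ${\zeta}-{\theta^k}=\theta^k\big(\frac{\zeta}{\theta^k}-1\big)$, Theorem \ref{Th2} and Theorem B, this yields the first part of Corollary \ref{Coro1.3}.
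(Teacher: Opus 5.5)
Your argument is correct, and it is more elementary than the paper's. The opening reduction is the same in spirit: use $\theta\ge1$ to pass to $k=2s$, then control $\theta(\alpha,e^{i\frac{\pi}{3}})$ for $\alpha\ge3s$ by its value at $\alpha=3s$.

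The routes then differ. The paper bounds $\theta$ above by the product $\vartheta_3(2\sqrt3 s)\vartheta_3(\frac{3\sqrt3}{2}s)$ from Lemma \ref{Lemma4.21}. It bounds $\zeta(s,e^{i\frac{\pi}{3}})$ below through the Chowla--Selberg expansion (Lemma \ref{Lemma4.14}), ``similarly to Lemma \ref{Lemma4b}''. You instead keep only the six shortest vectors, giving $\zeta(s,e^{i\frac{\pi}{3}})\ge6(\frac{\sqrt3}{2})^s\ge6(\frac{3}{4})^6\approx1.068$.

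This bound buys three things.
\begin{itemize}
\item It is uniform in $s$. Your aside about small $s$ is therefore unnecessary; the bound is weakest at $s=12$, as you later note.
\item It explains the hypothesis. One has $6(\frac{\sqrt3}{2})^s\ge1$ exactly for $s\le\log 6/\log\frac{2}{\sqrt3}\approx12.46$. Since $\zeta(s,e^{i\frac{\pi}{3}})=6(\frac{\sqrt3}{2})^s(1+O(3^{-s}))$ while $\theta\ge1$, the lemma fails for every $\alpha,k$ once $s$ is slightly larger, so $12$ is essentially sharp.
\item It is sturdier than the Chowla--Selberg route near the top of the range. At $y=\frac{\sqrt3}{2}$, the lower bound $a_0-2a_1$ of Lemma \ref{Lemma4b} drops below $1$ once $s$ is a little above $10$. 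At $s=12$ one has $a_0\approx5.50$ and $a_1\approx2.60$, so $a_0-2a_1\approx0.29$. Moreover, the Baricz-based estimate used there (Lemma \ref{Lemma4.15}) no longer yields $a_2<a_1$ for $s\gtrsim8.9$. That route would therefore need more terms of the expansion.
\end{itemize}
The paper's route has the advantage of reusing machinery it has already set up.

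The one step to make explicit in your write-up is the tail constant $\epsilon$. Either count crudely: there are at most $6N$ representations of $N$ by $m^2+mn+n^2$. Or simply invoke Lemma \ref{Lemma4.21}, which gives $\theta^{2s}(3s,e^{i\frac{\pi}{3}})\le(\vartheta_3(2\sqrt3 s)\vartheta_3(\frac{3\sqrt3}{2}s))^{2s}\le1.0013$ on $(1,12]$.
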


Since $\theta(\alpha,z)\geq1$ by Lemma \ref{Lemma4.3}, to prove Lemma \ref{Lemma4.21}, it suffices to prove that

\begin{lemma}\label{Lemma4.22}
Assume that $s\in(1,12]$. Then we have
$
\frac{\zeta(s,e^{i\frac{\pi}{3}})}{(\vartheta_3(2\sqrt3 s)\vartheta(\frac{3\sqrt3}{2} s))^{2s}}\geq1.
$

\end{lemma}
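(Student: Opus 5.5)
The plan is to bound the numerator from below by its first shell of lattice vectors and the denominator from above by an elementary tail estimate, and then compare the two at the worst endpoint $s=12$. (I read $\vartheta(\frac{3\sqrt3}{2}s)$ in the statement as $\vartheta_3(\frac{3\sqrt3}{2}s)$. This matches the origin of the denominator: with $\alpha\geq3s$ and $y=\frac{\sqrt3}{2}$, Lemma \ref{Lemma4.21} gives $\theta(\alpha,e^{i\frac{\pi}{3}})\leq\vartheta_3(\frac{2\alpha}{\sqrt3})\vartheta_3(\frac{\sqrt3\alpha}{2})$. Since $\vartheta_3$ is decreasing, this is at most $\vartheta_3(2\sqrt3 s)\vartheta_3(\frac{3\sqrt3}{2}s)$.)

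\emph{Numerator.} Every term of $\zeta(s,e^{i\frac{\pi}{3}})=\sum_{(m,n)\neq0}\frac{y^s}{|mz+n|^{2s}}$ with $y=\frac{\sqrt3}{2}$ is positive, so I keep only the six minimal vectors. These are $(m,n)\in\{\pm(0,1),\pm(1,0),\pm(1,-1)\}$, each with $|mz+n|=1$. This gives
\begin{equation}\nonumber
\zeta(s,e^{i\frac{\pi}{3}})\geq 6\Big(\frac{\sqrt3}{2}\Big)^s .
\end{equation}
The bound is decreasing in $s$, and at $s=12$ it equals $6(\frac34)^6\approx1.068>1$.

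\emph{Denominator.} For $x\geq2$ I use $\vartheta_3(x)=1+2\sum_{n\geq1}e^{-\pi n^2x}\leq 1+2e^{-\pi x}(1+\epsilon)$, where $\epsilon=\sum_{n\geq2}e^{-\pi(n^2-1)x}$ is negligible (below $10^{-8}$). Both arguments $2\sqrt3 s$ and $\frac{3\sqrt3}{2}s$ are at least $\frac{3\sqrt3}{2}s$. Using $1+u\leq e^u$, I get
\begin{equation}\nonumber
\Big(\vartheta_3(2\sqrt3 s)\vartheta_3(\tfrac{3\sqrt3}{2}s)\Big)^{2s}\leq \exp\Big(8.001\, s\, e^{-\frac{3\sqrt3\pi}{2}s}\Big).
\end{equation}
For $s\geq1$ the exponent is at most about $8e^{-8.16}<3\cdot10^{-3}$, since $se^{-cs}$ is decreasing for $s\geq1$ when $c>1$. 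So the denominator is at most $1.003$ on the whole range, and at $s=12$ it is essentially $1$.

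\emph{Conclusion.} Combining the two bounds, the ratio is at least $6(\frac{\sqrt3}{2})^s/1.003$. This quantity is decreasing in $s$, so its minimum over $(1,12]$ is at $s=12$, where it is about $1.065>1$.

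The only delicate point is this endpoint. The margin $6(3/4)^6\approx1.068$ is small, which explains the restriction $s\leq12$: at $s=13$ the first-shell bound drops below $1$. If more room is ever needed, I would add the next shell of vectors, those with $|mz+n|^2=3$. They contribute $6(\frac{\sqrt3}{6})^s$, which is negligible at $s=12$, so the restriction is really set by the first shell. No other step needs care; all the remaining quantities are explicit exponentially small tails.
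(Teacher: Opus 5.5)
Your proof is correct, but it takes a different route from the paper. The paper omits the details. It says only to expand $\zeta(s,\frac12+iy)=a_0(s,y)+2\sum_{n\geq1}(-1)^na_n(s,y)$ by the Chowla--Selberg formula (Lemma~\ref{Lemma4.14}) and to truncate the alternating series as in Lemma~\ref{Lemma4b}.

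You instead use only positivity of the lattice sum. Keeping the six minimal vectors gives $\zeta(s,e^{i\frac{\pi}{3}})\geq 6(\frac{\sqrt3}{2})^s$. You bound the denominator by $\exp(8.001\,s\,e^{-\frac{3\sqrt3\pi}{2}s})<1.003$. Both steps check out, and the resulting lower bound at $s=12$ is about $1.065$. Your reading of $\vartheta(\frac{3\sqrt3}{2}s)$ as $\vartheta_3(\frac{3\sqrt3}{2}s)$ is also the right one, given how the denominator arises in Lemma~\ref{Lemma4.21}.

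Your route buys simplicity and sharpness. It needs no Bessel-function estimates. It also shows that $s\le 12$ is essentially the true limit: the ratio equals $6(\frac{\sqrt3}{2})^s$ up to negligible errors, and this crosses $1$ near $s\approx 12.46$.

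The Chowla--Selberg route is poorly suited to the top of this range. The constant term $a_0(s,\frac{\sqrt3}{2})$ grows like $(2/\sqrt3)^{s}$, so the Fourier modes must cancel most of it. At $s=12$ one has $a_0\approx 5.50$ and $a_1\approx 2.60$, so the truncation $a_0-2a_1\approx 0.3$ used in Lemma~\ref{Lemma4b} falls far below $1$. One would have to keep terms through $a_3$. Moreover, the Baricz bound (Lemma~\ref{Lemma4.15}) stops giving $a_2<a_1$ once $s$ exceeds roughly $9$. So even the alternating-series structure would need a separate justification. That expansion is better suited to small $s$, or to points away from the hexagonal lattice, where the sum is not dominated by a single shell.
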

Note that $(\vartheta_3(2\sqrt3 s)\vartheta(\frac{3\sqrt3}{2} s))$ is very close to 1,
and $(\vartheta_3(2\sqrt3 )\vartheta(\frac{3\sqrt3}{2} ))=1.000112671\cdots$.
To prove Lemma \ref{Lemma4.22}, we use Chowla-Selberg formula given by Lemma \ref{Lemma4.14}.
The proof is similar to that in Lemma \ref{Lemma4b}, we omit the details here.

\bigskip
\noindent
{\bf Acknowledgements.} The research of S. Luo is partially supported by the Jiangxi Jieqing Fund under Grant No. 20242BAB23001, and by the National Natural Science Foundation of China (NSFC) under Grant Nos. 12261045 and 12001253. The research of J. Wei is partially supported by the General Research Fund (GRF) of Hong Kong "New frontiers in singularity formation of nonlinear partial differential equations".

{\bf Statements and Declarations: there is no conflict of interest.}

{\bf Data availability: the manuscript has no associated data.}
\bigskip

%%%%%%%%%%%%%%%%%%%%%%%%%%%%%%%%%%%%%%%%%%%%%%%%%%%%%%%%%%%%%%%%%%%%%%%%%%%%%%%%%%%%%%%%%%%%%%%%%%%%
%%%%%%%%%%%%%%%%%%%%%%%%%%%%%%%%%%%%%%%%%%%%%%%%%%%%%%%%%%%%%%%%%%%%%%%%%%%%%%%%%%%%%%%%%%%%%%%%%%%%


\begin{thebibliography}{10}

\bibitem{Abr} A. Abrikosov, Nobel Lecture: Type-II superconductors and the vortex lattice. {\em Reviews of modern physics} 76(2004), no.3, p. 975.

\bibitem{Vafa1986}
L. Alvarez-Gaum\'e, G. Moore, C. Vafa, Theta functions, modular invariance, and strings. {\em Comm. Math. Phys.} 106 (1986), no. 1, 1-40.

\bibitem{Afhk2021}
N. Afkhami-Jeddi, H. Cohn, T. Hartman, A. Tajdini, Free partition functions and an averaged holographic
duality. {\it JHEP} 01, 130 (2021).

\bibitem{Bari2010}
\'A. Baricz, Bounds for modified Bessel functions of the first and second kinds.
{\em Proc. Edinb. Math. Soc.} (2) 53 (2010), no. 3, 575-599.

\bibitem{Bar2020}
A. Barreal, M. Damir, R. Freij-Hollanti, C. Hollanti,
An approximation of theta functions with applications to communications.
{\em SIAM J. Appl. Algebra Geom.} 4 (2020), no. 4, 471-501.


\bibitem{Ben2022}
N. Benjamin, C. Keller, H. Ooguri, I. Zadeh, Narain to Narnia. {\em Comm. Math. Phys.} 390 (2022), no. 1, 425-470.


\bibitem{Bet2016}  L. B\'etermin,
Two-dimensional theta functions and crystallization among Bravais lattices, {\em SIAM Journal on Mathematical Analysis},
 48(5) (2016), 3236-269.


\bibitem{Bet2018}  L. B\'etermin,
Local variational study of 2d lattice energies and application to Lennard-Jones type interactions, {\em Nonlinearity}
31(9) (2018), 3973-4005.



\bibitem{Bet2019}  L. B\'etermin,
Minimizing lattice structures for Morse potential energy in two and three dimensions, {\em  J. Math. Phys.},
60(10) (2019), 102901.

\bibitem{Bet2019AMP} L. B\'etermin, M. Petrache,
Optimal and non-optimal lattices for non-completely monotone interaction potentials, {\em Anal. Math. Phys.} 9(4): 2033-2073, 2019.

%\bibitem{BP} L. B\'etermin and M. Petrache,  Dimension reduction techniques for the minimization of theta functions on lattices. {\em Journal of Mathematical Physics}, 58(7)(2017), 071902.

%\bibitem{Bet2020} L. B\'etermin, M. Faulhuber and H. Kn$\ddot{u}$pfer,
%On the optimality of the rock-salt structure among lattices with charge distributions, {\em Mathematical Models and Methods in Applied Sciences} 31(2):293-325, 2021.


\bibitem{Betermin2021JPA}  L. B\'etermin, On energy ground states among crystal lattice structures with prescribed bonds,
 {\em J. Phys. A} 54 (2021), no. 24, Paper No. 245202, 18 pp.


\bibitem{Betermin2021AHP}
 L. B\'etermin, Effect of periodic arrays of defects on lattice energy minimizers. {\em Ann. Henri Poincar\'e} 22 (2021), no. 9, 2995-3023.

\bibitem{Betermin2021a}  L. B\'etermin,
Optimality of the triangular lattice for Lennard-Jones type lattice
energies: a computer-assisted method, {\em J. Phys. A} 56 (2023), no. 14, Paper No. 145204, 19 pp.


\bibitem{Betermin2020CA}
  L. B\'etermin, Minimal soft lattice theta functions. {\em Constr. Approx.} 52 (2020), no. 1, 115-138.


\bibitem{Betermin2023JAM}
 L. B\'etermin, M. Faulhuber, Maximal theta functions universal optimality of the hexagonal lattice for Madelung-like lattice energies. {\em J. Anal. Math.} 149 (2023), no. 1, 307-341.

\bibitem{Betermin2021SIAM}
 L. B\'etermin, Theta functions and optimal lattices for a grid cells model. {\em SIAM J. Appl. Math.} 81 (2021), no. 5, 1931-1953.

\bibitem{Betermin2021bb} L. B\'etermin, M. Faulhuber, and S. Steinerberger,
A variational principle for Gaussian lattice sums, arXiv:2110.006008v1.

\bibitem{Bers1991}
M. Bershadsky, I. Klebanov, Partition functions and physical states in two-dimensional quantum gravity and supergravity.
 {\em Nuclear Phys. B} 360 (1991), no. 2-3, 559-585.

 \bibitem{Bla2015}
X. Blanc, M. Lewin, The Crystallization Conjecture: A Review. {\em EMS Surveys in Mathematical Sciences, EMS}  2(2)2015, 255-306.


\bibitem{Che2007}
X. Chen, Y. Oshita, An application of the modular function in nonlocal variational problems. {\em Arch.
Rat. Mech. Anal.}, 186(1) (2007), 109-132.

\bibitem{Cas1959} J. Cassels,  On a problem of Rankin about the Epstein Zeta function, {\em Proc. Glasgow
Math. Assoc.} 4(1959), 73-80. (Corrigendum, ibid. 6 (1963), 116.)

\bibitem{Cohn2018}
H. Cohn, M. de Courcy-Ireland,
The Gaussian core model in high dimensions.
{\em Duke Math. J.} 167 (2018), no. 13, 2417-2455.


\bibitem{Coh2017}
H. Cohn, A. Kumar, S. Miller, D. Radchenko, M. Viazovska, The sphere packing problem in dimension 24, {\em Annals of Mathematics} 2017, 1017-1033.

\bibitem{Conway}
J. Conway, N. Sloane, Sphere packings, lattices and groups. With contributions by E. Bannai, J. Leech, S. Norton, A. Odlyzko, R. Parker, L. Queen and B. Venkov. Grundlehren der mathematischen Wissenschaften [Fundamental Principles of Mathematical Sciences], 290. Springer-Verlag, New York, 1988. xxviii+663 pp. ISBN: 0-387-96617-X.

\bibitem{Chowla1967}
S. Chowla, A. Selberg, On Epstein's Zeta function, {\em J. Reine Angew. Math.} 227
(1967), 86-110.


\bibitem{Chowla1949}
A. Selberg, S. Chowla, On Epstein's Zeta function (I), {\em Proc. Nat. Acad.
Sci.} 35 (1949), 371-74.


\bibitem{Cohen2007}
H. Cohen, Number theory. Vol. II. Analytic and modern tools. Graduate Texts in Mathematics, 240. Springer, New York, 2007. xxiv$+$596 pp. ISBN: 978-0-387-49893-5.

\bibitem{Dia1964} P. Diananda, Notes on two lemmas concerning the Epstein zeta-function, {\em Proc. Glasgow
Math. Assoc.} 6 (1964), 202-204.


\bibitem{Enn1964a} V. Ennola, A lemma about the Epstein Zeta function, {\em Proc. Glasgow Math. Assoc.} 6
(1964), 198-201.


\bibitem{Folkins1991}
I. Folkins, Functions of two-dimensional Bravais lattices. {\em J. Math. Phys.}
32, 1965-1969 (1991).


\bibitem{GMS2013} D. Goldman, C. Muratov, S. Serfaty, The Gamma-limit of the two-dimensional Ohta-Kawasaki
energy. I. droplet density. {\em  Arch. Rat. Mech. Anal.} 210(2)(2013), 581-613.

\bibitem{Ho2001} T. Ho, Bose-Einstein condensates with large number of vortices. {\em Physical Review Letters} 87(2001), 604031-604034

\bibitem{Kramer1991}
J. Kramer,
A geometrical approach to the theory of Jacobi forms.
{\em Compositio Math.} 79 (1991), no. 1, 1-19.


\bibitem{L}
S. Lefschetz, On the Functional Independence of Ratios of Theta Functions, {\it Proceedings of the National Academy of Sciences of the United States of America}, Vol. 13, No. 9 (Sep. 15, 1927), pp. 657-659 (3 pages).

\bibitem{Luo2019} S. Luo, X. Ren, J. Wei,
Non-hexagonal lattices from a two species interacting system, {\em SIAM J. Math. Anal.}, 52(2) (2020), 1903-1942.

\bibitem{Luo2023a}
 S. Luo, J. Wei, On lattice hexagonal crystallization for non-monotone potentials, {\em J. Math. Phys.} 65 (2024), no. 7, Paper No. 071901, 28 pp.

\bibitem{LW2022}
  S. Luo, J. Wei, On minima of sum of theta functions and application to Mueller-Ho conjecture. {\em Arch. Ration. Mech. Anal.} 243 (2022), no. 1, 139-199.


\bibitem{LW2023}
  S. Luo, J. Wei, On minima of difference of theta functions and application to hexagonal crystallization, {\em Math. Ann.} 387 (2023), no. 1-2, 499-539.

\bibitem{LW2025JMP}
  S. Luo, J. Wei, On non-Gaussian potentials having triangular lattice as minimizer at any fixed density, {\em J. Math. Phys.}, 66 (2025), no. 12, Paper No. 121901, 33 pp.

\bibitem{LW2025}
  S. Luo, J. Wei,
On minima of difference of Epstein zeta functions and exact solutions to Lennard-Jones lattice energy,  {\em J. Eur. Math. Soc. $($JEMS$)$}, (2025), published online first, DOI 10.4171/JEMS/1682.


\bibitem{Pin2020}
I. Pinelis, Exact lower and upper bounds on the incomplete gamma function. {\em Math. Inequal. Appl.} 23 (2020), no. 4, 1261-1278.

\bibitem{Witten2020} A. Maloney, E. Witten, Averaging over Narain moduli space. {\it JHEP} 10, 187 (2020).


\bibitem{Mon1988}
H. Montgomery, Minimal theta functions. {\em Glasgow Math. J.} 30 (1988), 75-85.

\bibitem{Mumford1983}
D. Mumford, Tata lectures on theta. I. With the assistance of C. Musili, M. Nori, E. Previato and M. Stillman. Progress in Mathematics, 28. Birkh$\ddot{a}$user Boston, Inc., Boston, MA, 1983. xiii+235 pp. ISBN: 3-7643-3109-7.


\bibitem{Naka2004} Y. Nakayama, Liouville field theory: a decade after the revolution, {\em International Journal of Modern Physics A} Vol. 19, No. 17-18, pp. 2771-2930 (2004).

\bibitem{Non1998}
 S. Nonnenmacher and A. Voros, Chaotic eigenfunctions in phase space, {\em J. Statist. Phys.}, 92 (1998), 431-518.

\bibitem{Sarnak1988} B. Osgood, R. Phillips, P. Sarnak,
Extremals of determinants of Laplacians, {\em Journal of functional analysis} 80, 148-211 (1988).

\bibitem{Pres2011} S. Prestipino, F. Saija, P. Giaquinta,
Hexatic Phase in the Two-Dimensional Gaussian-Core Model,
{\em Phys. Rev. Lett.} 106, 235701 - Published 10 June 2011.


\bibitem{Regev2024}
 O. Regev, N. Stephens-Davidowitz, A reverse Minkowski theorem. {\em Ann. of Math.} (2) 199 (2024), no. 1, 1-49.

\bibitem{Regev2023}
O. Regev, Some questions related to the reverse Minkowski theorem. ICM-International Congress of Mathematicians. Vol. 6. Sections 12-14, 4898-4912, EMS Press, Berlin, 2023.

\bibitem{Serfaty2012}
E. Sandier, S. Serfaty, From the Ginzburg-Landau model to vortex lattice problems. {\em Comm. Math. Phys.} 313(2012), 635-743.

\bibitem{Sarnak2006}
P. Sarnak, A. Str\"{o}mbergsson, Minima of Epstein's zeta function and heights of flat tori. {\em Invent. Math.} 165 (2006), no. 1, 115-151.

\bibitem{Serfaty2018}
S. Serfaty, Systems of points with Coulomb interactions. {\em Proceedings of the International Congress of Mathematicians-Rio de Janeiro 2018.} Vol. I. Plenary lectures, 935-977, World Sci. Publ., Hackensack, NJ, 2018.

\bibitem{Via2017} M. Viazovska, The sphere packing problem in dimension 8, {\em Annals of Mathematics} 2017, 991-1015.

\bibitem{Fran1997}
P. Francesco, P. Mathieu, D. S\'en\'echal, Conformal Field Theory. {\it Graduate Texts in Contemporary Physics}, New York, NY: Springer New York. p. 104. ISBN 978-1-4612-2256-9, 1997.


\bibitem{Muss2020} G. Mussardo,
Statistical Field Theory:
An Introduction to Exactly Solved Models in Statistical Physics,
Second Edition, {\it Oxford Graduate Texts}, ISBN: 9780198788102, 2020.

\bibitem{Ran1953} R. Rankin, A minimum problem for the Epstein Zeta function, {\em Proc. Glasgow Math.
Assoc.} 1 (1953), 149-158.

\bibitem{Watson1995}
G. Watson, A treatise on the theory of Bessel functions. Reprint of the second (1944) edition. Cambridge Mathematical Library. Cambridge University Press, Cambridge, 1995. viii$+$804 pp. ISBN: 0-521-48391-3.

\bibitem{Zagier} D. Zagier,
The Rankin-Selberg method for automorphic functions which are not of rapid decay,
{\em J. Fac. Sci. Tokyo} 28 (1982), 415-438.

\end{thebibliography}
\end{document}